\newcommand{\shi}{\text{\usefont{U}{min}{m}{n}\symbol{'127}}}
\DeclareFontFamily{U}{min}{}
\DeclareFontShape{U}{min}{m}{n}{<-> udmj30}{}
\newtheorem{theorem}{Theorem}
\newtheorem*{theorem*}{Theorem}
\newtheorem*{corollary*}{Corollary}
\newtheorem{definition}[theorem]{Definition}
\newtheorem{lemma}[theorem]{Lemma}
\newtheorem{prop}[theorem]{Proposition}
\newtheorem{remark}[theorem]{Remark}
\newtheorem{claim}[theorem]{Claim}
\newtheorem{corollary}[theorem]{Corollary}
\newtheorem{question}{Question}
\newcommand{\Col}{\mathrm{Col}}
\newcommand{\col}{\mathrm{Col}}
\newcommand{\Add}{\mathrm{Add}}
\newcommand{\add}{\mathrm{Add}}
\newcommand{\la}{\langle}
\newcommand{\ra}{\rangle}
\newcommand{\rest}{\restriction}
\newcommand{\Sh}{\shi}
\newcommand{\id}{\mathrm{id}}
\newcommand{\p}{\mathcal{P}}
\newcommand{\bbA}{\mathbb{A}}
\newcommand{\bbB}{\mathbb{B}}
\newcommand{\bbC}{\mathbb{C}}
\newcommand{\bbP}{\mathbb{P}}
\newcommand{\bbQ}{\mathbb{Q}}
\newcommand{\bbR}{\mathbb{R}}
\newcommand{\bbS}{\mathbb{S}}
\newcommand{\bbT}{\mathbb{T}}
\newcommand{\bbU}{\mathbb{U}}
\newcommand{\GCH}{\mathrm{GCH}}
\newcommand{\SCH}{\mathrm{SCH}}
\newcommand{\ZFC}{\mathrm{ZFC}}
\DeclareMathOperator{\dom}{dom}
\DeclareMathOperator{\ran}{ran}
\DeclareMathOperator{\len}{len}
\DeclareMathOperator{\cf}{cf}
\DeclareMathOperator{\acc}{acc}
\DeclareMathOperator{\crit}{crit}
\DeclareMathOperator{\ot}{ot}
\DeclareMathOperator{\Ult}{Ult}
\DeclareMathOperator{\cof}{cof}
\date{}
\author{Monroe Eskew and Yair Hayut}
\title{Dense Ideals}
\begin{document}

\maketitle
\begin{abstract}
    In this paper, we obtain the consistency, relative to large cardinals, of the existence of dense ideals on every successor of a regular cardinal simultaneously. Using a consequent transfer principle, we show that in this model there is a $\sigma$-complete, $\aleph_1$-dense ideal on $\aleph_{n+1}$ for every $n < \omega$, answering a question of Foreman. 

    Using this construction we show the consistency of the existence of various irregular ultrafilters on $\omega_n$, the consistency of the Foreman-Laver reflection property for the chromatic number of graphs for all possible pairs of cardinals below $\aleph_\omega$, and the simultaneous consistency of the partition hypotheses $\mathrm{PH}_n(\omega_m)$ for $n < m$.
\end{abstract}
\section{Introduction}
A common philosophical stance in set theory is that a statement that follows from a large cardinal axiom is morally true, even if independent from the standard axioms of set theory, $\ZFC$. For example, the statement that there is no definable non-measurable subset of the reals (e.g.\ in the structure $\la \mathbb R; \mathbb N, +, \cdot, exp\ra$) is considered true even though it is false in $L$, as it follows from the existence of sufficiently many large cardinals. 

Nevertheless, by the Levy-Solovay theorem, large cardinals are indifferent to small forcing such as the ones that can wiggle the value of the continuum, so large cardinals cannot decide whether the continuum hypothesis is true. In the paper \cite{Foreman86}, Foreman suggests an alternative to the standard large cardinal hierarchy, the \emph{generic} large cardinals. Those axioms are assertions about the existence of elementary embeddings in a generic extension of the universe.  They are not linearly ordered as the standard large cardinal axioms are, but they can hold at small cardinals (such as $\aleph_n$), thus giving us information about the universe's structure at those cardinals.\footnote{This paradigm has its limitations, see \cite{Eskew2020}.}

The strength of an elementary embedding $j: V \to M$ is measured by the similarity of the obtained model $M$ to the universe $V$, relative to the critical point of $j$. For a generic elementary embedding, there is another important parameter: the forcing notion that introduces the elementary embedding. The more modest the forcing notion is, the stronger the generic large cardinal axiom becomes. To get an elementary embedding with a critical point $\mu^+$, where $\mu$ is a cardinal, $\mu^+$ must be collapsed. When $\mu$ is regular, the Levy collapse $\col(\mu,\mu^+)$ is the most natural candidate for a forcing notion that minimizes the modification of the universe in this case. 

Similarly to the connection between $\sigma$-complete ultrafilters and definable elementary embeddings, there is a tight connection between ideals with various strong combinatorial properties and generic elementary embeddings. For an ideal $I$ on a set $X$, the forcing notion $\p(X)/I$ introduces a generic elementary embedding as an ultrapower using the generic filter for the forcing. 

Unlike traditional large cardinal axioms, generic large cardinal axioms can be mutually inconsistent. Thus, one of the goals in the study of generic large cardinals and ideals with strong properties is to isolate the limitations for the co-existence of such ideals at many small regular cardinals. 

The main theorem of this paper is Theorem \ref{thm:dense-ideals-everywhere}.
\begin{theorem*}[Main Theorem]
It is consistent, relative to a huge cardinal, that for every regular cardinal $\mu$, there is a normal ideal $I$ on $\mu^+$ such that
\[\p(\mu^+) / I \cong \mathcal{B}(\Col(\mu,\mu^+)).\]
\end{theorem*}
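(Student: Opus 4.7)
The plan is to combine Kunen's classical construction of a dense ideal on $\omega_1$ from a huge cardinal with a class-length reverse-Easton iteration, arranged so that the hugeness provides a uniform treatment of every regular cardinal. Start with a huge cardinal $\kappa$ witnessed by an elementary embedding $j \colon V \to M$ with $j(\kappa) = \lambda$ and ${}^{\lambda}M \subseteq M$. After a preparatory forcing that renders $j$ sufficiently generic-indestructible (in the style of Laver preparation), define a class iteration $\mathbb{P}$ in which, at each ground-model regular $\mu$, one forces with a carefully chosen collapse resembling $\Col(\mu, \mu^+)$, with supports and bookkeeping arranged so that the ``tail'' of $j(\mathbb{P})$ above stage $\kappa$ factors as $\Col(\mu, \lambda)$ inside $M[G \rest \kappa]$.

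The main steps would then be: (i) prove that $\mathbb{P}$ preserves enough of the cardinal structure and that $j$ lifts to a generic embedding $\hat\jmath \colon V[G] \to M[G^*]$ after adjoining a suitable generic for the factor above $\kappa$; (ii) form the normal ideal $I$ on $\mu^+$ via the standard recipe $A \in I \iff \kappa \notin \hat\jmath(A)$, and show that the map $[A]_I \mapsto \|\kappa \in \hat\jmath(A)\|$ implements an isomorphism of $\p(\mu^+)/I$ with $\mathcal{B}(\Col(\mu, \mu^+))$; (iii) use the design of the iteration together with reflection from the huge embedding to argue that the same analysis applies at every regular cardinal, not only at $\kappa$ itself, so the conclusion holds uniformly.

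The principal obstacle I anticipate is step (ii): obtaining a literal isomorphism, rather than merely a dense embedding or complete-subalgebra relation, between the quotient algebra and $\mathcal{B}(\Col(\mu, \mu^+))$. This demands an exact absorption lemma showing that the factor forcing read off from $j(\mathbb{P})/G$ is isomorphic on the nose to $\Col(\mu, \lambda)$ in $M[G \rest \kappa]$, and that the closure properties of $M$ are strong enough to guarantee every maximal antichain in $\p(\mu^+)/I$ arises from collapse conditions. A secondary difficulty is the globalization step (iii): one must ensure that neither earlier stages nor later tails of the class iteration destroy the dense-ideal property at any given stage, which calls for an indestructibility argument together with a careful choice of supports so that the relevant generic embeddings remain available throughout the iteration.
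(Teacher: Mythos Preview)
Your proposal identifies the right general shape---an Easton-support iteration of collapses together with lifting arguments---but it misses the two technical innovations that make the paper's argument work, and the classical approach you describe is known to fail for exactly the reason you flag.

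First, the absorption problem you highlight in step (ii) is not merely an obstacle to be overcome with care; it is the central difficulty, and ordinary Levy or Silver collapses do not solve it when one wants dense ideals at \emph{consecutive} successors. Foreman's 1998 paper \cite{foreman98} is precisely an attempt along your lines, and it obtains a dense ideal on $\omega_1$ together with only a ``very strongly layered'' ideal on $\omega_2$. The paper's solution is to replace the Levy collapse with the Dual Shioya forcing $\Sh(\mu,\kappa)$, which is designed so that \emph{any} reasonable $(\mu,\kappa)$-collapse projects to it (Lemma~\ref{mainproj}); this universality is what yields the exact isomorphism $\p(\kappa)/I \cong \mathcal{B}(\Col(\mu,\kappa))$ rather than a mere complete embedding.

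Second, you do not mention the inverse-limit problem, which is arguably the deeper obstacle. When building the master sequence $\la m_\alpha : \alpha<\lambda\ra$ needed to lift $j$ through the collapse, at a limit stage $\delta$ one must know that the stage-$\delta$ forcing projects to the inverse limit of the earlier stages along the chosen system of projections. For the Shioya collapse this fails generically (see the footnote opening Section~\ref{sec: uniformization}). The paper resolves this with the uniformization forcing $\bbU(G)$, a distributive threading poset that retroactively adds a coherent system of projections respecting inverse limits of small cofinality. The two-step $\Sh(\mu,\kappa)*\dot\bbU(\dot G)$, together with the absorption Lemma~\ref{3step} showing it becomes $\Col(\mu,\kappa)$ after one more collapse, is the engine of the whole argument. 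Your sketch has no analogue of this, and without it the lifting in step (i) cannot be carried out at limit stages of the master-condition construction.

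Finally, the ``Laver preparation'' you mention is not relevant here: the paper uses almost-huge embeddings (reflected from a single huge cardinal) between consecutive iterands, not indestructible supercompactness.
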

 By Proposition~\ref{gch}, the existence of such ideals implies $\GCH$, which implies that the forcing $\Col(\mu,\mu^+)$ has cardinality $\mu^+$.  We conclude that the ideal $I$ on $\mu^+$ is $\mu^+$-dense. 
 
 This result is optimal as it obtains the maximal class of successor cardinals which can carry a dense ideal. For a singular cardinal $\mu$, there can be no $\mu^{+}$-dense, normal ideal on $\mu^+$, by \cite[Lemma 2]{Eskew2020}.

 Following the terminology of \cite[Section 11.4]{foremanhandbook}, our main theorem shows that it is consistent for every successor of a regular cardinal to be \emph{minimally generically almost-huge}.\footnote{This means that for $\kappa = \mu^+$, the forcing $\col(\mu,\kappa)$ introduces an elementary embedding $j \colon V \to M$, where $M$ is a ${<}j(\kappa)$-closed transitive subclass of the generic extension $V[G]$.}  Our result cannot be improved to get any one cardinal minimally generically huge, as minimal generic hugeness of $\kappa$ implies that $\kappa^+$ is not generically  measurable via a $\kappa$-closed forcing, by \cite[Theorem 1]{genhuge}.

Using the Boolean transfer principle (see Subsection \ref{subsec: boolean-transfer}), we obtain Corollary \ref{cor:aleph_1-dense-ideals}:
\begin{corollary*}[Main Corollary]
    It is consistent, relative to a huge cardinal, that for every $n$, there is a uniform, $\sigma$-complete, $\omega_1$-dense ideal on $\omega_{n+1}$.
\end{corollary*}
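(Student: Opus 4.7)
By the Main Theorem, we have a model $V$ in which, for every regular cardinal $\mu$, there is a normal ideal $I_\mu$ on $\mu^+$ with $\mathcal{P}(\mu^+)/I_\mu \cong \mathcal{B}(\Col(\mu, \mu^+))$. Under $\GCH$, which holds in $V$ by Proposition~\ref{gch}, the forcing $\Col(\mu, \mu^+)$ has cardinality $\mu^+$, so each $I_\mu$ is $\mu^+$-dense. In particular, taking $\mu = \omega_n$ yields, for every $n < \omega$, an $\omega_{n+1}$-dense normal ideal $I_{\omega_n}$ on $\omega_{n+1}$. The case $n = 0$ is immediate: $I_\omega$ is already uniform, $\sigma$-complete, and $\omega_1$-dense.

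For $n \ge 1$, the density of $I_{\omega_n}$ is $\omega_{n+1}$ rather than $\omega_1$, so the ideal must be transformed. The plan is to invoke the Boolean transfer principle of Subsection~\ref{subsec: boolean-transfer}, which permits one to replace the quotient Boolean algebra of an ideal by a different Boolean algebra while keeping the index set fixed, provided a suitable forcing relationship holds between source and target. Here the source quotient is $\mathcal{B}(\Col(\omega_n, \omega_{n+1}))$, of size $\omega_{n+1}$; the target is $\mathcal{B}(\Col(\omega, \omega_1))$, which has cardinality $\omega_1$ and is $\sigma$-complete as a Boolean algebra.

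The structural ingredient needed for the transfer is the standard forcing equivalence
\[
\Col(\omega, \omega_1) \ast \dot{\Col}(\omega_1, \omega_2) \ast \cdots \ast \dot{\Col}(\omega_n, \omega_{n+1}) \equiv \Col(\omega, \omega_{n+1}),
\]
which exhibits $\mathcal{B}(\Col(\omega, \omega_1))$ as a regular subalgebra of a Boolean completion reachable from the family of dense ideals $I_{\omega_0}, \ldots, I_{\omega_n}$ supplied by the Main Theorem. Using these ideals together with the transfer principle, one extracts from $I_{\omega_n}$ a single ideal $J_n$ on $\omega_{n+1}$ with $\mathcal{P}(\omega_{n+1})/J_n \cong \mathcal{B}(\Col(\omega, \omega_1))$. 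Uniformity, $\sigma$-completeness, and $\omega_1$-density of $J_n$ then follow directly from the target algebra together with the normality of the source ideals.

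The principal obstacle is the correct alignment of the transfer: the generic ultrapower arising from $J_n$ must have critical point $\omega_1$ (ensuring $\sigma$-completeness but no more), yet $J_n$ must still be an ideal on $\omega_{n+1}$. The Boolean transfer principle is engineered precisely to negotiate this discrepancy between the size of the index set and the size of the quotient algebra, so once its hypotheses are matched to the iterated-collapse factorization of $\Col(\omega, \omega_{n+1})$ and the dense ideals from the Main Theorem, the remaining verifications should be routine.
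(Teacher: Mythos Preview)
Your high-level plan---start from the Main Theorem and invoke the Boolean transfer principle of Subsection~\ref{subsec: boolean-transfer}---is correct, but the mechanism you describe is not the one that does the work, and what you sketch would not produce the ideal.

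The forcing equivalence $\Col(\omega,\omega_1)*\dot\Col(\omega_1,\omega_2)*\cdots*\dot\Col(\omega_n,\omega_{n+1})\equiv\Col(\omega,\omega_{n+1})$ plays no role in the transfer and exhibits nothing useful here. The actual content of Theorem~\ref{transfer} and Corollary~\ref{lowerend} is: from the normal ideal $J$ on $\kappa^+$ with $\p(\kappa^+)/J\cong\mathcal B(\Col(\kappa,\kappa^+))$, together with $\diamondsuit_{\kappa^+}(\cof(\kappa))$ (which holds in the model), one builds a function $\pi\colon\kappa^+\to\kappa$ and an ideal $K\supseteq J$ such that $[A]_{\mathrm{bd}}\mapsto[\pi^{-1}[A]]_K$ is an isomorphism $\p(\kappa)/\mathrm{bd}\cong\p(\kappa^+)/K$. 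Any $\delta$-complete uniform ideal on $\kappa$ then pulls back along $\pi$ to a $\delta$-complete uniform ideal on $\kappa^+$ with the \emph{same} quotient algebra. The transfer thus moves ideals \emph{up} one step, from $\kappa$ to $\kappa^+$, preserving the quotient; it does not shrink the quotient of an ideal already living on $\kappa^+$.

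Consequently, the correct argument (and the paper's, in Corollary~\ref{cor:aleph_1-dense-ideals}) is iterative: start with the $\omega_1$-dense normal ideal on $\omega_1$, apply Corollary~\ref{lowerend} with $\kappa=\omega_1$ to obtain a $\sigma$-complete uniform ideal on $\omega_2$ with the same $\omega_1$-sized quotient, then apply it again with $\kappa=\omega_2$ to reach $\omega_3$, and so on up to $\omega_{n+1}$. Your proposal to ``extract from $I_{\omega_n}$ a single ideal $J_n$'' in one shot via a regular-subalgebra observation does not match the statement of the transfer principle and does not yield a construction.
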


This corollary has many combinatorial consequences, and we devote Section \ref{sec: applications} to stating some of them.

Our results answer the following open questions from the literature:
\begin{itemize}
    \item The Main Corollary answers \cite[Question 15]{foremanhandbook}.
    \item Theorem~\ref{up_sizes} obtains all possible values, subject to the constraints of $\GCH$, for sizes of ultrapowers among the $\omega_n$'s, answering \cite[Question 16]{foremanhandbook} below $\aleph_\omega$.\footnote{Other parts of Question 16 were answered in the first author's thesis.}
    \item The Main Theorem implies that it is consistent to have very strongly layered ideals on all $\omega_n$ simultaneously, answering \cite[Question 17]{foremanhandbook}.
    \item The Main Corollary also shows that Foreman's computation of the forbidden segments is sharp in the lower part, as asked in \cite[Question 34]{foremanhandbook}.\footnote{In Subsection \ref{subsec: FK-spectrum}, we improve the computation of the forbidden segments, pointing to a more accurate formulation of Question 34, which remains open.}
    \item In Subsection \ref{subsec: graph-colorings}, we derive as a corollary from the existence of uniform dense ideals the minimal possible chromatic number for the Erd\H{o}s-Hajnal graphs, under $\GCH$, for $\omega_n$, $n < \omega$, addressing a problem of Todor\'{c}evi\v{c}, \cite[Problem 8.17]{todorcevic}.
    \item In Subsection~\ref{subsec: partition-hypothesis}, we obtain an upper bound for the consistency strength of the partition hypotheses of \cite{BBMT}, addressing Question 9.5 of that work.
\end{itemize}

Let us describe the structure of the paper. 
In Section \ref{sec: preliminaries}, we will cover the basic technical properties of forcings, projections, and embeddings needed for the definition of our main forcing notion.

In Section \ref{sec: Dual-shioya-collapse}, we introduce the main technical tool of this paper, which is the \emph{Dual Shioya collapse}. This forcing notion is a minor variant of one developed by Shioya in \cite{ShioyaDense}. It collapses the interval between a pair of uncountable regular cardinals $\mu$ and $\kappa$ in a relatively universal way. Intuitively, the forcing glues together generically arbitrary $\mu$-closed forcing notions of cardinality $<\kappa$.

In order to lift an elementary embedding after the Dual Shioya collapse, we need to make sure that many steps of the generic collapse behave as inverse limits, a property that generically fails quite often. In Section \ref{sec: uniformization}, we introduce a forcing notion that will allow us to restore a large set on which inverse limits exist, which will be crucial in order to construct the required master conditions. 

In Section \ref{sec: dense-ideals-1-2}, we prove the consistency of the simultaneous existence of dense ideals at $\omega_1$ and $\omega_2$ relative to a suitable large cardinal hypothesis. This is an improvement on Foreman's result \cite{foreman98}, where he obtained a dense ideal on $\omega_1$ together with a very strongly layered ideal $\omega_2$, which he showed has many of the same combinatorial consequences at the level of $\omega_2$.  
The construction of the ideal for $\omega_1$ differs from the construction of a dense ideal on $\mu^+$ for uncountable regular $\mu$. The reason for that is that the Dual Shioya collapse is not well-defined when the lower cardinal is $\omega$. Luckily, the problem of constructing dense ideals on $\omega_1$ was already solved in several ways, some of which are compatible with the rest of the construction. 

In Section \ref{sec: more-dense-ideals} we prove our main consistency result by showing how to lift an almost-huge embedding after a two-step iteration of the forcing from Section \ref{sec: Dual-shioya-collapse} and Section \ref{sec: uniformization}. 

In Section \ref{sec: applications}, we derive a couple of $\ZFC$-consequences from the existence of ideals as in the Main Theorem. This section
does not require any knowledge about the results of the other sections and can be read independently.
We start by obtaining some simple consequences of these ideals and then, in Subsection \ref{subsec: boolean-transfer} we recall the principle of Boolean transfer that was developed by Foreman and Woodin and apply it in our case in order to get a variety of ideals of different density and completeness over $\omega_n$ for natural $n$. In \cite{foreman98}, Foreman developed a strong and general version of this transfer principle which applies to many types of ideals. We present a simplified version of the proof which is suitable for our case. Using the Boolean transfer principle, we repeat Foreman's argument for our ideals and obtain in Subsection \ref{subsec: irregular-ultrafilters} highly irregular ultrafilters. In Subsection \ref{subsec: FK-spectrum}, we present an improved argument for the computation of a class of regular cardinals on which no $\sigma$-complete $\omega_2$-saturated ideal can exist. 

In Subsection \ref{subsec: graph-colorings}, we use those irregular ultrafilters in order to compute the chromatic number of the Erd\H{o}s-Hajnal graph for $\omega_n^{\omega_m}$ and obtain the Foreman-Laver reflection principle for the chromatic number of graphs between all possible pairs of cardinals below $\aleph_\omega$. Finally, in Subsection \ref{subsec: partition-hypothesis}, we derive the consistency of the Partition Hypothesis of \cite{BBMT} and conclude (using an unpublished work, \cite{BLZ}) the consistency of the triviality of the cohomology group $\hat{\mathrm{H}}^n(\omega_m,\mathcal{A})$ for $n < m$.  

In Section \ref{sec: questions}, we list some open problems.

Throughout the paper we work in $\ZFC$ and state explicitly any large cardinal hypothesis that we are using. We refer the reader to \cite{Jech} for basic facts about forcing and to \cite{Kanamori} for definitions and basic properties of large cardinals.
\section{Preliminaries}\label{sec: preliminaries}
Let us first recall a few basic forcing facts.  A partially ordered set (poset) $\bbP$ is called \emph{separative} when for all $p,q \in \bbP$, if $p \nleq q$, then there is $r \leq p$ such that $r \perp q$, i.e.\ there is no element below both $r$ and $q$.  The key property of separative posets is that $p \leq q$ if and only if $p \Vdash q \in \dot G$, where $\dot G$ is the canonical name for the generic filter.  A non-separative poset may be made separative by forming the \emph{seperative quotient}, where we put $p \preceq q$ iff all $r \leq p$ are compatible with $q$ and defining the equivalence relation $p \sim q \leftrightarrow p \preceq q \preceq p$.  A generic for a poset can be readily translated into one for its separative quotient and vice versa. Thus, we will implicitly identify a poset and its separative quotient, unless it can cause confusion.

If $\bbP$ is a poset and $\dot\bbQ$ is a $\bbP$-name for a poset, then the iteration $\bbP*\dot\bbQ$ is taken to be the set of pairs $\la p,\dot q\ra \in H_\theta$, where $\theta$ is the least regular cardinal such that $\bbP,\dot\bbQ\in H_\theta$, such that $\bbP$ forces $\dot q \in \dot\bbQ$, with the order defined as $\la p_1,\dot q_1 \ra\leq\la p_0,\dot q_0\ra$ iff $p_1 \leq p_0$ and $p_1 \Vdash \dot q_1 \leq \dot q_0$.  This ordering is usually not antisymmetric, but it becomes a partial order when we mod out by the equivalence relation $\la p_1,\dot q_1 \ra\sim\la p_0,\dot q_0\ra$ iff $p_0 = p_1$ and $p_0 \Vdash \dot q_0 = \dot q_1$.  We officially take this quotient as our definition of two-step iteration, but we will suppress mention of it for notational convenience.  It is easy to see that this poset is separative if $\bbP$ is separative and $\dot\bbQ$ is forced to be separative.

There are two main ways that one forcing can be thought of as a subforcing of another: projections and complete embeddings.

\begin{definition}
    For posets $\bbP$ and $\bbQ$, a map $\pi: \bbQ \to \bbP$ is a \emph{projection} when it is order-preserving, its range is dense in $\bbP$, and it has the property that for all $q \in \bbQ$ and all $p \leq \pi(q)$, there is $q' \leq q$ such that $\pi(q') \leq p$.
\end{definition}

The statement that $\ran\pi$ is dense in $\bbP$ is made superfluous by requiring that $\bbP$ and $\bbQ$ have maximum elements $1_\bbP$ and $1_\bbQ$ respectively, and $\pi(1_\bbQ) = 1_\bbP$.  If $\bbP$ or $\bbQ$ is missing a maximum element, then these can be artificially added and a given projection can be extended to map one to the other.  We also note that if $\bbP$ is separative, $\bbQ$ has a maximum element $1_\bbQ$, and $\pi: \bbQ \to \bbP$ is a projection in the above sense, then $\pi(1_\bbQ)$ must be a maximum element of $\bbP$.

The above notion of projection is weaker than what is often used in the literature (for example in \cite{AbrahamHandbook}), as we do not require that whenever $p \leq \pi(q)$, we can find $q' \leq q$ with $\pi(q') = p$.  A key advantage of using the weaker notion is that, if $D \subseteq \bbQ$ is dense, then $\pi \rest D$ is also a projection from $D$ to $\bbP$.  And this weaker notion still does the usual jobs:  If $\pi: \bbQ \to \bbP$ is a projection and $H \subseteq \bbQ$ is generic, then $\pi[H]$ generates a generic filter for $\bbP$, which we denote by $\pi(H)$.  Forcing with $\bbQ$ is equivalent to first taking a generic $G \subseteq \bbP$ and then forcing with the quotient $\bbQ/G = \pi^{-1}[G].$


A map $e: \bbP \to \bbQ$ is a \emph{complete embedding} when it preserves order and incompatibility and sends maximal antichains to maximal antichains.  Such a map is called a \emph{dense embedding} when its range is dense in $\bbQ$.  If $e: \bbP \to \bbQ$ is a complete embedding, then forcing with $\bbQ$ is equivalent to first taking a generic $G \subseteq \bbP$ and then forcing with the quotient $\bbQ/G = \{ q \in \bbQ: q$ is compatible with $e(p)$ for all $p \in G\}$.
For any separative poset $\bbP$, we can form the Boolean completion $\mathcal{B}(\bbP)$, the complete Boolean algebra of regular open subsets of $\bbP$, in which $\bbP$ appears as a dense set.  For complete Boolean algebras $\bbA,\bbB$, the existence of a projection $\pi: \bbB \to \bbA$ is equivalent to the existence of a complete embedding $e: \bbA \to \bbB$.

For ordinals $\kappa$ and $\lambda$, $\col(\kappa,\lambda)$ is the collection of partial functions from $\kappa$ to $\lambda$ defined at ${<}\kappa$-many points, ordered by reverse inclusion.  $\add(\kappa)$ is $\col(\kappa,2)$.  $\col(\kappa,{<}\lambda)$ is the collection of partial functions $p$ from $\lambda \times \kappa$ to $\lambda$ defined at ${<}\kappa$-many points, such that for all $\la\alpha,\beta\ra\in\dom p$, $p(\alpha,\beta)<\alpha$.  This is isomorphic to the ${<}\kappa$-support product of $\col(\kappa,\alpha)$ for $\alpha<\lambda$.  For $\lambda<\lambda'$, there is a natural projection from $\col(\kappa,{<}\lambda')$ to $\col(\kappa,{<}\lambda)$ given by $p \mapsto p \rest (\lambda \times \kappa)$.  For notational simplicity,  we will usually write $p \rest \lambda$ for this operation.

Let us recall the notions of strategic closure.
For a poset $\bbP$ and an ordinal $\delta$, we define two games $\mathcal{G}^\mathrm{I}_\delta(\bbP)$ and $\mathcal{G}^\mathrm{II}_\delta(\bbP)$.  Two players alternate playing elements of $\bbP$ in a descending sequence, with Player I making the first move.  At limit stages, Player I plays first in $\mathcal{G}^\mathrm{I}_\delta(\bbP)$, and Player II plays first in $\mathcal{G}^\mathrm{II}_\delta(\bbP)$.  Player II wins if the game lasts for $\delta$-many rounds; otherwise, Player I wins.  For a cardinal $\kappa$, we say that $\bbP$ is \emph{$\kappa$-strategically closed} if Player II has a winning strategy for $\mathcal{G}^\mathrm{II}_\kappa(\bbP)$, and \emph{strongly $\kappa$-strategically closed} if Player II has a winning strategy for $\mathcal{G}^\mathrm{I}_\kappa(\bbP)$.

We will use the following variation on Easton's Lemma:
\begin{lemma}
\label{easton}
    Suppose $\kappa$ is a regular cardinal and $\bbP,\bbQ$ are posets such that $|\bbP|<\kappa$ and $\bbQ$ is $\kappa$-distributive.  Then $\bbP$ preserves that $\bbQ$ is $\kappa$-distributive.
\end{lemma}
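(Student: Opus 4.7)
The plan is to exploit that $\bbP \times \bbQ$ is the same forcing as $\bbQ \times \bbP$, so for any $V$-generic $G \times H \subseteq \bbP \times \bbQ$ we have $V[G][H] = V[H][G]$. It then suffices to show that every function $f \colon \lambda \to \mathrm{Ord}$ in $V[G][H]$ with $\lambda < \kappa$ already lies in $V[G]$; this is exactly the assertion that $\bbQ$ remains $\kappa$-distributive over $V[G]$.

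I would fix such an $f$ and pass to $V[H]$, where $f$ is added over $V[H]$ by the $\bbP$-generic $G$. In $V[H]$ one can choose a nice $\bbP$-name $\sigma$ for $f$, presented as a sequence $\la (\bar A_\alpha, h_\alpha) : \alpha < \lambda \ra$, where $\bar A_\alpha \subseteq \bbP$ is a maximal antichain and $h_\alpha \colon \bar A_\alpha \to \mathrm{Ord}$ records the value that $\sigma(\alpha)$ is forced to have by each $p \in \bar A_\alpha$. Since $|\bbP|^V < \kappa$ and $\bbQ$ is $\kappa$-distributive in $V$, no new subsets of $\bbP$ (which can be identified with ${<}\kappa$-sequences of zeros and ones after a fixed $V$-enumeration of $\bbP$) appear in $V[H]$; hence each $\bar A_\alpha \in V$. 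Likewise $h_\alpha$, being a function from a ${<}\kappa$-sized set of $V$ into the ordinals, is essentially a ${<}\kappa$-sequence of ordinals, so $h_\alpha \in V$ as well.

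The remaining step is to move the entire assembly into $V$. Fix in $V$ a canonical coding of each possible pair $(\bar A_\alpha, h_\alpha)$ by a single ordinal $\gamma_\alpha$. The resulting sequence $\la \gamma_\alpha : \alpha < \lambda \ra \in V[H]$ is a $\lambda$-sequence of ordinals with $\lambda < \kappa$, so by $\kappa$-distributivity of $\bbQ$ in $V$ it already lies in $V$. Decoding yields $\sigma \in V$, and therefore $f = \sigma^G \in V[G]$, as desired.

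The main subtlety is the separation between the two invocations of $\kappa$-distributivity: the first, combined with $|\bbP| < \kappa$, pulls each individual slice of the nice name into $V$; the second, combined with $\lambda < \kappa$, pulls the $\lambda$-sequence of slices into $V$ after the ordinal coding. Either hypothesis in isolation would be insufficient, and this is where the asymmetric use of the smallness of $\bbP$ versus the distributivity of $\bbQ$ becomes essential.
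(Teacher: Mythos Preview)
Your proof is correct and follows essentially the same approach as the paper: pass to $V[H]$, take a $\bbP$-name for $f$, and argue that the name is already in $V$ so that $f = \sigma^G \in V[G]$. The paper compresses your steps 2--5 into the single sentence ``Since $|\bbP|<\kappa$, we may assume $\tau \in V$,'' whereas you spell out explicitly how distributivity is invoked twice (once for the individual slices, once for the $\lambda$-sequence); this is the same argument, just unpacked.
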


\begin{proof}
    Suppose $G \times H \subseteq \bbP \times \bbQ$ is generic.  Let $f \in V[G][H]$ be a function from some $\alpha<\kappa$ to the ordinals.  Let $\tau \in V[H]$ be a $\bbP$-name such that $\tau^G = f$.  Since $|\bbP|<\kappa$, we may assume $\tau \in V$.  Thus $\tau^G \in V[G]$.
\end{proof}

\subsection{Complete $\kappa$-closure}
Throughout this subsection, $\kappa$ is a regular uncountable cardinal.
\begin{definition}
    A poset is called \emph{completely $\kappa$-closed} when all descending sequences of length $<\kappa$ have a greatest lower bound or infimum (glb or inf).
\end{definition}

\begin{definition}
Suppose $\bbP$ is a completely $\kappa$-closed poset.  
    A set $X \subseteq \bbP$ is called \emph{$\kappa$-closed} when for all $\delta<\kappa$ and all descending sequences $\la p_i : i < \delta \ra \subseteq X$, $\inf_i p_i \in X$.
\end{definition}

\begin{definition}
    If $\bbP,\bbQ$ are completely $\kappa$-closed posets, then we say an order-preserving map $\pi : \bbP \to \bbQ$ is \emph{$\kappa$-continuous} when for all $\delta<\kappa$ and all descending sequences $\la p_i : i < \delta \ra \subseteq \bbP$, $\pi(\inf_i p_i) = \inf_i \pi(p_i)$.
\end{definition}

\begin{lemma}[Folklore]
Suppose $\bbP$ is a completely $\kappa$-closed poset.  Then every directed $D \in [\bbP]^{<\kappa}$ has a greatest lower bound. 
\end{lemma}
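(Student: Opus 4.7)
The plan is to proceed by transfinite induction on $\delta = |D| < \kappa$. The base case and the $\delta = \omega$ case are handled directly; the uncountable case is reduced to smaller directed sets via a filtration.

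For $D$ finite, a trivial induction using directedness shows that $D$ has a minimum element, which serves as the glb.

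For $\delta = \omega$, I would enumerate $D = \{d_n : n < \omega\}$ and recursively pick $p_{n+1} \in D$ with $p_{n+1} \leq p_n$ and $p_{n+1} \leq d_{n+1}$; this is possible by directedness since $p_n \in D$. As $\omega < \kappa$, the infimum $q = \inf_n p_n$ exists by complete $\kappa$-closure. It is a lower bound of $D$ because $q \leq p_m \leq d_m$ for each $m$, and any other lower bound of $D$ lies below every $p_n \in D$, hence below $q$.

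For uncountable $\delta$, first fix (by choice and directedness) a function $f\colon [D]^{<\omega} \to D$ assigning to each finite $F \subseteq D$ a lower bound of $F$ in $D$. Enumerate $D = \la d_i : i < \delta \ra$ and, for each $i < \delta$, let $D_i$ be the closure of $\{d_j : j \leq i\}$ under $f$ (iterating $\omega$-many times). Then each $D_i$ is itself a directed subset of $D$, and crucially $|D_i| \leq |i| + \omega < \delta$, using that $\delta$ is an uncountable cardinal. By the induction hypothesis each $D_i$ admits a glb $q_i$ in $\bbP$. Since $i < j$ implies $D_i \subseteq D_j$, the sequence $\la q_i : i < \delta \ra$ is descending, and its infimum $q$ exists by complete $\kappa$-closure (as $\delta < \kappa$). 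One then verifies directly that $q$ is a lower bound of $D = \bigcup_i D_i$ and that any other lower bound of $D$ lies below each $q_i$, hence below $q$.

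The main delicate point, and the reason I would treat $\delta = \omega$ separately, is keeping the filtration sizes strictly below $\delta$: for finite $i$ the closure $D_i$ need not shrink in cardinality, so the induction cannot be invoked there; fortunately the explicit descending chain through $D$ handles the countable case without any appeal to induction.
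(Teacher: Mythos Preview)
Your proof is correct and follows essentially the same approach as the paper: induction on $|D|$, with the base case $\delta=\omega$ handled by extracting a cofinal descending chain inside $D$, and the uncountable step handled by filtering $D$ into an increasing family of smaller directed subsets whose glb's form a descending $\delta$-sequence. The only cosmetic difference is that the paper simply asserts the existence of such a directed filtration, whereas you construct it explicitly via closure under a lower-bound Skolem function; your explicit remark about why $\delta=\omega$ must be treated separately is a nice clarification that the paper leaves implicit.
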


\begin{proof}
    We argue by induction on the cardinality $\delta$ of directed subsets of $\bbP$.  For $\delta = \omega$, given a directed $D \subseteq \bbP$, we select a descending sequence $\la p_i : i < \omega \ra \subseteq D$ such that for every $d \in D$, there is $i<\omega$ such that $p_i \leq d$.  Then $\inf_i p_i = \inf(D)$.  If the hypothesis holds for all $\gamma<\delta$, let $D$ be a directed set of size $\delta$, and let $\la D_\alpha: \alpha<\delta \ra$ be a $\subseteq$-increasing continuous sequence of subsets of $D$ such that each $D_\alpha$ is directed and of size $<\delta$.  By induction, each $D_\alpha$ has a glb, $p_\alpha$.  The descending sequence $\la p_\alpha: \alpha<\delta \ra$ has a glb, $p_\delta$, which is a lower bound to $D$.  Any other lower bound of $D$ must be a lower bound of each $D_\alpha$ and thus below each $p_\alpha$, so $p_\delta = \inf(D)$.
\end{proof}

\begin{lemma}
Suppose $\bbP,\bbQ$ are completely $\kappa$-closed posets and $\pi : \bbP \to \bbQ$ is a $\kappa$-continuous order-preserving map.  Then for all directed $D \in [\bbP]^{<\kappa}$ , $\pi(\inf D) = \inf \pi[D]$.
\end{lemma}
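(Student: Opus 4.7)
The plan is to mimic the induction of the preceding folklore lemma, using the fact that $\kappa$-continuity already handles descending sequences, and that every directed set of size $<\kappa$ has been reduced to such sequences.

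First I would note that $\pi[D]$ is directed (since $\pi$ is order-preserving) and has size $<\kappa$, so by the preceding lemma it has a glb, and both sides of the desired equation make sense. Then I would argue by induction on the cardinality $\delta$ of directed subsets $D \subseteq \bbP$. For $\delta = \omega$, pick a descending sequence $\la p_i : i<\omega\ra \subseteq D$ that is cofinal in $D$, so $\inf D = \inf_i p_i$. Then $\kappa$-continuity gives $\pi(\inf D) = \inf_i \pi(p_i)$, and cofinality (together with $\pi$ being order-preserving) shows that $\la \pi(p_i) : i<\omega\ra$ is cofinal in $\pi[D]$, so $\inf_i \pi(p_i) = \inf \pi[D]$.

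For the inductive step at $\delta < \kappa$, I would take a $\subseteq$-increasing continuous sequence $\la D_\alpha : \alpha<\delta\ra$ of directed subsets of $D$ of size $<\delta$ with union $D$, exactly as in the previous lemma. Setting $p_\alpha := \inf D_\alpha$, the sequence $\la p_\alpha : \alpha<\delta\ra$ is descending, $\inf D = \inf_\alpha p_\alpha$, and by the inductive hypothesis $\pi(p_\alpha) = \inf \pi[D_\alpha]$. Applying $\kappa$-continuity to the descending sequence $\la p_\alpha\ra$ yields
\[
\pi(\inf D) \;=\; \pi(\inf_\alpha p_\alpha) \;=\; \inf_\alpha \pi(p_\alpha) \;=\; \inf_\alpha \inf \pi[D_\alpha].
\]
To finish, I would verify $\inf_\alpha \inf \pi[D_\alpha] = \inf \pi[D]$: since $\pi[D] = \bigcup_\alpha \pi[D_\alpha]$, a common lower bound of $\pi[D]$ is a lower bound of each $\pi[D_\alpha]$, hence $\leq \inf \pi[D_\alpha]$ for every $\alpha$, hence $\leq \inf_\alpha \inf \pi[D_\alpha]$; and conversely $\inf_\alpha \inf \pi[D_\alpha]$ is obviously a lower bound for $\bigcup_\alpha \pi[D_\alpha] = \pi[D]$.

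The only conceptual wrinkle, which is where I would be most careful, is that $\kappa$-continuity was defined only on descending sequences, not on arbitrary directed sets; the whole point of the proof is to extract, at each level of the induction, a descending sequence cofinal in the directed witness so that the hypothesis can be invoked. Apart from that reduction, the argument is essentially bookkeeping.
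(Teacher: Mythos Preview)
Your proof is correct and follows essentially the same approach as the paper: induction on $|D|$, handling the countable base case via a cofinal descending chain and the successor step via a continuous $\subseteq$-increasing filtration $\la D_\alpha : \alpha<\delta\ra$ of smaller directed subsets, applying $\kappa$-continuity to the descending sequence $\la \inf D_\alpha\ra$. Your write-up is in fact slightly more explicit than the paper's in justifying $\inf_\alpha \inf \pi[D_\alpha] = \inf \pi[D]$.
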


\begin{proof}
If $D$ is countable, then there is a chain $C \subseteq D$ such that for all $d \in D$, there is $c \in C$ with $c \leq d$, so $\inf C = \inf D$ and $\inf \pi[C] = \inf \pi[D]$.  By assumption, $\pi(\inf C) = \inf \pi[C]$.  

Suppose $\delta<\kappa$ and the claim holds for all directed $D$ of cardinality $<\delta$.  Let $D$ be hypothesized, and let $\la D_\alpha : \alpha < \delta\ra$ be a continuous increasing sequence of sets such that each $D_\alpha$ is a directed subset of $D$ of size $<\delta$.  Let $d_\alpha = \inf D_\alpha$.  By induction, $\pi(d_\alpha) = \inf \pi [D_\alpha]$.  We have that $\inf D = \inf_{\alpha<\delta} d_\alpha$, and by $\kappa$-continuity for chains, $\pi(\inf D) = \inf_{\alpha<\delta} \pi(d_\alpha) = \inf_{\alpha<\delta}(\inf \pi[D_\alpha]) = \inf \pi[D]$.
\end{proof}

\begin{lemma}
\label{int_denseclosed}
    Suppose $\bbP$ is a completely $\kappa$-closed poset.  If $\delta<\kappa$ is a cardinal and $\{ D_\alpha : \alpha<\delta \}$ is a collection of $\kappa$-closed dense subsets of $\bbP$, then $\bigcap_{\alpha<\delta} D_\alpha$ is $\kappa$-closed and dense. 
\end{lemma}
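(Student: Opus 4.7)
The plan is to dispatch $\kappa$-closure of $D := \bigcap_{\alpha<\delta} D_\alpha$ immediately from the hypotheses, and to establish density by a back-and-forth construction that interleaves descents through all of the $D_\alpha$'s infinitely often. For $\kappa$-closure, suppose $\la p_i : i < \gamma\ra \subseteq D$ is descending with $\gamma<\kappa$. Complete $\kappa$-closure of $\bbP$ gives an infimum $p_\infty \in \bbP$, and since the sequence lies in each $D_\alpha$, the $\kappa$-closure of $D_\alpha$ forces $p_\infty \in D_\alpha$; hence $p_\infty \in D$.

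For density, fix $p \in \bbP$. The naive one-pass approach — pick $q_{\alpha+1} \leq q_\alpha$ with $q_{\alpha+1} \in D_\alpha$ for $\alpha<\delta$ — fails because after moving into $D_\alpha$ we may fall out of $D_\beta$ for $\beta<\alpha$. The remedy is to make infinitely many passes through the $D_\alpha$'s. Define $f : \delta\cdot\omega \to \delta$ by $f(\delta\cdot n + \alpha) = \alpha$ for $n<\omega$ and $\alpha<\delta$, so each value is attained cofinally often in $\delta\cdot\omega$. I then build a descending sequence $\la q_\xi : \xi \leq \delta\cdot\omega\ra$ with $q_0 \leq p$: at a successor stage $\xi+1$, use density of $D_{f(\xi)}$ to pick $q_{\xi+1} \leq q_\xi$ in $D_{f(\xi)}$; at a limit stage $\lambda \leq \delta\cdot\omega$, set $q_\lambda = \inf_{\xi<\lambda} q_\xi$. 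This limit is legitimate because $|\delta\cdot\omega| = \max(|\delta|,\omega) < \kappa$ (as $\kappa$ is regular uncountable), so $\delta\cdot\omega$ is itself an ordinal below $\kappa$, and $\bbP$ is completely $\kappa$-closed.

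Let $q = q_{\delta\cdot\omega}$; clearly $q \leq p$. For each fixed $\alpha<\delta$, consider the subsequence $\la q_{\delta\cdot n + \alpha + 1} : n < \omega\ra$: it is a descending $\omega$-sequence sitting inside $D_\alpha$, and its indices $\{\delta\cdot n + \alpha + 1 : n<\omega\}$ are cofinal in $\delta\cdot\omega$, so its infimum coincides with $q$. The $\kappa$-closure of $D_\alpha$ then yields $q \in D_\alpha$, and since $\alpha$ was arbitrary, $q \in D$. The only subtle points are recognizing why one pass does not suffice and verifying that the interleaved length $\delta\cdot\omega$ stays below $\kappa$; both are handled above, and no step looks like a genuine obstacle.
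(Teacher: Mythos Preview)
Your proof is correct. The closure argument matches the paper exactly. For density, however, you take a genuinely different route: you do a single direct construction of length $\delta\cdot\omega$, cycling through all the $D_\alpha$'s $\omega$-many times and taking one infimum at the end, whereas the paper proceeds by induction on $\delta$. The paper first handles $\delta=2$ by your interleaving idea (alternating between $D_0$ and $D_1$ for $\omega$ steps and taking the infimum), deduces the finite case, and then for infinite $\delta$ builds a descending sequence $\la p_\alpha : \alpha\leq\delta\ra$ where $p_{\alpha+1}$ is chosen in $\bigcap_{i\leq\alpha} D_i$ using the inductive hypothesis, with infima at limits; the limit $p_\alpha$ lands in each earlier $D_\beta$ because a tail of the sequence already lies in $D_\beta$. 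Your approach is more uniform and avoids the induction, at the cost of a slightly longer run (length $\delta\cdot\omega$ rather than $\delta$); the paper's inductive approach is more modular and reuses the two-set case as a black box. Both rely on the same key observation that the infimum of a descending sequence with a cofinal tail in $D_\alpha$ must lie in $D_\alpha$.
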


\begin{proof}
    The intersection is easily seen to be $\kappa$-closed, by the $\kappa$-closure of each $D_\alpha$.  To show density, first, let us argue for the case $\delta = 2$.  Let $p \in \bbP$ be arbitrary.  We choose a descending sequence $p_0 \geq p_1 \geq p_2 \geq \dots$ below $p$ such that $p_i \in D_0$ for even $i$ and $p_i \in D_1$ for odd $i$.  By the closure of $D_0$ and $D_1$, $p_\omega = \inf_i p_i \in D_0 \cap D_1$.  
    It follows that the intersection of finitely many $\kappa$-closed dense sets is $\kappa$-closed and dense.

    For infinite $\delta$, we argue by induction.  Suppose the claim holds for all $\gamma<\delta$.  Given $p_0 \in \bbP$, we construct a descending sequence $\la p_\alpha : \alpha \leq \delta \ra$ such that $p_{\alpha+1} \in \bigcap_{i\leq\alpha} D_i$, and for limit $\alpha$, $p_\alpha = \inf_{i<\alpha} p_i$.  The successor step is done using the induction hypothesis.  If $\alpha$ is a limit, then for each $\beta<\alpha$, $\{ p_i : \beta<i<\alpha \} \subseteq D_\beta$, and so $p_\alpha \in D_\beta$ by $\kappa$-closure.  It follows that $p_\delta \in \bigcap_{\alpha<\delta} D_\alpha$.
\end{proof}

\begin{lemma}
\label{inv_denseclosed}
    Suppose $\bbP,\bbQ$ are completely $\kappa$-closed posets, $D$ is a $\kappa$-closed dense subset of $\bbQ$, and $\pi : D \to \bbP$ is a $\kappa$-continuous projection.  Then for each $\kappa$-closed dense $E \subseteq \bbP$, $\pi^{-1}[E]$ is a $\kappa$-closed dense subset of $\bbQ$.
\end{lemma}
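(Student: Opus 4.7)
Plan: The $\kappa$-closure part is straightforward; the density part requires an interleaving argument because the projection property only gives $\pi(q') \leq p$ for some $p \in E$, not $\pi(q') \in E$ itself.

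For closure, suppose $\langle q_i : i < \delta \rangle$ is a descending sequence in $\pi^{-1}[E]$ with $\delta < \kappa$. Since $\pi^{-1}[E] \subseteq D$ and $D$ is $\kappa$-closed, the infimum $q_* = \inf_i q_i$ exists and lies in $D$. By $\kappa$-continuity of $\pi$, $\pi(q_*) = \inf_i \pi(q_i)$. Since $\langle \pi(q_i) : i < \delta \rangle$ is a descending sequence in $E$ and $E$ is $\kappa$-closed, this infimum lies in $E$, so $q_* \in \pi^{-1}[E]$.

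For density, fix $q \in \bbQ$. Since $D$ is dense, replace $q$ by some element of $D$ below it; call it $q_0$. I will build a descending sequence $\langle q_i : i < \omega \rangle \subseteq D$ and an auxiliary descending sequence $\langle p_i : i < \omega \rangle \subseteq E$ by alternation. Given $q_i \in D$, use density of $E$ to pick $p_i \leq \pi(q_i)$ with $p_i \in E$. Then, since $\pi : D \to \bbP$ is a projection, choose $q_{i+1} \leq q_i$ in $D$ with $\pi(q_{i+1}) \leq p_i$. Note $p_{i+1} \leq \pi(q_{i+1}) \leq p_i$, so both sequences are descending. Let $q_\omega = \inf_{i<\omega} q_i$, which lies in $D$ by $\kappa$-closure of $D$. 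By $\kappa$-continuity, $\pi(q_\omega) = \inf_i \pi(q_i)$. The sequences $\langle p_i \rangle$ and $\langle \pi(q_i) \rangle$ are interleaved (with $p_i \leq \pi(q_i)$ and $\pi(q_{i+1}) \leq p_i$), so they have a common infimum; hence $\pi(q_\omega) = \inf_i p_i$, which lies in $E$ by $\kappa$-closure of $E$. Thus $q_\omega \leq q$ and $q_\omega \in \pi^{-1}[E]$.

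The main obstacle, as noted, is that the weak form of projection used in this paper only guarantees strengthenings whose $\pi$-image lies \emph{below} a prescribed element of $E$, not inside $E$. The resolution is the countable interleaving above, which converts ``eventually in $E$'' into ``infimum in $E$'' by combining $\kappa$-continuity of $\pi$ with $\kappa$-closure of both $D$ and $E$. Note that the density step only uses sequences of length $\omega$; longer sequences appear only in verifying closure.
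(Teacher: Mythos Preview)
Your proof is correct and essentially identical to the paper's own argument: the closure part is the same direct application of $\kappa$-continuity and closure of $E$, and the density part is the same $\omega$-length interleaving of $\langle q_i\rangle$ and $\langle p_i\rangle$ with $\pi(q_i) \geq p_i \geq \pi(q_{i+1})$, concluding via $\kappa$-continuity that $\pi(\inf_i q_i) = \inf_i p_i \in E$.
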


\begin{proof}
    To show closure, suppose $\delta<\kappa$ and $\la p_\alpha : \alpha<\delta \ra$ is a descending sequence contained in $D$ such that $\pi(p_\alpha) \in E$ for all $\alpha<\delta$.  By the closure of $D$, $p_\delta = \inf_\alpha p_\alpha \in D$, and by the continuity and order-preservation of $\pi$ and the closure of $E$, $\pi(p_\delta) = \inf_\alpha \pi(p_\alpha) \in E$.  
    
    To show density, let $q_0 \in D$.  Let $p_0 \leq \pi(q_0)$ be in $E$, and let $q_1 \leq q_0$ be in $D$ and such that $\pi(q_1) \leq p_0$.  Continue in this manner, building descending sequences $p_0 \geq p_1 \geq p_2 \geq \dots$ and $q_0 \geq q_1 \geq q_2 \geq \dots$ such that $p_i \in E$, $q_i \in D$, and $\pi(q_i) \geq p_i \geq \pi(q_{i+1})$.  Then $\pi(\inf_i q_i) = \inf_i \pi(q_i) = \inf_i p_i \in E$.
\end{proof}

\begin{lemma}
\label{composition}
    Suppose $\bbP_0,\bbP_1,\bbP_2$ are completely $\kappa$-closed posets, and for $i<2$, $D_i \subseteq \bbP_i$ is a $\kappa$-closed dense subset, and $\pi_i : D_i \to \bbP_{i+1}$ is a $\kappa$-continuous projection.  Then there is a $\kappa$-closed dense $E \subseteq \bbP_0$ on which $\pi_1\circ\pi_0$ is defined and is a $\kappa$-continuous projection.
\end{lemma}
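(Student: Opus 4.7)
The plan is to take $E = \pi_0^{-1}[D_1] = \{ p \in D_0 : \pi_0(p) \in D_1 \}$, which is exactly the set of points at which both $\pi_0$ and $\pi_1 \circ \pi_0$ are defined. That $E$ is $\kappa$-closed and dense in $\bbP_0$ follows directly from Lemma \ref{inv_denseclosed} applied to $\pi_0 : D_0 \to \bbP_1$ with the $\kappa$-closed dense set $D_1 \subseteq \bbP_1$. Composition of order-preserving maps gives that $\pi_1 \circ \pi_0$ is order-preserving on $E$.

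For $\kappa$-continuity, I would take a descending sequence $\la p_\alpha : \alpha < \delta \ra \subseteq E$ with $\delta < \kappa$. Its infimum $p_\delta$ lies in $E$ by $\kappa$-closure. Applying $\kappa$-continuity of $\pi_0$ gives $\pi_0(p_\delta) = \inf_\alpha \pi_0(p_\alpha)$; since each $\pi_0(p_\alpha)$ lies in $D_1$ and $D_1$ is $\kappa$-closed, this infimum is genuinely taken inside $D_1$, so $\kappa$-continuity of $\pi_1$ then yields $(\pi_1 \circ \pi_0)(p_\delta) = \inf_\alpha (\pi_1 \circ \pi_0)(p_\alpha)$.

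The step I expect to be the main obstacle is verifying the projection property. Given $p \in E$ and $r \leq (\pi_1 \circ \pi_0)(p)$ in $\bbP_2$, I would first apply the projection property of $\pi_1$ at the point $\pi_0(p) \in D_1$ to obtain some $q \in D_1$ with $q \leq \pi_0(p)$ and $\pi_1(q) \leq r$. Then I would apply the projection property of $\pi_0$ at $p \in D_0$ and $q \leq \pi_0(p)$ to get $p' \in D_0$ with $p' \leq p$ and $\pi_0(p') \leq q$. The snag is that nothing forces $p' \in E$: we only know $\pi_0(p') \leq q \in D_1$, not that $\pi_0(p')$ itself lies in $D_1$. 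To patch this, I would invoke density of $E$ in $\bbP_0$ (already established) to select $p'' \in E$ with $p'' \leq p'$. Then $\pi_0(p'') \leq \pi_0(p') \leq q$ by order-preservation, so $(\pi_1 \circ \pi_0)(p'') \leq \pi_1(q) \leq r$, as required. Density of $\ran(\pi_1\circ\pi_0)$ in $\bbP_2$ follows from the same three-step pattern: chase any $r \in \bbP_2$ up through $\pi_1[D_1]$ dense in $\bbP_2$, then $\pi_0[D_0]$ dense in $\bbP_1$, then $E$ dense in $\bbP_0$.
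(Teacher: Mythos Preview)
Your proof is correct and follows the same approach as the paper: set $E = \pi_0^{-1}[D_1]$, invoke Lemma~\ref{inv_denseclosed} for density and $\kappa$-closure, and chain the two projection properties. In fact you are slightly more careful than the paper on one point: after obtaining $p'$ with $\pi_0(p') \leq q$, you observe that $p'$ need not lie in $E$ and pass to $p'' \in E$ below $p'$ by density, whereas the paper writes $\pi_1(\pi_0(p_0'))$ directly without explicitly arranging $\pi_0(p_0') \in D_1$. Your extra step is the honest way to close that gap; otherwise the arguments are identical.
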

\begin{proof}
    Let $E = \pi_0^{-1}[D_1]$.   Then $E$ is dense and $\kappa$-closed by Lemma~\ref{inv_denseclosed}, and $\pi_1\circ\pi_0$ is defined on $E$.  It is clear that $\pi_1\circ\pi_0$ is order-preserving and $\kappa$-continuous.  To show that it is a projection, let $p_0 \in E$ and let $p_2 \leq \pi_1(\pi_0(p_0))$.   Let $p_1 \leq \pi_0(p_0)$ be such that $\pi_1(p_1) \leq p_2$.  Let $p_0' \leq p_0$ be such that $\pi_0(p_0') \leq p_1$.  Then $\pi_1(\pi_0(p_0')) \leq \pi_1(p_1) \leq p_2$.
\end{proof}

The following theorem is true for every regular cardinal $\kappa$. It is attributed to McAloon in \cite[Section 4, Theorem 1]{Grigorieff} for the case $\kappa = \omega$. The uncountable case can be proved by combining McAloon's proof and the ideas in the proof of \cite[Lemma 3]{Magidor82}. 

A proof, under an additional separativity assumption on $\bbP$, also appears in \cite{CummingsHandbook}. We give the proof, adjusted to the non-separative case, for completeness. A similar argument is used in the proof of Lemma \ref{3step}. 

\begin{theorem}
\label{mcaloon}
If $\bbP$ is a $\kappa$-closed poset of size $\lambda>\kappa$ that collapses $\lambda$ to $\kappa$, then there is a dense embedding from $\{ q \in \col(\kappa,\lambda) : \dom q$ is a successor ordinal$\}$ to $\bbP$.  If $\bbP$ is completely $\kappa$-closed, then there is a $\kappa$-continuous dense embedding from $\{ q \in \col(\kappa,\lambda) : \dom q$ is an ordinal$\}$ to $\bbP$.
\end{theorem}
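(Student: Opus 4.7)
The plan is to build, by recursion on $\alpha<\kappa$, a dense tree of conditions in $\bbP$ indexed by the successor-domain conditions $T:=\{q\in\col(\kappa,\lambda):\dom q=\beta+1 \text{ for some } \beta<\kappa\}$, labelled so that the images of level $\alpha+1$ form a maximal antichain refining those of earlier levels. The key combinatorial input is that below every $p\in\bbP$ there is an antichain of size $\lambda$: taking a name $\dot f$ for a surjection $\check\kappa\to\check\lambda$, for each $\gamma<\lambda$ choose $p_\gamma\leq p$ and $\beta_\gamma<\kappa$ with $p_\gamma\Vdash\dot f(\beta_\gamma)=\gamma$; since $\kappa<\lambda$, by pigeonhole some fixed $\beta$ appears as $\beta_\gamma$ for $\lambda$-many $\gamma$'s, and the corresponding $p_\gamma$'s are pairwise incompatible as they force distinct values of $\dot f(\beta)$.

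Fix an enumeration $\bbP=\{r_\xi:\xi<\lambda\}$ and recursively define $\iota(q)$ for $q\in T$, maintaining: (i) $\{\iota(q):\dom q=\alpha+1\}$ is a maximal antichain in $\bbP$ for every $\alpha<\kappa$; and (ii) $q_1\supseteq q_0$ implies $\iota(q_1)\leq\iota(q_0)$. At stage $0$, apply the key lemma to split $1_\bbP$ into a maximal antichain of size $\lambda$ and label its members $\iota(\{(0,\gamma)\})$ for $\gamma<\lambda$. At a successor step $\alpha+1\to\alpha+2$, refine each $\iota(q)$ with $\dom q=\alpha+1$ into a maximal antichain of size $\lambda$ below it, labelled by the $\lambda$ extensions $q\cup\{(\alpha+1,\gamma)\}$. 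At a limit $\alpha<\kappa$, for each chain $\la q_\beta:\beta<\alpha\ra\subseteq T$ with $\dom q_\beta=\beta+1$ and $q_\beta\supseteq q_{\beta'}$ whenever $\beta\geq\beta'$, use $\kappa$-closure to pick a lower bound $p^*$ of the descending sequence $\la\iota(q_\beta):\beta<\alpha\ra$, then apply the key lemma to split $p^*$ into a maximal antichain of size $\lambda$ indexed by $\{(\bigcup_\beta q_\beta)\cup\{(\alpha,\gamma)\}:\gamma<\lambda\}$.

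Density is secured via bookkeeping: at each refinement we introduce $\lambda$-many new labels, and we reserve some of them to handle pending $r_\xi$'s by assigning them to common extensions of the current $\iota(q)$ and any compatible $r_\xi$, which exist by $\kappa$-closure. Since each refinement yields $\lambda$-many fresh slots and there are only $\lambda$-many $r_\xi$'s to process across $\kappa$-many stages, a straightforward bookkeeping dominates every $r_\xi$ at some stage, so the image of $\iota$ is dense. By construction, $\iota$ is order-preserving, sends incompatible pairs to incompatible pairs, and has dense range, so it is the desired dense embedding.

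For the completely $\kappa$-closed version, extend $\iota$ to all $q$ with $\dom q$ an ordinal by $\iota(q):=\inf_{\beta<\dom q}\iota(q\rest(\beta+1))$ whenever $\dom q$ is a limit; the infimum exists by complete $\kappa$-closure, and $\kappa$-continuity on chains is immediate from this definition. The main obstacle is organizing the bookkeeping in the first part: each refinement must leave a \emph{full} maximal antichain of size $\lambda$ below $\iota(q)$ (to sustain all subsequent branching) while also eventually dominating every $r_\xi$. This is resolved by the ample slot-count just noted, and is the only delicate point of the construction.
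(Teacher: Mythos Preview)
Your construction of the tree is fine, but the density argument has a real gap. The phrase ``a straightforward bookkeeping dominates every $r_\xi$'' hides a genuine difficulty: when you refine a node $\iota(q)$, you propose to reserve slots for common extensions of $\iota(q)$ with each compatible $r_\xi$, but these common extensions need not be pairwise incompatible, so they cannot all sit in a single antichain below $\iota(q)$. If instead you try to handle only a few $r_\xi$'s per node per level, you face the problem that there are $\lambda>\kappa$ many $r_\xi$'s to process in only $\kappa$ many stages, and there is no evident way to guarantee that a given $r_\xi$ is ever assigned to a node with which it is compatible. The counting ``$\lambda$ fresh slots per refinement, $\lambda$ targets total'' does not by itself organize the compatibility constraints.

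The paper's proof avoids bookkeeping entirely by a different device: it fixes a name $\dot f$ for a surjection from $\kappa$ onto the \emph{generic filter} $\dot G$ (not onto $\lambda$), and builds the tree so that each node $t$ at level $\beta+1$ decides $\dot f(\beta)=\check q$ for some $q$ and satisfies $t\leq q$. Density then falls out: given $p\in\bbP$, some $q\leq p$ forces $\dot f(\check\alpha)=\check p$ for some $\alpha$; any node $t\in T_{\alpha+1}$ compatible with $q$ must decide $\dot f(\alpha)$ the same way, so $t\leq p$. This is the idea your argument is missing. Your key lemma (the pigeonhole argument for size-$\lambda$ antichains) is correct and is implicitly used in the paper as well, but it only supplies the branching, not the density.
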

\begin{proof}
    Let $\dot f$ be a $\bbP$-name for a surjection from $\kappa$ to the generic filter $\dot G$.  We build a tree $T \subseteq \bbP$ of height $\kappa$ with the following properties:
    \begin{enumerate}
        \item $1_\bbP$ is the root of $T$.
        \item For each $t \in T$, there is a set $S(t) \subseteq T$ of size $\lambda$ such that for each $s \in S(t)$ $s < t$, and there is no $x \in T$ such that $s<x<t$.
        \item  For each descending chain $\la t_i : i < \delta \ra$ in $T$, where $\delta<\kappa$, if $\inf_i t_i$ exists in $\bbP$, then $\inf_i t_i \in T$.  Otherwise, there is a set of $\lambda$-many pairwise incompatible lower bounds in $T$ to $\la t_i: i <\delta \ra$, all of which are maximal in $T$ among such lower bounds.
        \item $T$ is dense in $\bbP$.
    \end{enumerate}
    This suffices to build the desired dense embedding.  We build $T$ inductively by levels, $\la T_\alpha : \alpha<\kappa \ra$, where $T_\alpha$ is the set of all $t \in T$ such that the chain in $T$ above $t$ has length $\alpha$.

    Suppose we have $\la T_i: i<\alpha\ra$, and inductively assume each $T_i$ is a maximal antichain in $\bbP$.  If $\alpha = \beta +1$, then for each $t \in T_\beta$, there are densely many $p \leq t$ such that for some $q \in \bbP$, $p \Vdash \dot f(\check\beta) = \check q$, and $p \leq q$.  This is because, for any $p \leq t$, there is some $p' \leq p$ and some $q$ such that $p' \Vdash \dot f(\check\beta) = \check q$, and since $p'$ forces that both itself and $q$ are in the generic filter, we can take $p'' \leq p',q$.  Pick a maximal antichain $A_t$ of such $p$ of size $\lambda$, which is possible since $\bbP$ is nowhere $\lambda$-c.c.  Let $T_\alpha = \bigcup \{ A_t : t \in T_\beta \}$.
    
    If $\alpha$ is a limit, then for all chains $\vec c = \la t_i : i<\alpha\ra$ where $t_i \in T_i$, if $\inf_i t_i$ exists, let $A_{\vec c} = \{\inf_i t_i\}$, and otherwise choose a maximal antichain $A_{\vec c}$ of size $\lambda$ of lower bounds to $\la t_i : i < \alpha \ra$.  Let $T_\alpha = \bigcup \{ A_{\vec c} : \vec c$ is a chain through $\la T_i : i < \alpha \ra \}$.  To verify that $T_\alpha$ is a maximal antichain, let $p \in \bbP$.  Recursively choose descending sequences $\la p_i : i <\alpha \ra$ and $\la t_i : i < \alpha\ra$ such that $p_0 = p$, and for all $i<\alpha$, $t_i \in T_i$ and $p_{i+1} \leq p_i,t_i$, using the assumption that each $T_i$ is a maximal antichain.  Then there is a lower bound $p_\alpha$ to the sequences $\la p_i : i<\alpha \ra$ and $\la t_i : i<\alpha\ra$, and $p_\alpha$ is compatible with some $t \in T_\alpha$.

    To show that $T$ is dense in $\bbP$, let $p \in \bbP$.  Let $q \leq p$ be such that for some $\alpha<\kappa$, $q \Vdash \dot f(\check\alpha) = \check p$.  Let $t \in T_{\alpha+1}$ be such that $q$ is compatible with $t$.  Then for some $r\in \bbP$, $t \Vdash \dot f(\check\alpha) = \check r$ and $t \leq r$.  Thus some condition forces $\dot f(\check\alpha) = \check p = \check r$, and therefore $p = r \geq t \in T$.
\end{proof}

\begin{lemma}\label{lem:embedding-from-projection}
Suppose $\kappa$ is a regular uncountable cardinal,
$\bbP$ and $\bbQ$ are completely $\kappa$-closed posets, $|\bbQ| =\lambda > |\bbP|$, $\bbQ$ collapses $\lambda$ to $\kappa$, and $\pi : \bbQ \to \bbP$ is a $\kappa$-continuous projection.  

Then, there is a $\kappa$-closed dense set $D \subseteq \bbP \times \col(\kappa,\lambda)$ and a $\kappa$-continous dense embedding $e : D \to \bbQ$ with the property that $\pi(e(p,c)) = p$ for all $\la p,c \ra \in D$.
\end{lemma}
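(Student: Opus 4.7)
The plan is to present $\bbQ$ as a two-step iteration $\bbP * \dot\bbR$, apply Theorem~\ref{mcaloon} to the quotient $\dot\bbR$ in the generic extension by $\bbP$, and then extract the desired ground-model map by carefully decoding the resulting $\bbP$-name.

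First, let $\dot\bbR$ be the canonical $\bbP$-name for the quotient $\{q \in \check\bbQ : \pi(q) \in \dot G\}$. Combining the $\kappa$-continuity of $\pi$ with the fact that the generic filter $G \subseteq \bbP$ contains the infima of its descending sequences of length ${<}\kappa$ (by the complete $\kappa$-closure of $\bbP$), one sees that $\dot\bbR$ is forced to be completely $\kappa$-closed with infima computed in $\bbQ$. Since $|\bbP| < \lambda$ preserves $\lambda$ as a cardinal while $\bbQ$ collapses $\lambda$ to $\kappa$, the quotient $\dot\bbR$ is forced to collapse $\lambda$ to $\kappa$ and therefore has size exactly $\lambda$. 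Applying Theorem~\ref{mcaloon} inside $V[G]$ yields a $\kappa$-continuous dense embedding $\hat e \colon D^* \to \dot\bbR^G$, where $D^* = \{c \in \col(\kappa,\lambda) : \dom c \text{ is an ordinal}\}$; let $\dot e$ be a $\bbP$-name for $\hat e$.

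In $V$, define
\[D = \{(p,c) \in \bbP \times D^* : \exists q \in \bbQ,\ \pi(q) = p \text{ and } p \Vdash \dot e(\check c) = \check q\},\]
and for $(p,c) \in D$ let $e(p,c)$ be this unique $q$. Then $\pi(e(p,c)) = p$ holds by construction. The $\kappa$-closure of $D$, as well as the $\kappa$-continuity of $e$, follows from the $\kappa$-continuity of $\pi$ and $\hat e$: given a descending sequence in $D$ with $p_\delta = \inf_\alpha p_\alpha$, $c_\delta = \inf_\alpha c_\alpha$ (the union of the partial functions), and $q_\delta = \inf_\alpha e(p_\alpha,c_\alpha)$, one verifies $\pi(q_\delta) = p_\delta$ and $p_\delta \Vdash \dot e(\check c_\delta) = \check q_\delta$. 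Order preservation, incompatibility preservation, and density of the range of $e$ are then inherited from the corresponding properties of $\hat e$ via standard name arguments.

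The main technical obstacle is the density of $D$, since the weak projection property provides only $p \leq \pi(q)$ rather than $p = \pi(q)$ when $p$ decides $\dot e(\check c) = \check q$. I resolve this through an $\omega$-step zigzag refinement. Starting from $(p_0, c_0)$ with $c_0 \in D^*$ and $p_0$ deciding $\dot e(\check{c_0}) = \check{q_0}$, build descending sequences $\la p_n, c_n, q_n \ra$ satisfying $p_n \Vdash \dot e(\check{c_n}) = \check{q_n}$, $p_n \leq \pi(q_n)$, and crucially $\pi(q_{n+1}) \leq p_n$. At stage $n+1$, use the projection property to find $q' \leq q_n$ with $\pi(q') \leq p_n$, use the density of $\hat e$ in $\dot\bbR$ to locate $c_{n+1} \leq c_n$ in $D^*$ with $\hat e(c_{n+1}) \leq q'$ (working inside some $V[G]$ with $\pi(q') \in G$), and finally strengthen to $p_{n+1} \leq \pi(q')$ deciding $\dot e(\check{c_{n+1}}) = \check{q_{n+1}}$ for a suitable $q_{n+1} \leq q'$. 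The sandwich $\pi(q_{n+1}) \leq p_n \leq \pi(q_n)$ forces $\la p_n \ra$ and $\la \pi(q_n) \ra$ to share the same infimum, so by $\kappa$-continuity of $\pi$, $\pi(q_\omega) = p_\omega$, where $p_\omega = \inf p_n$ and $q_\omega = \inf q_n$. Hence $(p_\omega, c_\omega) \in D$ with $e(p_\omega, c_\omega) = q_\omega$, and by construction $(p_\omega, c_\omega) \leq (p_0, c_0)$.
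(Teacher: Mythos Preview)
Your proof is correct and follows essentially the same approach as the paper: apply Theorem~\ref{mcaloon} to the quotient $\pi^{-1}[G]$ in $V[G]$, define $e(p,c)=q$ exactly when $\pi(q)=p$ and $p \Vdash \dot\varphi(\check c)=\check q$, and establish density of the domain by an $\omega$-step zigzag interleaving the projection property with deciding values of the name, so that the sandwiching $\pi(q_{n+1})\leq p_n \leq \pi(q_n)$ forces equality at the infimum. The paper carries out the analogous zigzag for density of the range explicitly rather than appealing to ``standard name arguments,'' but the construction is the same and your remark is justified once the domain case is in hand.
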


\begin{proof}
Whenever $G \subseteq \bbP$ is generic, $\pi^{-1}[G]$ has a $\kappa$-closed dense subset isomorphic to the tree of conditions in $\col(\kappa,\lambda)$ whose domain is an ordinal.  This is because $\pi$ is $\kappa$-continuous, so if $\delta<\kappa$ and $\la q_i : i <\delta \ra \subseteq \bbQ$ is a descending sequence such that $\pi(q_i) \in G$, then $\pi(\inf_{i<\delta} q_i) \in G$ as well, implying that $\bbQ_n/G$ is completely $\kappa$-closed. By Theorem~\ref{mcaloon}, in $V[G]$ there is a $\kappa$-continuous dense embedding $\varphi : \col(\kappa,\lambda) \to \bbQ/G$.  Let $\dot\varphi$ be a name for such an embedding.

For $\la p,c\ra \in \bbP \times \col(\kappa,\lambda)$, if there is $q \in \bbQ$ such that $p = \pi(q)$ and $p \Vdash \dot\varphi(\check c) = \check q$, then let $e(p,c) = q$.  Since $\dot\varphi$ is forced to be a function, such a $q$ is unique if it exists.  Since $\dot\varphi$ is forced to be order-preserving, $e$ is also order-preserving.  If $e(p_1,c_1) = q_1 \leq e(p_0,c_0) = q_0$, then $\pi(q_1) = p_1 \leq \pi(q_0) = p_0$, and since $p_1 \Vdash \dot\varphi(\check c_1) = \check q_1 \leq \check q_0 = \dot\varphi(\check c_0)$, we must have $c_1 \leq c_0$.  Thus $e$ is an order-isomorphism with its range.  It is easy to see that the domain of definition of $e$ is $\kappa$-closed, and $e$ is $\kappa$-continuous.

Let us show that the domain of definition of $e$ is dense.  Let $\la p_0,c_0 \ra \in \bbP \times \col(\kappa,\lambda)$ be arbitrary.  There is some $p_0' \leq p_0$ and some $q_0 \in \bbQ$ such that $p_0' \Vdash \check q_0 \in \check\bbQ/\dot G$ and $\dot\varphi(\check c_0) = \check q_0$.  
Since $p_0'$ and $\pi(q_0)$ are compatible, there is $q_0' \leq q_0$ such that $\pi(q_0') \leq p_0'$.
Then we can find $p_1 \leq p_0'$, $c_1 \leq c_0$, and $q_1 \leq q_0'$ such that $p_1 \Vdash \dot\varphi(\check c_1) = \check q_1$, and $p_1\leq\pi(q_1)$.
Then we can find $q_1' \leq q_1$ such that $\pi(q_1') \leq p_1$, and subsequently find $p_2 \leq \pi(q_1')$, $c_2 \leq c_1$, and $q_2 \leq q_1'$ such that $p_2 \Vdash \dot\varphi(\check c_2) = \check q_2$, and $p_2\leq\pi(q_2)$.
Continue in this way, constructing sequences $p_0 \geq p_1 \geq \dots$, $c_0 \geq c_1 \geq \dots$, and $q_0 \geq q_1 \geq\dots$, so that for $i\geq 1$, $p_i \Vdash \dot\varphi(\check c_i) = \check q_i$ and $p_{i+1} \leq \pi(q_{i+1}) \leq p_i$.  Define $p_\omega = \inf_{i} p_i$, $c_\omega = \inf_{i} c_i$, and $q_\omega = \inf_{i} q_i$.  By the continuity of $\pi$ and $\dot\varphi$, $p_\omega \Vdash \dot\varphi(\check c_\omega) = \check q_\omega$, and $\pi(q_\omega) = p_\omega$.

Finally, let us show that the range of $e$ is dense.  Let $q \in \bbQ$ be arbitrary.  There is $\la p_0,c_0\ra\in\bbP \times \col(\kappa,\lambda)$ and $q_0 \leq q$ such that $p_0 \Vdash \dot\varphi(\check c_0) = \check q_0$.
Then find $q_0' \leq q_0$ such that $\pi(q_0') \leq p_0$, and subsequently find $p_1 \leq \pi(q_0')$, $c_1 \leq c_0$, and $q_1 \leq q_0'$ such that $p_1 \Vdash \dot\varphi(\check c_1) = \check q_1$, and $p_1\leq\pi(q_1)$.
Continue in this way, constructing sequences $p_0 \geq p_1 \geq \dots$, $c_0 \geq c_1 \geq \dots$, and $q_0 \geq q_1 \geq\dots$, such that $p_i \Vdash \dot\varphi_n(\check c_i) = \check q_i$, and $p_{i+1} \leq \pi(q_{i+1}) \leq p_i$.  Define $p_\omega = \inf_{i} p_i$, $c_\omega = \inf_{i} c_i$, and $q_\omega = \inf_{i} q_i \leq q$.  By the continuity of $\pi$ and $\dot\varphi$, $p_\omega \Vdash \dot\varphi(\check c_\omega) = \check q_\omega$, and $\pi(q_\omega) = p_\omega$.
\end{proof}   

\subsection{Inverse limits}
Suppose $\vec\bbP = \la \bbP_\alpha : \alpha < \delta \ra$ is a sequence of posets and $\vec\pi = \la \pi_{\beta\alpha} : \alpha<\beta<\delta \ra$ is a sequence such that $\pi_{\beta\alpha} : \bbP_\beta \to \bbP_\alpha$ is a projection.  Suppose also that whenever $\alpha<\beta<\gamma<\delta$, then $\pi_{\gamma\alpha} = \pi_{\beta\alpha}\circ\pi_{\gamma\beta}$.   Such a pair of sequences is called an \emph{inverse system}, and $\delta$ is referred to as the \emph{length} of the system.  We define the \emph{inverse limit} of the system, $\varprojlim(\vec\bbP,\vec\pi)$, to be the set of all sequences $\la p_i : i <\delta \ra$ such that for $\alpha<\beta<\delta$, $p_\alpha = \pi_{\beta\alpha}(p_\beta)$.  The ordering on $\varprojlim(\vec\bbP,\vec\pi)$ is pointwise.

We also allow inverse systems on which the maps $\pi_{\beta\alpha}$ are not necessarily defined at every point of $\bbP_\beta$, and the maps are not assumed to commute everywhere.
In this case, we take $\varprojlim(\vec\bbP,\vec\pi)$, to be the set of all sequences $\la p_i : i <\delta \ra$ such that for all $\alpha<\beta<\delta$, $p_\beta \in \dom \pi_{\beta\alpha}$ and $p_\alpha = \pi_{\beta\alpha}(p_\beta)$.  

Given an inverse system like this, recursively define sets $E_\alpha^{\la\vec\bbP,\vec\pi\ra} \subseteq \bbP_\alpha$ as follows.  Let $E_0^{\la\vec\bbP,\vec\pi\ra}  = \bbP_0$, and for $0<\beta$, let $E_\beta^{\la\vec\bbP,\vec\pi\ra}  = \bigcap_{\alpha<\beta} \pi^{-1}_{\beta\alpha}[E_\alpha^{\la\vec\bbP,\vec\pi\ra} ]$.  
It is easy to check by induction that for $\alpha<\delta$, $p\in E_\alpha^{\la\vec\bbP,\vec\pi\ra}$ if and only if for each finite sequence $\alpha_0<\alpha_1<\dots<\alpha_n=\alpha$, $\pi_{\alpha_1\alpha_0}\circ\dots\circ\pi_{\alpha_n\alpha_{n-1}}(p)$ is defined.

Sometimes, for notational convenience, we will refer to a map $\pi_{\alpha\alpha}$ in an inverse system, by which we mean the identity map.

\begin{lemma}
    Suppose $\la\vec\bbP,\vec\pi\ra$ is an inverse system of length $\delta$, each $\bbP_\alpha$ is separative, and for all $\alpha<\delta$, the map $\vec p \mapsto \vec p(\alpha)$ is a projection.  Then $\varprojlim(\vec\bbP,\vec\pi)$ is separative.
\end{lemma}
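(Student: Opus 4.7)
The plan is to reduce separativity of the inverse limit to separativity at a single coordinate, using the projection hypothesis to lift a strengthening at that coordinate back to a strengthening in the inverse limit.

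Suppose $\vec p, \vec q \in \varprojlim(\vec\bbP,\vec\pi)$ and $\vec p \not\leq \vec q$. Since the order on the inverse limit is pointwise, there is some coordinate $\alpha<\delta$ with $p_\alpha \not\leq q_\alpha$ in $\bbP_\alpha$. By separativity of $\bbP_\alpha$, I can pick $r_\alpha \leq p_\alpha$ with $r_\alpha \perp q_\alpha$ in $\bbP_\alpha$. This is the only place separativity of the factors is used, and the only place the assumption $\vec p \not\leq \vec q$ is unpacked.

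Next I invoke the hypothesis that the coordinate map $\sigma_\alpha \colon \vec x \mapsto x(\alpha)$ is a projection from $\varprojlim(\vec\bbP,\vec\pi)$ to $\bbP_\alpha$. Since $\sigma_\alpha(\vec p) = p_\alpha$ and $r_\alpha \leq p_\alpha$, the projection property provides some $\vec r \leq \vec p$ in the inverse limit with $\sigma_\alpha(\vec r) = r(\alpha) \leq r_\alpha$. This is the key step, and the only nontrivial input beyond the pointwise definition of the ordering; without the projection hypothesis one would have no general method of producing such a $\vec r$ inside $\varprojlim(\vec\bbP,\vec\pi)$.

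Finally I check $\vec r \perp \vec q$. If some $\vec s \in \varprojlim(\vec\bbP,\vec\pi)$ were a common lower bound of $\vec r$ and $\vec q$, then coordinatewise $s(\alpha) \leq r(\alpha) \leq r_\alpha$ and $s(\alpha) \leq q_\alpha$, contradicting $r_\alpha \perp q_\alpha$ in $\bbP_\alpha$. Hence $\vec r$ witnesses separativity for the pair $\vec p, \vec q$, as required. I do not anticipate any real obstacle: the argument is entirely formal once the coordinate map is known to be a projection, and incompatibility transfers from a single coordinate back up to the pointwise order automatically.
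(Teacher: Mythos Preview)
Your proof is correct and follows essentially the same approach as the paper's own proof: find a coordinate where the order fails, use separativity there to get an incompatible extension, lift it back via the projection hypothesis, and observe that incompatibility at a coordinate implies incompatibility in the inverse limit. The only difference is that you spell out the final incompatibility check, which the paper leaves as ``Clearly $\vec r \perp \vec q$.''
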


\begin{proof}
    Suppose $\vec p,\vec q \in \varprojlim(\vec\bbP,\vec\pi)$ and $\vec p \nleq \vec q$.  Then for some $\alpha<\delta$, $\vec p(\alpha) \nleq \vec q(\alpha)$.  There is $x \leq \vec p(\alpha)$ such that $x \perp \vec q(\alpha)$, and by hypothesis, there is $\vec r \leq \vec p$ such that $\vec r(\alpha) \leq x$.  Clearly $\vec r \perp \vec q$.
\end{proof}

\begin{definition}
    For a regular uncountable cardinal $\kappa$, a pair $\la\vec\bbP,\vec\pi\ra$ is called a \emph{$\kappa$-good inverse system} when for some ordinal $\delta$,
    \begin{enumerate}
        \item $\vec\bbP = \la\bbP_i : i <\delta\ra$ is a sequence of completely $\kappa$-closed posets.
        \item $\vec\pi = \la \pi_{ji} : i<j<\delta \ra$ is a sequence of maps such that for all $i<j<\delta$, $D_{ji} = \dom \pi_{ji}$ is a dense $\kappa$-closed subset of $\bbP_j$, and $\pi_{ji} : D_{ji} \to \bbP_i$ is a $\kappa$-continuous projection. 
        \item  For $i<j<k<\delta$, $\pi_{ki} = \pi_{ji} \circ \pi_{kj}$ at all points for which these are defined.
    \end{enumerate}
\end{definition}

\begin{lemma}
\label{invlim_ctble}
Suppose $\kappa$ is a regular uncountable cardinal and $\la\vec\bbP,\vec\pi\ra$ is a $\kappa$-good inverse system of length $\omega$.  Then:
    \begin{enumerate}
        \item $\varprojlim(\vec\bbP,\vec\pi)$ is completely $\kappa$-closed.
        \item For each $i<\omega$, the map $\vec p \mapsto \vec p(i)$ is a $\kappa$-continuous projection from $\varprojlim(\vec\bbP,\vec\pi)$ to $\bbP_i$.
    \end{enumerate}
\end{lemma}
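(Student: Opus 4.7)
The plan is to treat the two claims separately: the first follows quickly from $\kappa$-closure of the $D_{ji}$ and $\kappa$-continuity of the $\pi_{ji}$, while the second requires an $\omega$-fusion to construct coherent inverse-limit elements below prescribed targets. For Claim (1), given a descending sequence $\la \vec p^\alpha : \alpha < \delta \ra$ in $\varprojlim(\vec\bbP,\vec\pi)$ with $\delta < \kappa$, I would define $\vec q$ coordinate-wise by $\vec q(j) := \inf_\alpha \vec p^\alpha(j)$, available by complete $\kappa$-closure of each $\bbP_j$. Since each $\vec p^\alpha(j)\in D_{ji}$, $\kappa$-closure of $D_{ji}$ gives $\vec q(j)\in D_{ji}$, and $\kappa$-continuity of $\pi_{ji}$ yields $\pi_{ji}(\vec q(j)) = \inf_\alpha \pi_{ji}(\vec p^\alpha(j)) = \inf_\alpha \vec p^\alpha(i) = \vec q(i)$; hence $\vec q\in\varprojlim$, and it is clearly the greatest lower bound. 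The identification of the infimum's $i$-th coordinate with the coordinate-wise infimum immediately makes the projection $\vec p \mapsto \vec p(i)$ $\kappa$-continuous, and order-preservation is tautological.

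For the substantive half of Claim (2), the projection property, given $\vec p\in\varprojlim$ and $x \leq \vec p(i)$ I plan to build $\vec p^* \leq \vec p$ in $\varprojlim$ with $\vec p^*(i)\leq x$ by an $\omega$-stage fusion that introduces one new coordinate per stage while preserving exact commutation across all previously introduced coordinates. At stage $0$, pick $p^{(0)}_i \leq x$ (with $p^{(0)}_i \leq \vec p(i)$) in the finite, hence $\kappa$-closed dense, intersection $\bigcap_{k<i} D_{ik}$, and set $p^{(0)}_j := \pi_{ij}(p^{(0)}_i)$ for $j<i$. Inductively at stage $m+1$, use the projection property of $\pi_{i+m+1, i+m}$ to find $p^{(m+1)}_{i+m+1} \leq \vec p(i+m+1)$ with $\pi_{i+m+1, i+m}(p^{(m+1)}_{i+m+1}) \leq p^{(m)}_{i+m}$, then refine $p^{(m+1)}_{i+m+1}$ further to lie in each $D_{i+m+1, k}$ for $k \leq i+m$ and in each preimage $\pi_{i+m+1, j}^{-1}[D_{jk}]$ for $k < j \leq i+m$; this is a finite intersection of $\kappa$-closed dense subsets of $\bbP_{i+m+1}$, hence $\kappa$-closed and dense by Lemmas~\ref{int_denseclosed} and~\ref{inv_denseclosed}. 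Finally set $p^{(m+1)}_j := \pi_{i+m+1, j}(p^{(m+1)}_{i+m+1})$ for $j \leq i+m$; the composition rule $\pi_{i+m+1,j} = \pi_{i+m,j}\circ\pi_{i+m+1,i+m}$ forces $p^{(m+1)}_j \leq p^{(m)}_j$, and by construction each $p^{(m)}_j$ lies in $D_{jk}$ for every $k < j$.

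To close out the fusion, set $p^*_j := \inf_{m \geq \max(j-i,0)} p^{(m)}_j$ for $j \geq i$ and $p^*_j := \pi_{ij}(p^*_i)$ for $j < i$. Complete $\kappa$-closure provides the infima, $\kappa$-closure of each $D_{jk}$ pushes membership to the limit, and $\kappa$-continuity of $\pi_{jk}$ combined with the stage-wise equalities $\pi_{jk}(p^{(m)}_j) = p^{(m)}_k$ (holding for all $m \geq j-i$) gives $\pi_{jk}(p^*_j) = p^*_k$; the mismatch in index ranges between the two infima is absorbed because the sequence $(p^{(m)}_k)_m$ is descending, so tail infima agree. Thus $\vec p^* \in \varprojlim$, and $\vec p^* \leq \vec p$ because $p^{(m+1)}_{i+m+1} \leq \vec p(i+m+1)$ was enforced at each stage. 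Range density of $\vec p \mapsto \vec p(i)$ is proved by the same fusion applied to an arbitrary $x \in \bbP_i$, with the upper-bound constraint against a pre-existing $\vec p$ simply dropped. I expect the main obstacle to be the bookkeeping at each inductive step: ensuring that $p^{(m+1)}_{i+m+1}$ is refined into enough $\kappa$-closed dense sets so that every projected $p^{(m)}_j$ lands in the $D_{jk}$ required for $\kappa$-continuity to apply cleanly in the limit. Lemma~\ref{inv_denseclosed} is the crucial tool here, letting us pull the $D_{jk}$ back through the $\kappa$-continuous projections $\pi_{i+m+1,j}$ without losing $\kappa$-closure or density.
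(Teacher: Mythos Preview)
Your proposal is correct and follows essentially the same fusion argument as the paper. The paper packages the finitely many dense sets you name at each stage into the single recursively-defined set $E_j = E_j^{\la\vec\bbP,\vec\pi\ra}$, and instead of maintaining an exactly commuting tuple $(p^{(m)}_j)_{j\leq i+m}$ at each stage it only records the ``top'' condition $q_j\in E_j$ with the inequality $\pi_{j+1,j}(q_{j+1})\leq q_j$, postponing coherence to the end by setting $q_i^*=\inf_{j}\pi_{ji}(q_j)$; but this is a cosmetic reorganization of the same construction.
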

\begin{proof}
    To show the first claim, suppose $\delta<\kappa$ and $\la\vec p_\alpha : \alpha < \delta\ra$ is a descending sequence in $\varprojlim(\vec\bbP,\vec\pi)$.  Then for each $i<\omega$, $\la\vec p_\alpha(i): \alpha<\delta\ra$ is a descending sequence with an infimum $p_i^*$.  Let $\vec p_\delta = \la p_i^* : i<\omega \ra$.  The continuity of each $\pi_{ji}$ implies that $\vec p_\delta \in\varprojlim(\vec\bbP,\vec\pi)$.  If $\vec q$ is any other lower bound to the sequence $\la\vec p_\alpha : \alpha < \delta\ra$, then we would have $\vec q(i) \leq p^*_i$ for each $i$, so $\vec p_\delta = \inf_\alpha \vec p_\alpha$.
    
    For the second claim, first note that $\kappa$-continuity follows from the above paragraph, since if $\delta<\kappa$  and $\la \vec p_\alpha : \alpha < \delta \ra$ is descending, then $\inf_\alpha \vec p_\alpha$ is given by taking $\inf_\alpha \vec p_\alpha(i)$ at each coordinate.
    Now let $E_i = E_i^{\la\vec\bbP,\vec\pi\ra}$ for $i<\omega.$
    By Lemmas~\ref{int_denseclosed} and \ref{inv_denseclosed}, each $E_i$ is a $\kappa$-closed dense subset of $\bbP_i$.  
    To show that $\vec p \mapsto \vec p(n)$ is a projection, suppose $\vec p \in \varprojlim(\vec\bbP,\vec\pi)$ and $r \leq \vec p(n)$.  Let $q_n \in E_n$ be below $r$.  Using the fact that $\pi_{n+1,n}$ is a projection, let $q_{n+1} \in E_{n+1}$ be such that $q_{n+1} \leq \vec p(n+1)$ and $\pi_{n+1,n}(q_{n+1}) \leq q_n$.  
     Assume inductively that $n < j$ and we have chosen $q_i \in E_i$ for $n\leq i\leq j$ such that
     $$p(i) \geq q_i \geq \pi_{i+1,i}(q_{i+1}) \geq \pi_{i+2,i}(q_{i+2}) \geq\dots\geq \pi_{ji}(q_j).$$
     Then pick $q_{j+1} \in E_{j+1}$ such that $q_{j+1} \leq \vec p(j+1)$ and $\pi_{j+1,j}(q_{j+1}) \leq q_j$.
    The induction carries forward, and after $\omega$ steps, we have that for all $i<\omega$ $\la \pi_{ji}(q_j) : \max\{n,i\}<j<\omega \ra$ is a descending sequence in $E_i$ with infimum $q_i^*$.  If we define $\vec q = \la q^*_i : i < \omega \ra$, then $\vec q \in \varprojlim(\vec\bbP,\vec\pi)$ by the continuity of each $\pi_{ji}$, $\vec q \leq \vec p$, and $\vec q(n) = q^*_n \leq r$. 
\end{proof}

To make a similar claim about inverse systems of longer lengths, we need to assume that they behave nicely at limit points.

\begin{lemma}
\label{invlim_general}
    Assume the following:
    \begin{enumerate}
        \item  $\kappa$ is a regular uncountable cardinal and $\delta<\kappa$.
        \item $\la\vec\bbP,\vec\pi\ra$ is a $\kappa$-good inverse system of length $\delta$.
        \item 
        For all limit $\gamma<\delta$, the map $\sigma_\gamma : E_\gamma^{\la\vec\bbP,\vec\pi\ra} \to \varprojlim(\vec\bbP{\rest}\gamma,\vec\pi {\rest}\gamma)$ defined by $\sigma_\gamma(p) = \la \pi_{\gamma\alpha}(p) : \alpha<\gamma \ra$ is a projection.
    \end{enumerate}  
    Then $\varprojlim(\vec\bbP,\vec\pi)$ is completely $\kappa$-closed, and for each $i<\delta$, the map $\vec p \mapsto \vec p(i)$ is a $\kappa$-continuous projection from $\varprojlim(\vec\bbP,\vec\pi)$ to $\bbP_i$.
\end{lemma}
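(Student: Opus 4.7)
The plan is to proceed by induction on $\delta$, with the cases $\delta \leq \omega$ handled by Lemma~\ref{invlim_ctble}. For $\delta = \beta+1$, the map $\vec p \mapsto \vec p(\beta)$ is a bijection between $\varprojlim(\vec\bbP,\vec\pi)$ and the dense $\kappa$-closed subset $E_\beta^{\la\vec\bbP,\vec\pi\ra}$ of $\bbP_\beta$, so the limit inherits complete $\kappa$-closure from $\bbP_\beta$; the coordinate maps for $i<\beta$ reduce to $\pi_{\beta i}$ restricted to $E_\beta$, which remain $\kappa$-continuous projections.

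For $\delta$ a limit, complete $\kappa$-closure and $\kappa$-continuity of the coordinate maps are verified exactly as in Lemma~\ref{invlim_ctble}: given a descending sequence $\la \vec p_\alpha : \alpha<\eta\ra$ of length $\eta<\kappa$, the componentwise infimum $\vec p_\eta(j) = \inf_\alpha \vec p_\alpha(j)$ lies in $\varprojlim(\vec\bbP,\vec\pi)$ by the $\kappa$-continuity of each $\pi_{kj}$, and this also witnesses $\kappa$-continuity.

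The substantive work is to show that each coordinate map is a projection. Fix $i<\delta$, $\vec p \in \varprojlim(\vec\bbP,\vec\pi)$, and $r\leq\vec p(i)$; we must produce $\vec q\leq\vec p$ in the limit with $\vec q(i)\leq r$. Choose a continuous cofinal sequence $\la\beta_\xi:\xi<\cf(\delta)\ra$ in $\delta$ with $\beta_0 = i+1$, and inductively construct $q_{\beta_\xi} \in E_{\beta_\xi}^{\la\vec\bbP,\vec\pi\ra}$ with $q_{\beta_\xi}\leq\vec p(\beta_\xi)$, $\pi_{\beta_\xi,\beta_\eta}(q_{\beta_\xi})\leq q_{\beta_\eta}$ for $\eta<\xi$, and $\pi_{\beta_0,i}(q_{\beta_0})\leq r$. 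The successor step uses the projection property of $\pi_{\beta_{\xi+1},\beta_\xi}$ applied to $\vec p(\beta_{\xi+1})$ and $q_{\beta_\xi}\leq\vec p(\beta_\xi)=\pi_{\beta_{\xi+1},\beta_\xi}(\vec p(\beta_{\xi+1}))$, followed by a descent into $E_{\beta_{\xi+1}}$ via the density of the latter.

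At a limit $\xi$, the ordinal $\beta_\xi=\sup_{\eta<\xi}\beta_\eta$ is a limit, so hypothesis~(3) provides a projection $\sigma_{\beta_\xi}: E_{\beta_\xi}\to\varprojlim(\vec\bbP\rest\beta_\xi,\vec\pi\rest\beta_\xi)$. By the inductive hypothesis applied to the shorter system, the latter limit is completely $\kappa$-closed, so we may form $\vec q'\in\varprojlim(\vec\bbP\rest\beta_\xi,\vec\pi\rest\beta_\xi)$ by setting $\vec q'(\alpha)=\inf\{\pi_{\beta_{\eta'},\alpha}(q_{\beta_{\eta'}}):\eta'<\xi,\ \beta_{\eta'}>\alpha\}$; the sequence whose infimum defines $\vec q'(\alpha)$ is descending by the successor inequalities, and $\vec q'$ satisfies the inverse-limit compatibility by the $\kappa$-continuity of the projections. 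Since $\vec q'\leq\vec p\rest\beta_\xi=\sigma_{\beta_\xi}(\vec p(\beta_\xi))$, applying $\sigma_{\beta_\xi}$ as a projection yields $q_{\beta_\xi}\leq\vec p(\beta_\xi)$ in $E_{\beta_\xi}$ with $\pi_{\beta_\xi,\beta_\eta}(q_{\beta_\xi})\leq\vec q'(\beta_\eta)\leq q_{\beta_\eta}$ for every $\eta<\xi$. After the transfinite construction, set $\vec q(\alpha)=\inf\{\pi_{\beta_\xi,\alpha}(q_{\beta_\xi}):\beta_\xi>\alpha\}$ for each $\alpha<\delta$; descending properties and $\kappa$-continuity of the projections show $\vec q\in\varprojlim(\vec\bbP,\vec\pi)$, $\vec q\leq\vec p$, and $\vec q(i)\leq\pi_{\beta_0,i}(q_{\beta_0})\leq r$.

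The main obstacle is that the paper's projections only yield conditions below a target rather than exact preimages, so the strict equalities defining $\varprojlim$ must be reinstated by taking downward infima at every stage. Hypothesis~(3) is the essential new ingredient beyond the length-$\omega$ argument: it is what allows the construction to pass through limit stages where no single top coordinate controls the system below.
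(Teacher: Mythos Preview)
Your proof is correct and follows essentially the same route as the paper's: at limit stages you form a thread in the shorter inverse limit by taking coordinatewise infima of the projected $q$'s, invoke hypothesis~(3) to pull it back to a condition at the top, and at the end define $\vec q$ by infima of the projections. The paper does the same, except it constructs $q_\gamma$ at \emph{every} ordinal $\gamma\in[\alpha,\delta)$ rather than along a cofinal sequence of length $\cf(\delta)$, and it does not organize the argument as an induction on $\delta$. Your induction is harmless but unnecessary: the complete $\kappa$-closure of any inverse limit (of any length) follows directly from componentwise infima and $\kappa$-continuity of the maps, so you never actually need the inductive hypothesis where you invoke it.
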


\begin{proof}
    Complete $\kappa$-closure and $\kappa$-continuity follow by the same argument as in the case $\delta = \omega$.

    Let us show that the map $\vec p \mapsto \vec p(i)$ is a projection.  Suppose $\vec p \in \varprojlim(\vec\bbP,\vec\pi)$, $\alpha<\delta$, and $r \leq \vec p(\alpha)$.    Let $E_i = E_i^{\la\vec\bbP,\vec\pi\ra}$ for $i<\delta$.  Note that by Lemmas~\ref{int_denseclosed} and \ref{inv_denseclosed}, each $E_i$ is a $\kappa$-closed dense subset of $\bbP_i$.   Let $q_\alpha \in E_\alpha$ be below $r$. 

    Assume inductively that $\alpha < \gamma<\delta$ and we have chosen $q_i \in E_i$ for $\alpha\leq i<\gamma$ such that $q_i \leq \vec p(i)$, and for all $\beta<\gamma$, $\la \pi_{i\beta}(q_i) : \max\{\alpha,\beta\}\leq i< \gamma \ra$ is a decreasing sequence in $E_\beta$.  If $\gamma = \gamma'+1$,  pick $q_\gamma \in E_\gamma$ such that $q_\gamma \leq \vec p(\gamma)$ and $\pi_{\gamma\gamma'}(q_\gamma) \leq q_{\gamma'}$.
     
    If $\gamma$ is a limit, first define a $\gamma$-sequence $\vec r_\gamma$ by \[\vec r_\gamma(\beta) =  \inf \{ \pi_{i\beta}(q_i) : \max\{\alpha,\beta\} \leq i < \gamma \ra \} \in E_\beta.\]  Using $\kappa$-continuity, it is easy to check that $\vec r_\gamma \in \varprojlim(\vec\bbP{\rest}\gamma,\vec\pi {\rest}\gamma)$ and $\vec r_\gamma \leq \vec p\rest\gamma$.  Then use our extra assumption to find $q_\gamma \in E_\gamma$ such that $\sigma_\gamma(q_\gamma)\leq \vec r_\gamma$.  This allows the induction to continue.  In the end, define $\vec q \in \prod_i E_i$ by $\vec q(i) = \inf \{ \pi_{ji}(q_j) : \max\{\alpha,i\} \leq j < \delta \ra \}$.  Just as before, we have that $\vec q \in \varprojlim(\vec\bbP,\vec\pi)$, $\vec q \leq \vec p$, and $\vec q(\alpha)\leq r$.
\end{proof}

\begin{lemma}
\label{invlim_dense}
    Under the same hypotheses as in Lemma~\ref{invlim_general}, assume additionally that for each $\alpha<\delta$, we are given a dense $\kappa$-closed $D_\alpha \subseteq \bbP_\alpha$.  Then $\varprojlim(\vec\bbP,\vec\pi) \cap \prod_\alpha D_\alpha$ is a dense $\kappa$-closed subset of $\varprojlim(\vec\bbP,\vec\pi)$.
\end{lemma}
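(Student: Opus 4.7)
The plan is to adapt the inductive construction in the proof of Lemma~\ref{invlim_general}, strengthening each $E_\alpha^{\la\vec\bbP,\vec\pi\ra}$ at the right moment so that we stay inside the $D_\alpha$'s. First, $\kappa$-closure of $\varprojlim(\vec\bbP,\vec\pi)\cap\prod_\alpha D_\alpha$ is automatic: by Lemma~\ref{invlim_general}, infima in $\varprojlim(\vec\bbP,\vec\pi)$ are computed coordinatewise, and each coordinate infimum lies in $D_\alpha$ by the $\kappa$-closure of $D_\alpha$.

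For density, for each $\alpha<\delta$ I set
\[
E'_\alpha \;=\; E_\alpha^{\la\vec\bbP,\vec\pi\ra} \;\cap\; D_\alpha \;\cap\; \bigcap_{\beta<\alpha}\pi_{\alpha\beta}^{-1}[D_\beta].
\]
By Lemma~\ref{inv_denseclosed}, each $\pi_{\alpha\beta}^{-1}[D_\beta]$ is a $\kappa$-closed dense subset of $\bbP_\alpha$, and $E_\alpha^{\la\vec\bbP,\vec\pi\ra}$ is as well, as was already observed in the proof of Lemma~\ref{invlim_general}. Since $|\alpha|+1<\kappa$, Lemma~\ref{int_denseclosed} gives that $E'_\alpha$ is $\kappa$-closed and dense in $\bbP_\alpha$. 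Fix $\vec p\in\varprojlim(\vec\bbP,\vec\pi)$. I will build a sequence $\la q_\alpha : \alpha<\delta\ra$ satisfying the invariant that $q_\alpha\in E'_\alpha$, $q_\alpha\leq\vec p(\alpha)$, and $\pi_{\alpha\beta}(q_\alpha)\leq q_\beta$ for every $\beta<\alpha$.

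At a successor stage $\alpha = \gamma+1$, I use the projection property of $\pi_{\gamma+1,\gamma}$ to obtain $q'\leq\vec p(\gamma+1)$ in $E_{\gamma+1}^{\la\vec\bbP,\vec\pi\ra}$ with $\pi_{\gamma+1,\gamma}(q')\leq q_\gamma$, then shrink to some $q_{\gamma+1}\in E'_{\gamma+1}$ below $q'$ by density. At a limit stage $\gamma$, mirroring the proof of Lemma~\ref{invlim_general}, I define $\vec r_\gamma\in\varprojlim(\vec\bbP\rest\gamma,\vec\pi\rest\gamma)$ by $\vec r_\gamma(\beta)=\inf\{\pi_{i\beta}(q_i):\beta\leq i<\gamma\}$, noting $\vec r_\gamma\leq\vec p\rest\gamma=\sigma_\gamma(\vec p(\gamma))$; by hypothesis~(3) that $\sigma_\gamma$ is a projection, I obtain $q'\leq\vec p(\gamma)$ in $E_\gamma^{\la\vec\bbP,\vec\pi\ra}$ with $\sigma_\gamma(q')\leq\vec r_\gamma$, and then shrink to $q_\gamma\in E'_\gamma$ by density. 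The invariants $q_\alpha\leq\vec p(\alpha)$ and $\pi_{\alpha\beta}(q_\alpha)\leq q_\beta$ are immediate from these choices, while membership in $E'_\alpha$ delivers $\pi_{\alpha\beta}(q_\alpha)\in D_\beta$ for every $\beta<\alpha$.

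To finish, set $\vec q(\beta)=\inf\{\pi_{i\beta}(q_i):\beta\leq i<\delta\}$. By the same $\kappa$-continuity computation as at the end of Lemma~\ref{invlim_general}, $\vec q\in\varprojlim(\vec\bbP,\vec\pi)$ and $\vec q\leq\vec p$. The payoff of working inside $E'_\alpha$ is that for every $\beta\leq i<\delta$, $q_i\in E'_i$ forces $\pi_{i\beta}(q_i)\in D_\beta$, so the infimum $\vec q(\beta)$ lies in $D_\beta$ by the $\kappa$-closure of $D_\beta$; hence $\vec q\in\prod_\alpha D_\alpha$ as required. The main subtlety is precisely this coordinatewise-at-the-end preservation: one must ensure at each $\alpha$ that every \emph{future} projection $\pi_{i\alpha}(q_i)$ will land in $D_\alpha$, which is exactly what motivates including the preimages $\pi_{\alpha\beta}^{-1}[D_\beta]$ in the definition of $E'_\alpha$.
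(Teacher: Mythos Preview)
Your proof is correct and follows essentially the same route as the paper's. The paper packages the construction by restricting the projections to $\pi'_{ji}=\pi_{ji}\rest D_j$, identifying $\varprojlim(\vec\bbP,\vec\pi)\cap\prod_\alpha D_\alpha$ with $\varprojlim(\vec\bbP,\vec\pi')$, and then rerunning the induction of Lemma~\ref{invlim_general} with the sets $E_\alpha^{\la\vec\bbP,\vec\pi'\ra}$; you instead directly form $E'_\alpha=E_\alpha^{\la\vec\bbP,\vec\pi\ra}\cap D_\alpha\cap\bigcap_{\beta<\alpha}\pi_{\alpha\beta}^{-1}[D_\beta]$ and run the same induction. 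These are two presentations of the same argument.
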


\begin{proof}
    For $i<j<\delta$, replace $\pi_{ji}$ with $\pi'_{ji} = \pi_{ji} \rest D_j$.  It is easy to see that \[\varprojlim(\vec\bbP,\vec\pi) \cap \prod_\alpha D_\alpha = \varprojlim(\vec\bbP,\vec\pi').\]  $\varprojlim(\vec\bbP,\vec\pi')$ is completely $\kappa$-closed by Lemma~\ref{invlim_general}, and since glb's are computed pointwise, $\varprojlim(\vec\bbP,\vec\pi')$ is a $\kappa$-closed subset of $\varprojlim(\vec\bbP,\vec\pi)$.
    
    To show that $\varprojlim(\vec\bbP,\vec\pi')$ is dense in $\varprojlim(\vec\bbP,\vec\pi)$, let $\vec p \in \varprojlim(\vec\bbP,\vec\pi)$. 
    Let $E_0 = D_0$, and
    for $0<\alpha<\delta$, define $E_\alpha = E^{\la\vec\bbP,\vec\pi'\ra}_\alpha$. 
    Let $q_0 \leq \vec p(0)$ be in $E_0$.

    Assume inductively that $\gamma<\delta$ and we have chosen $q_{i} \in E_{i}$ for $i<\gamma$ such that $q_{i} \leq \vec p(i)$, and $\la \pi_{ji}(q_j) : i \leq j< \gamma \ra$ is a decreasing sequence in $E_{i}$.  If $\gamma = \gamma'+1$,  pick $q_{\gamma} \in E_{\gamma}$ such that $q_{\gamma} \leq \vec p(\gamma)$ and $\pi_{\gamma\gamma'}(q_\gamma) \leq q_{\gamma'}$.
     If $\gamma$ is a limit, first define $\vec r \in \varprojlim(\vec\bbP{\rest} \gamma,\vec\pi {\rest} \gamma)$
    by $\vec r(i) = \inf \{ \pi_{ji}(q_j) : i \leq j <\gamma \ra \} \in E_i$. 
    We have that $\vec r \leq \vec p \rest \gamma$.  
    Then find $q_{\gamma} \in E_{\gamma}$ below $\vec p(\gamma)$ such that $\sigma_{\gamma}(q_{\gamma}) \leq \vec r$.

    After $\delta$-many steps, define $q_i^* =  \inf \{ \pi_{ji}(q_j) : i \leq j <\delta \ra \}  \in E_i$, and let $\vec q = \la q_i^* : i <\delta \ra$.  We have that $\vec q \in \varprojlim(\vec\bbP,\vec\pi')$ and $\vec q \leq \vec p$ in $\varprojlim(\vec\bbP,\vec\pi)$.
\end{proof}

\begin{lemma}
\label{invlim_cof1}
Assume the following:
    \begin{enumerate}
        \item  $\kappa$ is a regular uncountable cardinal and $\delta<\kappa$.
        \item $\la\vec\bbP,\vec\pi\ra$ is a $\kappa$-good inverse system of length $\delta$.
        \item $X \subseteq \delta$ is cofinal.
        \item For all $\gamma \in X$, the map $\sigma_{\gamma} : E_\gamma^{\la\vec\bbP{\rest} X,\vec\pi{\rest} X \ra} \to \varprojlim(\vec\bbP\rest (X\cap\gamma),\vec\pi \rest (X\cap\gamma))$ defined by $\sigma_\gamma(p) = \la \pi_{\gamma\alpha}(p) : \alpha\in X \cap\gamma \ra$ is a projection.
    \end{enumerate}
    Then the map $\vec p \mapsto \vec p \rest X$ is a $\kappa$-continuous isomorphism of $\varprojlim(\vec\bbP,\vec\pi)$ with a dense $\kappa$-closed subset of $\varprojlim(\vec\bbP \rest X,\vec\pi \rest X)$.
\end{lemma}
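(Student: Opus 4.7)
The plan is to verify the four properties that make the map $\Phi : \varprojlim(\vec\bbP,\vec\pi) \to \varprojlim(\vec\bbP\rest X,\vec\pi\rest X)$ defined by $\vec p \mapsto \vec p\rest X$ a $\kappa$-continuous order-isomorphism onto a dense $\kappa$-closed subset. Order-preservation and $\kappa$-continuity are immediate from the fact that infimums in both inverse limits are computed pointwise, using $\kappa$-continuity of each $\pi_{\beta\alpha}$. For order-reflection (hence injectivity), suppose $\vec p\rest X \leq \vec q\rest X$; given $\alpha<\delta$, either $\alpha\in X$ or cofinality of $X$ gives some $\beta\in X$ with $\alpha<\beta$, and then $\vec p(\alpha) = \pi_{\beta\alpha}(\vec p(\beta)) \leq \pi_{\beta\alpha}(\vec q(\beta)) = \vec q(\alpha)$. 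The poset $\varprojlim(\vec\bbP,\vec\pi)$ is itself completely $\kappa$-closed with glb's computed pointwise, by the same argument as in the first paragraph of the proof of Lemma~\ref{invlim_general}, which uses only $\kappa$-continuity of each projection and not the projection assumption at limit stages. It follows that the range of $\Phi$ is $\kappa$-closed: if $\la\vec p_i\rest X : i<\eta\ra$ is descending in the range with $\eta<\kappa$, then by order-reflection $\la\vec p_i\ra$ is descending, $\vec p_\eta := \inf_i\vec p_i$ exists, and $\Phi(\vec p_\eta)$ is the pointwise infimum of the restrictions.

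The main step is density of the range. Given $\vec q\in\varprojlim(\vec\bbP\rest X,\vec\pi\rest X)$, note that $\la\vec\bbP\rest X,\vec\pi\rest X\ra$ is itself a $\kappa$-good inverse system of length bounded by $\delta<\kappa$ satisfying the projection hypothesis of Lemma~\ref{invlim_general} by assumption~(4). For each $\gamma\in X$, set $D^*_\gamma = \bigcap_{\alpha<\gamma} D_{\gamma\alpha}$, which is dense and $\kappa$-closed in $\bbP_\gamma$ by Lemma~\ref{int_denseclosed}. By Lemma~\ref{invlim_dense} applied to the $X$-system with these $D^*_\gamma$, there is $\vec q'\leq\vec q$ with $\vec q'(\gamma)\in D^*_\gamma$ for every $\gamma\in X$. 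Now define $f(\alpha) = \min(X\setminus\alpha)$, well-defined by cofinality, and set $\vec p(\alpha) = \pi_{f(\alpha)\alpha}(\vec q'(f(\alpha)))$; this is defined because $\vec q'(f(\alpha))\in D^*_{f(\alpha)}\subseteq D_{f(\alpha)\alpha}$, and clearly $\vec p\rest X = \vec q'\leq\vec q$.

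To check that $\vec p\in\varprojlim(\vec\bbP,\vec\pi)$, fix $\alpha<\beta<\delta$ and write $\gamma=f(\alpha)$, $\gamma'=f(\beta)$. In the main case $\gamma<\gamma'$ (so $\alpha\leq\gamma<\beta\leq\gamma'$), applying the compatibility axiom twice gives $\pi_{\gamma'\alpha} = \pi_{\beta\alpha}\circ\pi_{\gamma'\beta}$ and $\pi_{\gamma'\alpha} = \pi_{\gamma\alpha}\circ\pi_{\gamma'\gamma}$, and combining with $\vec q'(\gamma) = \pi_{\gamma'\gamma}(\vec q'(\gamma'))$ (valid since $\gamma,\gamma'\in X$) yields $\pi_{\beta\alpha}(\vec p(\beta)) = \pi_{\gamma\alpha}(\vec q'(\gamma)) = \vec p(\alpha)$; the case $\gamma=\gamma'$ reduces to a single application of compatibility. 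The main technical obstacle is the auxiliary claim $\vec p(\beta)\in D_{\beta\alpha}$, which relies on the standard reading of the compatibility axiom in a $\kappa$-good inverse system: the projections respect the dense domains, so $\pi_{\gamma'\beta}$ sends $D_{\gamma'\alpha}$ into $D_{\beta\alpha}$ whenever $\alpha<\beta<\gamma'$, ensuring that membership in the relevant $D^*_\gamma$ propagates under the projections.
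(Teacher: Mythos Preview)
Your overall structure mirrors the paper's proof, and the order-preservation, order-reflection, $\kappa$-continuity, and $\kappa$-closure of the range are all handled correctly. The gap is precisely where you flag ``the main technical obstacle.''

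The claim that ``$\pi_{\gamma'\beta}$ sends $D_{\gamma'\alpha}$ into $D_{\beta\alpha}$ whenever $\alpha<\beta<\gamma'$'' is not part of the definition of a $\kappa$-good inverse system and is false in general. The compatibility clause only asserts that $\pi_{\gamma'\alpha}$ and $\pi_{\beta\alpha}\circ\pi_{\gamma'\beta}$ agree at points where \emph{both} are defined; it does not force the domain of one into the domain of the other. A trivial counterexample: take $\bbP_0=\bbP_1=\bbP_2$, let $\pi_{20}$ and $\pi_{21}$ be the identity on all of $\bbP_2$, and let $\pi_{10}$ be the identity restricted to a proper $\kappa$-closed dense $D\subsetneq\bbP_1$. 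Then $D_{20}=\bbP_2$ but $\pi_{21}[D_{20}]=\bbP_1\not\subseteq D_{10}$. So your sets $D^*_\gamma=\bigcap_{\alpha<\gamma}D_{\gamma\alpha}$ are too coarse to guarantee that the extension $\vec p$ lands in the full inverse limit.

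The fix, which is exactly what the paper does, is to replace $D^*_\gamma$ by the recursively defined $E_\gamma=E_\gamma^{\la\vec\bbP,\vec\pi\ra}$. The point of $E_\gamma$ is that $p\in E_\gamma$ forces \emph{every} finite composition $\pi_{\alpha_1\alpha_0}\circ\cdots\circ\pi_{\gamma\alpha_{n-1}}(p)$ to be defined, and the recursion $E_\gamma=\bigcap_{\alpha<\gamma}\pi_{\gamma\alpha}^{-1}[E_\alpha]$ immediately gives $\pi_{\gamma\beta}[E_\gamma]\subseteq E_\beta$. One then checks that $E_\gamma$ is a $\kappa$-closed dense subset of $E_\gamma^{\la\vec\bbP\rest X,\vec\pi\rest X\ra}$, applies Lemma~\ref{invlim_dense} with these sets, and your extension argument then goes through verbatim.
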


\begin{proof}
    The map $\vec p \mapsto \vec p \rest X$ is clearly order-preserving.
    It is injective since if $\vec p_0 \not= \vec p_1$, then $\vec p_0(\alpha) \not= \vec p_1(\alpha)$ for some $\alpha<\delta$, and thus $\vec p_0(\beta) \not= \vec p_1(\beta)$ for all $\beta\in [\alpha,\delta)$, and hence $\vec p_0 \rest X \not= \vec p_1 \rest X$ since $X$ is cofinal.  If $\vec p_0 \rest X \leq \vec p_1 \rest X$, then $\vec p_0 \leq \vec p_1$ since $X$ is cofinal and each $\pi_{ji}$ is order-preserving, so the map is an order-isomorphism with its range.  The map is clearly $\kappa$-continuous since glb's are computed pointwise.

    To show that the range is dense, note that it is easy to see by induction that for all $\alpha<\delta$, $E_\alpha = E_\alpha^{\la\vec\bbP,\vec\pi\ra}$ is a $\kappa$-closed dense subset of $E_\alpha^{\la\vec\bbP\rest X,\vec\pi\rest X\ra}$.  By Lemma~\ref{invlim_dense}, $E^* = \varprojlim(\vec\bbP{\rest} X,\vec\pi{\rest} X) \cap \prod_{\alpha \in X} E_\alpha$ is a $\kappa$-closed dense subset of $\varprojlim(\vec\bbP{\rest} X,\vec\pi{\rest} X)$.  We claim that the map $\vec p \mapsto \vec p \rest X$ is a surjection from $\varprojlim(\vec\bbP,\vec\pi)$ to $E^*$.
    For $\vec q \in E^*$, define $\vec p \in \varprojlim(\vec\bbP,\vec\pi)$ by $\vec p(\alpha) = \vec q(\alpha)$ for $\alpha \in X$ and for $\alpha \in \delta\setminus X$, $\vec p(\alpha) = \pi_{\beta\alpha}(\vec q(\beta))$ for $\beta = \min(X\setminus\alpha)$.  We have $\vec p(\alpha) \in E_\alpha$ since $\pi_{\beta\alpha}[E_\beta]\subseteq E_\alpha$.
\end{proof}

\begin{prop}
\label{invlim_cof2}
Assume the following:
    \begin{enumerate}
        \item  $\kappa$ is a regular uncountable cardinal and $\delta$ is an ordinal such that $\cf(\delta)<\kappa$.
        \item $\la\vec\bbP,\vec\pi\ra$ is a $\kappa$-good inverse system of length $\delta$.
        \item For all $X \subseteq \delta$ of size $<\cf(\delta)$ with a maximum element $\gamma$, the map $\sigma_\gamma : E_\gamma^{\la\vec\bbP{\rest} X,\vec\pi{\rest} X \ra} \to \varprojlim(\vec\bbP\rest (X\cap\gamma),\vec\pi \rest (X\cap\gamma))$ defined by $\sigma_\gamma(p) = \la \pi_{\gamma\alpha}(p) : \alpha\in X \cap\gamma \ra$ is a projection.
    \end{enumerate}
    Then for all cofinal $A,B \subseteq \delta$ of size $<\kappa$, 
    $\varprojlim(\vec\bbP{\rest} A,\vec\pi{\rest} A)$ and 
    $\varprojlim(\vec\bbP{\rest} B,\vec\pi{\rest} B)$
    have isomorphic $\kappa$-closed dense subsets.
\end{prop}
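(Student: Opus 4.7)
The plan is to reduce both $\varprojlim(\vec\bbP\rest A)$ and $\varprojlim(\vec\bbP\rest B)$ to a common intermediate inverse limit indexed by a cofinal set of order type exactly $\cf(\delta)$, using Lemma~\ref{invlim_cof1} as the main tool. Since $A$ and $B$ are each cofinal in $\delta$, I will first choose cofinal subsets $A'\subseteq A$ and $B'\subseteq B$ of order type $\cf(\delta)$, and let $C' = A'\cup B'$. Then $|C'|\leq \cf(\delta)<\kappa$ and $\ot(C')<\cf(\delta)^+\leq\kappa$, so all of $\vec\bbP\rest A$, $\vec\bbP\rest B$, $\vec\bbP\rest A'$, $\vec\bbP\rest B'$, $\vec\bbP\rest C'$ are $\kappa$-good inverse systems of length $<\kappa$, to which Lemma~\ref{invlim_cof1} may in principle be applied.

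The key step is to verify that the hypothesis of Lemma~\ref{invlim_cof1} is satisfied in the four situations $(\vec\bbP\rest A, A')$, $(\vec\bbP\rest B, B')$, $(\vec\bbP\rest C', A')$, and $(\vec\bbP\rest C', B')$. In each case the cofinal set $X$ has order type $\cf(\delta)$, so for any $\gamma\in X$, $|X\cap\gamma|<\cf(\delta)$ (using that $\cf(\delta)$ is a regular cardinal). Taking $Y = (X\cap\gamma)\cup\{\gamma\}\subseteq \delta$ gives a subset of $\delta$ of size $<\cf(\delta)$ with maximum $\gamma$, so hypothesis~(3) of the proposition yields a projection $\sigma_\gamma:E_\gamma^{\la\vec\bbP\rest Y,\vec\pi\rest Y\ra}\to\varprojlim(\vec\bbP\rest(Y\cap\gamma),\vec\pi\rest(Y\cap\gamma))$, which matches the required $\sigma_\gamma$ for Lemma~\ref{invlim_cof1} since $E_\gamma$ depends only on the indices $\leq\gamma$. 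This is the step I expect to be the main obstacle, and it is why one must pass through subsets of order type exactly $\cf(\delta)$: for $A$ or $B$ directly, the initial segments $A\cap\gamma$ or $B\cap\gamma$ can have size $\geq\cf(\delta)$, in which case hypothesis~(3) of the proposition does not apply.

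Applying Lemma~\ref{invlim_cof1} in the first two situations yields $\kappa$-continuous isomorphisms of $\varprojlim(\vec\bbP\rest A)$ and $\varprojlim(\vec\bbP\rest B)$ with $\kappa$-closed dense subsets $E_A\subseteq\varprojlim(\vec\bbP\rest A')$ and $E_B\subseteq\varprojlim(\vec\bbP\rest B')$ respectively; applying it in the latter two yields $\kappa$-continuous isomorphisms of $\varprojlim(\vec\bbP\rest C')$ with $\kappa$-closed dense subsets $F_A\subseteq\varprojlim(\vec\bbP\rest A')$ and $F_B\subseteq\varprojlim(\vec\bbP\rest B')$. By Lemma~\ref{int_denseclosed}, $E_A\cap F_A$ is $\kappa$-closed dense in $\varprojlim(\vec\bbP\rest A')$; pulling it back through $\varprojlim(\vec\bbP\rest A)\cong E_A$ and through $F_A\cong\varprojlim(\vec\bbP\rest C')$ produces a $\kappa$-closed dense subset $P_A\subseteq\varprojlim(\vec\bbP\rest A)$ isomorphic to a $\kappa$-closed dense subset $Q_A\subseteq\varprojlim(\vec\bbP\rest C')$. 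The analogous construction on the $B$ side produces $P_B\subseteq\varprojlim(\vec\bbP\rest B)$ and $Q_B\subseteq\varprojlim(\vec\bbP\rest C')$ with $P_B\cong Q_B$. Finally, $Q_A\cap Q_B$ is $\kappa$-closed dense in $\varprojlim(\vec\bbP\rest C')$ by Lemma~\ref{int_denseclosed}, and pulling it back through $P_A\cong Q_A$ and $P_B\cong Q_B$ gives $\kappa$-closed dense subsets of $\varprojlim(\vec\bbP\rest A)$ and $\varprojlim(\vec\bbP\rest B)$ that are both isomorphic to $Q_A\cap Q_B$, hence isomorphic to each other.
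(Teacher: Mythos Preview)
Your proof is correct and rests on the same key insight as the paper's: pass to cofinal subsets $A'\subseteq A$, $B'\subseteq B$ of order type $\cf(\delta)$ so that initial segments have size ${<}\cf(\delta)$ and hypothesis~(3) applies, then invoke Lemma~\ref{invlim_cof1}. The paper streamlines the execution by taking $C=A\cup B$ rather than $C'=A'\cup B'$: applying Lemma~\ref{invlim_cof1} to the pairs $(C,A')$ and $(C,B')$ gives $\kappa$-continuous dense embeddings of $\varprojlim(\vec\bbP\rest C)$ into $\varprojlim(\vec\bbP\rest A')$ and $\varprojlim(\vec\bbP\rest B')$, and then a one-line cofinality argument (if $\vec q\rest A'\leq \vec p\rest A'$ and $A'$ is cofinal in $A$, then $\vec q\rest A\leq\vec p$) shows the restriction maps $\varprojlim(\vec\bbP\rest C)\to\varprojlim(\vec\bbP\rest A)$ and $\to\varprojlim(\vec\bbP\rest B)$ are themselves dense embeddings. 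Thus $\varprojlim(\vec\bbP\rest C)$ already serves as the common $\kappa$-closed dense subset, bypassing your four applications of Lemma~\ref{invlim_cof1} and the two rounds of intersection-and-pullback via Lemma~\ref{int_denseclosed}.
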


\begin{proof}
    Let $A, B \subseteq \delta$ be as hypothesized, and let $A' \subseteq A$ and $B' \subseteq B$ be cofinal and of order type $\cf(\delta)$.  Let $C = A \cup B$.  By Lemma~\ref{invlim_cof1}, the restriction maps to $A'$ and $B'$ are $\kappa$-continuous dense embeddings of $\varprojlim(\vec\bbP{\rest} C,\vec\pi{\rest} C)$ into $\varprojlim(\vec\bbP{\rest} A',\vec\pi{\rest} A')$ and $\varprojlim(\vec\bbP{\rest} B',\vec\pi{\rest} B')$ respectively.  The restriction maps to $A$ and $B$ are also $\kappa$-continuous dense embeddings, because for example if $\vec p \in \varprojlim(\vec\bbP{\rest} A,\vec\pi{\rest} A)$, then $\vec p \rest A' \in \varprojlim(\vec\bbP{\rest} A',\vec\pi{\rest} A')$, and there is $\vec q \in \varprojlim(\vec\bbP{\rest} C,\vec\pi{\rest} C)$ such that $\vec q \rest A' \leq \vec p \rest A'$.  Hence $\vec q \rest A \leq \vec p$, since $A'$ is cofinal in $A$.
\end{proof}

\subsection{Reasonable collapses}

\begin{prop}
Suppose $\bbP,\bbQ$ are separative posets, $\bbP$ is a regular suborder of $\bbQ$, and $\pi : \bbQ \to \bbP$ is a projection such that $\pi \rest \bbP = \mathrm{id}$.  Then for all $q \in \bbQ$, $q \leq \pi(q)$, and for all $p \in \bbP$ and $q \in \bbQ$, $p \perp q$ if and only if $p \perp \pi(q)$.
\end{prop}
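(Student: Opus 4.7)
The plan is to prove the two assertions in order: first $q \leq \pi(q)$ for every $q \in \bbQ$, and then the compatibility equivalence, which will fall out as a short consequence.

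For the first assertion I would argue by contradiction using separativity of $\bbQ$. If $q \nleq \pi(q)$, pick $r \leq q$ with $r \perp \pi(q)$. The key move is to exploit the regular suborder hypothesis through maximal antichains: extend $\{\pi(q)\}$ to a maximal antichain $A$ in $\bbP$, so that by regularity $A$ remains a maximal antichain in $\bbQ$. Then $r$ is compatible in $\bbQ$ with some $a \in A$, and since $r \perp \pi(q)$ we must have $a \neq \pi(q)$. Choose $r' \leq r, a$ in $\bbQ$ and apply $\pi$: since $\pi$ is order-preserving, $\pi(r') \leq \pi(r) \leq \pi(q)$, and since $\pi \rest \bbP = \mathrm{id}$, $\pi(r') \leq \pi(a) = a$. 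Thus $\pi(r')$ witnesses compatibility of two distinct elements of the antichain $A$ in $\bbP$, a contradiction.

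For the compatibility statement, the direction $p \perp \pi(q) \Rightarrow p \perp q$ is immediate: any common extension $r$ of $p$ and $q$ in $\bbQ$ satisfies $r \leq q \leq \pi(q)$ by the first assertion, so $r$ also extends $\pi(q)$. For the other direction I would argue contrapositively: if $p$ and $\pi(q)$ are compatible, choose $p' \leq p, \pi(q)$ in $\bbP$ (using regularity of the embedding to reflect the compatibility down to $\bbP$), then use the projection property applied to $q$ and $p' \leq \pi(q)$ to find $q' \leq q$ in $\bbQ$ with $\pi(q') \leq p'$. By the first assertion, $q' \leq \pi(q') \leq p' \leq p$, so $q'$ is a common extension of $p$ and $q$.

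The only mildly tricky point is the first assertion, where the hypothesis $\pi \rest \bbP = \mathrm{id}$ on its own does not obviously let one compare $q$ with an element of $\bbP$; the right tool is to convert the regular suborder hypothesis into its maximal antichain formulation, which is exactly what supplies an element of $\bbP$ with which $r$ must be compatible and on which $\pi$ acts as the identity.
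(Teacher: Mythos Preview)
Your proof is correct. The second assertion is handled essentially as in the paper: for one direction the paper applies $\pi$ to a common extension $r \leq p,q$ (getting $\pi(r) \leq p,\pi(q)$), while you instead use $r \leq q \leq \pi(q)$; for the other direction both arguments use the projection property together with $q' \leq \pi(q')$ in exactly the same way.

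The genuine difference is in the first assertion. The paper argues via forcing semantics: since $\bbP$ is a regular suborder and $\pi\rest\bbP = \id$, the induced generic $\pi(\dot G)$ coincides with $\dot G \cap \bbP$ (both are $\bbP$-generic and one contains the other), hence $q \Vdash \pi(q) \in \dot G$, and separativity gives $q \leq \pi(q)$. Your argument is purely combinatorial: you extend $\{\pi(q)\}$ to a maximal antichain in $\bbP$, use regularity to make it maximal in $\bbQ$, and derive a contradiction by projecting a common extension. Your route is more elementary in that it avoids invoking generic filters and the equality $\pi(\dot G) = \dot G \cap \bbP$, at the cost of being slightly longer; the paper's route is a one-line appeal to the forcing characterization of separativity but implicitly uses the uniqueness of generic filters under inclusion.
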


\begin{proof}

    For $q \in\bbQ$, $q \Vdash \pi(q) \in \pi(\dot G) = \dot G \cap\bbP$, so by separativity, $q\leq \pi(q)$.

    Suppose $p \in \bbP$ and $q \in \bbQ$.  If $r\leq p,q$, then $\pi(r) \leq p,\pi(q)$ since $\pi(p) = p$.  If $r \leq p,\pi(q)$, then there is $q' \leq q$ such that $\pi(q') \leq r$, and then by the first claim, $q' \leq \pi(q') \leq p$, so $q' \leq p,q$.
\end{proof}

The following is meant to abstract some general features of 
$\col(\kappa,{<}\lambda)$ and similar collapsing posets.

\begin{definition}
Suppose $\kappa$ is regular and $\lambda>\kappa$ is inaccessible.  A poset $\bbP$ is called a \emph{reasonable $(\kappa,\lambda)$-collapse} when:
\begin{enumerate}
    \item $\bbP$ is $\lambda$-c.c.\ and completely $\kappa$-closed.
    \item There is a $\subseteq$-increasing sequence of regular suborders $\la \bbP_\alpha : \alpha<\lambda \ra$.
    \item There is a sequence of maps $\la \pi_\alpha : \alpha < \lambda \ra$ such that for all $\alpha$, $\pi_\alpha : \bbP \to \bbP_\alpha$ is a $\kappa$-continuous projection, $\pi_\alpha \rest \bbP_\alpha = \mathrm{id}$, and for $\alpha<\beta$, $\pi_\alpha = \pi_\alpha \circ \pi_\beta$.
    \item For unboundedly many regular $\alpha<\lambda$, $|\bbP_{\alpha}| = \alpha$ and $\Vdash_{\bbP_{\alpha}} |\alpha| = \kappa$.
\end{enumerate}
\end{definition}

\begin{definition}
    A poset $\bbP$ is called a \emph{quite reasonable} $(\kappa,\lambda)$-collapse when there are $\la \bbP_\alpha : \alpha<\lambda\ra$ and $\la\pi_\alpha : \alpha<\lambda\ra$ witnessing that $\bbP$ is reasonable, with the following additional property:
    
    Whenever $\cf(\alpha)<\kappa$, the map $p \mapsto \la \pi_\beta(p) : \beta<\alpha\ra$ is an isomorphism from $\bbP_\alpha$ to $\varprojlim(\vec\bbP \rest \alpha,\vec\pi\rest\alpha)$, where $\vec\pi\rest\alpha = \la \pi_\gamma \rest \bbP_{\beta} : \gamma<\beta<\alpha \ra$.
   
\end{definition}

Let us remark that the Levy, Silver, Laver, and Easton collapses are all quite reasonable.

\section{Dual Shioya forcing}\label{sec: Dual-shioya-collapse}
In this section, we will present our main building block. 

In \cite{ShioyaDense}, Shioya defined a forcing notion that generically glues together copies of the Levy collapse $\Col(\kappa, \mu)$ for $\mu < \lambda$, using complete embeddings with $\kappa$-closed quotients,
and showed how to use it in order to get dense ideals. Here we will use a variant of this forcing, using projections instead of embeddings. Those two definitions yield equivalent forcings,\footnote{Using Lemma \ref{lem:embedding-from-projection}, one can obtain a complete embedding with a $\kappa$-closed quotient from a $\kappa$-continuous projection. On a dense subset of conditions $\langle \bbP, p, \dot\tau\ra$ of $\Sh(\kappa,\lambda)$, such that $\mathbb{P}$ is isomorphic to $\Col(\kappa,\mu)$, $\mu$ is a regular cardinal, and $\tau$ codes the generic of $\mathbb{P}$, this modification is order-preserving from our poset to Shioya's poset.} but using projections seems to be more natural with respect to the uniformization forcing in Section \ref{sec: uniformization}. We will not use the equivalence of those forcings in the proofs.
\begin{definition}
Let $\omega < \kappa < \lambda$ be regular cardinals, $\lambda$ inaccessible. $\Sh(\kappa,\lambda)$\footnote{$\Sh$ is the Japanese hiragana character representing the first syllable of the name, ``Shioya''.  We caution that the associated sound is also the reading of the Japanese word for ``death'', so before Japanese audiences, it may be advisable to pronounce the full name.} 
consists of all triples of the form $\langle \mathbb{P}, p, \tau\rangle$ where:
\begin{enumerate}
    \item $\mathbb{P}\in V_\lambda$ is a separative completely $\kappa$-closed poset.
    \item $p\in \mathbb{P}$.
    \item For some $\gamma<\lambda$, $\tau$ is a $\mathbb{P}$-name for a function from $\gamma$ to $2$.
\end{enumerate}
Let us define the order of  $\Sh(\kappa,\lambda)$. Let $\langle \mathbb{P}_0, p_0,\tau_0\rangle \leq \langle \mathbb{P}_1, p_1,\tau_1\rangle$ if there is a $\kappa$-continuous projection $\pi$ from a $\kappa$-closed dense subset of $\mathbb{P}_0$ to $\mathbb{P}_1$ such that $p_0 \Vdash p_1 \in \pi(\dot{G})$, where $\dot{G}$ is the canonical name for the $\bbP_0$-generic filter, and $p_0 \Vdash \pi^*(\tau_1) \trianglelefteq \tau_0$, where $\pi^*$ is the canonical translation of $\bbP_1$-names into $\bbP_0$-names via the projection $\pi$.
\end{definition}

\begin{lemma}
    The ordering on $\Sh(\kappa,\lambda)$ is transitive.
\end{lemma}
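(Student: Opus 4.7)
My plan is to prove transitivity by composing the witnessing projections and then verifying the two semantic conditions on generics and name-translations.

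Suppose $\langle\bbP_0,p_0,\tau_0\rangle\leq\langle\bbP_1,p_1,\tau_1\rangle\leq\langle\bbP_2,p_2,\tau_2\rangle$, witnessed by $\kappa$-continuous projections $\pi_{01}\colon D_{01}\to\bbP_1$ and $\pi_{12}\colon D_{12}\to\bbP_2$, where $D_{01}\subseteq\bbP_0$ and $D_{12}\subseteq\bbP_1$ are $\kappa$-closed dense. By Lemma~\ref{composition}, the set $E=\pi_{01}^{-1}[D_{12}]$ is $\kappa$-closed and dense in $\bbP_0$, and the composition $\pi_{02}:=\pi_{12}\circ\pi_{01}\colon E\to\bbP_2$ is a $\kappa$-continuous projection. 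This is my candidate witness for $\langle\bbP_0,p_0,\tau_0\rangle\leq\langle\bbP_2,p_2,\tau_2\rangle$.

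Next I verify the generic-membership condition. Working below $p_0$, let $G_0\subseteq\bbP_0$ be generic with $p_0\in G_0$. Then $G_1:=\pi_{01}[G_0\cap D_{01}]$ generates a $\bbP_1$-generic filter, and by the assumption $p_0\Vdash p_1\in\pi_{01}(\dot G)$, we have $p_1\in G_1$. By the second assumption, applied in $V[G_0]$, we get $p_2\in\pi_{12}[G_1\cap D_{12}]$. Since $G_1\cap D_{12}$ is generated by $\pi_{01}[G_0\cap E]$, the filter $\pi_{12}[G_1\cap D_{12}]$ is generated by $\pi_{12}(\pi_{01}[G_0\cap E])=\pi_{02}[G_0\cap E]$. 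Hence $p_2\in\pi_{02}(G_0)$, which shows $p_0\Vdash p_2\in\pi_{02}(\dot G)$.

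Finally I verify the name-ordering condition. The canonical translation of names is functorial under composition, so $\pi_{02}^*(\tau_2)=\pi_{01}^*(\pi_{12}^*(\tau_2))$ (up to the usual equivalence of names, which is all the relation $\trianglelefteq$ sees). Below $p_0$ we have $\pi_{01}^*(\tau_1)\trianglelefteq\tau_0$. Below $p_1$ we have $\pi_{12}^*(\tau_2)\trianglelefteq\tau_1$; pulling this through $\pi_{01}^*$ and using that $p_0\Vdash p_1\in\pi_{01}(\dot G)$ gives $p_0\Vdash\pi_{01}^*(\pi_{12}^*(\tau_2))\trianglelefteq\pi_{01}^*(\tau_1)\trianglelefteq\tau_0$, as required.

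The only genuine subtlety is bookkeeping about partial projections: the witnessing $\pi_{ij}$ are defined only on dense $\kappa$-closed subsets, and the forcing statements $p\Vdash\cdots\pi(\dot G)\cdots$ must be interpreted as statements about the generic filter generated by $\pi[\dot G\cap D_{ij}]$. Lemma~\ref{composition} is exactly tailored to this setting, and both conditions in the ordering are preserved under the composition since they are forced below $p_0$ on the dense set $E$. The name translation is compatible with composition because $\pi^*$ is defined by replacing each occurrence of a generic object in a $\bbP_i$-name by its $\pi$-preimage structure, and this commutes with composition on the dense domain.
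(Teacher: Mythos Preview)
Your proof is correct and follows essentially the same approach as the paper: compose the two witnessing projections via Lemma~\ref{composition}, then verify the generic-membership and name-end-extension conditions by tracing the induced generics and using functoriality of name translation. The paper's proof is slightly terser but structurally identical.
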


\begin{proof}
    Suppose $\langle \mathbb{P}_0, p_0,\tau_0\rangle \leq \langle \mathbb{P}_1, p_1,\tau_1\rangle$ and $\langle \mathbb{P}_1, p_1,\tau_1\rangle \leq \langle \mathbb{P}_2, p_2,\tau_2\rangle$.
    Let $D_0,D_1$ and $\pi_{0}$,$\pi_{1}$ be such that for $i = 0,1$, $D_i$ is a $\kappa$-closed dense subset of $\bbP_i$, and $\pi_{i} : D_i \to \bbP_{i+1}$ is a $\kappa$-continuous projection.  By Lemma~\ref{composition}, there is a $\kappa$-closed dense $E \subseteq \bbP_0$ on which $\pi_1\circ\pi_0 : E \to \bbP_2$ is a $\kappa$-continuous projection.  Let $\sigma = \pi_1\circ\pi_0$.


    The $\sigma$-image of a generic $G_0$ for $\bbP_0$ induces a generic $G_2$ for $\bbP_2$, and this is the same as taking the filter $G_1$ induced by the $\pi_{0}$-image of $G_0$ and then taking the filter induced by the $\pi_{1}$-image of $G_1$.  Since $p_0$ forces that $p_1 \in \dot G_1$ and $p_1$ forces that $p_2 \in \dot G_2$, $p_0$ forces that $p_2 \in \dot G_2$.  Since $p_0$ forces that 
    $\pi_{0}^*(\tau_1) \trianglelefteq \tau_0$ 
    and $p_1$ forces that 
    $\pi_{1}^*(\tau_2) \trianglelefteq \tau_1$,
    it follows that $p_0$ forces that
    $\sigma^*(\tau_2) \trianglelefteq \tau_0$.
    \end{proof}

The following lemma shows the universality of $\Sh(\kappa,\lambda)$. A similar property was shown by Shioya \cite{ShioyaDense}, using a very similar proof. The same property holds for the Anonymous collapse of \cite{Eskew2016}.

\begin{lemma}
\label{mainproj}
    Suppose $\bbR$ is a reasonable $(\kappa,\lambda)$-collapse. Then, there is a dense subset $D \subseteq \bbR * \dot\Add(\lambda)$ and a projection $\psi  :  D \to \Sh(\kappa,\lambda)$.
\end{lemma}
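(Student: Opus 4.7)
The plan is to define $D\subseteq\bbR*\dot\Add(\lambda)$ to consist of those $(r,\dot s)$ such that, for some \emph{good} $\alpha<\lambda$ (meaning $|\bbR_\alpha|=\alpha$ and $\Vdash_{\bbR_\alpha}|\alpha|=\kappa$; these are unbounded in $\lambda$ by reasonableness), $r\in\bbR_\alpha$ and $\dot s$ is an $\bbR_\alpha$-name forced to be a function from some ordinal $\gamma<\lambda$ to $2$. For $(r,\dot s)\in D$, set $\psi(r,\dot s)=\la\bbR_\alpha,r,\dot s\ra$ for a canonical choice of good $\alpha$. Density of $D$ is routine: given $(r,\dot s)$, extend $\dot s$ to have ordinal domain, then use the $\lambda$-c.c.\ of $\bbR$ and unboundedness of good $\alpha$'s to absorb both $r$ and the $<\lambda$ antichains defining $\dot s$ into some $\bbR_\alpha$.

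For order-preservation, suppose $(r',\dot s')\le(r,\dot s)$ in $D$ with $\psi$-images $\la\bbR_{\alpha'},r',\dot s'\ra$ and $\la\bbR_\alpha,r,\dot s\ra$ (WLOG $\alpha\le\alpha'$). The map $\pi_\alpha\rest\bbR_{\alpha'}\colon\bbR_{\alpha'}\to\bbR_\alpha$ is a $\kappa$-continuous projection (from the reasonable-collapse data) and is the identity on $\bbR_\alpha$. So $r'\Vdash r\in\pi_\alpha(\dot G_{\alpha'})$, and since $\dot s$ is already an $\bbR_\alpha$-name we have $\pi_\alpha^*(\dot s)=\dot s$, with $r'\Vdash\dot s\trianglelefteq\dot s'$ following from $r'\Vdash\dot s'\le\dot s$ in $\Add(\lambda)$ (both functions have ordinal domain). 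Thus $\psi(r',\dot s')\le\psi(r,\dot s)$ is witnessed by $\pi_\alpha\rest\bbR_{\alpha'}$.

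The substance of the proof is the projection property (which also yields dense range). Given $(r,\dot s)\in D$ with $\psi(r,\dot s)=\la\bbR_\alpha,r,\dot s\ra$ and $\la\bbP,p,\tau\ra\le\la\bbR_\alpha,r,\dot s\ra$ witnessed by a $\kappa$-continuous projection $\pi\colon\bbP\to\bbR_\alpha$ (so $\pi(p)\le r$ and $p\Vdash\pi^*(\dot s)\trianglelefteq\tau$), I would absorb $\bbP$ into $\bbR_\beta$ for a large good $\beta>|\bbP|$. In $V^{\bbR_\alpha}$, the quotient $\bbR_\beta/\dot G_\alpha$ is completely $\kappa$-closed of cardinality $\beta$ and collapses $\beta$ to $\kappa$, while the quotient $\bbP/\dot G_\alpha$ (via $\pi$) is completely $\kappa$-closed of cardinality $<\beta$, so $(\bbP/\dot G_\alpha)\times\col(\kappa,\beta)^{V^{\bbR_\alpha}}$ also has these three properties. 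By Theorem~\ref{mcaloon}, both posets are $\kappa$-continuously densely embedded by $\{q\in\col(\kappa,\beta) : \dom q \text{ an ordinal}\}$, hence are forcing-equivalent in $V^{\bbR_\alpha}$. Composing with the first-coordinate projection and then unpacking the resulting $\bbR_\alpha$-name (in the spirit of Lemma~\ref{lem:embedding-from-projection}) lifts this equivalence to a $V$-definable $\kappa$-continuous projection $\tilde\pi$ from a dense $\kappa$-closed subset of $\bbR_\beta$ onto $\bbP$ with $\pi\circ\tilde\pi=\pi_\alpha\rest\bbR_\beta$. Finally, pick $r'\in\bbR_\beta$ with $\tilde\pi(r')\le p$ (then automatically $r'\le r$, since $\pi_\alpha(r')=\pi(\tilde\pi(r'))\le\pi(p)\le r$ and $r'\le\pi_\alpha(r')$) and set $\dot s'=\tilde\pi^*(\tau)$, an $\bbR_\beta$-name for a function $\gamma\to 2$. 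Then $(r',\dot s')\in D$, $(r',\dot s')\le(r,\dot s)$ in $\bbR*\dot\Add(\lambda)$ (because $p\Vdash\pi^*(\dot s)\trianglelefteq\tau$ pulls back to $r'\Vdash\dot s\trianglelefteq\dot s'$), and $\psi(r',\dot s')\le\la\bbP,p,\tau\ra$ is witnessed by $\tilde\pi$.

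The main obstacle is producing the $V$-definable $\tilde\pi$ from what is, a priori, only an $\bbR_\alpha$-forced forcing-equivalence of $\bbR_\beta/\dot G_\alpha$ and $(\bbP/\dot G_\alpha)\times\col(\kappa,\beta)$: one must work with $\bbR_\alpha$-names carefully to guarantee that the resulting map is $\kappa$-continuous on a dense $\kappa$-closed subset of $\bbR_\beta$ in $V$ and truly factors through $\pi_\alpha$. This is exactly where the fine structure of reasonable collapses, in combination with Theorem~\ref{mcaloon}, is doing the essential work.
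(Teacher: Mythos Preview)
Your proposal is correct and follows essentially the same route as the paper. The paper's proof packages the key step a bit more directly: rather than first establishing an equivalence of quotients in $V^{\bbR_\alpha}$ and then lifting, it applies Lemma~\ref{lem:embedding-from-projection} in $V$ twice---once to $\bbP\times\col(\kappa,\alpha_1)\twoheadrightarrow\bbR_{\alpha_0}$ and once to $\bbR_{\alpha_1}\twoheadrightarrow\bbR_{\alpha_0}$---to obtain ground-model dense embeddings $e_0,e_1$ from dense $\kappa$-closed subsets of $\bbR_{\alpha_0}\times\col(\kappa,\alpha_1)$, and then sets the desired projection to be $\mathrm{pr}_0\circ e_0\circ e_1^{-1}$; but this is exactly the ``unpacking in the spirit of Lemma~\ref{lem:embedding-from-projection}'' you describe, and your factorization $\pi\circ\tilde\pi=\pi_\alpha\rest\bbR_\beta$ is precisely the commutativity the paper verifies. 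One cosmetic difference: the paper fixes the domain of $\dot s$ to equal $\alpha$, which makes the ``length'' unique and avoids having to speak of a canonical choice of good $\alpha$.
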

\begin{proof}
Suppose $\la \bbR_\alpha : \alpha<\lambda \ra$ and $\la \rho_{\alpha} : \alpha<\lambda \ra$ witness that $\bbR$ is a reasonable $(\kappa,\lambda)$-collapse.  We can take $\bbR_0$ to be $\{ 1_\bbR\}$. 
Let $D$ be the set of $\la r,\dot s \ra$ such that for some $\alpha<\lambda$, $r \in \bbR_\alpha$, $\dot s$ is a $\bbR_\alpha$-name, and $\dot s$ is forced (by $1_\bbR$) to have domain $\alpha$.  Using the $\lambda$-c.c., it is easy to see that $D$ is dense.  For $\la r,\dot s \ra \in D$ with (unique) witnessing ordinal $\alpha$, let us call $\alpha$ the \emph{length} of $\la r,\dot s \ra$.  For $\la r,\dot s \ra \in D$ of length $\alpha$, let $\psi(r,\dot s) = \la \bbR_\alpha,r,\dot s\ra$.  $\psi$ is clearly order-preserving, with the maps $\rho_\alpha \rest \bbR_\beta$ for $\alpha\leq\beta$ witnessing the order relations in $\ran\psi$.  Note that $\psi(1_\bbR,\check\emptyset) = \la \{ 1\},1,\check\emptyset \ra$ is a maximal element of $\Sh(\kappa,\lambda)$.

Suppose $\la r_0,\dot s_0 \ra \in D$ has length $\alpha_0$ and $\la\bbP,\bar p,\tau\ra \leq \la\bbR_{\alpha_0},r_0,\dot{s}_0\ra$.  Let $\pi_0 : \bbP \to \bbR_{\alpha_0}$ be a $\kappa$-continuous projection defined on a dense $\kappa$-closed subset of $\bbP$ witnessing the ordering.  Strengthening the condition if necessary, we may assume $\bar p \in \dom\pi_0$.   Let $\alpha_1$ be a regular cardinal such that $\alpha_0 \leq \alpha_1 < \lambda$, $|\bbP| < \alpha_1$,  $\Vdash_\bbP \dom\tau < \check{\alpha}_1$, $|\bbR_{\alpha_1}| = \alpha_1$ and $\Vdash_{\bbR_{\alpha_1}} |\alpha_1| = \kappa$.  Extend $\la\bbP,\bar p,\tau\ra$ to $\la\bbQ,\bar q,\sigma\ra$, where $\bbQ = \bbP \times \col(\kappa,\alpha_1)$, $\bar q = \la \bar p,1 \ra$, and $\sigma$ is forced to be an end-extension of $\mathrm{pr}_0^*(\tau)$ of length $\alpha_1$, where $\mathrm{pr}_0$ is the projection $\la a,b\ra \mapsto a$.

For notational convenience, let $\bbQ_0 = \bbQ$, let $\bbQ_1 = \bbR_{\alpha_1}$, let $\chi_0 = \pi_0 \circ \mathrm{pr}_0$, and let $\chi_1 = \rho_{\alpha_0} \rest \bbR_{\alpha_1}$.



\begin{claim}
\label{treefactor}
    There are $e_0,e_1$ such that on some $\kappa$-closed dense sets, the following diagram commutes. 

\[
\begin{tikzcd}
    & \mathbb{Q}_1=\mathbb{R}_{\alpha_1}\arrow{rrd}{\chi_1} & & \\
    \mathbb{R}_{\alpha_0} \times \Col(\kappa,\alpha_1) \arrow[dashed]{ur}{e_1} \arrow[dashed]{dr}{e_0} \arrow{rrr}{\mathrm{pr}_0}& &  & \mathbb{R}_{\alpha_0} \\
    & \mathbb{Q}_0=\mathbb{P} \times \Col(\kappa,\alpha_1) \arrow{urr}{\chi_0} 
 \arrow{r}{\mathrm{pr}_0} & \mathbb{P}\arrow{ur}{\pi_0} & 
\end{tikzcd}
\]
    Namely, for $n<2$, there is a $\kappa$-closed dense $E_n \subseteq \bbR_{\alpha_0} \times \col(\kappa,\alpha_1)$ and a $\kappa$-continuous dense embedding $e_n : E_n \to \bbQ_n$, with the property that if $\la r,c \ra \in E_n$, then $\chi_n(e_n(r,c)) = r$.
\end{claim}
The claim follows from Lemma~\ref{lem:embedding-from-projection}.

Now let $E = E_0 \cap E_1$, which is $\kappa$-closed and dense.  For $n<2$, let $F_n = e_n[E]$.  There is a $\kappa$-continuous isomorphism $\iota : F_1 \to F_0$ defined by $\iota = e_0 \circ e_1^{-1}$.  $\mathrm{pr}_0 \circ \iota$ is a $\kappa$-continuous projection from $F_1$ to $\bbP$.
If $q \in F_1$ and $q = e_1(r,c)$, then 
$$\pi_0 \circ \mathrm{pr_0} \circ \iota(q) = \chi_0 \circ e_0(r,c) = r = \chi_1 \circ e_1(r,c) = \chi_1(q) = \rho_{\alpha_0}(q).$$
Take $r_1 \in F_1$ such that $\iota(r_1) \leq \bar q$.
Let $\dot s_1$ be any $\bbR_{\alpha_1}$-name such that $r_1 \Vdash \dot s_1 = \iota^*(\sigma)$.  Then we have that $\la \bbR_{\alpha_1},r_1,\dot s_1 \ra \leq \la \bbP,\bar p,\tau\ra$, as witnessed by the projection $\mathrm{pr}_0 \circ \iota$ defined on the $\kappa$-closed dense set $F_1$.  
Furthermore, $\la r_1,\dot s_1 \ra \leq \la r_0,\dot s_0\ra$ in $\bbR * \dot\add(\lambda)$ since $r_1 \leq \rho_{\alpha_0}(r_1) = \pi_0 \circ \mathrm{pr_0} \circ \iota(r_1) \leq \pi_0(\bar p) \leq r_0$, and 
\[r_1 \Vdash \dot s_1 \trianglerighteq (\mathrm{pr}_0 \circ \iota)^*(\tau) \trianglerighteq (\pi_0\circ\mathrm{pr}_0 \circ \iota)^*(\dot s_0) = \rho_{\alpha_0}^*(\dot s_0) = \dot s_0. \qedhere \]
\end{proof}

\begin{remark}
\label{termremark}
Suppose $\la \bbR_\alpha : \alpha<\lambda \ra$ and $\la \rho_{\alpha} : \alpha<\lambda \ra$ witness that $\bbR$ is a reasonable $(\kappa,\lambda)$-collapse.  Let $\psi : D \to \Sh(\kappa,\lambda)$ be as in the above lemma.  For $\la r,\dot s \ra \in D$, $\psi(r,\dot s) = \la\bbR_\alpha,r,\dot s\ra$, where $\alpha$ is the length of $\la r,\dot s \ra$.

It follows from the construction in the above lemma that
whenever $\la r_0,\dot s_0 \ra \in D$ has length $\alpha_0$, $\la\bbP,p,\tau\ra \leq \la\bbR_{\alpha_0},r_0,\dot s_0\ra$, and $\pi_0 : \bbP \to \bbR_{\alpha_0}$ is a projection defined on a dense $\kappa$-closed set witnessing the ordering, then there is $\la r_1,\dot s_1 \ra \in D$ of length $\alpha_1 \geq \alpha_0$ such that:
    \begin{itemize}
        \item $r_1 \leq r_0$.
        \item $1_\bbR \Vdash \dot s_1 \leq \dot s_0$.
        \item $\la\bbR_{\alpha_1},r_1,\dot s_1\ra \leq \la\bbP,p,\tau\ra$.
        \item If $r_0 = 1_\bbR$ and $p = 1_\bbP$, then we can take $r_1 = 1_\bbR$.
        \item There is a projection $\pi_1 : \bbR_{\alpha_1} \to \bbP$ defined on a dense $\kappa$-closed set witnessing the ordering such that $\pi_0 \circ \pi_1 = \rho_{\alpha_0} \rest \bbR_{\alpha_1}$.
    \end{itemize}
\end{remark}

\begin{lemma}
\label{distributive}
    Let $G \subseteq \Sh(\kappa,\lambda)$ be generic, $\bbR$ be a reasonable $(\kappa,\lambda)$-collapse, and $(\bbR * \dot\add(\lambda))/G$ be a quotient forcing via the projection $\psi$ given by Lemma~\ref{mainproj}.  
    
    Then, $(\bbR * \dot\add(\lambda))/G$ is $\lambda$-distributive in $V[G]$.
\end{lemma}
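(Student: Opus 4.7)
The plan is to exhibit a canonical $\bbR$-generic filter $H_{\bbR}$ over $V$ inside $V[G]$, and then exploit that in $V[H_{\bbR}]$ the forcing $\dot\add(\lambda)$ evaluates to a $\lambda$-closed poset.  Since any $H \subseteq \bbR * \dot\add(\lambda)$ generic over $V$ with $\psi(H) = G$ factors through $V[H_{\bbR}]$ by a $\lambda$-closed step, every $\delta$-sequence of ordinals appearing in $V[H]$ with $\delta < \lambda$ will already lie in $V[H_{\bbR}] \subseteq V[G]$, which is exactly the $\lambda$-distributivity of $(\bbR * \dot\add(\lambda))/G$.

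I would define
\[ H_{\bbR} = \{r \in \bbR : \exists r' \leq r,\ \exists \alpha,\ \exists \dot s,\ \la \bbR_\alpha, r', \dot s \ra \in G\}, \]
which is manifestly in $V[G]$.  Upward closure holds by design.  For the filter property, given $r_1,r_2 \in H_{\bbR}$ witnessed by $\la \bbR_{\alpha_i}, r'_i, \dot s_i \ra \in G$, I pick a common lower bound in $G$ and use Remark~\ref{termremark} together with the density and genericity of $G$ to obtain some $\la \bbR_\alpha, r, \dot s \ra \in G$ below it, with $\alpha \geq \max\{\alpha_1,\alpha_2\}$; the $\Sh$-ordering witnessed by $\rho_{\alpha_i} \rest \bbR_\alpha$ gives $\rho_{\alpha_i}(r) \leq r'_i$, and since $r \leq \rho_{\alpha_i}(r)$ by the reasonable-collapse property, we conclude $r \leq r'_i \leq r_i$.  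For density against any $V$-dense $D' \subseteq \bbR$, I would show that $\{\la \bbR_\beta, r', \dot s' \ra \in \Sh : r' \in D'\}$ is dense in $\Sh$: given $\la \bbP, p, \tau \ra \in \Sh$, apply Remark~\ref{termremark} to find $\la \bbR_\alpha, r, \dot s \ra \leq \la \bbP, p, \tau \ra$, strengthen $r$ in $\bbR$ to some $r' \in D'$, pick $\beta \geq \alpha$ with $r' \in \bbR_\beta$, and choose $\dot s'$ end-extending $\dot s$ to length $\beta$; the resulting $\la \bbR_\beta, r', \dot s' \ra$ extends the original condition via $\rho_\alpha \rest \bbR_\beta$.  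Meeting this dense set with $G$ produces an element of $H_{\bbR} \cap D'$.

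Granting $H_{\bbR}$ is $\bbR$-generic, it agrees with the canonical $\bbR$-projection of any $H$ generic for the iteration with $\psi(H) = G$, so $V[H_{\bbR}] \subseteq V[G]$.  In $V[H_{\bbR}]$, the evaluation $\dot\add(\lambda)^{H_{\bbR}}$ is the poset of partial functions from $\lambda$ to $2$ of size ${<}\lambda$; since $\lambda$ remains regular by the $\lambda$-c.c.\ of $\bbR$, this poset is $\lambda$-closed and hence $\lambda$-distributive in $V[H_{\bbR}]$.  Consequently $V[H] = V[H_{\bbR}][s_H]$ adds no $\delta$-sequence of ordinals over $V[H_{\bbR}]$ for $\delta < \lambda$, so every $(\bbR * \dot\add(\lambda))/G$-name for such a sequence is realized inside $V[H_{\bbR}] \subseteq V[G]$, as required.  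The principal subtle point is the density step in establishing the genericity of $H_{\bbR}$, which requires two careful applications of Remark~\ref{termremark} intertwined with the projections $\rho_\alpha$.
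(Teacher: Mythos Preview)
Your approach has a genuine gap: the set $H_{\bbR}$ you define need not be a filter.  The problematic step is the assertion that the $\Sh$-ordering $\la\bbR_\alpha,r,\dot s\ra \leq \la\bbR_{\alpha_i},r'_i,\dot s_i\ra$ is ``witnessed by $\rho_{\alpha_i}\rest\bbR_\alpha$'', from which you conclude $\rho_{\alpha_i}(r)\leq r'_i$ and hence $r\leq r'_i$.  But the definition of the order on $\Sh(\kappa,\lambda)$ only asks for the existence of \emph{some} $\kappa$-continuous projection witnessing the relation; nothing forces this witness to be the canonical $\rho_{\alpha_i}$.  In fact, the paper remarks explicitly, just after this lemma, that one can have incompatible $p_0,p_1\in\bbP$ with $\la\bbP,p_0,\tau\ra,\la\bbP,p_1,\tau\ra\in G$.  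Concretely, if $\iota$ is an automorphism of $\bbR_\alpha$ with $\iota(r)\perp r$, then $\la\bbR_\alpha,r,\dot s\ra$ and $\la\bbR_\alpha,\iota(r),\iota^*(\dot s)\ra$ are equivalent in $\Sh$ (each is below the other via $\iota^{\pm 1}$), so whenever one lies in $G$ so does the other, placing the incompatible $r,\iota(r)$ both in your $H_\bbR$.  The machinery that later pins down projections uniquely (Lemma~\ref{freeze}) requires \emph{coding conditions}, which are not available at this point in the argument.

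The paper's proof avoids this trap by not attempting to recover the $\bbR$-generic from $G$ at all.  Instead it recovers the $\add(\lambda)$-generic function $k:\lambda\to 2$: one defines $g(\alpha)=n$ iff some $\la\bbP,p,\tau\ra\in G$ has $p\Vdash_\bbP\tau(\alpha)=n$, and this \emph{is} well-defined because any two such conditions in $G$ have a common refinement in $G$ which must force a single value.  One checks $g=k$.  Then any ${<}\lambda$-sequence of ordinals in $V[H{*}K]$ lives in $V[H\cap\bbR_\alpha]$ for some $\alpha<\lambda$; a subset $y\subseteq\kappa$ coding $H\cap\bbR_\alpha$ is, by density, recorded as a $\kappa$-block of $k=g$, hence $y\in V[G]$ and the sequence is in $V[G]$.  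The crucial difference is that the third coordinate $\tau$ is rigid under the $\Sh$-ordering (stronger conditions must force end-extensions), whereas the second coordinate $p$ is not.
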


\begin{proof}
    Let $\la \bbR_\alpha : \alpha<\lambda \ra \subseteq \bbR$ witness that $\bbR$ is reasonable.
    Let $H * K \subseteq \bbR * \dot\add(\lambda)$ be generic such that $\psi(H{*}K) = G$.  Let $k: \lambda \to 2$ be the function corresponding to $K$.  
    
    First, we claim that $k \in V[G]$.  Let $\alpha<\lambda$ and let $\la\bbP_0,p_0,\tau_0\ra,\la\bbP_1,p_1,\tau_1\ra \in G$ be such that $p_0 \Vdash_{\bbP_0} \tau(\alpha) = n_0$ and $p_1 \Vdash_{\bbP_1} \tau(\alpha) = n_1$. There is $\la\bbQ,q,\sigma\ra \in G$ below both $\la\bbP_0,p_0,\tau_0\ra$ and $\la\bbP_1,p_1,\tau_1\ra$, and by the definition of the ordering, $q \Vdash \sigma(\alpha) = n_0 = n_1$, so $n_0 = n_1$.  Thus $G$ determines a function $g : \lambda \to 2$ by $g(\alpha) = n$ iff for some $\la\bbP,p,\tau\ra \in G$, $p \Vdash_\bbP \tau(\alpha) = n$.  Now if $\la r,\dot s\ra \in H*K$ is a condition of length $\beta$ and $r \Vdash_{\bbR_\beta} \dot s(\alpha) = n$, then $\psi(r,\dot s) = \la\bbR_\beta,r,\dot s\ra \in G$.  Thus $g = k$.

    For any set of ordinals $x \in V[H{*}K]$ of size $\leq\kappa$, there is some $\alpha<\lambda$ such that $x \in V[H \cap \bbR_\alpha]$.  In $V[G]$, the set $H \cap \bbR_\alpha$ is coded by some $y \subseteq \kappa$, and by a density argument there will be some $\beta<\lambda$ such that $y = \{ \gamma<\kappa : k(\beta +\gamma) = 1\}$.  Thus $y = \{ \gamma<\kappa : g(\beta+\gamma) = 1 \} \in V[G]$, and so $x \in V[G]$.
\end{proof}

In the previous lemma, for $\langle \bbP, p, \tau\rangle\in G$, we could not necessarily identify a generic for $\mathbb{P}$ in the generic extension. Namely, it is possible that there are incompatible $p_0,p_1\in\bbP$ such that $\langle \bbP, p_0, \tau\rangle, \langle \bbP, p_1, \tau\rangle\in G$. The ordering of $\Sh(\kappa,\lambda)$ ensures that such incompatible conditions must still decide the same information on $\tau$. In the next definition we identify a dense subset of conditions in which such a phenomenon cannot occur.  
\begin{definition}[Coding conditions]
    We say $\la \bbP,p,\tau \ra \in \Sh(\kappa,\lambda)$ is a \emph{coding condition} when $\tau$ is forced to have domain $\delta<\lambda$, and whenever $G \subseteq \bbP$ is generic,
    \begin{itemize}
        \item $V[G] = V[\tau^G]$;
        \item in all generic extensions of $V[G]$, $G$ is the unique filter $F$ that is $\bbP$-generic over $V$ such that $\tau^F = \tau^G$.
    \end{itemize}
  We say that $\la \bbP,p,\tau \ra$ is a \emph{strong coding condition} when $\tau$ is forced to have domain $\delta<\lambda$, and there is a 
  dense $D \subseteq \bbP$, a set $X \subseteq \delta$, and a function $f : X \to D$ such that for all $\alpha \in X$,
    $$1_\bbP \Vdash f(\alpha) \in \dot G \leftrightarrow \tau(\alpha) = 1.$$
\end{definition}


\begin{lemma}
\label{strongcoding}
    Strong coding conditions are coding.
\end{lemma}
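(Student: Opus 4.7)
My plan is to show directly that $\tau^G$ determines $G$, which will yield both $V[G]=V[\tau^G]$ and the uniqueness clause at once. The key observation is that the hypothesis exhibits, for each $\alpha\in X$, a specific condition $f(\alpha)\in D$ whose membership in the generic filter is recorded by the single bit $\tau^G(\alpha)$; thus $\tau^G$ directly codes the set $G\cap f[X]$, and density of $f[X]$ in $\bbP$ then allows me to recover all of $G$.

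Concretely, I would first record the standard fact that whenever $F$ is $V$-generic on $\bbP$ and $E\subseteq\bbP$ is dense, $F$ equals the upward closure in $\bbP$ of $F\cap E$: for each $p\in F$ the set $\{e\in E:e\leq p\}$ is dense below $p$ and so is met by $F$ by genericity. Next, applying the defining equivalence $1_\bbP\Vdash f(\alpha)\in\dot G\leftrightarrow\tau(\alpha)=1$ to an arbitrary $\bbP$-generic filter $F$ over $V$ (in any outer universe containing both $V$ and $F$) yields
\[ F\cap f[X]=\{f(\alpha):\alpha\in X,\ \tau^F(\alpha)=1\}. \]
Taking $F=G$, the right-hand side is computable from $\tau^G$ in $V$, so $G\cap f[X]\in V[\tau^G]$, and hence $G\in V[\tau^G]$ by the upward-closure fact; the reverse inclusion $V[\tau^G]\subseteq V[G]$ is immediate since $\tau$ is a $\bbP$-name. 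For uniqueness, if $F$ is a $\bbP$-generic filter over $V$ living in some further generic extension of $V[G]$ with $\tau^F=\tau^G$, the displayed formula applied to both $F$ and $G$ gives $F\cap f[X]=G\cap f[X]$, after which the upward-closure fact applied to both filters forces $F=G$.

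The one delicate point I anticipate is that the definition asserts only that $D$ is dense, whereas the reconstruction genuinely needs $f[X]$ dense. I would resolve this by reading the definition with $D=f[X]$, since the defining equivalence uses only the image of $f$ and this is the natural interpretation; the density hypothesis on $D$ then becomes the density of $f[X]$ used in the argument. No further large-cardinal or forcing machinery is needed, so this is a direct bookkeeping proof rather than one with a serious obstacle.
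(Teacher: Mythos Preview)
Your proof is correct and follows essentially the same route as the paper's: both hinge on the observation that $G\cap f[X]$ is determined by $\tau^G$ via the defining equivalence, and then density of $f[X]$ lets one recover $G$ itself. You spell out the upward-closure step and argue uniqueness directly, whereas the paper phrases uniqueness contrapositively (if $H\neq G$ then some $d\in D\cap G\setminus H$ yields a bit on which $\tau^G$ and $\tau^H$ disagree), but the content is the same. Your remark about needing $f[X]$ dense rather than merely $D$ dense is apt; the paper's proof makes the same implicit identification when it asserts that an element of $D\cap G\setminus H$ has a preimage under $f$.
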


\begin{proof}
    Suppose $\la\bbP,p,\tau\ra \in \Sh(\kappa,\lambda)$ is strong coding, and let $\delta<\lambda$, $D \subseteq \bbP$, $X\subseteq\delta$, and $f : X \to D$ witness.  Let $G \subseteq \bbP$ be generic and let $W$ be any generic extension of $V[G]$.  $D \cap G$ can be read off from $\tau^G$, so $V[G] = V[\tau^G]$.  If $H \in W$ is a different filter that is $\bbP$-generic over $V$, then there is $p \in D \cap G$ such that $p \notin H$.  There is $\alpha\in X$ such that $\tau^G(\alpha) = 1$ and $\tau^H(\alpha) = 0$.
\end{proof}
Note that $\la\bbP,p,\tau\ra$ being a coding or strong coding condition depends only on $\bbP$ and $\tau$ and not on $p$. 
It is easy to see that strong coding conditions are dense in $\Sh(\kappa,\lambda)$.
We also note that the property of being a coding condition is relatively local; a standard Löwenheim-Skolem argument shows that $\la\bbP,p,\tau\ra$ is a coding condition iff this is true in $H_\theta$, where $\theta>2^{|\bbP|}$ is regular and $\la\bbP,p,\tau\ra\in H_\theta$.

Suppose $\la \la\bbP_\alpha,1_\alpha,\tau_\alpha\ra : \alpha < \delta\ra \subseteq \Sh(\kappa,\lambda)$ is a descending sequence of coding conditions.  
Suppose $\la\bbP_\delta,1_\delta,\tau_\delta\ra$ is a lower bound, $\tau_\delta$ is forced to the the union of the $\tau_\alpha$, and $\bbP_\delta$ is in some sense a ``limit'' of the $\bbP_\alpha$, in that there is a dense set $D \subseteq \bbP_\delta$ and there are projections $\pi_{\delta\alpha} : D \to \bbP_\alpha$, such that whenever $G \subseteq \bbP_\delta$ is generic, then $G \cap D = \{ p \in D : (\forall \alpha<\delta) \pi_{\delta\alpha}(p) \in \pi_{\delta\alpha}(G) \}$.  Then $\la\bbP_\delta,1_\delta,\tau_\delta\ra$ is also coding, since $G$ can be recovered from $\la \pi_{\delta\alpha}(G) : \alpha<\delta\ra$, which can be recovered from $\tau_\delta^G = \bigcup_{\alpha<\delta} \tau_\alpha^{\pi_{\delta\alpha}(G)}$, and if $G' \not= G$ is another $\bbP_\delta$-generic filter, then for some $\alpha<\delta$, $\pi_{\delta\alpha}(G) \not= \pi_{\delta\alpha}(G')$, and thus $\tau_\delta^G \not= \tau_\delta^{G'}$.

\begin{lemma}[Projection freezing]
\label{freeze}
    Let $\langle \mathbb{P}, p, \tau\rangle$ be a coding condition and let $\langle \mathbb{Q},q,\sigma\rangle \leq \langle \mathbb{P}, p, \tau\rangle$.   For any dense $D \subseteq \bbQ$, if $\pi_0,\pi_1$ are projections from $D$ to $\bbP$ such that $q \Vdash \pi_i^*(\tau) \trianglelefteq \sigma$ for $i=0,1$, then $\pi_0(r) = \pi_1(r)$ for all $r \leq q$ in $D$.
\end{lemma}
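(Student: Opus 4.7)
The plan is to argue by contradiction, using the uniqueness clause built into the definition of a coding condition. Suppose there were some $r \leq q$ in $D$ with $\pi_0(r) \neq \pi_1(r)$ in the separative poset $\bbP$. Without loss of generality assume $\pi_0(r) \nleq \pi_1(r)$. By separativity of $\bbP$, pick $r^* \leq \pi_0(r)$ with $r^* \perp \pi_1(r)$. Since $\pi_0 : D \to \bbP$ is a projection, we can find $r'' \leq r$ in $D$ with $\pi_0(r'') \leq r^*$. Then $\pi_0(r'') \perp \pi_1(r)$, and since $\pi_1(r'') \leq \pi_1(r)$, also $\pi_0(r'') \perp \pi_1(r'')$.

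Now pass to a $V$-generic $H \subseteq \bbQ$ containing $r''$. Because $\pi_0, \pi_1$ are projections on the dense set $D$, the upward closures $G_i$ of $\pi_i[H \cap D]$ are $V$-generic filters on $\bbP$, and $\pi_0(r'') \in G_0$, $\pi_1(r'') \in G_1$. By the definition of $\pi_i^*$, in $V[H]$ we have $(\pi_i^*(\tau))^H = \tau^{G_i}$, and since $q \in H$ forces $\pi_i^*(\tau) \trianglelefteq \sigma$, we obtain $\tau^{G_i} \trianglelefteq \sigma^H$ for $i = 0, 1$. Both $\tau^{G_0}$ and $\tau^{G_1}$ are thus the unique initial segment of $\sigma^H$ of length $\dom\tau$, so $\tau^{G_0} = \tau^{G_1}$.

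Now invoke the coding condition hypothesis: inside the generic extension $V[H]$, $G_0$ is the unique $V$-generic filter on $\bbP$ whose interpretation of $\tau$ is $\tau^{G_0}$. Since $G_1$ is another such filter (with the same $\tau$-value), we conclude $G_0 = G_1$. But then $\pi_0(r'')$ and $\pi_1(r'')$ both lie in this common filter, contradicting the incompatibility established in the first paragraph. Hence $\pi_0(r) \leq \pi_1(r)$, and by symmetry $\pi_0(r) = \pi_1(r)$.

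The one subtlety worth flagging is that the coding clause is a statement about filters in arbitrary outer models of $V[G]$; here both $G_0, G_1$ live in $V[H]$, which is a generic extension of $V[G_0]$ (the quotient forcing takes us from $\bbP$ up to $\bbQ$), so the uniqueness clause applies directly in $V[H]$. This is the only nontrivial step; the rest is a routine separativity-plus-projection manipulation.
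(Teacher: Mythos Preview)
Your proof is correct and essentially identical to the paper's: both argue by contradiction, use separativity to produce $r'' \leq q$ with $\pi_0(r'') \perp \pi_1(r'')$, then take a $\bbQ$-generic $H$ containing $r''$ and observe that the two induced $\bbP$-generics $G_0, G_1$ yield the same $\tau$-value (as initial segments of $\sigma^H$ of the fixed length $\delta$), contradicting the uniqueness clause in the definition of coding condition. Your extra remark that $V[H]$ is a generic extension of $V[G_0]$, so the uniqueness clause applies there, is a helpful clarification the paper leaves implicit.
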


\begin{proof}
    Let $D,\pi_0,\pi_1$ be as hypothesized, and let $\delta<\lambda$ be the forced domain of $\tau$.
    Suppose $r \leq q$, $r \in D$, and $\pi_0(r) \not= \pi_1(r)$; let us assume $\pi_0(r) \nleq \pi_1(r)$.  Let $p' \leq \pi_0(r)$ be such that $p' \perp \pi_1(r)$, and let $r' \leq r$ be such that $\pi_0(r') \leq p'$.  Thus we have $\pi_0(r') \perp \pi_1(r')$ since $\pi_1(r') \leq \pi_1(r) \perp p'$. 

    Let $H \subseteq \bbQ$ be generic with $r' \in H$.  Let $G_0,G_1$ be the $\bbP$-generics induced by $\pi_0,\pi_1$ respectively; note $G_0 \not= G_1$.  Since $q \in H$, $\tau^{G_0} = \tau^{G_1} = \sigma^H \rest \delta$.  But this is a contradiction since there is only one $\bbP$-generic filter $F \in V[H]$ with $\tau^F = \sigma^H \rest \delta$.
\end{proof}

\begin{lemma}
\label{G_P}
    Suppose $\la\bbP,1_\bbP,\tau\ra\in \Sh(\kappa,\lambda)$ is a coding condition, and let $G \subseteq \Sh(\kappa,\lambda)$ be generic over $V$ with $\la\bbP,1_\bbP,\tau\ra\in G$.  Then 
    $$G_{\bbP}^\tau = \{ p \in \bbP : \la\bbP,p,\tau\ra \in G\}$$
    is a $\bbP$-generic filter over $V$.
\end{lemma}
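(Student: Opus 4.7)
The plan is to verify the three defining properties of a $V$-generic filter for $\bbP$ in turn: upward closure, downward directedness, and genericity. Throughout, I will exploit the fact that the identity map $\id_\bbP$ is a $\kappa$-continuous projection witnessing $\la \bbP, p, \tau\ra \leq \la\bbP, p', \tau\ra$ whenever $p \leq_\bbP p'$, and I will invoke the projection freezing lemma (Lemma~\ref{freeze}) whenever two \emph{a priori} different projections from some $\bbQ$ down to $\bbP$ appear.

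For upward closure, suppose $p \in G_\bbP^\tau$ and $p \leq_\bbP p'$. Using the identity as the witnessing projection, $\la\bbP,p,\tau\ra \leq \la\bbP,p',\tau\ra$, so upward closure of $G$ gives $\la\bbP,p',\tau\ra \in G$, i.e., $p' \in G_\bbP^\tau$. For downward directedness, suppose $p_0,p_1 \in G_\bbP^\tau$. Since $G$ is a filter, there is $\la\bbQ,q,\sigma\ra \in G$ below both $\la\bbP,p_i,\tau\ra$, witnessed by $\kappa$-continuous projections $\pi_i : D_i \to \bbP$ from $\kappa$-closed dense $D_i \subseteq \bbQ$, with $q \Vdash p_i \in \pi_i(\dot G_\bbQ)$ and $q \Vdash \pi_i^*(\tau) \trianglelefteq \sigma$. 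Setting $D = D_0 \cap D_1$ (still $\kappa$-closed and dense by Lemma~\ref{int_denseclosed}) and applying Lemma~\ref{freeze} to the common extension $\la\bbQ,q,\sigma\ra \leq \la\bbP,1_\bbP,\tau\ra$ of our coding condition, we see that $\pi_0 \rest D$ and $\pi_1 \rest D$ agree below $q$; call the common map $\pi$. The forcing statements $q \Vdash p_i \in \pi(\dot G_\bbQ)$ then give densely many $r \leq q$ in $D$ with $\pi(r) \leq p_0$, and, applying this again, we find $r' \leq r$ in $D$ with $\pi(r') \leq p_1$, hence $\pi(r') \leq p_0,p_1$. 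By a standard density argument inside $\Sh(\kappa,\lambda)$, the set of $\la\bbQ,r',\sigma\ra$ with this property is predense below $\la\bbQ,q,\sigma\ra$, so by genericity some such $\la\bbQ,r',\sigma\ra$ lies in $G$. Using $\pi$ as the witnessing projection, $\la\bbQ,r',\sigma\ra \leq \la\bbP,\pi(r'),\tau\ra$, so $\pi(r') \in G_\bbP^\tau$ is the desired common refinement.

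For genericity, let $E \subseteq \bbP$ be a dense set in $V$ and set $A_E = \{\la\bbP,p,\tau\ra : p \in E\}$. I will show $A_E$ is predense in $\Sh(\kappa,\lambda)$ below $\la\bbP,1_\bbP,\tau\ra \in G$. Given any $\la\bbQ,q,\sigma\ra \leq \la\bbP,1_\bbP,\tau\ra$ via some projection $\pi : D \to \bbP$ on a $\kappa$-closed dense $D \subseteq \bbQ$, pick $q' \leq q$ in $D$, then $p \in E$ with $p \leq \pi(q')$, and finally $q'' \leq q'$ in $D$ with $\pi(q'') \leq p$. Then $\la\bbQ,q'',\sigma\ra$ witnesses that $\la\bbQ,q,\sigma\ra$ is compatible with $\la\bbP,p,\tau\ra \in A_E$. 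Thus $G \cap A_E \neq \emptyset$, so $G_\bbP^\tau \cap E \neq \emptyset$.

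The main obstacle is the downward-directedness step: \emph{a priori}, the two projections $\pi_0,\pi_1$ arising from the same ambient $\la\bbQ,q,\sigma\ra$ could behave quite differently, and we need them to combine into a single projection sending a common refinement in $\bbQ$ below both $p_0$ and $p_1$ in $\bbP$. This is exactly what the coding condition hypothesis buys us, via the projection freezing lemma: it forces the two projections to coincide below $q$ on $D_0 \cap D_1$, making the remainder of the argument a routine extraction of a compatible refinement.
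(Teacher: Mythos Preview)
Your proof is correct and follows essentially the same approach as the paper's: upward closure via the identity projection, directedness by taking a common lower bound in $G$ and invoking projection freezing (Lemma~\ref{freeze}) to merge the two witnessing projections, and genericity by showing the obvious set $A_E$ is predense below $\la\bbP,1_\bbP,\tau\ra$. If anything, your treatment of the directedness step is slightly more explicit than the paper's, since you spell out how to pass from $q \Vdash p_i \in \pi(\dot G_\bbQ)$ to an actual $r' \leq q$ with $\pi(r') \leq p_0,p_1$, whereas the paper simply asserts $\pi_0(q)\leq p_0,p_1$ once $q$ lies in both domains.
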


\begin{proof}
    First, let us check that that $G_\bbP^\tau$ is a filter.  Suppose $\la\bbP,p_0,\tau\ra,\la\bbP,p_1,\tau\ra\in G$.  If $\la\bbQ,q,\sigma\ra \in \Sh(\kappa,\lambda)$ is below both $\la\bbP,p_0,\tau\ra$ and $\la\bbP,p_1,\tau\ra$, with projections $\pi_0,\pi_1 : \bbQ \to \bbP$ witnessing each order relation respectively, then we can find $q'\leq q$ such that $q' \in \dom \pi_0 \cap \dom \pi_1$.  
    Therefore there is a dense set of conditions $\la\bbQ,q,\sigma\ra\in\Sh(\kappa,\lambda)$ that are either incompatible with one of $\la\bbP,p_0,\tau\ra,\la\bbP,p_1,\tau\ra$, or below both with $q$ in the domain of each of a pair of projections witnessing the order relations.  Since $\la\bbP,p_0,\tau\ra,\la\bbP,p_1,\tau\ra \in G$, the latter option appears in $G$.  If $\la\bbQ,q,\sigma\ra \in G$ is below both, with witnessing projections $\pi_0,\pi_1 : \bbQ \to \bbP$, with $q \in \dom \pi_0 \cap \dom \pi_1$, then $\pi_0(q) = \pi_1(q)$ by Lemma~\ref{freeze}, since $\la\bbP,1,\tau\ra$ is coding.  Thus $\la\bbP,\pi_0(q),\tau\ra\in G$ and $\pi_0(q) \leq p_0,p_1$.

    To show that $G_\bbP^\tau$ is generic, let $D \subseteq \bbP$ be an open and dense set in $V$.  If $\la\bbQ,q,\sigma\ra \in \Sh(\kappa,\lambda)$ is below $\la\bbP,1,\tau\ra$, then if $\pi : \bbQ \to \bbP$ is a witnessing projection, there is $q' \leq q$ such that $\pi(q') \in D$.  Therefore there is a dense set of conditions $\la\bbQ,q,\sigma\ra$ that are either incompatible with $\la\bbP,1,\tau\ra$, or below it and such that $q \in \pi^{-1}[D]$ for a witnessing projection $\pi$.  If $\la\bbQ,q,\sigma\ra\in G$ is in this dense set, the the latter option must occur, so for some projection $\pi : \bbQ \to \bbP$, $\pi(q) \in D \cap G_\bbP^\tau$.
\end{proof}

\begin{remark}
    If $\la\bbP,1,\tau\ra$ is a coding condition, then there is a projection $\pi$ from a dense subset of $\Sh(\kappa,\lambda) \rest \la\bbP,1,\tau\ra$ to $\bbP$.  For any $\la\bbQ,q,\sigma\ra\leq\la\bbP,1,\tau\ra$, if there is  a projection $\rho$ witnessing the ordering with $q\in\dom\rho$, let $\pi(\la\bbQ,q,\sigma\ra)=\rho(q)$.  This is well-defined because by Lemma~\ref{freeze}, any two $\rho_0,\rho_1$ with these properties must have $\rho_0(q)=\rho_1(q)$.  It is easy to check that $\pi$ is a projection.
\end{remark}

\begin{remark} 
    $\Sh(\kappa,\lambda)$ is never countably closed.  
    
    To see this, let $\bbP$ be any nontrivial completely $\kappa$-closed separative poset in $V_\lambda$.  $\mathcal{B}(\bbP)$ contains a countable maximal antichain, $\la a_i : i < \omega \ra$.  If $b_i = \sum_{j\geq i} a_j$, then $\la b_i : i < \omega \ra$ is a descending sequence without a lower bound.  Viewing $\bbP$ as a dense subset of $\mathcal{B}(\bbP)$, let $\bbP_i = \bbP \cup \{ b_i \}$ for each $i<\omega$.  Note that each $\bbP_i$ is completely $\kappa$-closed, as the single extra point $b_i$ does not affect this.  
    
    Now, let $\tau$ be a name such that $\la \bbP,1,\tau \ra$ is a coding condition.  The sequence $\la\la\bbP_i,b_i,\tau \ra : i <\omega \ra$ is a descending sequence in $\Sh(\kappa,\lambda)$, with $\id \rest \bbP$ witnessing each order relation.  A lower bound to this sequence would force that for each $i$, $G^\tau_{\bbP_i}$ is a generic filter for $\bbP_i$ possessing $b_i$.  We must also have that $G^\tau_{\bbP_i} \rest \bbP = G^\tau_\bbP$ for all $i<\omega$.  Thus, if $H$ is the filter on $\mathcal{B}(\bbP)$ generated by $G^\tau_\bbP$, then $H$ contains the descending sequence $\la b_i : i < \omega \ra$.  But this contradicts the fact that $H$ must possess one of the $a_i$ by genericity.
 \end{remark}



As a corollary of the Freezing Lemma, we conclude that (at least locally) the projections that witness the order of the conditions in $\Sh(\kappa,\lambda)$ commute.
\begin{lemma}
\label{commute}
    Suppose $\la\bbP_0,p_0,\tau_0\ra, \la\bbP_1,p_1,\tau_1\ra,\la\bbP_2,p_2,\tau_2\ra$ are elements of $\Sh(\kappa,\lambda)$ such that:
    \begin{enumerate}
        \item $\la\bbP_2,p_2,\tau_2\ra \leq \la\bbP_1,p_1,\tau_1\ra\leq\la\bbP_0,p_0,\tau_0\ra$.
        \item $\la\bbP_0,p_0,\tau_0\ra$ is a coding condition.
        \item For $i < j$, $i,j\in\{0,1,2\}$, $\pi_{j,i}$ is a projection witnessing $\la\bbP_j,p_j,\tau_j\ra \leq \la\bbP_i,p_i,\tau_i\ra$.
    \end{enumerate}
    Then for densely many $q \leq p_2$, $\pi_{2,0}(q) = \pi_{1,0} \circ \pi_{2,1}(q)$. 
\end{lemma}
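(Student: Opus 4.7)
The plan is to apply the Projection Freezing Lemma (Lemma~\ref{freeze}) to the pair of projections $\pi_{2,0}$ and $\pi_{1,0}\circ\pi_{2,1}$, both viewed as projections from (dense subsets of) $\bbP_2$ to $\bbP_0$. Since $\la\bbP_0,p_0,\tau_0\ra$ is a coding condition and both projections witness the (transitively derived) ordering $\la\bbP_2,p_2,\tau_2\ra\leq\la\bbP_0,p_0,\tau_0\ra$, the freezing lemma will immediately yield agreement on a common dense set below $p_2$.

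First I would set up the common domain. By Lemma~\ref{composition} applied to $\bbP_2\to\bbP_1\to\bbP_0$, there is a $\kappa$-closed dense $E\subseteq\bbP_2$ on which $\pi_{1,0}\circ\pi_{2,1}$ is defined and is a $\kappa$-continuous projection onto $\bbP_0$. Let $D_{2,0}$ be the $\kappa$-closed dense domain of $\pi_{2,0}$, and set $D=E\cap D_{2,0}$, which is $\kappa$-closed and dense in $\bbP_2$ by Lemma~\ref{int_denseclosed}. Both $\pi_{2,0}$ and $\pi_{1,0}\circ\pi_{2,1}$ then restrict to $\kappa$-continuous projections from $D$ to $\bbP_0$.

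Next I would verify that the composed projection also witnesses $\la\bbP_2,p_2,\tau_2\ra\leq\la\bbP_0,p_0,\tau_0\ra$. For the filter-membership condition, any generic $G\subseteq\bbP_2$ containing $p_2$ induces via $\pi_{2,1}$ a generic $G_1\ni p_1$, which in turn induces via $\pi_{1,0}$ a generic $G_0\ni p_0$, so $p_2\Vdash p_0\in(\pi_{1,0}\circ\pi_{2,1})(\dot G)$. For the name condition, we have $p_2\Vdash\pi_{2,1}^*(\tau_1)\trianglelefteq\tau_2$ and $p_1\Vdash\pi_{1,0}^*(\tau_0)\trianglelefteq\tau_1$; pulling the latter back via $\pi_{2,1}^*$ (using that $\pi^*$ preserves ordering and composes naturally, so $(\pi_{1,0}\circ\pi_{2,1})^*=\pi_{2,1}^*\circ\pi_{1,0}^*$ up to forcing equivalence) and concatenating with the former yields $p_2\Vdash(\pi_{1,0}\circ\pi_{2,1})^*(\tau_0)\trianglelefteq\tau_2$.

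Having established that both projections satisfy the hypotheses of Lemma~\ref{freeze} on the common dense set $D$, we conclude that $\pi_{2,0}(q)=\pi_{1,0}\circ\pi_{2,1}(q)$ for every $q\leq p_2$ in $D$, which is the required densely many $q$. The main subtlety is the name-translation bookkeeping in the second step: one must be careful that the $\ast$-operation genuinely composes under the projection composition (as opposed to only on a dense subset), so that we are entitled to invoke the freezing lemma with a single unified name $\tau_0$ on both projections. Everything else is a straightforward assembly of results already established in the preliminaries.
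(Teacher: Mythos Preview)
Your proposal is correct and is essentially the approach the paper has in mind: the paper states this lemma as a corollary of the Freezing Lemma (Lemma~\ref{freeze}) without writing out a separate proof, relying on the transitivity argument to show that $\pi_{1,0}\circ\pi_{2,1}$ witnesses $\la\bbP_2,p_2,\tau_2\ra\leq\la\bbP_0,p_0,\tau_0\ra$. Your name-translation concern is already handled by the proof of transitivity of the ordering, so there is no genuine subtlety remaining.
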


In Lemmas \ref{strongstrategic} and \ref{lambdastratclosed} we will prove strengthenings of the following lemmas:
\begin{lemma}
    $\Sh(\kappa,\lambda)$ is $\kappa$-strategically closed.
\end{lemma}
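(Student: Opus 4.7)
The plan is to transfer strategic closure from a $\kappa$-closed forcing to $\Sh(\kappa,\lambda)$ via Lemma~\ref{mainproj}. Take $\bbR = \Col(\kappa, {<}\lambda)$, which is a reasonable $(\kappa,\lambda)$-collapse. Lemma~\ref{mainproj} then provides a dense $D \subseteq \bbR * \dot\Add(\lambda)$ together with a projection $\psi : D \to \Sh(\kappa,\lambda)$. The poset $\bbR * \dot\Add(\lambda)$ is $\kappa$-closed since $\bbR$ is $\kappa$-closed and $\bbR$ forces $\dot\Add(\lambda)$ to be $\lambda$-closed; hence any descending sequence in $D$ of length less than $\kappa$ has a lower bound in $\bbR * \dot\Add(\lambda)$, which density refines into an element of $D$.

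Player~II's winning strategy in $\mathcal{G}^{\mathrm{II}}_\kappa(\Sh(\kappa,\lambda))$ would simulate a game in $D$. At each of its rounds $\alpha$, Player~II maintains a descending condition $q_\alpha \in D$ with $\psi(q_\alpha)$ below the current $\Sh$-move. At a successor round, once Player~I has played some $p$, Player~II invokes the projection property of $\psi$ to find $q' \leq q_{\beta^*}$ in $D$ (where $\beta^*$ is Player~II's most recent round) with $\psi(q') \leq p$, sets $q_\alpha = q'$, and plays $\psi(q_\alpha)$. At a limit round $\gamma < \kappa$ (where Player~II plays first in $\mathcal{G}^{\mathrm{II}}$), Player~II takes any lower bound of $\{q_\beta : \beta < \gamma, \beta \text{ a Player~II round}\}$ in $\bbR * \dot\Add(\lambda)$ by $\kappa$-closure, refines to $q_\gamma \in D$ by density, and plays $\psi(q_\gamma)$. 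Order-preservation of $\psi$, together with the cofinality of Player~II's rounds below every limit, ensures $\psi(q_\gamma)$ is a lower bound to every earlier $\Sh$-move, so Player~II survives all $\kappa$ rounds.

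The main obstacle is essentially the setup afforded by Lemma~\ref{mainproj}; once that is in hand, the argument is a routine transfer of strategic closure through a projection. A more direct strategy using strong coding conditions and inverse limits (via Lemmas~\ref{commute} and~\ref{invlim_general}) is also available, maintaining as an invariant that Player~II's posets at limit rounds are literally inverse limits of the previously chosen commuting projection system, so that the hypothesis on the maps $\sigma_\gamma$ in Lemma~\ref{invlim_general} holds trivially by induction. This more hands-on approach is presumably what gets strengthened in Lemmas~\ref{strongstrategic} and~\ref{lambdastratclosed}, but the transfer argument above suffices for the statement as given.
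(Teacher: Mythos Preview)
Your argument is correct, and it is essentially the same approach the paper takes. The paper does not prove this lemma directly but defers to Lemma~\ref{strongstrategic}; if you look at that proof restricted to the $\Sh(\kappa,\lambda)$-coordinate, Player~II's moves are exactly conditions of the form $\la\col(\kappa,{<}\alpha_i),r_i,\tau_i\ra$ obtained via Lemma~\ref{mainproj} from nested elements of $\col(\kappa,{<}\lambda)*\dot\Add(\lambda)$, which is precisely your transfer-through-projection strategy. Your last paragraph slightly mischaracterizes the paper's method: the inverse-limit machinery in Lemma~\ref{strongstrategic} is there to handle the $\bbU(\dot G)$ component, while the $\Sh$-coordinate is already the Levy-collapse transfer you describe.
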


\begin{lemma}
    $\Sh(\kappa,\lambda)$ forces $\lambda = \kappa^{+}$.
\end{lemma}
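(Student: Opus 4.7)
The plan is to transfer cardinal arithmetic from a standard Levy collapse to $\Sh(\kappa,\lambda)$ using the projection of Lemma~\ref{mainproj}. Since $\lambda$ is inaccessible, $\bbR := \col(\kappa,{<}\lambda)$ is a reasonable $(\kappa,\lambda)$-collapse: it is $\lambda$-c.c.\ and completely $\kappa$-closed, with regular suborders $\bbR_\alpha = \col(\kappa,{<}\alpha)$, the natural restriction projections $\rho_\alpha(p) = p\rest (\alpha\times\kappa)$, and for cofinally many regular $\alpha<\lambda$ one has $|\bbR_\alpha|=\alpha$ and $\Vdash_{\bbR_\alpha}|\alpha|=\kappa$. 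Apply Lemma~\ref{mainproj} to fix a projection $\psi : D \to \Sh(\kappa,\lambda)$ defined on a dense $D \subseteq \bbR * \dot\add(\lambda)$.

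Let $G \subseteq \Sh(\kappa,\lambda)$ be $V$-generic. By Lemma~\ref{distributive}, the quotient $(\bbR*\dot\add(\lambda))/G$ is $\lambda$-distributive over $V[G]$. Let $H * K$ be generic for this quotient over $V[G]$, so $\psi(H*K) = G$ and $V[G]\subseteq V[H][K]$, with no $<\lambda$-sequence of ordinals added in the extension $V[G]\to V[H][K]$. In $V[H][K]$, the Levy collapse $\bbR$ has already sent every cardinal in $[\kappa,\lambda)$ down to $\kappa$ while preserving $\lambda$ (by the $\lambda$-c.c.), and $\dot\add(\lambda)$ is forced to be $\lambda$-closed over $V[H]$, so it changes no cardinals $\leq\lambda$. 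Thus $\lambda = \kappa^+$ in $V[H][K]$.

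It remains to pull this conclusion back to $V[G]$. For each $\mu \in [\kappa,\lambda)$, a bijection $\kappa \to \mu$ in $V[H][K]$ is a $\kappa$-sequence of ordinals, hence by $\lambda$-distributivity of the quotient it already lies in $V[G]$; so $\Sh(\kappa,\lambda)$ collapses every such $\mu$ to $\kappa$. Conversely, $\lambda$ cannot be collapsed in $V[G]$: any bijection from a smaller ordinal onto $\lambda$ present in $V[G]$ would persist in $V[H][K]$, contradicting the regularity of $\kappa^+$ there. Finally, $\kappa$ itself is preserved, since the preceding lemma establishes that $\Sh(\kappa,\lambda)$ is $\kappa$-strategically closed, hence adds no new ${<}\kappa$-sequences of ordinals. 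Combining these three facts gives $\lambda = \kappa^+$ in $V[G]$, as required.

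The main technical point, rather than a genuine obstacle, is that the $\lambda$-distributivity of the quotient is used in both directions: downward to transport the collapsing bijections from $V[H][K]$ into $V[G]$, and upward to transport any hypothetical counterexample to the preservation of $\lambda$ from $V[G]$ into $V[H][K]$. Once Lemmas~\ref{mainproj} and~\ref{distributive} are in hand, the rest reduces to standard properties of the Levy collapse and of $\lambda$-closed forcing.
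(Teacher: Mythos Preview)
Your argument is correct. The paper does not give a direct proof of this lemma; it merely states it together with the $\kappa$-strategic closure lemma and remarks that strengthenings will be proved later (Lemmas~\ref{strongstrategic} and~\ref{lambdastratclosed}). Your route---applying Lemma~\ref{mainproj} with $\bbR=\col(\kappa,{<}\lambda)$ and then invoking the $\lambda$-distributivity of the quotient from Lemma~\ref{distributive} to transfer the cardinal structure of $V[H{*}K]$ back to $V[G]$---is exactly the natural direct argument, and both of those lemmas are already proved at this point in the paper. So you are supplying the proof the paper omits, using its own prior machinery.

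One small stylistic point: you appeal to ``the preceding lemma'' for the preservation of $\kappa$, but that lemma is equally unproven at this stage of the paper. You can avoid the circularity entirely by running the same sandwich argument you use for $\lambda$: since $\col(\kappa,{<}\lambda)$ is $\kappa$-closed and $\add(\lambda)^{V[H]}$ is ${<}\lambda$-closed, $\kappa$ remains a cardinal in $V[H{*}K]$, hence also in the intermediate model $V[G]$. This makes the proof self-contained from Lemmas~\ref{mainproj} and~\ref{distributive} alone.
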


\section{Uniformization}\label{sec: uniformization}
Before diving into the details, let us motivate our construction. In the standard constructions of saturated ideals, we lift an elementary embedding (typically, an almost-huge one) through a two-stage collapse $\bbP * \dot\bbR$.  We start with an embedding $j \colon V \to M$ such that $\crit(j) = \kappa$, $j(\kappa)=\lambda$, and $M^{<\lambda}\subseteq M$. $\bbP \subseteq V_\kappa$ turns $\kappa$ into a successor cardinal $\mu^+$, and $\bbR$ is a reasonable $(\kappa,\lambda)$-collapse in $V^\bbP$.

Typically, we will get that $j(\mathbb{P})$ projects to $\bbP*\dot\bbR$.  So if $G' \subseteq j(\bbP)$ is generic, then we obtain a generic $G * H \subseteq \bbP*\dot\bbR$, and we can lift the embedding first to $j \colon V[G] \to M[G']$.  As any initial segment of $\mathbb{R}$ is smaller than $\lambda$, the set $j[H {\restriction} \alpha]$ belongs to $M[G']$ for $\alpha<\lambda$, and using some closure property of $j(\mathbb R)$, we obtain a lower bound $m_\alpha \in j(\mathbb{R})\restriction j(\alpha)$ for this set. 



In order to extend the \emph{master sequence} $\la m_\alpha:  \alpha < \lambda\ra$ to an $M$-generic filter for $j(\mathbb{R})$, we must extend each $m_\alpha$ in a way that will be compatible with later extensions of $m_\beta$, $\alpha < \beta$. In other words, we are using the fact that the \emph{restriction map is a projection}. So, we can extend each $m_\alpha$ to some condition in $j(\mathbb{R}\restriction \alpha)$ meeting some antichain which is bounded by coordinate $j(\alpha)$, knowing that we will not obtain a disagreement with $m_\beta$ for larger $\beta$.

At limit points $\delta$ of this process, we must verify that $m_\delta$ is compatible with the \emph{sequence} of steps that we already made. This is not a consequence of the existence of projections from $j(\mathbb{R})\restriction\delta$ to the lower parts. Instead, we need to verify that this projection \emph{factors through the inverse limit} of the system of forcing notions and projections that we considered so far.\footnote{This is not automatic, even for very well-behaved forcing notions and projections. Indeed, let $\mathbb{Q}_n = \Add(\omega_1,n)$, namely all partial functions from $\omega_1\times n$ to $2$ with countable domain. Let $\pi_{n,m}\colon \mathbb{Q}_n \to \mathbb{Q}_m$ be the restriction map. Let $\mathbb{Q}_\omega$ be the collection of all functions $f \colon \alpha\times \omega \to 2$, with $\alpha<\omega_1$, such that for all $\beta<\alpha$, $\{n < \omega  :  f(\beta,n)=1\}$ is finite.  Let $\pi_{\omega,n} : \bbQ_\omega \to \bbQ_n$ be the restriction map $f \mapsto f \rest (\omega_1 \times n)$.  The system $\la\pi_{\beta,\alpha} : \alpha<\beta\leq \omega \ra$ is a commuting system of continuous projections, but $\bbQ_\omega$ is not the inverse limit of the system $\langle \mathbb Q_n, \pi_{n,m}  :  m < n < \omega\rangle$.  

It is not difficult to arrange that $\mathbb{Q}_\omega$ appears in a generic $G \subseteq \Sh(\omega_1,\lambda)$, as well as $\mathbb Q_n$ for all $n$,  with the above projections witnessing the ordering of their conditions, by picking the corresponding names $\tau$ in a suitable way. In this case, no other forcing that appears in $G$ can factor through the inverse limit of $\la \mathbb Q_n, \pi_{n,m}: n < m < \omega\ra$.}

In our forcing, we are using the Dual Shioya collapse $\Sh(\kappa,\lambda)$ after some first stage $\bbP \subseteq V_\kappa$.  Suppose that in some forcing extension, we have a lifted embedding $j \colon V[G] \to M[G']$, where $G,G'$ are generic for $\bbP,j(\bbP)$ respectively.  Suppose we also have a generic $H \subseteq \Sh(\kappa,\lambda)^{V[G]}$ such that $V_\alpha \cap H \in M[G']$ for all $\alpha<\lambda$.  In order to be able to lift the  embedding through $V[G][H]$, we need to add structure to $H$. We isolate a sequence of forcing notions and (almost) commuting projections between them. Moreover, we want the projections to respect small inverse limits. 

This can be done by adding the quotient relative to $\col(\kappa,{<}\lambda)*\add(\lambda)$, but this would be too aggressive. We want to maintain the universality of the forcing notion, in the sense of Lemma \ref{iteratedshioyastrat}. So, we add a more complicated, but weaker structure.

\begin{definition}
    Let $\bbU(G)$ be the set of triples $u = \la\vec\bbQ^u,\vec\pi^u, \vec q^u\ra$ such that:
\begin{enumerate}
\item For some $\delta<\lambda$, $\vec\bbQ^u = \la\la \bbQ_\alpha,1_\alpha,\tau_\alpha \ra : \alpha \leq \delta \ra$, $\vec\pi^u =  \la \pi_{\beta\alpha} : \alpha<\beta\leq\delta \ra$, and $\vec q^u = \la q_{\beta\alpha}  :  \alpha < \beta \leq \delta\rangle$.
\item For each $\alpha\leq\delta$, $\la \bbQ_\alpha,1_\alpha,\tau_\alpha \ra \in G$ and $\la\bbQ_\alpha,1_\alpha,\tau_\alpha \ra$ is a coding condition.
\item\label{codecommute} For each $\alpha<\beta\leq\delta$, $\la\bbQ_\beta,q_{\beta\alpha},\tau_\beta\ra \in G$, and $\pi_{\beta\alpha}$  is a $\kappa$-continuous projection from a $\kappa$-closed dense subset of $\bbQ_\beta$ to $\bbQ_\alpha$ that witnesses the order relation $\la\bbQ_\beta,q_{\beta\alpha},\tau_\beta\ra \leq \la\bbQ_\alpha,1_\alpha,\tau_\alpha\ra$. We denote $\pi_{\beta\beta}:=id_{\mathbb{Q}_\beta}$.
\item\label{invlimU} Suppose $x \in [\delta+1]^{<\kappa}$ has a top element $\gamma$.  If $q \in \bbQ_\gamma$ is such that $q \in E_\gamma^{\la\vec\bbQ \rest x,\vec\pi\rest x\ra}$
and $\pi_{\gamma\beta}(q) \leq q_{\beta\alpha}$ for all $\{\alpha,\beta\} \in [x]^2$, then below $q$, the natural map into the inverse limit along $x \cap \gamma$ is a projection.\footnote{Recall that $E_\gamma^{\la\vec\bbQ \rest x,\vec\pi\rest x\ra}$ is the set of all $p \in \mathbb Q_\gamma$ such that for all finite sequences $\alpha_0<\alpha_1<\dots<\alpha_n = \gamma$ contained in $x$, $\pi_{\alpha_1,\alpha_0}\circ\dots\circ\pi_{\alpha_n,\alpha_{n-1}}$ is defined at $p$. It is a $\kappa$-closed dense set.}   
More specifically, for all such $q$ and all
$$\vec r \in \varprojlim\left(\vec\bbQ \rest (x \cap \gamma),\vec\pi\rest(x\cap\gamma)\right)$$
pointwise below $\la \pi_{\gamma\alpha}(q) : \alpha \in x \cap \gamma \ra$, there is $q' \leq q$ such that
$$\la \pi_{\gamma\alpha}(q') : \alpha \in x \cap \gamma \ra \leq \vec r.$$

\end{enumerate}
We define $u' = \la \vec \bbQ^{u'}, \pi^{u'}, \vec q^{u'}\ra \leq u = \la \vec \bbQ^u, \pi^u, \vec q^u\rangle$  when $\vec\bbQ^{u'} \supseteq \vec\bbQ^u$, $\vec\pi^{u'}\supseteq\vec\pi^u$, and ${\vec q\ } ^{u'}\supseteq {\vec q\ }^u$.\footnote{We will omit the superscript $u$ when it is clear from the context, and add it to the components of the forcing for emphasis, when needed.}
\end{definition}

One way to think about the forcing $\bbU(G)$ is as a threading forcing, similar to the forcing adding a thread to an anti-compactness principle, as in \cite{CummingsForemanMagidor} and many subsequent works. The forcing $\Sh(\kappa,\lambda)$ adds a collection of collapsing generics connected via unspecified projections. In a larger generic extension (for example, in the generic extension by $\Col(\kappa,{<}\lambda) * \Add(\lambda)$), one can find those projections explicitly. Since the quotient map between $\Col(\kappa,{<}\lambda)\ast \Add(\lambda)$ and $\Sh(\kappa,\lambda)$ is $\lambda$-distributive, we expect the forcing $\bbU(G)$ to be $\lambda$-distributive as well.  

Let us remark on the connection between (\ref{codecommute}) and (\ref{invlimU}).  Suppose $\la\vec\bbQ,\vec\pi,\vec q\ra$ is a condition of length $\delta+1$.  Suppose $\alpha<\beta<\gamma\leq\delta$, and $q \in \bbQ_\gamma$ is such that the maps $\pi_{\gamma\alpha}$ and $\pi_{\beta\alpha}\circ\pi_{\gamma\beta}$ are defined at $q$, $q \leq q_{\gamma\beta},q_{\gamma\alpha}$, and $\pi_{\gamma\beta}(q) \leq q_{\beta\alpha}$.  
Then $\la \bbQ_\gamma,q,\tau_\gamma\ra \leq \la\bbQ_\alpha,1_\alpha,\tau_\alpha\ra$, and the maps $\pi_{\gamma\alpha}$ and $\pi_{\beta\alpha} \circ \pi_{\gamma\beta}$ both witness this relation.  Therefore, since $\la\bbQ_\alpha,1_\alpha,\tau_\alpha\ra$ is a coding condition, Lemma~\ref{freeze} implies that $\pi_{\gamma\alpha}(q) = \pi_{\beta\alpha} \circ \pi_{\gamma\beta}(q)$.

Thus if $x,\gamma,q$ satisfy the hypotheses of (\ref{invlimU}), then $\la\pi_{\gamma\alpha}(q) : \alpha\in x\cap\gamma \ra \in \varprojlim(\vec\bbQ \rest (x\cap\gamma),\vec\pi\rest(x\cap\gamma))$, and for all $\beta \in x$, the maps down to $\bbQ_\alpha$ for $\alpha \in x \cap\beta$ commute below $\pi_{\gamma\beta}(q)$.  So below $\la\pi_{\gamma\alpha}(q) : \alpha\in x\cap\gamma \ra$, $\la\vec\bbQ\rest(x\cap\gamma),\vec\pi\rest(x\cap\gamma)\ra$ is a $\kappa$-good inverse system.  Condition (\ref{invlimU}) requires that, below $q$, $\bbQ_\gamma$ projects to the inverse limit in the natural way.

We will use the flexibility in choosing the conditions $q_{\beta\alpha}$ to obtain a strategic closure. This will play a crucial role in the proof of Lemma \ref{iteratedshioyastrat}, in which we use the $\bbU$ forcing, which is defined in a smaller forcing extension, as a termspace forcing for $\bbU$ as defined in a larger extension. In this case, the commutativity of the projections and the existence of inverse limits might occur only below a sufficiently strong condition. 

Let $U\subseteq \mathbb{U}(G)$ be generic and let $\vec{\mathbb{Q}}_U = \bigcup_{u \in U} \vec{\mathbb{Q}^u}$.
We say that a condition $\langle \mathbb{Q}, 1, \sigma\rangle$ appears in $U$ if it belongs to the range of $\vec{\mathbb{Q}}_{U}$.
\begin{lemma}
\label{1ptext}
    If $G * U \subseteq \Sh(\kappa,\lambda) * \dot\bbU(\dot G)$ is generic, then for all $\la\bbP,p,\sigma\ra \in G$, there is a coding condition $\la\bbQ,1,\tau\ra$ appearing in $U$ such that for some $q \in G^\tau_\bbQ$, $\la\bbQ,q,\tau\ra\leq\la\bbP,p,\sigma\ra$.
\end{lemma}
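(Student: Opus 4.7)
\bigskip
\noindent\textbf{Proof proposal for Lemma~\ref{1ptext}.}

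The plan is a density argument.  Fix a condition $(h_0,\dot u_0)\in \Sh(\kappa,\lambda)*\dot\bbU(\dot G)$; we may assume $h_0 \leq \la\bbP,p,\sigma\ra$, since any element of $G$ below $\la\bbP,p,\sigma\ra$ will do.  By extending $h_0$ if necessary, I may assume $h_0$ decides $\dot u_0$ outright as some $u_0 = \la\vec\bbQ,\vec\pi,\vec q\ra$ of length $\delta_0 + 1$.  Using that strong coding conditions are dense in $\Sh(\kappa,\lambda)$, strengthen $h_0$ further to a strong coding condition $h_1 = \la\bbR,r,\rho\ra$ with $h_1 \leq \la\bbP,p,\sigma\ra$, and let $\gamma = \delta_0 + 1$.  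The goal is to produce an extension of $(h_1,u_0)$ in $\Sh * \dot\bbU$ that forces $\la\bbR,1_\bbR,\rho\ra$ to appear in $\dot U$ at index $\gamma$ together with the witness $r \in G^\rho_\bbR$.

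For each $\alpha\leq\delta_0$, since $h_1\leq\la\bbQ_\alpha,1_\alpha,\tau_\alpha\ra$ in $\Sh(\kappa,\lambda)$, there is a $\kappa$-continuous projection $\sigma_\alpha\colon \bbR\to\bbQ_\alpha$ defined on a $\kappa$-closed dense set witnessing the order.  Because each $\la\bbQ_\alpha,1_\alpha,\tau_\alpha\ra$ is a coding condition, Lemma~\ref{commute} (projection freezing, applied triple by triple) shows that on a $\kappa$-closed dense set below $r$ the identities $\sigma_\alpha = \pi_{\beta\alpha}\circ\sigma_\beta$ hold.  Similarly, since $h_1\leq\la\bbQ_\beta,q_{\beta\alpha},\tau_\beta\ra$, we can descend in $\bbR$ (using its $\kappa$-closure together with Lemma~\ref{int_denseclosed} to intersect the dense sets required at the relevant pairs) to pass to some $r_2\leq r$ with $\sigma_\beta(r_2)\leq q_{\beta\alpha}$ on the pairs that will actually be needed in verifying condition (4).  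Now define $u_1$ by appending $\la\bbR,1_\bbR,\rho\ra$ at position $\gamma$, with $\pi_{\gamma\alpha} := \sigma_\alpha$ and $q_{\gamma\alpha} := r_2$ for $\alpha\leq\delta_0$, and let $h_2 = \la\bbR,r_2,\rho\ra$.

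Clauses (1)--(3) of the definition of $\bbU$ are then immediate: $h_2$ forces $\la\bbR,1_\bbR,\rho\ra\in\dot G$ and $\la\bbR,r_2,\rho\ra\in\dot G$, the $\sigma_\alpha$ witness the required Shioya order relations, and each $\la\bbQ_\alpha,1_\alpha,\tau_\alpha\ra$ remains coding.  The main obstacle is verifying clause (4) for sets $x$ of size ${<}\kappa$ with top element $\gamma$.  I expect to handle this by a dichotomy: if $x\cap\gamma$ has a top element $\alpha^*\in x$, then $\varprojlim(\vec\bbQ\rest (x\cap\gamma),\vec\pi\rest(x\cap\gamma))$ is densely isomorphic to $\bbQ_{\alpha^*}$ via $p\mapsto\la\pi_{\alpha^*\beta}(p):\beta\in x\cap\gamma\ra$, so the natural map from $\bbR$ is the composition of $\pi_{\gamma\alpha^*}$ with this isomorphism and is therefore a projection below any $q\leq r_2$ in the appropriate dense $\kappa$-closed set.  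If $x\cap\gamma$ has no top, let $\alpha^* = \sup(x\cap\gamma)$; since $\gamma$ is a successor, $\alpha^*\leq\delta_0$ is a limit with $\cf(\alpha^*)<\kappa$.  The descent arranged in the preceding paragraph (together with commutativity) guarantees that, for any $q\leq r_2$ satisfying the hypothesis of (4), the image $\pi_{\gamma\alpha^*}(q)\in\bbQ_{\alpha^*}$ satisfies the hypothesis of (4) for $u_0$ at $(x\cap\gamma)\cup\{\alpha^*\}$ (with top $\alpha^*$), which gives that $\bbQ_{\alpha^*}$ projects to $\varprojlim(\vec\bbQ\rest(x\cap\gamma),\vec\pi\rest(x\cap\gamma))$ below $\pi_{\gamma\alpha^*}(q)$.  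Composing with $\pi_{\gamma\alpha^*}\colon \bbR\to\bbQ_{\alpha^*}$ yields the required projection of $\bbR$ into the inverse limit below $q$.  The hard part of the whole argument is precisely organizing this descent in Paragraph 2 so that, for every pair $(\alpha,\alpha^*)$ that can arise as above, the needed inequality $\sigma_\beta(r_2)\leq q_{\beta\alpha}$ holds --- I expect to handle this by exploiting condition (4) on $u_0$ itself, so that coherent descent to a single $r_2$ is possible using $\kappa$-closure of $\bbR$ together with the inverse-limit structure already present in $u_0$.
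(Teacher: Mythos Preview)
Your approach has a genuine gap, and it stems from choosing independent projections $\sigma_\alpha : \bbR \to \bbQ_\alpha$ for each $\alpha \leq \delta_0$ and then hoping to descend to a single $r_2$ making everything cohere.  The length $\delta_0$ of $u_0$ can be any ordinal below $\lambda$, in particular $\delta_0 \geq \kappa$.  In your limit case for clause~(\ref{invlimU}) you want to apply the hypothesis on $u_0$ at $(x\cap\gamma)\cup\{\alpha^*\}$ with $\alpha^* = \sup(x\cap\gamma) \notin x$; this requires $\sigma_{\alpha^*}(q) \leq q^{u_0}_{\alpha^*\alpha}$ for $\alpha \in x\cap\gamma$, which is not part of the hypothesis on $q$.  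Since $x$ ranges over all ${<}\kappa$-sized subsets of $\gamma$, the ordinal $\alpha^*$ ranges over essentially all limit points of $\delta_0$, and arranging a single $r_2$ (or even a family $q_{\gamma\alpha}$) that handles every possible $\alpha^*$ amounts to meeting more than $\kappa$ many constraints at once.  Neither the $\kappa$-closure of $\bbR$ nor clause~(\ref{invlimU}) for $u_0$ gives you that.  The same objection applies to your commutativity step: projection freezing only aligns $\sigma_\alpha$ with $\pi_{\beta\alpha}\circ\sigma_\beta$ below some condition depending on the pair $(\alpha,\beta)$, and there may be more than $\kappa$ such pairs.

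The paper sidesteps all of this by factoring through the old top.  Choose a single projection $\rho : \bbR \to \bbQ_{\delta_0}$ witnessing $\la\bbR,r,\rho\ra \leq \la\bbQ_{\delta_0},1_{\delta_0},\tau_{\delta_0}\ra$ and \emph{define} $\pi_{\gamma\delta_0} := \rho$ and $\pi_{\gamma\alpha} := \pi^{u_0}_{\delta_0,\alpha}\circ\rho$ for $\alpha<\delta_0$.  Commutativity is then built in; no freezing is needed.  The conditions $q_{\gamma\alpha}$ are chosen \emph{separately} for each $\alpha$ from the generic filter $G^{\tau}_{\bbR}$ so that $\rho(q_{\gamma\alpha}) \leq q^{u_0}_{\delta_0,\alpha}$, which is possible pointwise since $\la\bbQ_{\delta_0},q^{u_0}_{\delta_0,\alpha},\tau_{\delta_0}\ra \in G$; no simultaneous descent is required.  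Verifying clause~(\ref{invlimU}) is now uniform in $x$: for any $q$ satisfying the hypothesis, $\rho(q)\in\bbQ_{\delta_0}$ satisfies the hypothesis of (\ref{invlimU}) for $u_0$ at the set $(x\cap\delta_0)\cup\{\delta_0\}$ with top $\delta_0$ (or one uses $\vec r(\delta_0)$ directly if $\delta_0\in x$), so one applies (\ref{invlimU}) for $u_0$ once and then lifts back through the single projection $\rho$.
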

\begin{proof}
    Suppose $G \subseteq \Sh(\kappa,\lambda)$ is generic.  Let $u=\la\vec\bbQ^u,\vec\pi^u,\vec q^u\ra\in\bbU(G)$ and $\la\bbP,p,\sigma\ra \in G$ be arbitrary. Suppose $\vec\bbQ^u$ has length $\delta+1$, and let $\la\bbQ_\delta,1_\delta,\tau_\delta\ra$ be the top element of $\vec\bbQ^u$.  Let $\la\bbQ_{\delta+1},q^*,\tau_{\delta+1}\ra \in G$ be a coding condition below both $\la\bbQ_\delta,1_\delta,\tau_\delta\ra$ and $\la\bbP,p,\sigma\ra$. Let $\rho$ be a projection witnessing $\la\bbQ_{\delta+1},q^*,\tau_{\delta+1}\ra\leq\la\bbQ_\delta,1_\delta,\tau_\delta\ra$.  
    Let $\pi_{\delta+1,\delta}^{u'} = \rho$, and for $\alpha<\delta$, let  $\pi_{\delta+1,\alpha}^{u'} = \pi_{\delta\alpha}^{u} \circ\rho$.

    Let $q^{u'}_{\delta+1,\delta} = q^*$, and for $\alpha<\delta$, let $q^{u'}_{\delta+1,\alpha}$ be some $r \in G_\bbQ^\tau$ below $q^*$ such that $\rho(r) \leq q^u_{\delta\alpha}$.
    Let $u'$ be the associated extensions of $u$, with the $\delta+1^{st}$ element of $\vec\bbQ^{u'}$ being $\la\bbQ_{\delta+1},1_{\bbQ_{\delta+1}},\tau_{\delta+1}\ra$.  It suffices to show that $u'\in\bbU(G)$.


    Let us verify condition~(\ref{invlimU}).  Suppose $x \in [\delta+1]^{<\kappa}$, and let $x' = x \cup\{\delta+1\}$.
    Suppose $q \in \bbQ_{\delta+1}$ and $\pi_{\delta+1,\beta}(q) \leq q^{u'}_{\beta\alpha}$ for all $\{\alpha,\beta\} \in [x']^2$.  By the definition of $\vec\pi^{u'}$ and $\vec q^{u'}$, $\rho(q)$ and $\pi_{\delta\alpha}(\rho(q))$ are defined for each $\alpha \in x \cap \delta$, and $\pi^u_{\delta\beta}(\rho(q)) \leq q^u_{\beta\alpha}$ for all $\{\alpha,\beta\} \in [x \cup \{\delta\}]^2$.
    
    Suppose $\vec r \in \varprojlim(\vec\bbQ^{u'
    }\rest x,\vec\pi^{u'}\rest x)$ and $\vec r \leq\la\pi_{\delta+1,\alpha}(q) : \alpha \in x \ra$.  If $\delta \in x$, put $q' = \vec r(\delta)$, and otherwise use the assumption that $u \in \bbU(G)$ to find $q' \leq \rho(q)$ in $\bbQ_\delta$ such that $\la\pi_{\delta\alpha}(q') : \alpha \in x \ra \leq \vec r \rest\delta$.  Then find $q'' \leq q$ in $E^{\la\vec\bbQ^{u'},\vec\pi^{u'}\ra}_{\delta+1}$ such that $\rho(q'') \leq q'$, 
    so that $\la\pi_{\delta+1,\alpha}(q'') : \alpha \in x \ra \leq \vec r$.
\end{proof}

\begin{lemma}
\label{strongstrategic}
    $\Sh(\kappa,\lambda) * \dot\bbU(\dot G)$ is strongly $\kappa$-strategically closed.
\end{lemma}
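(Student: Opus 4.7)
The plan is to exhibit a winning strategy for Player II in $\mathcal{G}^{\mathrm I}_\kappa(\Sh(\kappa,\lambda)*\dot\bbU(\dot G))$ that at each round $\gamma$ has produced a condition $(s_\gamma,\dot u_\gamma)$ satisfying the following invariants:
\begin{enumerate}
    \item $s_\gamma = \la\bbP_\gamma,p_\gamma,\tau_\gamma\ra$ is a strong coding condition;
    \item explicit $\kappa$-continuous projections $\pi_{\gamma\beta}:\bbP_\gamma\to\bbP_\beta$ (each defined on a $\kappa$-closed dense subset of $\bbP_\gamma$) have been chosen for all $\beta\leq\gamma$, with $\pi_{\gamma\gamma}=\mathrm{id}$, which commute below the relevant conditions by Lemma~\ref{freeze};
    \item $\dot u_\gamma$ is a $\Sh(\kappa,\lambda)$-name forced by $s_\gamma$ to be a condition in $\bbU(\dot G)$ that extends Player I's $\dot u^I_\gamma$ and has $\la\bbP_\gamma,1,\tau_\gamma\ra$ as the top of its coding sequence, with the $\pi_{\gamma\beta}$'s as the witnessing projections, and with $q$-witnesses $\dot q_{\gamma\beta}$ chosen as names forced by $p_\gamma$ to lie appropriately in $\bbP_\gamma$.
\end{enumerate}

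At a successor round $\gamma$, Player I has just played some $(s^I_\gamma,\dot u^I_\gamma)\leq (s_{\gamma-1},\dot u_{\gamma-1})$. Player II finds a strong coding condition $s_\gamma=\la\bbP_\gamma,p_\gamma,\tau_\gamma\ra \leq s^I_\gamma$, using the density of such conditions in $\Sh(\kappa,\lambda)$. She picks a $\kappa$-continuous projection $\rho_\gamma$ witnessing $s_\gamma \leq \la\bbP_{\gamma-1},1,\tau_{\gamma-1}\ra$ and sets $\pi_{\gamma,\gamma-1}=\rho_\gamma$ and $\pi_{\gamma\beta}=\pi_{\gamma-1,\beta}\circ\rho_\gamma$ for $\beta<\gamma-1$. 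She then forms $\dot u_\gamma$ by appending $\la\bbP_\gamma,1,\tau_\gamma\ra$ on top of the coding sequence forced by $\dot u^I_\gamma$, with these projections and with $q$-witnesses chosen (as $\Sh$-names) to satisfy condition~(3) of the definition of $\bbU$; this is essentially the construction of Lemma~\ref{1ptext}. The commutativity requirement in the definition of $\bbU$ holds below the relevant conditions by the projection-freezing lemma, Lemma~\ref{freeze}, since each $\la\bbP_\beta,1,\tau_\beta\ra$ is coding.

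At a limit round $\gamma<\kappa$, Player I is to move first, so Player II's strategy must guarantee that a legal move exists. By the invariants and the recorded inverse-limit data (clause~(\ref{invlimU}) of $\bbU$ applied at earlier limits), $\la\vec\bbP\rest\gamma,\vec\pi\rest\gamma\ra$ is a $\kappa$-good inverse system satisfying hypothesis~(3) of Lemma~\ref{invlim_general}. Hence $\bbP^*:=\varprojlim(\vec\bbP\rest\gamma,\vec\pi\rest\gamma)$ is completely $\kappa$-closed with $\kappa$-continuous coordinate projections $\pi^*_\beta$ to each $\bbP_\beta$. Letting $\tau^*$ be a $\bbP^*$-name that codes the union of the $\tau_\beta$'s via these projections, $s^*:=\la\bbP^*,\vec 1,\tau^*\ra$ is a coding condition below every $s_\beta$, and an associated name $\dot u^*$ extending every prior $\dot u_\beta$ by appending $\la\bbP^*,1,\tau^*\ra$ at the top (with the $\pi^*_\beta$'s as projections) gives a legitimate condition in $\bbU$: clause~(\ref{invlimU}) for the new top holds because $\bbP^*$ is literally the inverse limit, and for $x\subseteq\gamma+1$ with $\max x=\gamma$ it follows by composing with the restriction-to-$x\cap\gamma$ projection, which is a projection by Lemma~\ref{invlim_cof1}. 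Thus $(s^*,\dot u^*)$ is a legal condition in $\Sh(\kappa,\lambda)*\dot\bbU(\dot G)$ below every previously played condition, so Player I can continue.

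The main obstacle is the verification at limit stages that the inverse limit $\bbP^*$ together with its coordinate projections produces a bona fide $\bbU$-condition, in particular that condition~(\ref{invlimU}) holds for \emph{all} admissible $x\in[\gamma+1]^{<\kappa}$ with $\max x=\gamma$; this is handled by combining the definition of $\bbP^*$ as a literal inverse limit with Lemma~\ref{invlim_cof1} (applied to pass from $\gamma$ to $x\cap\gamma$) and with the inductive invariant that condition~(\ref{invlimU}) held for the system along earlier cofinal subsets. The rest is bookkeeping carried by Lemma~\ref{freeze} and the density of strong coding conditions.
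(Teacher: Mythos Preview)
Your approach differs substantially from the paper's.  The paper does not use abstract inverse limits for Player II's moves; instead, Player II plays conditions whose first coordinate is always of the form $\la\col(\kappa,{<}\alpha_i),r_i,\tau_i\ra$, arranging (via Lemma~\ref{mainproj} and Remark~\ref{termremark}) that the projection between any two of Player II's posets is the restriction map $r\mapsto r\rest\alpha_j$, with the corresponding $q$-witnesses trivial.  The payoff is that at a limit stage $\delta$, the inverse limit along Player II's posets is literally $\col(\kappa,{<}\alpha_\delta)$, and the resulting limit condition is coding by the remark after Lemma~\ref{strongcoding}, since for Levy collapses a condition genuinely equals the union of its restrictions.

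Your abstract inverse-limit approach has real gaps as written.  First, you conflate two index sets: since $\dot u_\gamma$ must extend Player I's $\dot u^I_\gamma$, and Player I may insert many posets into the coding sequence between your moves, the amalgamated sequence at a limit round has length some $\xi\gg\gamma$, and clause~(\ref{invlimU}) must be verified for all $x\in[\xi+1]^{<\kappa}$, not just $x\subseteq\gamma+1$.  Your $\bbP^*$ is the inverse limit along Player II's positions (a cofinal set $y\subseteq\xi$), so saying ``$\bbP^*$ is literally the inverse limit'' does not settle clause~(\ref{invlimU}) for $x$ containing Player I's positions; one must pass to $x\cup y$ and invoke Lemma~\ref{invlim_cof1} after checking its hypotheses, which is exactly what the paper does (and where the concrete structure of Levy collapses helps).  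Second, you assert that $s^*=\la\bbP^*,\vec 1,\tau^*\ra$ is coding with $\tau^*$ the union of the $\tau_\beta$'s; this requires a $\bbP^*$-generic to be recoverable from its coordinate generics, i.e.\ that coordinatewise compatibility of threads implies compatibility in $\varprojlim$, which is not obvious for abstract inverse systems and is precisely why the paper restricts to Levy collapses.  Third, you never say how Player II responds after Player I moves at a limit round---in particular how the projections $\pi_{\gamma\beta}$ from Player II's new poset are defined so as to factor through $\bbP^*_\gamma$; the paper treats this case explicitly and nontrivially, constructing a projection $\varphi$ from Player I's poset to $\col(\kappa,{<}\alpha_\delta)$ that agrees with the natural map to the inverse limit below a suitable condition.
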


\begin{proof}
    Let us play the game $\mathcal{G}^\mathrm{I}_\kappa(\Sh(\kappa,\lambda) * \dot\bbU(\dot G))$.  We will use the quite reasonable $(\kappa,\lambda)$-collapse $\col(\kappa,{<}\lambda)$ to guide our strategy for Player II.  Later, we will refer to the strategy we describe as $\Sigma$ and invoke some of its specific properties.

    Let us denote the conditions in the play as $\langle p_i, \dot{u}_i\rangle$, where $p_i = \la \bbP_i, r_i, \tau_i\rangle$.  

    Suppose Player I plays $\la\la\bbP_0,r_0,\tau_0\ra,\dot u_0\ra$.  By extending the first coordinate if necessary, we may assume it decides the top $\Sh(\kappa,\lambda)$-condition in $\dot u_0$ to be some $\la\bbQ,1,\sigma\ra$, and then since this is compatible with $\la\bbP_0,r_0,\tau_0\ra$, we can find a common coding extension $\la\bbP_0',r_0',\tau_0'\ra$.  Then, using the proof of Lemma~\ref{1ptext}, we can extend $\dot u_0$ to $\dot u_0'$, such that $\la\bbP_0',r_0',\tau_0'\ra$ forces $\la\bbP_0',1,\tau_0'\ra$  to be at the top of $\dot u_0'$.  
    Let $\nu$ be a cardinal such that $|\bbP_0'|<\nu<\lambda$ and $\nu^{<\kappa} = \nu$, and let $\alpha_1 = \nu+1$.  By Lemma~\ref{mcaloon}, both $\bbP_0' \times \col(\kappa,\nu)$ and $\col(\kappa,{<}\alpha_1)$ have $\kappa$-closed dense sets isomorphic to $\col(\kappa,\nu)$, so let $\rho$ be a $\kappa$-continuous projection from a dense $\kappa$-closed subset of $\col(\kappa,{<}\alpha_1)$ to $\bbP_0'$.
    Let $r_1$ and $\tau_1$ be such that $\la\col(\kappa,{<}\alpha_1),r_1,\tau_1\ra \leq \la\bbP_0',r_0',\tau_0'\ra$, with the former also a coding condition, and with the ordering witnessed by $\rho$.
    Again appealing to Lemma~\ref{1ptext}, let $\dot u_1$ be a one-point extension of $\dot u_0'$ with $\la\col(\kappa,{<}\alpha_1),1,\tau_1\ra$ at the top. We set the projections of $\dot u_1$ to be the composition of the projection $\rho$ with the previous projections $\vec\pi^{\dot u_0'}$. Letting $\dot\delta+1$ be the length of the condition $\dot{u}_0'$, we extend its sequence $\vec q$ by taking $q^{\dot{u}_1}_{\dot\delta + 1,\alpha}$, for $\alpha<\dot\delta$, to be a name for a condition in $G^{\tau_1}_{\col(\kappa,{<}\alpha_1)}$ below $r_1$ projected by $\rho$ below $q^{\dot{u}_0'}_{\dot{\delta},\alpha}$, and we set $q^{\dot{u}_1}_{\dot \delta+1,\dot\delta} = r_1$.  
    
    Let $\mathbb{P}_1 = \col(\kappa,{<}\alpha_1)$, and let $\la\la\bbP_1,r_1,\tau_1\ra,\dot u_1\ra$ be Player II's first move.

    Suppose Player I responds with $\la\la\bbP_2,r_2,\tau_2\ra,\dot u_2\ra$.  We first strengthen it as above, so that we can assume $\la\bbP_2,r_2,\tau_2\ra$ forces $\la\bbP_2,1,\tau_2\ra$ to be at the top of $\dot u_2$, and also so that $\la\bbP_2,q_2,\tau_2\ra$ decides the projection that $\dot u_2$ assigns from $\bbP_2$ to $\col(\kappa,{<}\alpha_1)$, say as $\pi^*$.  Then appealing to Lemma~\ref{mainproj} and Remark~\ref{termremark}, we can find $\alpha_3$, $r_3$, and $\tau_3$ such that $r_3 \leq r_1$ in $\col(\kappa,{<}\lambda)$, $1 \Vdash \tau_3 \trianglerighteq \tau_1$,   
     $\la\col(\kappa,{<}\alpha_3),r_3,\tau_3\ra$ is a coding condition below $\la\bbP_2,r_2,\tau_2\ra$, say with witnessing projection $\rho$,
     and on a $\kappa$-closed dense set, $\pi^* \circ \rho$ equals the restriction map $p \mapsto p \rest \alpha_1$. Note that here we ask this commutativity to hold on a dense set and not just below a certain condition.
     
     Let $\dot u_3$ be a name for a one-point extension of $\dot u_2$ with $\la\col(\kappa,{<}\alpha_3),1,\tau_3\ra$ at the top, but with the projections chosen in the following way:  
    \begin{itemize}
        \item The projection from $\col(\kappa,{<}\alpha_3)$ to $\col(\kappa,{<}\alpha_1)$ is the restriction map, $r \mapsto r \rest\alpha_1$. 
        \item The projection from $\Col(\kappa,{<}\alpha_3)$ to posets $\bbQ^{\dot{u}_2}_\zeta$, appearing in $\dot u_2$ between $\col(\kappa,{<}\alpha_1)$ and $\col(\kappa,{<}\alpha_3)$ is, as in Lemma~\ref{1ptext}, the composition $\pi \circ \rho$, where $\pi$ is the projection that $\dot u_2$ chooses from $\bbP_2$ to $\bbQ^{\dot u_2}_\zeta$.
        \item For each $\zeta$ smaller than the index of $\col(\kappa,{<}\alpha_3)$, the projection from $\col(\kappa,{<}\alpha_3)$ to $\bbQ^{\dot u_2}_\zeta$ is the composition $\pi \circ (\cdot\rest\alpha_1)$ where $\pi$ is the projection that $\dot u_2$ chooses from $\Col(\kappa,{<}\alpha_1)$ to $\bbQ^{\dot u_2}_\zeta$.
    \end{itemize}
    The sequence of conditions $\vec q^{u_2}$ is extended as follows.  Working in an extension by $\Sh(\kappa,\lambda)$ obtained by forcing below $\la\col(\kappa,{<}\alpha_3),r_3,\tau_3\ra$, suppose the posets played by Player I occupy places $\xi_0 < \xi_1$ in $u_2$, so that those played by II will occupy places $\xi_0+1$ and $\xi_1+1$ in $u_3$. 
    The conditions $q^{u_3}_{\xi_1+1,\alpha}$ for $\alpha\leq\xi_1$ are chosen from $G^{\tau_3}_{\col(\kappa,{<}\alpha_3)}$ with the following properties:
    \begin{itemize}
        \item $q^{u_3}_{\xi_1+1,\xi_0+1} =1$.
        \item For $\alpha\leq\xi_0$, $q^{u_3}_{\xi_1+1,\alpha} \rest \alpha_1 \leq q^{u_2}_{\xi_0+1,\alpha}$.
        \item For $\xi_0+1 < \alpha \leq \xi_1$, $q^{u_3}_{\xi_1+1,\alpha}\leq r_3$ and $\rho(q^{u_3}_{\xi_1+1,\alpha}) \leq q^{u_2}_{\xi_1,\alpha}.$
    \end{itemize}

    To check clause (\ref{invlimU}), suppose $x \in [\xi_1+1]^{<\kappa}$, $q \in \col(\kappa,{<}\alpha_3)$ is such that $\pi_{\xi_1+1,\beta}(q) \leq q^{u_3}_{\beta\alpha}$ for $\{\alpha,\beta\} \in [x \cup \{\xi_1+1\} ]^2$.  Suppose $\vec r \leq \la \pi_{\xi_1+1,\alpha}(q) : \alpha \in x \ra$ in $\varprojlim(\vec \bbQ^{u_3} \rest x,\vec \pi^{u_3} \rest x)$.  If $\sup(x) > \xi_0$, then we find the desired $q \leq q'$ as in the proof of Lemma~\ref{1ptext}.  If $\sup(x) \leq \xi_0$, then there is $q' \leq q \rest \alpha_1$ with the desired property by induction, and we can take $q'' = q \cup q'$. 
    
    Let us assume inductively that $\delta<\kappa$ and the game has gone on for $\delta$ rounds, and Player II's moves satisfy the following for odd $i<\delta$:
    \begin{itemize}
        \item Each of II's moves is of the form $\la\la\col(\kappa,{<}\alpha_i),r_i,\tau_i\ra,\dot u_i\ra$, where \\
        $\la\col(\kappa,{<}\alpha_i),1,\tau_i\ra$ is forced to be at the top of $\dot u_i$.
        \item For odd $j<i$, $r_i \rest \alpha_j \supseteq r_j$, $1 \Vdash \tau_i \trianglerighteq \tau_j$,
        it is forced that the projection chosen by $\dot u_i$ from $\col(\kappa,{<}\alpha_i)$ to $\col(\kappa,{<}\alpha_j)$ is the restriction map, $r \mapsto r\rest\alpha_j$, and the condition $q_{\beta_i,\beta_j} \in \col(\kappa,{<}\alpha_i)$, where $\beta_j<\beta_i$ are the indices of these posets in $u_i$, is just the trivial condition.
        \item It is forced that for any $\bbQ$ appearing in $\dot u_i$, if $j$ is such that $\col(\kappa,{<}\alpha_j)$ appears after $\bbQ$, and it is the poset in the first coordinate of a previous play by II, and the indices in $\dot u_i$ of $\bbQ,\col(\kappa,{<}\alpha_j),\col(\kappa,{<}\alpha_i)$ are $\gamma,\beta_j,\beta_i$ respectively, then $\pi^{u_i}_{\beta_i,\gamma} = \pi^{u_i}_{\beta_j,\gamma} \circ (\cdot\rest\alpha_j)$, restricted to some $\kappa$-closed dense set.
        Furthermore, it is forced that $q_{\beta_i,\gamma} \rest \alpha_j \leq q_{\beta_j,\gamma}$. 
    \end{itemize}
    We must show two things: first, that a lower bound at stage $\delta$ exists when $\delta$ is a limit, and second, that Player II can continue to play moves that satisfy these assumptions.  If $\delta$ is not a limit, Player II responds just as in round 3.

    Suppose $\delta$ is a limit ordinal. 
    Let us first describe the $\Sh(\kappa,\lambda)$-condition. Let $\alpha_\delta = \sup \{ \alpha_i : i < \delta$ is odd$\}$.  Let $r_\delta = \bigcup \{ r_i : i < \delta$ is odd$\}$.  Let $\tau_\delta$ be a $\col(\kappa,{<}\alpha_\delta)$-name for the union of $\{ \tau_i : i<\delta$ is odd$\}$ (formally, pulled to be $\col(\kappa,{<}\alpha_\delta)$-names).  By the remarks just after Lemma~\ref{strongcoding}, $\la\col(\kappa,{<}\alpha_\delta),r_\delta,\tau_\delta\ra$ is a coding condition, and it is a lower bound to the first coordinates of the previously played conditions.

    To define $\dot u_\delta$, temporarily work in an extension $V[G]$ by $\Sh(\kappa,\lambda)$ obtained by forcing below $p_\delta=\la\col(\kappa,{<}\alpha_\delta),r_\delta,\tau_\delta\ra$.  Suppose $\la\la\bbQ_\beta,1,\sigma_\beta\ra: \beta<\xi\ra$, $\la\pi_{\beta\beta'},q_{\beta\beta'} : \beta'<\beta<\xi\ra$ 
    are the sequences given by amalgamating the $\dot u_i^G$ for $i<\delta$. 
    For an odd ordinal $i<\delta$, let $\beta_i$ be the index below $\xi$ of the poset $\mathbb{P}_i=\col(\kappa,{<}\alpha_i)$ played by II. 

    Let us define the condition $u_\delta$, extending $u_i$ for all $i < \delta$.
    Let $\la\bbQ^{u_\delta}_\xi,1,\sigma_\xi\ra = \la\col(\kappa,{<}\alpha_\delta),1,\tau_\delta\ra$.
    To define the projections, first if $\gamma = \beta_i$ for some odd $i<\delta$, let $\pi^{u_\delta}_{\xi\gamma} = \cdot\rest\alpha_i$, and let $q^{u_\delta}_{\xi\gamma} = 1$.  For other $\gamma<\xi$, define
    \begin{align*}
        D_\gamma = \{ r \in \col(\kappa,{<}\alpha_\delta) : & \text{ }\pi_{\gamma,\beta_j} \circ \pi_{\beta_{i},\gamma}(r \rest \alpha_i) \text{ is defined} \\ 
       &\text{ whenever } j<i<\delta \text{ are odd and } \beta_j<\gamma<\beta_i \}.
    \end{align*}
    It is easy to check that $D_\gamma$ is a $\kappa$-closed dense subset of $\col(\kappa,{<}\alpha_\delta)$.
    Define $\pi^{u_\delta}_{\xi\gamma} : D_\gamma \to \bbQ_\gamma$ as $\pi_{\beta_{i},\gamma}\circ (\cdot\rest\alpha_{i})$, where $i<\delta$ is such that $\gamma<\beta_i$.  (By the induction hypothesis, it does not matter which such $i$ we pick.)
    To define $\vec{q}^{u_\delta}$, let $q^{u_\delta}_{\xi\gamma}$ be a condition $q$ in the generic filter of $\col(\kappa,{<}\alpha_\delta)$, such that whenever $\beta_j<\gamma<\beta_i$, $q \rest \alpha_{i} \leq q_{\beta_{i},\gamma}$, and $\pi_{\xi\gamma}(q) \leq q_{\gamma,\beta_j}$.
    As those are ${<}\kappa$-many conditions below which to project, such a condition $q$ exists.  We have that for all $\gamma<\xi$, $\pi^{u_\delta}_{\xi\gamma}$ witnesses the ordering
    $\la \col(\kappa,{<}\alpha_\delta),q_{\xi\gamma}^{u_\delta},\tau_\delta\ra \leq \la\bbQ_\gamma,1,\sigma_\gamma\ra$.
    Working in $V[G]$, let us show that $u_\delta$ satisfies condition (\ref{invlimU}). Let $x \in [\xi]^{<\kappa}$, and let $x' = x \cup \{\xi\}$.  Let $y = \{\beta_i  :  i < \delta\}$.  Note that if $r \in \bbQ_\xi^{u_\delta}$ and $\pi^{u_\delta}_{\xi\gamma}(r)$ is defined for all $\gamma \in x$, then also $\pi^{u_\delta}_{\gamma,\beta_j}\circ\pi^{u_\delta}_{\beta_i,\gamma}\circ\pi^{u_\delta}_{\xi\beta_i}$ is defined at $r$ whenever $\gamma \in x$ and  $\beta_j<\gamma<\beta_i$.
    Furthermore, if $q \in \bbQ_\xi^{u_\delta}$ has the property that all compositions of the maps $\vec\pi^{u_\delta}$ indexed by elements of $x'$ are defined at $q$, and $\pi^{u_\delta}_{\xi\beta}(q) \leq q^{u_\delta}_{\beta\alpha}$ for all $\{\alpha,\beta\} \in [x']^2$, then by construction, this also holds for all $\{\alpha,\beta\} \in [x'\cup y]^2$.  
    It follows that for all $\{\alpha,\beta\} \in [x'\cup y]^2$, 
    $\la\bbQ_\beta,\pi_{\xi\beta}(q),\sigma_\beta\ra \leq \la\bbQ_\alpha,1,\sigma_\alpha\ra$,
    and the map $\pi_{\beta\alpha}$ witnesses this.  By the remarks after the definition of $\bbU(G)$, the system of maps
    $\la\pi_{\beta\alpha} : \{\alpha,\beta\} \in [x' \cup y]^2\ra$ commutes below $\la \pi_{\xi\gamma}(q) : \gamma \in x' \cup y \ra$.

    Suppose $q \in E^{\la\vec\bbQ^{u_\delta} \rest x', \vec\pi^{u_\delta} \rest x' \ra}$, $\pi^{u_\delta}_{\xi\beta}(q) \leq q^{u_\delta}_{\beta\alpha}$ for all $\{\alpha,\beta\} \in [x']^2$, and 
    $\vec r \in \varprojlim(\vec\bbQ^{u_\delta} \rest x, \vec\pi^{u_\delta} \rest x)$ is below $\la \pi^{u_\delta}_{\xi\gamma}(q) : \gamma \in x \ra$.  If $x$ is not cofinal in $\xi$, let $i<\delta$ be such that $\beta_i > \sup x$.  Then $\vec r$ is below $\la\pi_{\beta_i,\gamma}(q \rest \alpha_i) : \gamma \in x\ra$.  Since $u_i$ is a condition and the projections commute below $q$, there is $q' \leq q \rest \alpha_i$ such that $\la \pi_{\beta_i,\gamma}(q') : \gamma \in x\ra \leq \vec r$.  Then let $q'' = q \cup q'$.

    If $x$ is cofinal in $\xi$, then by the above observations, \[q \in E^{\la\vec\bbQ^{u_\delta} \rest (x'\cup y), \vec\pi^{u_\delta} \rest (x' \cup y) \ra}.\]
    By Lemma~\ref{invlim_cof1}, below $\la \pi_{\xi\gamma}(q) : \gamma \in x \cup y \ra$,
    $\varprojlim(\vec\bbQ^{u_\delta} \rest (x \cup y), \vec\pi^{u_\delta} \rest (x \cup y))$
    is canonically isomorphic to a dense subset of 
    $\varprojlim(\vec\bbQ^{u_\delta} \rest x, \vec\pi^{u_\delta} \rest x)$ via the restriction map.  Thus we can find $\vec s \in \varprojlim(\vec\bbQ^{u_\delta} \rest (x \cup y), \vec\pi^{u_\delta} \rest (x \cup y))$ such that $\vec s \rest x \leq \vec r$, and take $q' \leq q$ as $\bigcup_{i<\delta} \vec s(\beta_i)$, witnessing (\ref{invlimU}).
    So, at the $\delta^{th}$ step, Player I has a possible move. Let us show that Player II can continue similarly after a limit-stage move of Player I, $\la p_\delta,\dot u_\delta\ra$, where $p_\delta=\la\bbP_\delta,r_\delta,\tau_\delta\ra$.  
    The main idea is that Player II can play almost as if Player I had played the lower bound constructed above.
    
    Working temporarily in an extension $V[G]$ by $\Sh(\kappa,\lambda)$ obtained by forcing below $p_\delta$, assume $\dot u_\delta^G$ takes the form $\la\la\la\bbQ_i,1,\sigma_i\ra : i\leq\xi \ra,\la\pi_{ji} : i<j\leq\xi\ra, \la q_{ji} : i < j\leq\xi \ra\ra$.
    Let $\la\la\col(\kappa,{<}\alpha_i),r_i,\tau_i\ra,\dot u_i\ra$ be Player II's moves for odd $i<\delta$, and let $\beta_i$ be the index in $u_\delta$ where $\col(\kappa,{<}\alpha_i)$ appears.  Let $\alpha_\delta = \sup_i \alpha_i$, $\beta_\delta = \sup_i \beta_i$, and $y = \{ \beta_i : i<\delta$ is odd$\}$.
    By strengthening 
$p_\delta$ if necessary, we may assume that it decides the values of the ordinals $\xi,\alpha_i,\beta_i$, the projections $\pi_{ji}$ for $\{i,j\} \in [y\cup\{\xi\}]^2$, and that $|\bbP_\delta| = \nu > \alpha_\delta$, and $\bbP_\delta$ collapses $\nu$ to $\kappa$.  As in Lemma~\ref{1ptext}, we may also assume
    that $\la\bbP_\delta,1,\tau_\delta\ra$ is forced to be at the top of $\dot u_\delta$ at position $\xi$.  
    
    Since $u_\delta$ satisfies condition (\ref{invlimU}), there will be $q^*$ in the generic for $\bbQ_{\xi} = \bbP_\delta$ such that the maps in the system $\la\pi_{ji} : \{i,j\} \in [y \cup \{\xi\}]^2\ra$ commute below $q^*$, and
the natural map from $\bbQ_{\xi}$ to $\varprojlim(\vec\bbQ\rest y,\vec\pi\rest y)$ is a projection.  Note that this inverse limit is canonically isomorphic to $\col(\kappa,{<}\alpha_\delta)$.  Using Theorem~\ref{mcaloon}, and strengthening $q^*$ if necessary, we can construct a projection $\varphi \colon \bbQ_\xi \to \col(\kappa,{<}\alpha_\delta)$, defined on a $\kappa$-closed dense set, such that below $q^*$, $\varphi$ agrees with the natural map to $\varprojlim(\vec\bbQ\rest y,\vec\pi\rest y)$.\footnote{This is done simply by selecting a dense tree as in Theorem~\ref{mcaloon} with $q^*$ in the first level, noting the tree below any given node is isomorphic to the whole tree, noting that $\col(\kappa,<\nu+1)$ also contains a dense copy of that tree, and gluing partial projections.}  Without loss of generality, $q^* = r_\delta$.
    
    
    
    If $\tau_\delta'$ is a name for the union of $\tau_i$ for odd $i<\delta$, then we have that $\varphi$ witnesses the ordering $\la\bbP_\delta,r_\delta,\tau_\delta\ra\leq \la\col(\kappa,{<}\alpha_\delta),1,\tau_\delta'\ra$.
    By Lemma~\ref{mainproj} and Remark~\ref{termremark}, we can find a coding condition of the form $\la\col(\kappa,{<}\alpha_{\delta+1}),r_{\delta+1},\tau_{\delta+1}\ra \leq \la\bbP_\delta,r_\delta,\tau_\delta\ra$ with witnessing projection $\rho$ such that $\varphi\circ\rho = \cdot\rest\alpha_\delta$ on some $\kappa$-closed dense set, and $\tau_{\delta+1}$ is forced (by 1) to end-extend $\tau_\delta'$.
    
    We extend $\dot u_\delta$ to $\dot u_{\delta+1}$ that is forced to have $\bbQ^{u_{\delta+1}}_{\xi+1}=\col(\kappa,{<}\alpha_{\delta+1})$ and $\sigma_{\xi+1}=\tau_{\delta+1}$. 
    For the projections, $\pi^{u_{\delta+1}}_{\xi+1,\gamma}$ is defined as $\pi^{u_{\delta}}_{\xi\gamma}\circ\rho$ for $\beta_\delta \leq\gamma\leq\xi$, $\pi^{u_{\delta+1}}_{\xi+1,\gamma}$ is the restriction map when $\gamma \in y$, and for $\gamma \in \beta_\delta\setminus y$, 
    $\pi^{u_{\delta+1}}_{\xi+1,\gamma} = \pi^{u_\delta}_{\beta_i,\gamma} \circ (\cdot\rest\beta_i)$, where $\gamma<\beta_i<\beta_\delta$, restricted to the set of points $p$ such that $\pi^{u_{\delta}}_{\gamma,\beta_j}\circ\pi^{u_{\delta}}_{\beta_k,\gamma}(p \rest \beta_k)$ is defined whenever $\beta_j<\gamma<\beta_k$.
    For the conditions $q_{\xi+1,\gamma}$, we take $q_{\xi+1,\gamma} = 1$ for $\gamma \in y$,
    $\rho(q) \leq q_{\xi,\gamma}^{u_\delta}$ for $\beta_\delta\leq\gamma\leq\xi$,
    and for $\gamma \in\beta_\delta\setminus y$, we take $q_{\xi+1,\gamma}$ to be some $q$ in the generic below $r_{\delta+1}$ 
    such that whenever $\beta_j<\gamma<\beta_i$, $q \rest \alpha_{i} \leq q_{\beta_{i},\gamma}$, and $\pi_{\xi\gamma}(q) \leq q_{\gamma,\beta_j}$.


    Let us check that condition (\ref{invlimU}) holds for $\dot u_{\delta+1}$.  Suppose $x \in [\xi+1]^{<\kappa}$ and let $x' = x \cup \{\xi+1\}$; we must show $p \mapsto \la \pi_{\xi+1,\beta}(p) : \beta \in x\cap(\xi+1)\ra$ is a projection from $E^{\la\vec\bbQ\rest x',\vec\pi\rest x'\ra}_{\xi+1}$ to $\varprojlim(\vec\bbQ \rest x,\vec\pi\rest x)$ below the suitable condition.  If $\sup x > \beta_\delta$, or $\sup(x) < \beta_i$ for some $i<\delta$, then we argue as in Lemma~\ref{1ptext}.  So suppose $\sup x = \beta_\delta$, $q \in E^{\la\vec\bbQ\rest x',\vec\pi\rest x'\ra}_{\xi+1}$, and $\pi_{\xi+1,\beta}(q) \leq q_{\beta\alpha}$ for all $\{\alpha,\beta\} \in [x']^2$.  By construction, $q \in E^{\la\vec\bbQ\rest (x'\cup y),\vec\pi\rest (x'\cup y)\ra}_{\xi+1}$, and the system of maps
    $\la\pi_{\beta\alpha} : \{\alpha,\beta\} \in [x' \cup y]^2\ra$ commutes below $\la \pi_{\xi+1,\gamma}(q) : \gamma \in x' \cup y \}$.  If $\vec r \leq \la \pi_{\xi+1,\gamma}(q) : \gamma \in x \ra$ is in $\varprojlim(\vec\bbQ \rest x,\vec\pi\rest x)$, then as before, we apply Lemma~\ref{invlim_cof1} to find $q' \leq q \rest \alpha_\delta$ that projects below $\vec r$, and then take $q'' = q' \cup q$.
    \end{proof}

In the previous lemma, we used a reasonable collapse in order to guide our strategy for $\Sh(\kappa,\lambda)\ast \dot{\mathbb{U}(G)}$. As the first component of this iteration is only $\kappa$-closed, we could not hope for a much stronger closure than what we got. In the next lemma, we consider a case in which the generic for $\Sh(\kappa,\lambda)$ was obtained using a well-behaving forcing. In this case, we have better control over the properties of $\mathbb{U}(G)$.

\begin{lemma}
\label{lambdastratclosed}
    If $\bbR$ is a quite reasonable $(\kappa,\lambda)$-collapse, $H*K$ is $\bbR * \dot\add(\lambda)$-generic, and $G = \psi(H*K)$, where $\psi$ is the projection given by Lemma~\ref{mainproj}, then in $V[H*K]$, $\bbU(G)$ is $\lambda$-strategically closed.  
\end{lemma}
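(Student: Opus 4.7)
The plan is to adapt Player II's strategy from the proof of Lemma~\ref{strongstrategic}, now exploiting the concrete realization of the $\Sh$-generic $G=\psi(H*K)$ provided by the quite reasonable collapse $\bbR$ inside $V[H*K]$. Let $\la\bbR_\alpha:\alpha<\lambda\ra$ and $\la\rho_\alpha:\alpha<\lambda\ra$ be the witnesses to quite reasonableness, and let $D$ and $\psi:D\to\Sh(\kappa,\lambda)$ be as in Lemma~\ref{mainproj}. Player II will maintain a strictly increasing continuous sequence $\la\alpha_i:i<\lambda\ra$ below $\lambda$ (possible by regularity of $\lambda$), together with elements $\la r_{\alpha_i},\tau_{\alpha_i}\ra\in (H*K)\cap D$ of length $\alpha_i$ so that $\psi(r_{\alpha_i},\tau_{\alpha_i})=\la\bbR_{\alpha_i},1,\tau_{\alpha_i}\ra$ sits as the topmost $\Sh$-condition at her move at stage $i$.

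The internal projections between her own posets will always be canonical: $\pi_{\alpha_j,\alpha_i}=\rho_{\alpha_i}\rest\bbR_{\alpha_j}$ for $i<j$, while projections from $\bbR_{\alpha_j}$ to Player I's intermediate posets $\bbP$ will be factored through the $\bbR_{\alpha_k}$'s using Lemma~\ref{mainproj} and Remark~\ref{termremark} exactly as in the proof of Lemma~\ref{strongstrategic}. The conditions $q_{\beta\gamma}$ appearing in her play will be picked directly from the actual generic filter $H\cap\bbR_{\alpha_i}$, exploiting that we are in $V[H*K]$ to satisfy the $<\kappa$-many required order constraints at each step by a single genericity argument.

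A successor move after Player I plays $\la p_i,\dot u_i\ra$ will be handled exactly as in the successor case of Lemma~\ref{strongstrategic}: strengthen $p_i$ to decide the topmost coding condition of $\dot u_i$, then invoke Remark~\ref{termremark} to produce $\la r_{\alpha_{i+1}},\tau_{\alpha_{i+1}}\ra\in (H*K)\cap D$ for some $\alpha_{i+1}<\lambda$ refining everything played, with all new projections factoring through the canonical tower of $\rho$'s. This step is nearly verbatim the previous one; the only difference is that we now read the $q$'s off directly from $H$.

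The main technical hurdle will be the limit stage $\delta<\lambda$, where Player II must move first. She will set $\alpha_\delta=\sup_{i<\delta}\alpha_i<\lambda$ and play a condition with top $\la\bbR_{\alpha_\delta},1,\tau_{\alpha_\delta}\ra$, with projections to all earlier posets being canonical compositions through $\rho$'s. Verifying condition~(\ref{invlimU}) at $\delta$ is the delicate step, and is exactly what forces the use of \emph{quite} (rather than merely) reasonable collapses: given $x\in[\delta+1]^{<\kappa}$ with top element $\delta$, let $\beta=\sup\{\alpha_i:i\in x\cap\delta\}$. Since $|x|<\kappa$ we have $\cf(\beta)<\kappa$, so quite reasonableness of $\bbR$ gives $\bbR_\beta\cong\varprojlim(\vec\bbR\rest\beta,\vec\rho\rest\beta)$; by Lemma~\ref{invlim_cof1} the restriction map densely embeds this into $\varprojlim(\bbR_{\alpha_i}:i\in x\cap\delta)$. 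Composing with the projection $\rho_\beta:\bbR_{\alpha_\delta}\to\bbR_\beta$ then yields the required projection into the full inverse limit along $x\cap\delta$. The general case where $x$ also contains posets played by Player I reduces to this pure $\bbR$-case via the commutativity of the canonical projection tower, just as in the corresponding limit argument of Lemma~\ref{strongstrategic}.
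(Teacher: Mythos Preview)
Your approach is essentially the same as the paper's: Player II places conditions $\la\bbR_{\alpha_i},1,\tau_{\alpha_i}\ra$ (drawn from $H*K$) at the top of her moves, uses the canonical maps $\rho_{\alpha_i}\rest\bbR_{\alpha_j}$ between them, and at limits invokes quite reasonableness to verify clause~(\ref{invlimU}). Your limit sketch --- pass to the positions of Player II's posets, use $\bbR_\beta\cong\varprojlim$ when $\cf(\beta)<\kappa$, and apply Lemma~\ref{invlim_cof1} --- is correct and matches the paper.

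However, the exposition conflates two different games. You write that Player I plays $\la p_i,\dot u_i\ra$ and speak of ``strengthening $p_i$ to decide the topmost coding condition of $\dot u_i$'' --- but this lemma concerns $\mathcal G^{\mathrm{II}}_\lambda(\bbU(G))$ in $V[H*K]$, not $\mathcal G^{\mathrm{I}}_\kappa(\Sh(\kappa,\lambda)*\dot\bbU(\dot G))$. Player I's move is a concrete $u_i\in\bbU(G)$; there is no first coordinate to strengthen and nothing to decide. For the same reason, Remark~\ref{termremark} is unnecessary: at a successor, Player II simply finds (by density in $H*K$) some $\la r,\sigma\ra$ with $\psi(r,\sigma)=\la\bbR_\alpha,r,\sigma\ra$ a coding condition below the top of $u_i$, then extends via the Lemma~\ref{1ptext} argument. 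The commutativity machinery of Remark~\ref{termremark} was needed in Lemma~\ref{strongstrategic} only because the $\Sh$-generic was still being built; here $G$ already exists, and the Freezing Lemma ensures the projections agree below the appropriate $q_{\beta\alpha}$'s.

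There is also an indexing slip at limits: the condition $u_\delta$ has length $\xi_\delta+1$ with $\xi_\delta=\sup_{\beta<\delta}\xi_\beta$, so $x$ ranges over $[\xi_\delta]^{<\kappa}$, not $[\delta+1]^{<\kappa}$, and ``$\alpha_i$ for $i\in x\cap\delta$'' conflates game-round indices with positions inside $u_\delta$. The paper tracks both, taking a cofinal $\la\gamma_i:i<\nu\ra$ in $\delta$ (with $\nu=\cf(\delta)\le\kappa$ in $V[G]$) and letting $y$ be the set of \emph{positions} $\xi_{\gamma_i}$ in $u_\delta$ where the $\bbR_{\alpha_{\gamma_i}}$ sit; your quite-reasonableness argument is then run along $y$. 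Your sketch intends exactly this, but the bookkeeping must be stated correctly.
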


\begin{proof}
    Let $\la\bbR_\alpha : \alpha<\lambda\ra$ and $\la\rho_\alpha: \alpha<\lambda\ra$ witness that $\bbR$ is quite reasonable.  Suppose Player I plays $u_0 = \la\la\la\bbQ_i,1,\tau_i\ra : i\leq \xi_0\ra,\la\pi_{ji} : i<j\leq\xi_0\ra,\la q_{ji} : i<j\leq\xi_0\ra\ra$.  There are $\alpha_1 < \lambda$, $r_1 \in \bbR_{\alpha_1}$, and $\sigma_1$ such that $\la r_1,\sigma_1\ra \in H*K$ and $\psi(r_1,\sigma_1)= \la\bbR_{\alpha_1},r_1,\sigma_1\ra$ is a coding condition below $\la\bbQ_{\xi_0},1,\tau_{\xi_0}\ra$.  Using the argument for Lemma~\ref{1ptext}, $u_0$ can be extended to $u_1$ of length $\xi_0+2$ with $\la\bbR_{\alpha_1},1,\sigma_1\ra$ at position $\xi_0+1$.  This is Player II's first move.

    Suppose Player I responds with $u_2 = \la\la\la\bbQ^{u_2}_i,1,\tau^{u_2}_i\ra : i\leq \xi_2\ra,\la\pi^{u_2}_{ji} : i<j\leq\xi_2\ra, \la q^{u_2}_{ji} : i < j \leq \xi_2\ra\ra$.  There are $\alpha_3 < \lambda$, $r_3 \in \bbR_{\alpha_3}$, and $\sigma_3$ such that 
    $\la r_3,\sigma_3\ra \in H*K$, $r_3 \Vdash \sigma_1 \trianglelefteq \sigma_3$, and $\psi(r_3,\sigma_3)= \la\bbR_{\alpha_3},r_3,\sigma_3\ra$ is a coding condition below $\la\bbQ^{u_2}_{\xi_2},1,\tau^{u_2}_{\xi_2}\ra$.
    Let $\chi : \bbR_{\alpha_3} \to \bbQ_{\xi_2}$ witness the ordering.
    
    
    Let Player II's next move be $u_3 = \la\la\la\bbQ^{u_3}_i,1,\tau^{u_3}_i\ra : i\leq \xi_2+1\ra,\la\pi^{u_3}_{ji} : i<j\leq\xi_2+1\ra, \la q^{u_3}_{ji} : i < j \leq \xi_2 + 1\ra\ra$, where:
    \begin{itemize}
        \item $\la\bbQ^{u_3}_{\xi_2+1},1,\tau^{u_3}_{\xi_2+1}\ra = \la\bbR_{\alpha_3},1,\sigma_3\ra$.
        \item $\pi^{u_3}_{\xi_2+1,\beta} =  \pi^{u_2}_{\xi_2\beta}\circ\chi$ for $\xi_0+1<\beta\leq\xi_2$, $\pi^{u_3}_{\xi_2+1,\xi_0+1} = \rho_{\alpha_1} \rest \bbR_{\alpha_3}$, and $\pi^{u_3}_{\xi_2+1,\beta} = \pi^{u_2}_{\xi_0+1,\beta} \circ \rho_{\alpha_1}$ for $0\leq\beta\leq\xi_0$.
        \item For $\xi_0+1<\beta\leq\xi_1$, $q^{u_3}_{\xi_2+1,\beta}$ is a condition $q$ in the generic below $r_3$ such that $\chi(q) \leq q^{u_2}_{\xi_2,\beta}$, and for $0\leq\beta\leq\xi_0+1$, $q^{u_3}_{\xi_2+1,\beta}$ is a condition $q$ in the generic below $r_3$ such that $\rho_{\alpha_1}(q) \leq q^{u_2}_{\xi_0+1,\beta}$.
    \end{itemize}  

    It is easy to check that condition (\ref{codecommute}) holds for membership in $\bbU(G)$.  The argument that this condition (\ref{invlimU}) is satisfied is the same as for Player II's second move in Lemma~\ref{strongstrategic}.

    Assume inductively that $\delta<\lambda$, and the game has gone on for $\delta$-many rounds, with the following properties:
    \begin{itemize}
        \item The moves take the form $u_\beta = \la\la\la\bbQ^{u_\beta}_i,1,\tau^{u_\beta}_i\ra: i \leq \xi_\beta\ra,\la\pi^{u_\beta}_{ji}: i<j\leq\xi_\beta\ra, \la q^{u_\beta}_{ji}: i < j \leq \xi_\beta\ra\ra$ for $\beta<\delta$, with Player II having played in odd finite rounds and even infinite rounds.
        \item There is an increasing sequence of ordinals $\la\alpha_\beta : \beta<\delta\ra$ such that for odd finite and even infinite $\beta$, $\bbQ^{u_\beta}_{\xi_\beta} = \bbR_{\alpha_\beta}$.
        \item For $\beta$ odd finite or even infinite, $\la 1,\tau_{\xi_\beta}\ra \in H*K$, and $\tau_{\xi_\beta}$ is an $\bbR_{\alpha_\beta}$-name for a binary sequence of length $\alpha_\beta$ (so that $\psi(1,\tau_{\xi_\beta})=\la \bbR_{\alpha_\beta},1,\tau_{\xi_\beta}\ra$).
        \item For $\beta<\gamma$ both rounds where II played, $\pi^{u_\beta}_{\xi_\gamma\xi_\beta} = \rho_{\alpha_\beta} \rest \bbR_{\alpha_\gamma}$.
    \end{itemize}
    If $\delta$ is not a limit and the last round was a move of Player I, then II responds just as in round 3. 

    Let $\delta$ be a limit ordinal. The cofinality of $\delta$ in $V[G]$ is at most $\kappa$.  Let $\nu = \cf(\delta)$, and let $\la\gamma_i: i< \nu\ra$ be an increasing cofinal sequence in $\delta$ consisting of odd finite or even infinite ordinals.  Let $\alpha_\delta' = \sup_{i<\nu} \alpha_{\gamma_i}$.  There is $\alpha_\delta \geq \alpha_\delta'$ such that for some $\bbR_{\alpha_\delta}$-name $\tau$ for an $\alpha_\delta$-length binary sequence, $\la 1,\tau\ra \in H*K$, and $\psi(1,\tau)$ is a coding condition.  
    Put $\xi_{\delta} = \sup_{\beta<\delta} \xi_\beta$.
    Let us define the condition $u_\delta$ extending the amalgamation of $u_i$ for $i < \delta$. So $\vec\bbQ^{u_\delta}\restriction \xi_\delta$ is determined, and similarly the restrictions $\vec\pi^{u_\delta} \restriction \xi_{\delta}$ and $\vec q^{u_\delta} \restriction \xi_{\delta}$.  
    \begin{itemize}
        \item $\bbQ^{u_\delta}_{\xi_\delta} = \bbR_{\alpha_\delta}$ and $\tau_{\xi_\delta}^{u_\delta} = \tau$.
        \item For $\beta<\delta$ odd finite or even infinite, put $\pi^{u_\delta}_{\xi_\delta,\xi_\beta} = \rho_{\alpha_\beta} \rest \bbR_{\alpha_\delta}$, and let $q^{u_\delta}_{\xi_\delta,\xi_\beta}$ be a condition in $H\cap \bbR_{\alpha_\delta}$ forcing $\tau \trianglerighteq \tau^{u_\delta}_{\xi_\beta}$.
        \item For $\beta<\xi_\delta$ not of the form $\xi_\beta$ for $\beta$ a round where II played, let $i<\nu$ be least such that $\xi_{\gamma_i} > \beta$, and let $\pi^{u_\delta}_{\xi_\delta,\beta} = \pi^{u_{\gamma_i}}_{\xi_{\gamma_i},\beta} \circ\pi^{u_\delta}_{\xi_\delta,\xi_{\gamma_i}}$, restricted to the set of points $p$ such that $\pi^{u_\delta}_{\beta,\xi_{\gamma_k}}\circ \pi^{u_\delta}_{\xi_{\gamma_i},\beta} \circ\pi^{u_\delta}_{\xi_\delta,\xi_{\gamma_i}}(p)$ is defined for all $k<i$.  Since $\la\xi_{\gamma_k} : k \leq i \ra\in V$, $\pi^{u_\delta}_{\xi_\delta,j}$ is an object in $V$.
        \item Suppose $\beta < \xi_\delta$ is not an index of a Player II round. Let $i$ be the least index of a Player II round such that $\gamma_i > \beta$. 
        Let $q^{u_\delta}_{\xi_\delta,\beta}$ be a condition in the generic such that its projection under $\pi^{u_\delta}_{\xi_\delta,\eta}$ is stronger than $q^{u_{\gamma_i}}_{\eta,\zeta}$ for all $\{\zeta,\eta\} \subseteq \{\beta\} \cup \{\xi_{\gamma_j}:  j \leq i\} \cup \{\xi_\delta\}$. 
    \end{itemize}

    Let us verify that under this construction, $u_\delta$ is a condition in $\bbU(G)$. 
    To show (\ref{invlimU}), suppose $x \in [\xi_\delta]^{<\kappa}$, and let $x' = x \cup \{\xi_\delta\}$.
    Assume $q \in E^{\la\vec\bbQ\rest x',\vec\pi\rest x'\ra}_{\xi_\delta}$,
    $\pi_{\xi_\delta,\beta}(q) \leq q_{\beta\alpha}$ for all $\{\alpha,\beta\} \in [x']^2$.
    Let $y = \{ i<\nu : (\exists \beta \in x) i$ is least such that $\xi_{\gamma_i} \geq \beta \}$.
    By construction, we also have $q \in E^{\la\vec\bbQ\rest (x'\cup y),\vec\pi\rest (x'\cup y)\ra}_{\xi_\delta}$, and
    $\pi_{\xi_\delta,\beta}(q) \leq q_{\beta\alpha}$ for all $\{\alpha,\beta\} \in [x' \cup y]^2$.
    
    Suppose $\vec r \leq \la \pi_{\xi_\delta,\beta}(q) : \beta \in x\ra$ in $\varprojlim(\vec\bbQ\rest x,\vec\pi\rest x)$.
    If $y$ has a maximum element $j$, then because $u_{\gamma_{j}}$ is a condition, there is $q' \leq \pi_{\xi_\delta,\xi_{\gamma_j}}(q)$ such that $\la\pi_{\xi_{\gamma_j},\beta}(q') : \beta \in x \ra \leq \vec r$.  Then we take $q'' \leq q$ in $E^{\la\vec\bbQ\rest x',\vec\pi\rest x'\ra}_{\xi_\delta}$ such that $\pi_{\xi_\delta,\xi_{\gamma_j}}(q'')\leq q'$.

    If $y$ does not have a maximum element, let $\eta = \sup \{ \xi_{\gamma_i} : i \in y\}$.  By Lemma~\ref{invlim_cof1}, there is $\vec s \in \varprojlim(\vec\bbQ\rest(x\cup y),\vec\pi\rest(x\cup y))$ such that $\vec s \rest x \leq \vec r$.  Since $\bbR$ is a \emph{quite} reasonable collapse, $\bbR_\zeta$ is canonically isomorphic to $\varprojlim(\vec\bbQ \rest y,\vec\pi\rest y)$, where $\zeta = \sup \{\alpha_{\gamma_i} : i \in y\}$.  Thus we can find $q' \in \bbR_\zeta$ such that $q' \leq \rho_\zeta(q)$ and $\la \rho_{\alpha_{\gamma_i}}(q') : i \in y \} = \vec s \rest y$, and then find $q'' \leq q$ in $\bbR_{\alpha_\delta}$ such that $\rho_\zeta(q'') = q'$.  Thus $\la\pi_{\xi_\delta,\beta}(q'') : \beta \in x \ra \leq \vec r$.
    \end{proof}
    
    

\begin{corollary}
\label{Udist}
    $\Sh(\kappa,\lambda)$ forces that $\dot\bbU(\dot G)$ is $\lambda$-distributive.
\end{corollary}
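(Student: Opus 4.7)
The plan is to deduce $\lambda$-distributivity of $\bbU(G)$ in $V[G]$ from the $\lambda$-strategic closure of Lemma \ref{lambdastratclosed} by passing through an auxiliary generic extension and using Lemma \ref{distributive} to descend back to $V[G]$. I would argue by contradiction: suppose some $u\in\bbU(G)$ forces a $\bbU(G)$-name $\dot f$ to be a new function $\alpha\to\mathrm{Ord}$ with $\alpha<\lambda$ not lying in $V[G]$.

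Fix a quite reasonable $(\kappa,\lambda)$-collapse $\bbR$, for instance $\bbR=\col(\kappa,{<}\lambda)$. Lemma~\ref{mainproj} produces a projection $\psi$ from a dense subset of $\bbR*\dot\add(\lambda)$ onto $\Sh(\kappa,\lambda)$, and Lemma~\ref{distributive} says that the quotient $\bbS=(\bbR*\dot\add(\lambda))/G$ is $\lambda$-distributive in $V[G]$. Now force over $V[G]$ with the product $\bbU(G)\times\bbS$ and pick a mutually generic pair $U\times L$ with $u\in U$. The second coordinate yields a $V$-generic $H*K$ for $\bbR*\dot\add(\lambda)$ with $\psi(H*K)=G$ and $V[H*K]=V[G][L]$.

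By mutual genericity, $U$ is still $\bbU(G)$-generic over $V[H*K]$. Lemma~\ref{lambdastratclosed} then applies in $V[H*K]$ to show that $\bbU(G)$ is $\lambda$-strategically closed there, hence $\lambda$-distributive, so $\dot f^U\in V[H*K]$. Since $\bbS$ is $\lambda$-distributive in $V[G]$, the model $V[H*K]$ contains no new ${<}\lambda$-sequences of ordinals over $V[G]$, and therefore $\dot f^U\in V[G]$, contradicting the choice of $u$ and $\dot f$.

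The main point to watch is the mutual-genericity step, i.e.\ that forcing with $\bbU(G)\times\bbS$ over $V[G]$ really presents $U$ as generic over $V[H*K]$; this is standard provided one notes that $\bbU(G)$ is a poset already in $V[G]\subseteq V[H*K]$ and that product forcing can be viewed in either order. Everything else is a routine chaining of distributivities.
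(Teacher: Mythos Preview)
Your proposal is correct and follows essentially the same approach as the paper's proof. The only cosmetic difference is that you force with the product $\bbU(G)\times\bbS$ over $V[G]$ and then invoke mutual genericity, whereas the paper first forces the quotient $\bbS$ over $V[G]$ to obtain $V[H{*}K]$ and then takes $U\subseteq\bbU(G)$ generic over $V[H{*}K]$; these are equivalent presentations of the same argument.
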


\begin{proof}
    Suppose $G \subseteq \Sh(\kappa,\lambda)$ is generic, and towards a contradiction, suppose $\alpha<\lambda$ and $u \in \bbU(G)$ forces $\dot x$ to be a new $\alpha$-sequence of ordinals over $V[G]$.  Force a generic $H*K \subseteq \col(\kappa,{<}\lambda) * \dot\add(\lambda)$ such that $\psi(H{*}K) = G$.  By Lemma~\ref{distributive}, $V[H{*}K]$ has the same ${<}\lambda$-sequences of ordinals as $V[G]$.  In $V[H{*}K]$, $\bbU(G)$ is $\lambda$-distributive, so if $U \subseteq \bbU(G)$ is a generic filter over $V[H{*}K]$ with $u \in U$, then $V[H{*}K][U]$ has the same ${<}\lambda$-sequences of ordinals as $V[G]$.  Thus $\dot x^U \in V[G]$, but since $U$ is also generic over $V[G]$ and $u \Vdash \dot x \notin V[G]$, this is a contradiction.
\end{proof}
\begin{lemma}[Master conditions]
\label{3step}
    $\Sh(\kappa,\lambda) * \dot\bbU(\dot G) * \dot\col(\kappa,\lambda)$ is equivalent to $\col(\kappa,\lambda)$.  Moreover, whenever $\lambda'>\lambda$ is inaccessible, there are $\col(\kappa,\lambda)$-names $\tau^*$ and $\dot u^*$ such that $\la\col(\kappa,\lambda),1,\tau^*\ra$ is a strong coding condition in $\Sh(\kappa,\lambda')$ and $\dot u^*$ is forced to be an element of $\dot\bbU(\dot G')$, where $\dot G'$ is the canonical name for the $\Sh(\kappa,\lambda')$-generic, with the following property:  
    
    Whenever $G' * U' \subseteq \Sh(\kappa,\lambda') * \dot\bbU(\dot G')$ is a generic with $\la\la\col(\kappa,\lambda),1,\tau^*\ra,\dot u^* \ra \in G' * U'$, and $G*U*K$ is the $\Sh(\kappa,\lambda) * \dot\bbU(\dot G) * \dot\col(\kappa,\lambda)$-generic induced by $(G')^{\tau^*}_{\col(\kappa,\lambda)}$, then $G * U \subseteq G' * U'$.
    
\end{lemma}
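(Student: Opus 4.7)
Both parts of the lemma would be proved simultaneously by constructing explicit $\col(\kappa,\lambda)$-names $\tau^*$ and $\dot u^*$ that effect a dense embedding of $\col(\kappa,\lambda)$ into $\Sh(\kappa,\lambda)*\dot\bbU(\dot G)*\dot\col(\kappa,\lambda)$ via a canonical ``coding name'' construction. First I would fix, for each regular cardinal $\beta$ with $\kappa<\beta\leq\lambda$, an enumeration $\la c^\beta_\alpha:\alpha<\beta\ra$ of $\col(\kappa,<\beta)$ (and for $\beta=\lambda$, also an enumeration of $\col(\kappa,\lambda)$), coherent in the sense that the restriction map $c\mapsto c\rest\beta$ for $\beta<\beta'$ carries one enumeration onto the other. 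Define the $\col(\kappa,<\beta)$-name $\tau^*_\beta$ by $\tau^*_\beta(\alpha)=1\iff c^\beta_\alpha\in\dot G$, and set $\tau^*:=\tau^*_\lambda$. Each $\la\col(\kappa,<\beta),1,\tau^*_\beta\ra$ is then a strong coding condition, witnessed by $X=\beta$, $D=\col(\kappa,<\beta)$, $f(\alpha)=c^\beta_\alpha$; similarly $\la\col(\kappa,\lambda),1,\tau^*\ra$ is a strong coding condition of $\Sh(\kappa,\lambda')$ whenever $\lambda'>\lambda$ is inaccessible, since $\col(\kappa,\lambda)\in V_{\lambda'}$.

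Next I would construct $\dot u^*$. Pick an increasing cofinal sequence $\la\beta_i:i<\xi\ra$ of regular cardinals in $\lambda$ with $\beta_0>\kappa$ and $\xi<\lambda$ (for the moreover clause, adjoin $\beta_\xi=\lambda$, so that $\la\col(\kappa,\lambda),1,\tau^*\ra$ sits at the top of $\dot u^*$). Let $\dot u^*$ name the $\bbU$-condition whose $\vec\bbQ^{u^*}$ is $\la\la\col(\kappa,<\beta_i),1,\tau^*_{\beta_i}\ra:i\leq\xi\ra$, whose projections $\pi^{u^*}_{ji}$ are the restriction maps $c\mapsto c\rest\beta_i$, and whose $q^{u^*}_{ji}$ are drawn from the $\col(\kappa,<\beta_j)$-generic derived from $\tau^*_{\beta_j}$ so that all required inequalities hold. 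Clause~\ref{codecommute} is immediate from the coherence of the $\tau^*_\beta$'s and the pointwise commutation of restriction maps (with Lemma~\ref{freeze} pinning down the witnessing projections), and clause~\ref{invlimU} is automatic because for any limit $\gamma$ of cofinality less than $\kappa$, $\col(\kappa,<\beta_\gamma)$ is canonically the inverse limit of $\la\col(\kappa,<\beta_i):i<\gamma\ra$ under restriction, which is precisely the ``quite reasonable'' property of $\col(\kappa,<\cdot)$.

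For the first assertion I would then verify that the map
\[
e(c)=\bigl(\,\la\col(\kappa,<\beta_{i(c)}),\,c\rest\beta_{i(c)},\,\tau^*_{\beta_{i(c)}}\ra,\ \dot u^*,\ c\rest[\beta_{i(c)},\lambda)\,\bigr),
\]
where $i(c)<\xi$ is least with $\dom c\subseteq\beta_{i(c)}\times\kappa$ and the third coordinate is taken as the canonical check-name, is an order- and incompatibility-preserving dense embedding of $\col(\kappa,\lambda)$ into the three-step iteration, yielding the forcing equivalence. Density rests on three moves: refining any $\Sh$-condition below some $\la\col(\kappa,<\beta),r,\tau^*_\beta\ra$ via Lemma~\ref{mainproj}; extending any $\bbU$-condition to carry that refined condition at the top via Lemma~\ref{1ptext}, with Lemma~\ref{freeze} forcing the projection to agree with the restriction map on a dense set; and absorbing any nontrivial $\col$-part from the third coordinate into the first by enlarging $\beta$. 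Incompatibility preservation is immediate because any disagreement $c_1(\alpha)\neq c_2(\alpha)$ occurs inside the first-coordinate domain.

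For the moreover part, the same $\tau^*$ and $\dot u^*$ operate inside $\Sh(\kappa,\lambda')*\dot\bbU(\dot G')$. Given a generic $G'*U'$ containing $(\la\col(\kappa,\lambda),1,\tau^*\ra,\dot u^*)$, Lemma~\ref{G_P} guarantees that $H:=(G')^{\tau^*}_{\col(\kappa,\lambda)}$ is $\col(\kappa,\lambda)$-generic over $V$, and Part~1 applied to $H$ then recovers the three-step generic $G*U*K$ as $e[H]$. The containment $G*U\subseteq G'*U'$ follows: any condition $\la\col(\kappa,<\beta),c,\tau^*_\beta\ra\in G$ extends $\la\col(\kappa,\lambda),1,\tau^*\ra$ in $\Sh(\kappa,\lambda')$ via the restriction-map projection with $c\in H$, so a density argument places it in $G'$; likewise the $\bbU$-conditions assembled into $U$ are initial segments of conditions in $U'$ by the coherence of $\dot u^*$. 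The main technical obstacle I anticipate is the careful inductive choice of the witnessing $q^{u^*}_{ji}$'s so that clauses~\ref{codecommute} and~\ref{invlimU} hold simultaneously at every limit stage; this will parallel the strategy construction in Lemma~\ref{strongstrategic} and exploit the inverse-limit structure of $\col(\kappa,<\cdot)$ explicitly.
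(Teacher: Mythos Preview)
Your proposed embedding $e$ cannot be dense, and this breaks both parts of the lemma. You fix a single $\dot u^*$ whose $\vec\bbQ$-sequence consists entirely of Levy collapses $\col(\kappa,{<}\beta_i)$, and you let $e(c)$ always have $\dot u^*$ in the second coordinate. But conditions in $\Sh(\kappa,\lambda)*\dot\bbU(\dot G)*\dot\col(\kappa,\lambda)$ can have arbitrary $\bbU$-parts $u$ whose $\vec\bbQ^u$ contains coding posets $\bbP$ that are not Levy collapses. Lemma~\ref{1ptext} lets you extend such a $u$ by putting a Levy collapse on top, but the extension still has the old $\vec\bbQ^u$ as an initial segment; it is not equal to your fixed $\dot u^*$, nor below it in the $\bbU$-order (which is end-extension). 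So no $e(c)$ lies below $(s,u,c')$ when $u$ mentions a non-Levy poset. The same obstruction kills $U\subseteq U'$ in the moreover clause: the generic $U$ for $\bbU(G)$ will contain conditions with arbitrary coding posets, and these are not initial segments of your $\dot u^*$. (There is also a minor slip: $\lambda$ is regular, so no cofinal sequence of length $\xi<\lambda$ exists.)

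The paper's proof addresses exactly this. Rather than a single fixed $\dot u^*$, it builds a tree $T$ of height $\kappa$ inside the Boolean completion of the three-step iteration, following the McAloon construction (Theorem~\ref{mcaloon}) but guided by the strategy $\Sigma$ of Lemma~\ref{strongstrategic}: at each successor level, Player~I is fed the current node, and Player~II responds with a Levy-collapse first coordinate. Branches through $T$ then encode full generics for the three-step iteration, and the $\bbU$-parts along a branch interleave Player~I's arbitrary posets with Player~II's Levy collapses. The name $\dot u^*$ is then the union of all $\bbU$-conditions appearing along the generic branch (indexed below $\lambda$) with $T$ itself placed at index $\lambda$; this is what makes $U\subseteq U'$ go through. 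The projections from $T$ to $\col(\kappa,{<}\alpha)$ are read off from the Levy-collapse first coordinates of Player~II's moves, and $\tau^*$ codes both the $\Sh(\kappa,\lambda)$-generic function and the $T$-generic. The key idea you are missing is that the dense copy of $\col(\kappa,\lambda)$ must be built \emph{dynamically}, tracking the $\bbU$-generic along each branch, not via a static embedding with a fixed $\bbU$-coordinate.
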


\begin{proof}

    We combine the ideas of Lemmas~\ref{mcaloon} and \ref{strongstrategic}.  The forcing $\Sh(\kappa,\lambda) * \dot\bbU(\dot G) * \dot\col(\kappa,\lambda)$  collapses $\lambda$ to $\kappa$, and it has a dense subset of size $\lambda$.\footnote{Because of the lack of $\lambda$-c.c., the iteration does not have size $\lambda$, but the claim holds since each iterand is forced to have size $\lambda$.}  Let $\dot f$ be a name for a surjection from $\kappa$ to the generic filter intersected with this dense set.

    We use the strategy $\Sigma$ for Player II from Lemma~\ref{strongstrategic} to build a dense tree $T$ in the Boolean completion of the three-step iteration that is isomorphic to $\col(\kappa,\lambda)$.  For the first level of the tree, note that there is a dense set of conditions $\la s,u,c \ra \in \Sh(\kappa,\lambda) * \dot\bbU(\dot G) * \dot\col(\kappa,\lambda)$ such that $\la s,u \ra = \Sigma(\la\la s',u'\ra\ra)$ for some $\la s',u'\ra$, and $\la s,u,c \ra\leq q$ for some $q$ such that $\la s,u,c\ra \Vdash \dot f(0) = \check q$.  Take the first level $T_1$ to be a maximal antichain of size $\lambda$ of such conditions.

    Assume inductively that we have constructed the levels $\la T_\beta : \beta<\alpha\ra$ with the following properties:
    \begin{itemize}
        \item For $\gamma<\beta<\alpha$, $T_\beta$ is a maximal antichain refining $T_{\gamma}$.
        \item If $\beta<\alpha$ is a successor, then the elements of $T_\beta$ are members of $\Sh(\kappa,\lambda) * \dot\bbU(\dot G) * \dot\col(\kappa,\lambda)$, and if $\beta<\alpha$ is a limit, then each $p \in T_\beta$ is of the form $\inf \{ q \in \bigcup_{\gamma<\beta} T_\gamma : q \geq p\}$.
        \item For each successor $\beta<\alpha$ and each $p \in T_\beta$, we have assigned a descending sequence $\la \la s_i^p,u_i^p,c_i^p \ra : i < 2\beta \ra \subseteq \Sh(\kappa,\lambda) * \dot\bbU(\dot G) * \dot\col(\kappa,\lambda)$ with minimum element $p = \la s_{2 \beta  - 1}^p, u_{2 \beta - 1}^p, c_{2 \beta - 1}^p\ra$, such that $\la\la s_i^p,u_i^p \ra : i < 2\beta\ra$ is a partial play of $\mathcal{G}_\kappa^{\mathrm{I}}(\Sh(\kappa,\lambda) * \dot\bbU(\dot G))$ following $\Sigma$, with the property that $\{\la s_i^p,u_i^p,c_i^p\ra : i < 2\beta$ is odd$\}$ is exactly the set of nodes $q \in T_\gamma$ for successor $\gamma<\beta$ that are above $p$.  Furthermore, we require that for successor $\gamma<\beta<\alpha$, $q \in T_\gamma$, and $p \in T_\beta$ such that $q \geq p$, $\la \la s_i^p,u_i^p,c_i^p \ra : i < 2\gamma \ra$ is an initial segment of $\la \la s_i^p,u_i^p,c_i^p \ra : i < 2\beta \ra$.
        \item If $\beta<\alpha$ and $\beta = \gamma+1$, then for each $p \in T_\beta$, there is some $q$ such that $p \leq q$ and $p \Vdash \dot f(\gamma) = \check q$.
    \end{itemize}
    If $\alpha$ is a limit, let $T_\alpha$ be the set of infima to the branches through $\bigcup_{\beta<\alpha} T_\beta$.  Since the first two coordinates along each such branch conform to a play of the game according to $\Sigma$, and the third coordinate is in a $\kappa$-closed forcing, these infima are nonzero.  $T_\alpha$ is a maximal antichain just as in Lemma~\ref{mcaloon}.

    If $\alpha = \beta+1$, then for each $p \in T_\beta$, we can find $\la s,u,c\ra = p' \leq p$ such that for some $q$, $p'\leq q$ and $p' \Vdash \dot f(\beta) = \check q$.  We can put $\la s,u \ra$ as Player I's next move in the game.  Thus there is a dense set of $r \leq p$ whose first two coordinates come as Player II's next move in the game according to $\Sigma$ with initial sequence of plays $\la\la s_i^p,u_i^p,c_i^p\ra: i < 2\beta \ra$.  Let $T^p_\alpha$ be a maximal antichain of such conditions, and for each $r \in T^p_\alpha$, choose $\la s^r_{2\beta},u^r_{2\beta},c^r_{2\beta}\ra$ to give moves for Player I at round $2\beta$, continuing the sequence of plays up to $r$.  Let $T_\alpha = \bigcup \{ T^p_\alpha : p \in T_\beta \}$.  This completes the induction.

    $T$ is dense because for any $p \in \Sh(\kappa,\lambda) * \dot\bbU(\dot G) * \dot\col(\kappa,\lambda)$, there are conditions $q$ and $r$ and some $\alpha<\kappa$ such that $r \leq p$ and $q \Vdash \dot f(\alpha) = \check r$.  There is some $t \in T_{\alpha+1}$ compatible with $q$, and by construction we must have $t \leq r \leq p$.

    Let us now define a system of projections from dense open subsets of $T$ to $\col(\kappa,{<}\alpha)$ for $\alpha<\lambda$.  Recall that all of Player II's moves according to $\Sigma$ take the form $\la\la\col(\kappa,{<}\alpha),p,\tau\ra,\dot u\ra$.  For $\alpha<\lambda$, let $D_\alpha$ be the set of nodes $t \in T$ such that either $t$ is a successor node with the first coordinate of the form $\la\col(\kappa,{<}\beta),p,\tau\ra$ for some $\beta\geq\alpha$, or $t$ is a limit node, and $\alpha \leq \sup z$, where $z$ is the set of ordinals $\beta$ such that there is a successor node above $t$ with first coordinate of the form $\la\col(\kappa,{<}\beta),p,\tau\ra$.
    If $t \in D_\alpha$ is a successor node of the form $\la\la\col(\kappa,{<}\beta),p,\tau\ra,u,c\ra$, let $\sigma_\alpha(t) = p \rest \alpha$. 
    To show this is order-preserving, suppose $t_0>t_1$ are successor nodes of $T$, with the first coordinate of $t_i$ being $\la\col(\kappa,{<}\beta_i),q_i,\tau_i\ra$.  Since these nodes conform to a play of the game by $\Sigma$, the restriction map $x \mapsto x \rest \beta_0$ witnesses the ordering $\la\col(\kappa,{<}\beta_1),q_1,\tau_1\ra \leq \la\col(\kappa,{<}\beta_0),q_0,\tau_0\ra$.  Thus $q_1 \supseteq q_0$, and so $q_1 \rest \alpha \supseteq q_0 \rest \alpha$.
    
    If $t \in D_\alpha$ is a limit node, let $\delta$ be the ordertype of the set of nodes above $t$, and let $\la\la\la\col(\kappa,{<}\beta_i),q_i,\tau_i\ra,u_i,c_i\ra : i < \delta \ra$ list the successor nodes above $t$.  Put $\sigma_\alpha(t) = \bigcup_i q_i \rest \alpha$.
    By definition, $\sigma_\alpha$ is $\kappa$-continuous, and order preservation also holds for pairs that may include limit nodes.  If $\beta<\alpha$ and $t \in D_\alpha$, then $t \in D_\beta$ also, and $\sigma_{\beta}(t) = \sigma_{\alpha}(t) \rest \beta$.

    
    To show $\sigma_\alpha$ is a projection, suppose $\alpha<\lambda$, $t \in D_\alpha$, and $p \leq \sigma_{\alpha}(t)$.  If $t$ is a successor node, then $t$ has the form $\la\la\col(\kappa,{<}\beta),q,\tau\ra,u,c\ra$.  We get a stronger condition by replacing $q$ with $p\cup q$, and since $T$ is dense, there is a successor node in $T$ below it of the form $t' = \la\la\col(\kappa,{<}\gamma),r,\tau'\ra,u',c'\ra$, for some $\gamma\geq\beta$.  Let $\pi$ witness the ordering $\la\col(\kappa,{<}\gamma),r,\tau'\ra \leq \col(\kappa,{<}\beta),p\cup q,\tau\ra$.  Since the branch up to $t'$ conforms to $\Sigma$,
    the restriction map $x \mapsto x \rest \beta$ witnesses the ordering $\la\col(\kappa,{<}\gamma),r,\tau'\ra\leq \la\col(\kappa,{<}\beta),q,\tau\ra$.  Thus both $(\cdot\rest\beta)$ and $\id\circ\pi = \pi$ witness this ordering, so by Lemma~\ref{freeze}, $\pi(r) = r \rest \beta$.
    Thus $r \supseteq p \cup q$, and so $\sigma_{\alpha}(t') \leq p$.  
    
    If $t$ is a limit node, let $\la\la\la\col(\kappa,{<}\beta_i),q_i,\tau_i\ra,u_i,c_i\ra : i < \delta \ra$ list the successor nodes above $t$. 
    Let $\gamma = \sup_i\beta_i$.  As in the limit case of the proof of Lemma~\ref{strongstrategic}, one lower bound to the branch above $t$ takes the form $\la\la\col(\kappa,{<}\gamma),q,\tau\ra,u,c\ra$, where $q = \bigcup_i q_i$.  Thus $\la\la\col(\kappa,{<}\gamma),p \cup q,\tau\ra,u,c\ra$ is another lower bound, which is a condition stronger than $t$.  Since $T$ is dense, there is a successor node in $T$ below it of the form $t' = \la\la\col(\kappa,{<}\eta),r,\tau'\ra,u',c'\ra$.
    Let $\pi$ be a projection witnessing $\la\col(\kappa,{<}\eta,r,\tau'\ra \leq \la\col(\kappa,{<}\gamma),p\cup q,\tau\ra$.
    Since the branch up to $t'$ conforms to $\Sigma$, the restriction map witnesses the ordering $\la\col(\kappa,{<}\eta),r,\tau'\ra \leq \la\col(\kappa,{<}\beta_i),q_i,\tau_i\ra$ for $i <\delta$.
    For such $i$, the projections $(\cdot\rest\beta_{i}) \circ \pi$ and $(\cdot\rest\beta_{i})$ both witness $\la\col(\kappa,{<}\eta),r,\tau'\ra \leq \la\col(\kappa,{<}\beta_{i}),q_{i},\tau_{i}\ra$, so by Lemma~\ref{freeze}, they are the same below $r$.  Thus $\pi(r) \rest \gamma = r \rest \gamma$, so $r \supseteq p \cup q$, and $\sigma_{\alpha}(t')\leq p$.

    Let $\tau_0$ be a $\Sh(\kappa,\lambda)$-name for the generic function $g: \lambda \to 2$ as in the proof of Lemma~\ref{distributive}.  Then let $\tau^*$ be a $T$-name for a binary function of length $\lambda+\lambda$, where $\tau^* \rest \lambda = \tau_0$, and $\tau^* \rest [\lambda,\lambda+\lambda)$ codes the generic for $T$ (as in the definition of strong coding).  It is clear that for any inaccessible $\lambda'>\lambda$, $\la T,1,\tau^*\ra$ is a coding condition in $\Sh(\kappa,\lambda')$.  We claim that for any $t = \la\la\col(\kappa,{<}\alpha),p,\tau\ra,u,c\ra \in T$, the projection $\sigma_\alpha$ witnesses that $\la T,t,\tau^*\ra \leq \la\col(\kappa,{<}\alpha),p,\tau\ra$. 
    Suppose $B \subseteq T$ is generic with $t \in B$, $H = \sigma_\alpha(B)$, and $G*U*K \subseteq \Sh(\kappa,\lambda) * \dot\bbU(G') * \dot\col(\kappa,\lambda)$ is the filter corresponding to $B$.
    Since $\sigma_\alpha(t) = p$, $p \in H$.
    Further, for all $q \in H$, there is $s = \la\la\col(\kappa,{<}\beta),p',\tau'\ra,u',c'\ra \in B$ below $t$ such that $\sigma_\alpha(s) = p' \rest \alpha \leq q$.  Then $\la\col(\kappa,{<}\beta),p',\tau'\ra \in G$, and $p' \rest \alpha \in G^\tau_{\col(\kappa,<\alpha)}$.  Thus $H \subseteq G^\tau_{\col(\kappa,<\alpha)}$, and by the maximality of generic filters, $H = G^\tau_{\col(\kappa,<\alpha)}$.  If $\delta$ is the forced domain of $\tau$, then $\tau^H = \tau_0^G \rest \delta = (\tau^*)^B \rest \delta$.
    
    Suppose $G' \subseteq \Sh(\kappa,\lambda')$ is generic with $\la T,1,\tau^*\ra \in G'$, $B$ is the induced generic branch for $T$, and $G * U * K\subseteq \Sh(\kappa,\lambda) * \dot\bbU(G') * \dot\col(\kappa,\lambda)$ is the corresponding filter.  For any $\la\bbP,p,\tau\ra \in G$, there is a successor node $t = \la\la\col(\kappa,{<}\alpha),p',\tau'\ra,u,c\ra \in B$ such that $\la\col(\kappa,{<}\alpha),p',\tau'\ra \leq \la\bbP,p,\tau\ra$.  Since $\la T,t,\tau^*\ra \leq \la\col(\kappa,{<}\alpha),p',\tau'\ra$, we have $\la\bbP,p,\tau\ra \in G'$ as well.  Hence, $G \subseteq G'$.
    
    Let $\dot u^* = \la\vec\bbQ^{u^*},\vec\pi^{u^*}, \vec q^{u^*}\ra$ be a $T$-name for 
    \[\la\la\la\bbQ_i,1,\tau_i\ra : i \leq \lambda\ra,\la\pi_{ji} : i<j\leq\lambda\ra, \la q_{ji} : i < j \leq\lambda\ra,\] defined as follows.  Let  $\vec\bbQ\rest\lambda$,  $\vec\pi\rest\lambda$ and $\vec q \rest \lambda$ be the unions of the corresponding coordinates of conditions appearing in the $\dot\bbU(\dot G)$-generic.  Let $\bbQ_\lambda = T$ and $\tau_\lambda = \tau^*$.  Let $\la\alpha_i: i < \kappa\ra$ enumerate the indices in $\vec\bbQ$ at which the posets in the first coordinates of the successor nodes of $T$ appear in the generic branch.  For 
    $i<\kappa$, if $\bbQ_{\alpha_i} = \col(\kappa,{<}\beta)$, let $\pi_{\lambda\alpha_i} = \sigma_\beta$. 
    For $\gamma<\lambda$ such that $\gamma \notin \{ \alpha_i : i<\kappa \}$, let $i<\kappa$ be least such that $\gamma<\alpha_i$, and let $\pi_{\lambda\gamma} = \pi_{\alpha_i\gamma} \circ \pi_{\lambda\alpha_i}$, restricted to the set of points $t$ such that $\pi_{\gamma\alpha_j}\circ\pi_{\alpha_i\gamma} \circ \pi_{\lambda\alpha_i}(t)$ is defined for all $j<i$.

    To define the conditions $q_{\lambda\gamma}$ for $\gamma<\lambda$, first for $i<\kappa$, let $q_{\lambda\alpha_i}$ be the $i^{th}$ successor node $t \in B$, so $t$ has the form $\la\la\col(\kappa,{<}\beta),p,\tau\ra,u,c\ra$.  We established above that $\sigma_\beta$ witnesses $\la T,t,\tau^* \ra \leq \la\col(\kappa,{<}\beta),p,\tau\ra$.
    For $\gamma<\lambda$ such that $\gamma \notin \{\alpha_i : i <\kappa\}$, let $i$ be least such that $\alpha_i>\gamma$.  Let $q_{\lambda\gamma}$ be some $t \in B$ such that $t \leq q_{\lambda\alpha_i}$, and $\pi_{\lambda\beta}(t) \leq q_{\beta\alpha}$ for all $\{\alpha,\beta\} \subseteq \{ \gamma \} \cup \{ \alpha_j : j \leq i \}$.

    
    As $\Sh(\kappa,\lambda') \rest \la T,1,\tau^*\ra$ induces a generic for $T$, $\dot u^*$ can be interpreted as a $\Sh(\kappa,\lambda')$-name.  We must check that $\dot u^*$ is forced to be a member of $\dot\bbU(\dot G)$.  The satisfaction of the first three clauses of the definition is clear.  
    For (\ref{invlimU}), suppose $B$ and $G*U*K$ are generics as above, and let $u^* = (\dot u^*)^B$.  Suppose $x \in [\lambda]^{<\kappa}$, and let $x' = x \cup \{\lambda\}$.  Let $\la \alpha_i : i<\kappa\ra$ be as above, and let $y = \{ \alpha_i : (\exists \beta\in x) \alpha_i$ is least such that $\alpha_i\geq\beta\}$.  Suppose $t \in E_\lambda^{\la\vec\bbQ\rest x',\vec\pi\rest x' \ra}$ is such that $\pi_{\lambda\beta}(t) \leq q_{\beta\alpha}$ for all $\{\alpha,\beta\} \in [x']^2$.  By construction, we also have that $\pi_{\lambda\beta}(t) \leq q_{\beta\alpha}$ for all $\{\alpha,\beta\} \in [x' \cup y]^2$.
    Assume $\vec r \in \varprojlim(\vec\bbQ \rest x,\vec\pi\rest x)$ and $\vec r \leq \la \pi_{\lambda i}(t) : i \in x \ra$. 
    
    Suppose first that $y$ has a maximum element $\alpha_j$.  Then there is $u \in U$ such that $u^* \rest (\alpha_j+1) = u$, and we have that, putting $q = \pi_{\lambda\alpha_j}(t)$, $\pi_{\alpha_j\gamma}(q) \leq q_{\gamma\beta}$ for all $\{\beta,\gamma \} \in [x\cup y]^2$.  Thus there is $q' \leq q$ such that 
    $\pi_{\alpha_j\gamma}(q') \leq \vec r$, and there is $t' \leq t$ in $E_\lambda^{\la\vec\bbQ\rest x',\vec\pi\rest x' \ra}$ such that $\pi_{\lambda\alpha_j}(t') \leq q'$.

    Suppose next that $y$ does not have a maximal element.  By Lemma~\ref{invlim_cof1}, there is $\vec s \in \varprojlim(\vec\bbQ \rest (x \cup y),\vec \pi\rest (x \cup y))$ such that $\vec s \rest x \leq \vec r$.
    Let $z$ be the set of ordinals $\beta$ such that $\col(\kappa,{<}\beta)$ appears as $\bbQ_{\alpha_i}$ for $\alpha_i \in y$, and let $\gamma = \sup z$.
    Then $\vec s \rest y$ can be represented as $p \in \col(\kappa,{<}\gamma)$ via the canonical isomorphism.
    We have that $t \in \dom\sigma_\gamma$, and $p \leq \sigma_\gamma(t)$.  By the fact that $\sigma_\gamma$ is a projection, there is $t' \leq t$ in $E_\lambda^{\la\vec\bbQ\rest x',\vec\pi\rest x' \ra}$ such that $\sigma_\gamma(t') \leq p$, which implies $\la \pi_{\lambda i}(t') : i \in x \ra \leq \vec r$.

    Finally, if we force with $\Sh(\kappa,\lambda')* \dot\bbU(\dot G')$ below $\la\la T,1,\tau^*\ra,\dot u^*\ra$, obtaining a generic $G'*U'$, then if $G*U$ is the generic for $\Sh(\kappa,\lambda) * \dot\bbU(\dot G)$ induced from the $T$-generic, then all elements of $U$ are initial segments of $(\dot u^*)^{G'} \rest \lambda$, so $U \subseteq U'$.
\end{proof}


\section{Dense ideals on $\omega_1$ and $\omega_2$}\label{sec: dense-ideals-1-2}

Let us first show how the posets thus far introduced can force the existence of dense ideals on successors of regular uncountable cardinals.  Recall that a cardinal $\kappa$ is \emph{almost-huge} if there is an elementary embedding $j: V \to M$ with critical point $\kappa$ such that $M^{<j(\kappa)}\subseteq M$.  We say, ``$\kappa$ is almost-huge with target $\lambda$,'' when there is an embedding $j$ witnessing the almost-hugeness of $\kappa$ with $j(\kappa) = \lambda$.  If $j: V \to M$ is such an embedding, then by \cite[Theorem 24.11]{Kanamori}, there is another embedding $i: V \to N$ witnessing the almost-hugeness of $\kappa$, with $i(\kappa) =\lambda$, given via the direct limit of supercompactness measures derived from $j$.  Such an embedding $i$ has the additional property that $\lambda<i(\lambda)<\lambda^+$ and $i[\lambda]$ is cofinal in $i(\lambda)$.

\begin{theorem}
\label{anysuccreg}
    Suppose $\kappa$ is almost-huge with target $\lambda$ and $\mu<\kappa$ is regular and uncountable.  Then $\Sh(\mu,\kappa) * \dot\bbU(\dot G) * \dot\col(\kappa,{<}\lambda)$ forces that there is a normal ideal $I$ on $\kappa$ such that $\p(\kappa)/I \cong \mathcal{B}(\col(\mu,\kappa))$.
\end{theorem}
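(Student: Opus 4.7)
My plan is to use the standard framework for deriving dense ideals from almost-huge embeddings. I will take a derived almost-huge $i \colon V \to N$ with $\crit(i) = \kappa$, $i(\kappa) = \lambda$, $i[\lambda]$ cofinal in $i(\lambda)$, and $|i(\lambda)|^V = \lambda$. Let $G = G_0 * U_0 * K_0$ denote a $\bbP$-generic over $V$. I will lift $i$ to $\bar\imath \colon V[G] \to N[G^*]$ inside the extension $V[G][H]$ where $H$ is $\col(\mu,\kappa)^V$-generic over $V[G]$, and then derive $I$ via the usual definition $A \in I^+$ iff $\|\check\kappa \in \dot{\bar\imath}(\check A)\|_{\col(\mu,\kappa)} \neq 0$.

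First I will compute $i(\bbP) = \Sh(\mu,\lambda)^N * \dot\bbU^N * \dot\col(\lambda,{<}i(\lambda))^N$; by $N^{<\lambda}\subseteq N$ the first two iterands agree with their $V$-computations. To lift through them I apply Lemma~\ref{3step} with $\mu,\kappa,\lambda$ replacing $\kappa,\lambda,\lambda'$: this produces a master condition $\la\la\col(\mu,\kappa),1,\tau^*\ra,\dot u^*\ra \in \Sh(\mu,\lambda) * \dot\bbU$ below which forcing is equivalent to $\col(\mu,\kappa)$ and induces a $\Sh(\mu,\kappa)* \dot\bbU* \dot\col(\mu,\kappa)$-generic whose first two components extend any given $\Sh(\mu,\kappa)* \dot\bbU$-generic. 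Forcing $\col(\mu,\kappa)^V$ over $V[G]$, and strengthening the master condition so that the induced $\Sh(\mu,\kappa)* \dot\bbU$-generic equals $G_0 * U_0$, will then produce a $\Sh(\mu,\lambda)* \dot\bbU$-generic $G_1^* * U_1^* \in V[G][H]$ with $i[G_0 * U_0] \subseteq G_1^* * U_1^*$.

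Next, for the lift through $\col(\kappa,{<}\lambda)$: for each $\alpha < \lambda$, the set $i[K_0 \cap \col(\kappa,{<}\alpha)]$ is a directed subset of $\col(\lambda,{<}i(\alpha))^N$ of $V$-size $<\lambda$, so by the $\lambda$-closure of $\col(\lambda,{<}i(\alpha))^N$ in $N$ it has a pointwise lower bound $m_\alpha$; these can be chosen coherently in $\alpha$. Since $|i(\lambda)|^{V[G][H]} = \lambda = \kappa^+$, the forcing $\col(\lambda,{<}i(\lambda))^N$ has at most $\kappa^+$-many dense subsets lying in $N[G_1^* * U_1^*]$, and by its $\lambda$-distributivity I will construct $K_1^*$ meeting all of them while descending through $\la m_\alpha : \alpha<\lambda\ra$, completing the lift to $\bar\imath \colon V[G] \to N[G_1^* * U_1^* * K_1^*]$ inside $V[G][H]$.

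Normality of $I$ will be standard from $\kappa = \crit(\bar\imath)$ combined with $\mu$-closure of $\col(\mu,\kappa)$. For $\p(\kappa)^{V[G]}/I \cong \mathcal{B}(\col(\mu,\kappa))$, the map $[A]_I \mapsto \|\check\kappa \in \dot{\bar\imath}(\check A)\|$ is a complete Boolean injection; surjectivity will be established by showing that the Boolean completion of the quotient forcing $i(\bbP)/G$ is isomorphic to $\mathcal{B}(\col(\mu,\kappa))$ via the master-condition structure, together with a canonical-function argument producing, for each $p \in \col(\mu,\kappa)^{V[G]}$, a set $A_p \subseteq \kappa$ with $p = \|\check\kappa \in \dot{\bar\imath}(\check A_p)\|$. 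The hard part will be the lift through the $\col(\kappa,{<}\lambda)$-factor: $i[K_0]$ has $\lambda$-many conditions while $\col(\lambda,{<}i(\lambda))^N$ is only $\lambda$-closed in $N$, so no single master condition exists; careful organization of the coherent sequence $\la m_\alpha : \alpha < \lambda\ra$ and exploitation of $|i(\lambda)|^{V[G][H]} = \kappa^+$ to bound the relevant dense sets will be the delicate ingredient.
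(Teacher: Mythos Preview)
Your overall architecture is right, and the lift through $\col(\kappa,{<}\lambda)$ via a coherent master sequence $\la m_\alpha : \alpha<\lambda\ra$ is exactly what the paper does. The genuine gap is in how you obtain the $\Sh(\mu,\lambda)*\dot\bbU$-generic $G_1^* * U_1^*$ over $N$.

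You write that applying Lemma~\ref{3step} and ``forcing $\col(\mu,\kappa)^V$ over $V[G]$ \dots\ will then produce a $\Sh(\mu,\lambda)*\dot\bbU$-generic $G_1^* * U_1^*$.'' But Lemma~\ref{3step} runs the other way: it says that \emph{given} a generic $G'*U' \subseteq \Sh(\mu,\lambda)*\dot\bbU(\dot G')$ containing the master condition, the induced $\col(\mu,\kappa)$-generic yields $G*U*K$ with $G*U \subseteq G'*U'$. It does not tell you how to manufacture $G'*U'$ starting from $G_0*U_0$ and a further $\col(\mu,\kappa)$-generic. In $V[G][H]$ you have $\lambda=\mu^+$, and you would need to hit $\lambda$-many dense subsets of $\Sh(\mu,\lambda)*\dot\bbU$ lying in $N$; but this iteration is only strongly $\mu$-strategically closed (Lemma~\ref{strongstrategic}), which is not enough to run a $\lambda$-length construction. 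Indeed $\Sh(\mu,\lambda)$ is not even countably closed.

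The paper supplies the missing mechanism in two steps. First, it observes that $\col(\mu,\kappa)*\dot\col(\kappa,{<}\lambda)^{\Sh(\mu,\kappa)}$ is a quite reasonable $(\mu,\lambda)$-collapse, so Lemma~\ref{mainproj} gives a projection $\psi$ from this iteration followed by $\dot\add(\lambda)$ onto $\Sh(\mu,\lambda)$; after forcing $g\subseteq\col(\mu,\kappa)$ one has the reasonable-collapse generic $K*K_0$, and one \emph{builds} an $M$-generic $h\subseteq\add(\lambda)^{M[K*K_0]}$ by hand (using $|{\p(\lambda)}^{M[K*K_0]}|=\lambda$ and ${<}\lambda$-closure of $M[K*K_0]$), then sets $G':=\psi(K*K_0*h)$. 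Second, to build $U'\subseteq\bbU(G')$, the paper invokes Lemma~\ref{lambdastratclosed}: because $G'$ arose via the projection from the quite reasonable collapse, $\bbU(G')$ is $\lambda$-strategically closed in $V[K*K_0*h]$, which \emph{is} enough to build an $M[G']$-generic containing $u^*$. Without these two ingredients your plan to produce $G_1^**U_1^*$ does not go through.
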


\begin{proof}
    Let $j: V \to M$ be an embedding witnessing that $\kappa$ is almost-huge with target $\lambda$.  Let us assume that $j(\lambda)<\lambda^+$ and $j[\lambda]$ is cofinal in $j(\lambda)$.

    By Lemma~\ref{3step}, $\Sh(\mu,\kappa) * \dot\bbU(\dot G) * \dot\col(\mu,\kappa)$ is equivalent to $\col(\mu,\kappa)$, so $\Sh(\mu,\kappa)$ can be seen as a subforcing of $\col(\mu,\kappa)$.  The  iteration $\col(\mu,\kappa) * \dot\col(\kappa,{<}\lambda)^{\Sh(\mu,\kappa)}$ is clearly a quite reasonable $(\mu,\lambda)$-collapse.  Let us choose an increasing sequence of regular suborders witnessing this, $\la\bbR_\alpha: \alpha<\lambda\ra$, with the property that $\bbR_0 = \{ 1\}$ and $\bbR_{\alpha} = \col(\mu,\kappa)$ for $0<\alpha\leq\kappa+\kappa$, for the map $\psi$ from Lemma~\ref{mainproj} to satisfy that $\psi(1,\tau^*) = \la\col(\mu,\kappa),1,\tau^*\ra$, where $\tau^*$ is the name for the $(\kappa+\kappa)$-length binary sequence given by Lemma~\ref{3step}.  The later $\bbR_\alpha$ are of the form $\col(\mu,\kappa) * \dot\col(\kappa,{<}\beta_\alpha)^{\Sh(\mu,\kappa)}$ for some appropriate increasing sequence of ordinals $\beta_\alpha$, and the witnessing system of projection maps is given by the canonical restriction maps.

    Let $G * U * H \subseteq  \Sh(\mu,\kappa) * \dot\bbU(\dot G) * \dot\col(\kappa,{<}\lambda)$ be generic.  Let $g \subseteq \col(\mu,\kappa)$ be generic over $V[G*U*H]$, which implies that $H$ is generic over $V[G*U*g]$. By Lemma \ref{3step}, $G * U * g$ is equivalent to a generic $K\subseteq \col(\mu,\kappa)$ so $G * U * g * H$ induces a $V$-generic $K * H \subseteq \col(\mu,\kappa) * \dot\col(\kappa,{<}\lambda)^{\Sh(\mu,\kappa)}$.
    
    Now consider the model $M[K{*}H]$.  Since in $M$, $j(\lambda)$ is an inaccessible cardinal  below $(\lambda^+)^V$, $\p(\lambda)^{M[K*H]}$ has size $\lambda$ in $V[K{*}H]$.  Also, since $K*H$ is generic for a forcing that is $\lambda$-c.c.\ in $M$, $M[K{*}H]$ is ${<}\lambda$-closed in $V[K{*}H]$.  Working in $V[K{*}H]$, inductively build a filter $h \subseteq \add(\lambda)^{M[K*H]}$ that is generic over $M[K{*}H]$ with $(\tau^*)^K \in h$.
    
    Let $\psi : [\col(\mu,\kappa) * \dot\col(\kappa,{<}\lambda)^{\Sh(\mu,\kappa)}] * \dot\add(\lambda) \to \Sh(\mu,\lambda)$ be the projection of Lemma~\ref{mainproj}, and let $G' = \psi(K{*}H{*}h)$.  We have $\la\col(\mu,\kappa),1,\tau^*\ra \in G'$, $K = (G')^{\tau^*}_{\col(\mu,\kappa)}$, and by Lemma~\ref{3step}, $G \subseteq G'$.  Therefore, in $V[K{*}H]$, we can lift the embedding to $j : V[G] \to M[G']$.  By Lemma~\ref{distributive}, $M[G']$ is ${<}\lambda$-closed in $M[K{*}H]$, so it is ${<}\lambda$-closed in $V[K{*}H]$.

    Let $u^*$ be the condition in $\bbU(G')$ given by Lemma~\ref{3step}.  Using the closure of $M[G']$, the fact that $\p(\bbU(G'))^{M[G']}$ has size $\lambda$ in $V[K{*}H]$, and the $\lambda$-strategic closure given by Lemma~\ref{lambdastratclosed}, build a filter $U' \subseteq \bbU(G')$ that is generic over $M[G']$ with $u^* \in U'$.  By Lemma~\ref{3step}, $U \subseteq U'$.  Thus the embedding can be further lifted to $j: V[G{*}U] \to M[G'{*}U']$.

    For each $\alpha<\lambda$, let $H_\alpha = H \cap \col(\kappa,{<}\alpha)^{V[G*U]}$.  Each such $H_\alpha$ is a member of $M[G']$.  Let $m_\alpha = \bigcup j[H_\alpha] \in M[G']$, which is a condition in $\col(\lambda,{<}j(\lambda))^{M[G']}$ since $|H_\alpha| < \lambda$ and the Levy collapse is $\lambda$-directed closed.  In $V[K{*}H]$, let $\la A_\alpha : \alpha < \lambda \ra$ be an enumeration of the maximal antichains contained in $\col(\lambda,{<}j(\lambda))^{M[G']}$ that are members of $M[G'{*}U']$, which is possible again because $j(\lambda)<(\lambda^+)^V$.  
    
    Recursively, build a descending sequence $\la q_\alpha : \alpha < \lambda \ra$ such that $q_\alpha \leq m_\alpha$ and $q_\alpha \in D_\alpha$.  In order to do this, also recursively choose an increasing sequence of ordinals $\la \gamma_\alpha: \alpha < \lambda \ra \subseteq \lambda$ such that $\gamma_\alpha \geq \alpha$ and $A_\alpha \subseteq \col(\kappa,{<}j(\gamma_\alpha))^{M[G']}$.  We inductively assume that $q_\alpha$ is compatible with $j(p)$ for all $p \in H$ and that $q_\alpha \in \col(\lambda,{<}j(\gamma_\alpha))$. Given $q_\alpha$, first take $q_{\alpha+1}' = q_\alpha \cup m_{\gamma_{\alpha+1}}$, and then find $q_{\alpha+1} \leq q_{\alpha+1}'$ in $\col(\lambda,{<}j(\gamma_{\alpha+1}))^{M[G']}$ such that $q_{\alpha+1} \leq a$ for some $a \in A_{\alpha}$.  $q_{\alpha+1}$ is compatible with $j(p)$ for all $p \in H$ because $q_{\alpha+1} \supseteq j(p)$ for all $p \in H_{\gamma_{\alpha+1}}$.
    At a limit stage $\alpha$, given $\la q_i : i <\alpha\ra$, let $q_\alpha = \bigcup_{i<\alpha} q_i$, which is compatible with $j(p)$ for all $p \in H$ because each $q_i$ is.

    The sequence $\la q_\alpha : \alpha<\lambda\ra$ generates a filter $H' \subseteq \col(\lambda,j(\lambda))^{M[G']}$ that is generic over $M[G']$ and has the property that $j[H] \subseteq H'$.  
    
    Thus, we may lift the embedding one more time to $j : V[G{*}U{*}H] \to M[G'{*}U'{*}H']$.  To perform this lifting, we only needed to adjoin the generic $g \subseteq \col(\mu,\kappa)$ to $V[G{*}U{*}H]$.  In $V[G{*}U{*}H]$, define 
    \[I = \{ X \subseteq \kappa : 1 \Vdash_{\col(\mu,\kappa)} \check\kappa \notin j(X)\}.\]  By standard arguments, $I$ is $\kappa$-complete and normal.  

    \begin{claim}\label{claim:dense-embedding-to-collapse}
        There is a dense embedding $e$ from $\p(\kappa)/I$ to $\mathcal{B}(\col(\mu,\kappa))$.
    \end{claim}
    \begin{proof}
        Define $e : \p(\kappa)/I \to \mathcal{B}(\col(\mu,\kappa))$ by $e([X]_I) = \| \check\kappa \in j(X) \|$.  It is clear that $e$ preserves order and antichains.  
    
        Since $|\col(\mu,\kappa)| = \kappa$, $\p(X)/I$ is $\lambda$-c.c.  If $\la [X_\alpha]_I : \alpha<\kappa \ra$ is a maximal antichain, then by normality, $\nabla_\alpha X_\alpha := \{ \beta : (\exists \alpha<\beta) \beta \in X_\alpha \}$ is equivalent to $\kappa$ modulo $I$, and thus $1_{\col(\mu,\kappa)}$ forces that $\kappa \in j(X_\alpha)$ for some $\alpha <\kappa$; in other words $\{ e([X_\alpha]_I) : \alpha < \kappa \}$ is maximal in $\mathcal{B}(\col(\mu,\kappa))$.  Thus $e$ is a complete embedding.

        In order to show that $e$ is dense, it suffices to show that there is a name $\sigma$ in the forcing language of $\p(\kappa)/I$ such that whenever $g \subseteq \col(\mu,\kappa)$ is generic and $F = e^{-1}[g]$, then $g = \sigma^F$. Indeed, for any $p \in \col(\mu,\kappa)$, there is $X \subseteq \kappa$ such that $[X]_I \Vdash \check p \in \sigma$, and so $e([X]_I) \leq p$.  
        
        To show the desired claim, let $g \subseteq \col(\mu,\kappa)$ be generic, let $F = e^{-1}[g]$, and let $G',U',H'$ be as above.  Let $N = \Ult(V[G{*}U{*}H],F)$, let $i : V[G{*}U{*}H] \to N$ be the ultrapower embedding, and let $k : N \to M[G'{*}U'{*}H']$ be defined by $k([h]_F) = j(h)(\kappa)$.  Then $k$ is elementary and $j = k \circ i$.  
    \[\begin{tikzcd}
            V[G{*}U{*}H]\ar[d, "i"]\ar[r, "j"] & M[G'{*}U'{*}H'] \\
            N\ar[ur, "k"]
        \end{tikzcd}\]      

        Since $I$ is normal and $\kappa$-complete, $\crit(i) = \kappa$. In particular, for every $X\subseteq \kappa$ in $V[G{*}U{*}H]$, $X \in N$ and thus $(\kappa^+)^N \geq \lambda$. Since $\crit(k) > \kappa$ (as $\kappa = k([\id]_F)$),  and $\crit(k)$ must be a cardinal in $N$, $\crit k \geq \lambda$. Since $k(i(\kappa)) = j(\kappa) = \lambda$, in fact $\crit(k) > \lambda$. This implies that $i(G) = j(G) = G'$. 
        
        Thus, $g$ can be defined as the filter over the third coordinate given by the translating $i(G)^{\tau^*}_{\col(\mu,\kappa)}$ into a filter over $\Sh(\mu,\kappa) * \dot\bbU(\dot G) * \dot\col(\mu,\kappa)$ via the embedding of Lemma~\ref{3step}.
       \end{proof}
       So, $e^{-1}(\col(\mu,\kappa))$ is dense in the $I$-positive subsets of $\kappa$, as wanted.
\end{proof}

The basic arguments about $\Sh(\mu,\kappa)$, even the transitivity of the order, only work when $\mu$ is regular and uncountable. Thus, a different approach is needed for dense ideals on $\omega_1$.  Fortunately, suitable technology has already been presented in \cite{Eskew2016}.

For regular cardinals $\mu<\kappa$, the \emph{anonymous collapse} $\bbA(\mu,\kappa)$ is defined as follows.  Consider the complete Boolean algebra $\bbB = \mathcal{B}(\col(\mu,{<}\kappa)*\dot\add(\kappa))$.  Let $\dot X$ be the canonical name for the generic subset of $\kappa$ added by the second step. Then let $\bbA(\mu,\kappa)$ be the complete subalgebra generated by $\{ \| \check\alpha \in \dot X \| : \alpha<\kappa \}$.  When $\mu$ is uncountable, $\Sh(\mu,\kappa)$ corresponds to a complete subalgebra $\bbS \subseteq \bbB$, and by the argument for Lemma~\ref{distributive}, $\bbA(\mu,\kappa) \subseteq \bbS$.  But, unlike our version of the Shioya forcing,  $\bbA(\mu,\kappa)$ makes sense also for $\mu = \omega$.  

The following lemma has been known in the folklore for a long time---see for example \cite[p.\ 27]{FMS}.
A proof for a more general class of forcing notions is given in \cite{Eskew2016}.

\begin{lemma}
\label{woodin}
    If $\bbP$ is a reasonable $(\mu,\kappa)$-collapse, then whenever $G \subseteq \bbP$ is generic, there is some further forcing extension $W \supseteq V[G]$ in which there exists a $V$-generic $H \subseteq \col(\mu,{<}\kappa)$ such that $\p(\mu)^{V[H]} = \p(\mu)^{V[G]}$.
\end{lemma}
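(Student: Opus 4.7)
The plan is to route both $V[G]$ and the desired $V[H]$ through a common ``universal core'' generic extension, exploiting the anonymous collapse $\bbA(\mu,\kappa)$. By definition, $\bbA(\mu,\kappa)$ is a complete subalgebra of $\mathcal{B}(\col(\mu,{<}\kappa) * \dot\add(\kappa))$; and by the universality theorem for the anonymous collapse in \cite{Eskew2016} (which for $\mu>\omega$ can be extracted from Lemma~\ref{mainproj} combined with the natural embedding $\bbA(\mu,\kappa)\hookrightarrow\Sh(\mu,\kappa)$), the same Boolean algebra also embeds as a complete subalgebra of $\mathcal{B}(\bbR * \dot\add(\kappa))$.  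The key distributivity statement, proved by the argument of Lemma~\ref{distributive}, is that the two quotient forcings
\[ (\bbR * \dot\add(\kappa))/G^{\bbA} \quad\text{and}\quad (\col(\mu,{<}\kappa) * \dot\add(\kappa))/G^{\bbA} \]
are each $\kappa$-distributive in the intermediate model $V[G^{\bbA}]$, and hence preserve $\p(\mu)$.

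Granting these inputs, the construction is as follows.  First I would force over $V[G]$ with $\add(\kappa)$; in $V[G]$ the cardinal $\kappa$ equals $\mu^+$, so $\add(\kappa)^{V[G]}=\add(\mu^+)^{V[G]}$ is $\mu^+$-closed and this step adds no subsets of $\mu$.  Call the resulting model $V[G][g_1]$, a $V$-generic extension by $\bbR*\dot\add(\kappa)$, and let $G^{\bbA}\in V[G][g_1]$ be the induced $\bbA(\mu,\kappa)$-generic.  The first distributivity statement, together with the $\mu^+$-closure of $\add(\kappa)^{V[G]}$, gives $\p(\mu)^{V[G]}=\p(\mu)^{V[G][g_1]}=\p(\mu)^{V[G^{\bbA}]}$.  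Next, over $V[G][g_1]$ force with the second quotient above to produce the required extension $W\supseteq V[G]$; inside $W$ lives a $V$-generic filter $H'\subseteq\col(\mu,{<}\kappa)*\dot\add(\kappa)$ projecting to $G^{\bbA}$.  Let $H$ be the first coordinate of $H'$, which is $V$-generic for $\col(\mu,{<}\kappa)$.  The second distributivity statement yields $\p(\mu)^{V[H']}=\p(\mu)^{V[G^{\bbA}]}$, and since $V[H']=V[H][h]$ where $h$ is a generic for the $\mu^+$-closed poset $\add(\kappa)^{V[H]}$, we obtain
\[ \p(\mu)^{V[H]}=\p(\mu)^{V[H']}=\p(\mu)^{V[G^{\bbA}]}=\p(\mu)^{V[G]}, \]
as required.

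The principal obstacle is verifying the universality and distributivity claims for $\bbA(\mu,\kappa)$ in full generality, in particular for $\mu=\omega$, where Shioya's forcing is undefined and the proof of Lemma~\ref{mainproj} does not apply directly; this is precisely the content of the generalisations in \cite{Eskew2016}.  Once those are in hand, the above argument simply squeezes $V[G]$ and $V[H]$ against the common intermediate model $V[G^{\bbA}]$ at the level of $\p(\mu)$, and the required equality follows by distributivity on both sides.
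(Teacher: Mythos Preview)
Your argument is correct and is precisely the approach the paper points to: the lemma is not proved in the paper itself but deferred to \cite{Eskew2016} (and the folklore reference \cite{FMS}), and the anonymous-collapse sandwich you describe---embedding $\bbA(\mu,\kappa)$ into both $\mathcal{B}(\bbR*\dot\add(\kappa))$ and $\mathcal{B}(\col(\mu,{<}\kappa)*\dot\add(\kappa))$ with $\kappa$-distributive quotients---is exactly the method of \cite{Eskew2016}. Your remark that for $\mu>\omega$ the universality and distributivity can be read off from Lemma~\ref{mainproj} and Lemma~\ref{distributive} via the inclusion $\bbA(\mu,\kappa)\subseteq\mathcal{B}(\Sh(\mu,\kappa))$ is also consistent with what the paper notes just before stating the lemma.
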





\begin{theorem}
\label{omega1}
    If $\kappa$ is almost-huge with target $\lambda$, then $\bbA(\omega,\kappa) * \dot\Sh(\kappa,\lambda)*\dot\bbU(\dot G)$ forces that there is a normal $\omega_1$-dense ideal on $\omega_1$.
\end{theorem}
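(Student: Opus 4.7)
The plan is to adapt the proof of Theorem \ref{anysuccreg} to the $\omega_1$ setting, with the anonymous collapse $\bbA(\omega,\kappa)$ replacing the role of the initial Shioya collapse $\Sh(\mu,\kappa)$, and with Lemma \ref{woodin} (in the generalization to $\bbA(\omega,\kappa)$ given in \cite{Eskew2016}) substituting for the absorption properties an undefined $\Sh(\omega,\kappa)$ would otherwise provide. Let $j : V \to M$ witness the almost-hugeness of $\kappa$ with $j(\kappa) = \lambda$, $j(\lambda) < \lambda^+$, and $j[\lambda]$ cofinal in $j(\lambda)$. Let $A * G * U$ be $V$-generic for $\bbA(\omega,\kappa) * \dot\Sh(\kappa,\lambda) * \dot\bbU(\dot G)$, and define the ideal in $V[A * G * U]$ by
\[I = \{X \subseteq \kappa : 1 \Vdash_{\bbA(\omega,\kappa)^{V[A*G*U]}} \check\kappa \notin \tilde j(X)\},\]
where $\tilde j$ will be a lifting of $j$ to a $\bbA(\omega,\kappa)$-extension of $V[A*G*U]$.

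To construct $\tilde j$, force $\bbA(\omega,\kappa)$ over $V[A * G * U]$ to obtain $a$. By the generalized Woodin lemma for the anonymous collapse, in some further extension of $V[A*a]$ there is a $V$-generic $K \subseteq \col(\omega,{<}\kappa)$ with $\p(\omega)^{V[K]} = \p(\omega)^{V[A*a]}$. Extend $K$ by a $\col(\kappa,{<}\lambda)$-generic $H$ to obtain a $V$-generic for the quite reasonable $(\omega,\lambda)$-collapse $\col(\omega,{<}\lambda)$. Following the template of Theorem \ref{anysuccreg}, build an $\add(\lambda)^{M[K*H]}$-generic $h$ over $M[K*H]$ containing $(\tau^*)^K$; set $G' = \psi(K*H*h) \supseteq G$ via the projection of Lemma \ref{mainproj}; extract the induced $V$-generic $A' \subseteq \bbA(\omega,\lambda) = j(\bbA(\omega,\kappa))$ from $K*H*h$ (since $\bbA(\omega,\lambda)$ is a complete subalgebra of $\col(\omega,{<}\lambda) * \dot\add(\lambda)$), and verify $j[A] \subseteq A'$ so that we may lift to $j : V[A] \to M[A']$. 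Using the $\lambda$-strategic closure of $\bbU(G')$ in $V[K*H]$ given by Lemma \ref{lambdastratclosed} together with the master condition $u^*$ from Lemma \ref{3step}, build $U' \subseteq \bbU(G')$ generic over $M[A'*G']$ with $U \subseteq U'$, giving the full lifting $\tilde j : V[A*G*U] \to M[A'*G'*U']$.

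Finally, define $e : \p(\kappa)/I \to \mathcal{B}(\bbA(\omega,\kappa))$ by $e([X]_I) = \|\check\kappa \in \tilde j(X)\|$. Normality and $\kappa$-completeness of $I$ follow by standard arguments, and together they imply that $e$ is a complete embedding. For density, imitate Claim \ref{claim:dense-embedding-to-collapse}: given a generic $a \subseteq \bbA(\omega,\kappa)$ over $V[A*G*U]$, let $F = e^{-1}[a]$, form the $F$-ultrapower $i : V[A*G*U] \to N$, and consider the factor map $k : N \to M[A'*G'*U']$ with $\tilde j = k \circ i$; one shows $\crit(k) > \lambda$, so that $a$ is recoverable from $i(A)$ by the same procedure that extracted $A'$ from $K*H*h$. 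Since $\bbA(\omega,\kappa)$ has density $\kappa = \omega_1^{V[A*G*U]}$, this establishes that $I$ is normal and $\omega_1$-dense. The main obstacle will be the precise statement and application of the generalized Woodin lemma, as well as verifying that the extraction procedure by which $A'$ is read off from $K*H*h$ commutes with $j$ so that $A' = \tilde j(A)$ holds on the nose; the remainder is a routine adaptation of the proofs of Theorem \ref{anysuccreg} and Lemma \ref{3step}.
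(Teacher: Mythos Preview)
Your plan has a genuine structural gap: you misidentify what forcing $G'$ must be generic for. Since $\crit(j)=\kappa$, applying $j$ to $\Sh(\kappa,\lambda)^{V[A]}$ gives $\Sh(\lambda,j(\lambda))^{M[A']}$, not $\Sh(\kappa,\lambda)$ or a putative ``$\Sh(\omega,\lambda)$'' (which the paper explicitly notes is undefined). Thus your projection $\psi(K{*}H{*}h)$ from a $(\omega,\lambda)$-collapse via Lemma~\ref{mainproj} cannot land in the right place: Lemma~\ref{mainproj} would target $\Sh(\omega,\lambda)$, which makes no sense, and in any case what you need to lift through $G$ is a generic for $\Sh(\lambda,j(\lambda))^{M[A']}$ containing $j[G]$, not $G$. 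There is no natural inclusion $G\subseteq G'$ here; $j$ moves the $\kappa$-closed posets that make up conditions of $\Sh(\kappa,\lambda)$. Likewise your invocation of $\tau^*$ and $u^*$ from Lemma~\ref{3step} does not fit, since that lemma concerns $\Sh(\mu,\kappa)*\dot\bbU(\dot G)*\dot\col(\mu,\kappa)$ for the \emph{first}-stage Shioya forcing, and here the first stage is $\bbA(\omega,\kappa)$.

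The paper's proof deliberately avoids constructing a full $M[A']$-generic for $\Sh(\lambda,j(\lambda))*\dot\bbU$. After lifting $j$ only through $A$ (via the reals and Lemma~\ref{woodin}, roughly as you outline), it uses the uniformizing sequence $\la\la\bbP_\alpha,1,\tau_\alpha\ra,\pi_{\beta\alpha},q_{\beta\alpha}\ra$ given by $U$ to form master conditions $m_\alpha=\inf j[(G)^{\tau_\alpha}_{\bbP_\alpha}]\in j(\bbP_\alpha)$ and then builds a descending $\lambda$-sequence $\la q_\alpha:\alpha<\lambda\ra$ in $\Sh(\lambda,j(\lambda))^{M[A']}$ below the $m_\alpha$'s, deciding at each step one value of $j(\tau)$. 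The crucial point, and the reason $\bbU$ exists at all, is that at limit stages $\eta<\lambda$ one must find $p_\eta\leq m_{\alpha_\eta}$ projecting below the earlier choices; this is exactly where clause~(\ref{invlimU}) of the definition of $\bbU(G)$ is invoked (in $M[A']$, applied to $x=j[\gamma]$) to see that the natural map from $j(\bbP_\gamma)$ to the inverse limit along $\la j(\bbP_\alpha):\alpha<\gamma\ra$ is a projection below $m_\gamma$. This construction yields a $V[A{*}G]$-normal ultrafilter without ever producing a generic for $j(\Sh(\kappa,\lambda)*\dot\bbU)$. Your sketch omits this entirely, and the template of Theorem~\ref{anysuccreg} that you are following cannot supply it: there the second stage was $\col(\kappa,{<}\lambda)$, which is $\lambda$-directed-closed and admits genuine master conditions, whereas here the second stage is $\Sh(\kappa,\lambda)*\dot\bbU$, whose image under $j$ is only $\lambda$-strategically closed.
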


\begin{proof}
    Let $j: V \to M$ be an embedding witnessing that $\kappa$ is almost-huge with target $\lambda$.  Let us assume that $j(\lambda)<\lambda^+$ and $j[\lambda]$ is cofinal in $j(\lambda)$.

    Let us fix some generic filters: 
    \begin{itemize}
        \item Let $A * G * U \subseteq \bbA(\omega,\kappa) * \dot\Sh(\kappa,\lambda)*\dot\bbU(\dot G)$ be generic.  
        \item Let $\bbB =  \mathcal{B}(\col(\omega,{<}\kappa)*\dot\add(\kappa))/A$, and take a generic $g \times B \subseteq \col(\omega,\kappa) \times \bbB$ over $V[ A * G * U]$.  
        \item Let $H * X \subseteq \col(\omega,{<}\kappa)*\dot\add(\kappa)$ be the filter corresponding to $A*B$. 
    \end{itemize}
    Since $B$ is generic over $V[A{*}G{*}U{*}g]$, $G*U*g$ is generic over $V[A{*}B] = V[H{*}X]$.  Since $g \times B$ is generic for a forcing of size $\kappa$ from $V[A]$, Lemmas~\ref{easton} and \ref{Udist} imply that adjoining $U$ to $V[A{*}G{*}g{*}B] = V[H{*}X{*}g{*}G]$ adds no ${<}\lambda$-sequences.  Let $\bbR_0 = \bbR^{V[A]}$, the reals of $V[A]$, and let $\bbR_1 = \bbR^{V[A{*}G{*}g{*}B]}$.
    
    By Lemma~\ref{distributive}, in $V[A]$ there is a $\Sh(\kappa,\lambda)$-name for a $\lambda$-distributive forcing $\dot\bbQ$ such that $\Sh(\kappa,\lambda)*\dot\bbQ$ is equivalent to $\col(\kappa,{<}\lambda)*\dot\add(\lambda)$.  
    Again by Lemma~\ref{easton}, $\bbQ$ remains $\lambda$-distributive in $V[A{*}G{*}g{*}B]$.  Thus in some forcing extension of $V[H{*}X{*}g{*}G]$, there is a filter $K \subseteq \col(\kappa,{<}\lambda)^{V[H]}$ that is generic over $V[H{*}X{*}g]$ and such that $\bbR^{V[H{*}X{*}g{*}K]} = \bbR_1$.  Since $\add(\kappa) \times \col(\omega,\kappa) \times \col(\kappa,{<}\lambda)$ is a reasonable $(\omega,\lambda)$-collapse in $V[H]$, Lemma~\ref{woodin} implies that in some further forcing extension, there is a filter $H'' \subseteq \col(\omega,{<}\lambda)$ that is generic over $V[H]$ with the property that $\bbR^{V[H][X*g*K]} = \bbR^{V[H][H'']}$.  Rearranging $H \times H''$ into $H' \subseteq \col(\omega,{<}\lambda)$ such that $H' \rest \kappa = H$, we have that the embedding lifts to $j : V[H] \to M[H']$.  
    
    This embedding restricts to $j : V(\bbR^{V[H]}) \to M(\bbR^{V[H']})$, which is also elementary, and we have $\bbR^{V[H]} = \bbR_0$ and $\bbR^{V[H']} = \bbR_1$.  Thus the restricted embedding can be defined in $V[A{*}G{*}g{*}B]$.

    In $V[A{*}G{*}g{*}B]$, $\p(\lambda)^{M[A{*}G{*}g{*}B]}$ has size $\lambda$.  Furthermore, for every set of ordinals $x \in V[A{*}G{*}g{*}B]$ of size ${<}\lambda$, there is a poset $\bbP \in V_\lambda$, a filter $h \subseteq \bbP$, and a $\bbP$-name $\tau \in M$ such that $x = \tau^h$. Thus $M(\bbR_1)$ is ${<}\lambda$-closed in $V[A{*}G{*}g{*}B]$.  
    
    Using these observations, we can build a set $X' \subseteq \lambda$ that is $\add(\lambda)$-generic over $M[A{*}G{*}g{*}B]$ and such that $X' \cap \kappa = X$.  In particular we can further lift the embedding to $j : V(\bbR_0)[X] \to M(\bbR_1)[X']$.   
    $X$ generates the generic $A$, so by elementarity, $X'$ generates an $M$-generic filter $j(A)=A' \subseteq \bbA(\omega,\lambda)$.

    We will construct a descending sequence $\la q_\alpha : \alpha < \lambda\ra \subseteq \Sh(\lambda,j(\lambda))^{M[A']}$ such that:
    \begin{itemize}
         \item for every $p \in G$, there is $\alpha <\lambda$ such that $q_\alpha \leq j(p)$, and
        \item for every $Y \subseteq \kappa$ in $V[A{*}G]$ and every name $\tau \in V[A]$ such that $\tau^G = Y$, there is $\alpha<\lambda$ such that $q_\alpha$ decides whether $\kappa \in j(\tau)$.
    \end{itemize}
    Then we define an ultrafilter $F$ on $\p(\kappa)^{V[A*G]}$ by $Y \in F$ iff for some name $\tau$ such that $\tau^G = Y$ and some $\alpha<\lambda$, $q_\alpha \Vdash \kappa \in j(\tau)$.  $F$ is well-defined because if $\tau_0^G = \tau_1^G$, then there is some $p \in G$ such that $p \Vdash \tau_0 = \tau_1$, and then there is some $\alpha<\lambda$ such that $q_\alpha \Vdash j(\tau_0) = j(\tau_1)$.  $F$ is upwards-closed because if $\tau_0^G\subseteq\tau_1^G$, and $\tau_0^G \in F$, then there is some $p \in G$ such that $p \Vdash \tau_0 \subseteq\tau_1$ and some large enough $\alpha$ such that $q_\alpha \Vdash \kappa \in j(\tau_1)$.  $F$ is closed under diagonal intersections from $V[A{*}G]$ because if $\la Y_i : i <\kappa \ra \in V[A{*}G]$ then there is a sequence of names $\la \tau_i : i<\kappa \ra \in V[A]$ such that $\tau_i^G = Y_i$ for $i<\kappa$. If each $Y_i \in F$, then there is some $\alpha<\lambda$ such that $q_\alpha \Vdash \kappa \in j(\tau_i)$ for $i<\kappa$, and it follows that $q_\alpha \Vdash \kappa \in j(\triangle_i \tau_i)$.
    $F$ is ``ultra'' because for every relevant name $\tau$, there is some $\alpha<\lambda$ such that either $q_\alpha \Vdash \kappa \in j(\tau)$ or $q_\alpha \Vdash \kappa \in \lambda \setminus j(\tau)$.

    Let $\la\la\la \bbP_\alpha,1,\tau_\alpha \ra : \alpha <\lambda\ra,\la\pi_{\beta\alpha} : \alpha<\beta<\lambda \ra, \la q_{\beta\alpha} : \alpha<\beta<\lambda\ra\ra$ be the uniformizing sequence obtained from the generic filter $U$.
    For each $\alpha<\lambda$, let $m_\alpha \in j(\bbP_\alpha)$ be $\inf \{ j(p) : p \in G^{\tau_\alpha}_{\bbP_\alpha} \}$.  For $\alpha<\beta$, the genericity of $G^{\tau_\beta}_{\bbP_\beta}$ and the density of $\dom\pi_{\beta\alpha}$ imply that $\dom \pi_{\beta\alpha} \cap G^{\tau_\beta}_{\bbP_\beta}$ is a downwards-dominating subset of $G^{\tau_\beta}_{\bbP_\beta}$.  It follows by the $\kappa$-closedness of $\dom \pi_{\beta\alpha}$ that $m_\beta \in \dom j(\pi_{\beta\alpha})$, and by continuity that $m_{\alpha} = j(\pi_{\beta\alpha})(m_\beta)$.
    Since $m_\beta \leq j(q_{\beta\alpha})$, $\la j(\bbP_\beta),m_\beta,\tau_\beta\ra\leq\la j(\bbP_\alpha),m_\alpha,\tau_\alpha\ra$.  It follows that for all $\alpha<\beta<\gamma<\lambda$,  $j(\pi_{\beta\alpha})\circ j(\pi_{\gamma\beta}) = j(\pi_{\gamma\alpha})$ below $m_\gamma$, whenever these are both defined.
    Thus for each $\gamma<\lambda$, $\la m_\alpha : \alpha <\gamma \ra$ is an element of 
    $\varprojlim(\la j(\bbP_\alpha) : \alpha < \gamma\ra,\la j(\pi_{\beta\alpha}) : \alpha<\beta<\gamma\ra)$, which can be computed in  $M[A']$ since $U \rest \gamma \in M(\bbR_1)$.

    Let $\vec\bbP = \la j(\bbP_\alpha) : \alpha < \lambda\ra$ and $\vec\pi = \la j(\pi_{\beta\alpha}) : \alpha<\beta<\lambda\ra$.
    We claim that for all $\gamma < \lambda$, the map
    $$\sigma_\gamma : E^{\la\vec\bbP,\vec\pi\ra}_\gamma \to \varprojlim(\vec\bbP\rest\gamma,\vec\pi\rest\gamma),$$
    defined by $p \mapsto \la j(\pi_{\gamma\alpha})(p) : \alpha<\gamma \ra$, is a projection below $m_\gamma$.
    To see this, let $\gamma<\lambda$ and use the distributivity of $\bbU(G)$ to find some $\xi<\lambda$ such that $U \rest (\gamma+1) \in V[A][G^{\tau_\xi}_{\bbP_\xi}]$.  In this model, $U \rest (\gamma+1)$ has the property that for all $x \in [\gamma+1]^{<\kappa}$ with top element $\gamma$, if $\vec\bbQ = \la \bbP_\alpha: \alpha \in x \ra$ and $\vec\chi = \la \pi_{\beta\alpha}: \{\alpha,\beta\} \in [x]^2 \ra$, then the map $\sigma_x: E^{\la\vec\bbQ,\vec\chi\ra}_\gamma \to \varprojlim(\vec\bbQ,\vec\chi)$ defined by $p \mapsto \la \pi_{\gamma\alpha}(p): \alpha \in x \cap \gamma\ra$ is a projection, below any condition that projects as in clause (\ref{invlimU}) of the definition of $\bbU(G)$.  
    
    If we force below $m_\xi$ over $M[A']$, we get a lifted elementary embedding $j  :  V[A][G^{\tau_\xi}_{\bbP_\xi}] \to M[A'][K]$ for some $K \subseteq j(\bbP_\xi)$.  By elementarity, the property holds of $U' = j(U \rest (\gamma+1))$ in $M[A'][K]$.  In particular, it holds for $x = j[\gamma+1]$.  By re-indexing $U' \rest j[\gamma+1]$ by $\gamma+1$, we see that the desired claim holds in $M[A'][K]$. Since $j(q)_{j(\beta),j(\alpha)} = j(q_{\beta\alpha})\geq m_{\beta}$ for all $\{\alpha,\beta \} \in [\gamma+1]^2$, $\sigma_\gamma$ is a projection below $m_{\gamma}$. But $U' \rest j[\gamma+1] \in M[A']$ by the $\lambda$-closure of $j(\bbP_\xi)$, so the desired claim already holds in $M[A']$.

    To construct the sequence $\la q_\alpha : \alpha<\lambda\ra$, first enumerate all ordinals below $j(\lambda)$ as $\la\xi_\alpha : \alpha<\lambda\ra$.
    Let $\alpha_0$ be such that it is forced that $j(\dom \tau_{\alpha_0}) > \xi_0$.  Let $p_0 \in E^{\la\vec\bbP,\vec\pi\ra}_{\alpha_0}$ be below $m_{\alpha_0}$ and decide $j(\tau_{\alpha_0})(\xi_0)$.  Then let $\alpha_1 \geq \alpha_0$ be such that it is forced that $j(\dom \tau_{\alpha_1}) > \xi_1$, and let $p_1 \leq m_{\alpha_1}$ be in $E^{\la\vec\bbP,\vec\pi\ra}_{\alpha_1}$ and such that $j(\pi_{\alpha_1\alpha_0})(p_1) \leq p_0$ and $p_1$ decides $j(\tau_{\alpha_1})(\xi_1)$.  Continuing in this fashion, suppose we reach a limit $\zeta<\lambda$ such that we have an increasing sequence of ordinals $\la \alpha_i : i < \eta \ra$ and a sequence $\la p_i : i < \eta \ra$ such that for $i<\eta$,
    \begin{itemize}
        \item $p_i \in E^{\la\vec\bbP,\vec\pi\ra}_{\alpha_i}$,
        \item $p_i$ decides $j(\tau_{\alpha_i})(\xi_i)$, and
        \item $\la j(\pi_{\alpha_k\alpha_i})(p_k) : i \leq k < \eta \ra$ is a descending sequence in $E^{\la\vec\bbP,\vec\pi\ra}_{\alpha_i}$ below $m_{\alpha_i}$.
    \end{itemize}
    For $i<\eta$, let  $p^*_i = \inf \{ j(\pi_{\alpha_k\alpha_i})(p_k) : i \leq k < \eta\}$.   Let $\gamma = \sup_{i<\eta} \alpha_i$, and for $\beta <\gamma$, let $r_\beta = j(\pi_{\alpha_i\beta})(p_i^*)$ for any $i<\eta$ such that $\alpha_i \geq\beta$.  Then $\la r_\beta : \beta < \gamma \ra$ is a member of $\varprojlim(\vec\bbP\rest\gamma,\vec\pi\rest\gamma)$ below $\la m_\beta : \beta < \gamma\ra$.  Thus we may select some $\alpha_\eta \geq \gamma$ such that it is forced that  $j(\dom \tau_{\alpha_\eta})>\xi_\eta$, and we may select some $p_\eta \in E^{\la\vec\bbP,\vec\pi\ra}_{\alpha_\eta}$ below $m_{\alpha_\eta}$ such that  $\la j(\pi_{\alpha_\eta\beta})(p_{\alpha_\eta}) : \beta < \gamma \ra \leq \la r_\beta : \beta<\gamma\ra$ and $p_{\alpha_\eta}$ decides $j(\tau_{\alpha_\eta})(\xi_\eta)$.  Thus the induction can continue.

    Put $q_i = \la j(\bbP_{\alpha_i}),p_i,j(\tau_{\alpha_i})\ra$ for $i<\lambda$.  For each $p\in G$, there is some sufficiently large $i$ and some $r \in G^{\tau_{\alpha_i}}_{\bbP_{\alpha_i}}$ such that $\la\bbP_{\alpha_i},r,\tau_{\alpha_i}\ra \leq p$, and so $q_i \leq  \la j(\bbP_{\alpha_i}),m_{\alpha_i},j(\tau_{\alpha_i})\ra \leq j(p)$.
    Recall the $\Sh(\kappa,\lambda)$-generic function $f : \lambda \to 2$ from Lemma~\ref{distributive}, defined as $f(\beta) = n$ iff there is $\la\bbP,p,\tau\ra \in G$ such that $p \Vdash \tau(\beta) = n$. For every $Y \subseteq \kappa$ in $V[A{*}G]$, there is $\beta<\lambda$ such that $Y = \{ i <\kappa : f(\beta+i) = 1\}$.  If $\dot x_\beta$ is a name for this set and $\tau$ is any name such that $\tau^G = Y$, then there is $p \in G$ such that $p \Vdash \tau = \dot x_\beta$.
    If $i<\lambda$ is sufficiently large, then $q_i \leq j(p)$ and $q_i$ decides $j(\tau_{\alpha_i})(j(\beta)+\kappa)$, meaning it decides whether $\kappa \in j(\tau)$.  Therefore, we have our $V[A{*}G]$-normal ultrafilter $F$.

    Let $I = \{ Y \subseteq \kappa : 1 \Vdash_{\col(\omega,\kappa) \times \bbB} Y \notin \dot F \}$.  As in the proof of Theorem~\ref{anysuccreg}, $I$ is a normal ideal on $\kappa$, and the map $[Y]_I \mapsto \| \check Y \in \dot F \|$ is a complete embedding from $\p(\kappa)/I$ into $\mathcal{B}(\col(\omega,\kappa) \times \bbB)$.  Since the latter has a dense set of size $\omega_1$, $I$ is $\omega_1$-dense.
    \end{proof}

\begin{corollary}
    Suppose $\kappa_1<\kappa_2<\kappa_3$ are such that for $n\in\{1,2\}$,  $\kappa_n$ is almost-huge with target $\kappa_{n+1}$.  Then $\bbA(\omega,\kappa_1) * \dot\Sh(\kappa_1,\kappa_2) * \dot\bbU(\dot G) * \dot\col(\kappa_2,{<}\kappa_3)$
    forces that for $n\in\{1,2\}$, there is a normal ideal $I$ on $\omega_n$ such that 
    \[\p(\omega_n)/ I\cong \mathcal{B}(\col(\omega_{n-1},\omega_n)).\]
\end{corollary}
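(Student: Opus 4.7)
The plan is to derive each of the two ideals from one of the main theorems already proved in the section, then verify that each ideal survives the portion of the iteration that comes after its construction. Write $\bbP_1 = \bbA(\omega,\kappa_1)$, $\dot\bbP_2 = \dot\Sh(\kappa_1,\kappa_2) * \dot\bbU(\dot G)$, and $\dot\bbP_3 = \dot\col(\kappa_2,{<}\kappa_3)$, and let $A, G{*}U, H$ be the generic filters for the respective stages. In the final model $V[A*G*U*H]$, we have $\kappa_1 = \omega_1$, $\kappa_2 = \omega_2$, and $\kappa_3 = \omega_3$.

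For the ideal on $\omega_2$: a standard Levy--Solovay argument shows that, since $|\bbP_1| \leq \kappa_1 < \kappa_2$, any embedding $j: V \to M$ witnessing the almost-hugeness of $\kappa_2$ with target $\kappa_3$ lifts canonically to an almost-huge embedding $j: V[A] \to M[A]$ with the same critical point and target. Thus, in $V[A]$ we satisfy the hypotheses of Theorem~\ref{anysuccreg} with the regular uncountable $\mu = \kappa_1$ and the almost-huge $\kappa = \kappa_2$ with target $\lambda = \kappa_3$. Applying that theorem inside $V[A]$, the forcing $\Sh(\kappa_1,\kappa_2) * \dot\bbU(\dot G) * \dot\col(\kappa_2,{<}\kappa_3)$ forces the existence of a normal ideal $I_2$ on $\kappa_2 = \omega_2$ with $\p(\omega_2)/I_2 \cong \mathcal{B}(\col(\omega_1,\omega_2))$, which is what we want.

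For the ideal on $\omega_1$: apply Theorem~\ref{omega1} directly to the almost-huge cardinal $\kappa_1$ with target $\kappa_2$. This gives, in the intermediate model $V_0 = V[A*G*U]$, a normal $\omega_1$-dense ideal $I_1$ on $\omega_1 = \kappa_1$ whose quotient algebra is (isomorphic to) $\mathcal{B}(\col(\omega,\omega_1))$. It then remains to preserve this under the terminal step $H \subseteq \col(\kappa_2,{<}\kappa_3)$. This step is $\kappa_2$-closed in $V_0$, hence adds no new subsets of $\omega_1$, no new antichains in the poset $\col(\omega,\omega_1)$, and no new maximal antichains in $\p(\omega_1)/I_1$. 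Consequently $I_1$ remains a normal ideal in $V_0[H]$, its quotient algebra is unchanged, the Boolean completion $\mathcal{B}(\col(\omega,\omega_1))$ is computed identically in $V_0$ and $V_0[H]$, and the required isomorphism persists.

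\textbf{Main obstacle.} The principal care that must be taken is the preservation argument of the $\omega_1$-ideal across the final $\kappa_2$-closed collapse, and the bookkeeping required to show that the two appeals to almost-hugeness (one at $\kappa_1$, inside the subforcing $\bbP_1 * \dot\bbP_2$, used to construct the $\omega_1$-ideal; the other at $\kappa_2$, used only after passing to $V[A]$, to construct the $\omega_2$-ideal) do not interfere. Because the two constructions operate at disjoint cardinal ranges and consume disjoint pieces of the iteration, there is no genuine interaction between them, and the combination reduces to a clean rearrangement of the proofs of Theorems~\ref{omega1} and~\ref{anysuccreg}.
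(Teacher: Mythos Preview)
Your proposal is correct and matches the intended argument: the paper states this corollary without proof precisely because it is the evident combination of Theorem~\ref{anysuccreg} (applied in $V[A]$ after a small forcing, for the $\omega_2$-ideal) and Theorem~\ref{omega1} (for the $\omega_1$-ideal), with the $\kappa_2$-closed tail forcing preserving the latter. One small point worth making explicit: Theorem~\ref{omega1} as stated only gives a normal $\omega_1$-dense ideal on $\omega_1$, but the full isomorphism $\p(\omega_1)/I \cong \mathcal{B}(\col(\omega,\omega_1))$ then follows since the quotient has a dense set of size $\omega_1$ and collapses $\omega_1$, so Theorem~\ref{mcaloon} applies.
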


\section{More dense ideals}\label{sec: more-dense-ideals}

Towards producing longer intervals of cardinals carrying dense ideals, we investigate the effect of iterating $\Sh(\kappa,\lambda)*\dot\bbU(\dot G)$.  Recall that by Lemma~\ref{3step}, $\Sh(\kappa,\lambda)*\dot\bbU(\dot G) * \dot\col(\kappa,\lambda)$ is equivalent to $\col(\kappa,\lambda)$.

\begin{lemma}
\label{shproj}
    Suppose $\mu<\kappa<\lambda$ are regular uncountable with $\kappa,\lambda$ inaccessible. Let $\tau^*$ be the $\col(\mu,\kappa)$-name given by Lemma~\ref{3step}.  Then there is a projection $\psi$ defined on a dense subset,
    $$\psi : \col(\mu,\kappa) * \dot\Sh(\kappa,\lambda)^{\Sh(\mu,\kappa)*\dot\bbU(\dot G)} \to\Sh(\mu,\lambda) \rest \la\col(\mu,\kappa),1,\tau^*\ra.$$
\end{lemma}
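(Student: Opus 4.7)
The plan is to define $\psi$ explicitly on a dense subset, sending each condition $\la c,\dot q\ra$ (with $c$ deciding $\dot q$) to the natural Shioya-condition obtained by iterating $\col(\mu,\kappa)$ with the poset named by $\dot q$. Let $\bbS := \Sh(\mu,\kappa)*\dot\bbU(\dot G)$, and let $D$ be the set of $\la c,\dot q\ra \in \col(\mu,\kappa) * \dot\Sh(\kappa,\lambda)^\bbS$ such that $c$ decides $\dot q = \la\dot\bbP,\dot p,\dot\tau\ra$; this set is dense by the maximum principle. For such $\la c,\dot q\ra$, set
\[
\psi(c,\dot q) := \la \col(\mu,\kappa)*\dot\bbP,\, \la c,\dot p\ra,\, \tau^*{\frown}\dot\tau\ra.
\]
The first coordinate is completely $\mu$-closed (as an iteration of completely $\mu$-closed posets) and lies in $V_\lambda$ by the inaccessibility of $\lambda$, so this is a valid condition in $\Sh(\mu,\lambda)$. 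The first-coordinate projection $\col(\mu,\kappa)*\dot\bbP \to \col(\mu,\kappa)$ is $\mu$-continuous and witnesses $\psi(c,\dot q) \leq \la\col(\mu,\kappa),1,\tau^*\ra$.

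Order-preservation is immediate: if $\la c_1,\dot q_1\ra \leq \la c_0,\dot q_0\ra$ is witnessed by a $\bbS$-name for a projection $\dot\pi : \dot\bbP_1 \to \dot\bbP_0$, then the lifted map $(c,p)\mapsto(c,\dot\pi(p))$ is a $\mu$-continuous projection from a dense $\mu$-closed subset of $\col(\mu,\kappa)*\dot\bbP_1$ to $\col(\mu,\kappa)*\dot\bbP_0$, witnessing $\psi(c_1,\dot q_1) \leq \psi(c_0,\dot q_0)$.

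For the projection property, given $\la\bbQ',q',\sigma'\ra \leq \psi(c,\dot q)$ in $\Sh(\mu,\lambda)$, I would apply Lemma~\ref{mainproj} and Remark~\ref{termremark} to the quite reasonable $(\mu,\lambda)$-collapse $\bbR := \col(\mu,\kappa)*\dot\col(\kappa,{<}\lambda)$, with the regular suborders chosen (as in the proof of Theorem~\ref{anysuccreg}) so that $\bbR_\alpha = \col(\mu,\kappa)$ for $\kappa\leq\alpha\leq\kappa+\kappa$ and $\tau^*$ is the sequence given by Lemma~\ref{3step}. This extends $\la\bbQ',q',\sigma'\ra$ to a condition $\la\bbR_\alpha,r,\dot s\ra$ with $\bbR_\alpha = \col(\mu,\kappa)*\dot\col(\kappa,{<}\alpha)$ for some $\alpha<\lambda$. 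Writing $r = \la c',\dot c_1\ra$ and setting $\dot q' := \la\col(\kappa,{<}\alpha),\dot c_1,\dot\tau_1\ra$ (where $\dot\tau_1$ is the tail of $\dot s$ beyond $\tau^*$, and where $\col(\kappa,{<}\alpha)$ is absolutely definable from ordinals and therefore makes sense as a $\bbS$-name), one obtains $\psi(c',\dot q') = \la\bbR_\alpha,r,\dot s\ra \leq \la\bbQ',q',\sigma'\ra$.

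The main obstacle is verifying that $\la c',\dot q'\ra$ genuinely lies in the domain and satisfies $\la c',\dot q'\ra \leq \la c,\dot q\ra$; equivalently, that $\dot q'$ admits an interpretation in $\Sh(\kappa,\lambda)^\bbS$ and that the witnessing projection for $\dot q' \leq \dot q$ descends to a $\bbS$-name. This is where Lemma~\ref{freeze} is essential: applied to the coding condition $\la\col(\mu,\kappa),1,\tau^*\ra$, it forces the two natural projections $\bbR_\alpha \to \col(\mu,\kappa)$ (directly, versus as the composition $\bbR_\alpha \xrightarrow{\rho} \col(\mu,\kappa)*\dot\bbP \to \col(\mu,\kappa)$) to agree on a dense set. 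This forces $\rho$ to factor as $(c,x)\mapsto(c,\rho_1(c,x))$, and one then verifies (by a second application of Lemma~\ref{freeze} and the coherence properties of $\bbS$-names under the projection $\col(\mu,\kappa) \to \bbS$) that $\rho_1$ yields a $\bbS$-name for a projection $\col(\kappa,{<}\alpha) \to \dot\bbP$ witnessing the required ordering.
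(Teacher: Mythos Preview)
Your overall shape is the same as the paper's: map $\la t,\la\dot\bbP,\dot p,\dot\tau\ra\ra$ to a condition with first coordinate $\bbT*\dot\bbP$ (writing $\bbT$ for the tree equivalent of $\col(\mu,\kappa)$ from Lemma~\ref{3step}), working condition $\la t,\dot p\ra$, and a binary name extending $\tau^*$. Order-preservation works as you say. The genuine gap is in the projection property, and it lies precisely in the step you flag as the ``main obstacle''.

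When you apply Remark~\ref{termremark} to pull an arbitrary $\la\bbQ',q',\sigma'\ra$ back to $\la\bbR_\alpha,r,\dot s\ra$ with $\bbR_\alpha=\col(\mu,\kappa)*\dot\col(\kappa,{<}\alpha)$, the tail of $\dot s$ beyond $\tau^*$ is a $(\col(\mu,\kappa)*\dot\col(\kappa,{<}\alpha))$-name. By Lemma~\ref{3step}, this decomposes as a $(\bbS*\dot\col(\mu,\kappa)*\dot\col(\kappa,{<}\alpha))$-name, and there is no reason for it to be independent of the quotient $\col(\mu,\kappa)$-generic $K$. The same applies to the projection: your use of Lemma~\ref{freeze} does force $\rho$ to have the form $(c,x)\mapsto(c,\rho_1(c,x))$, but $\rho_1$ visibly depends on $c$; the claim that it ``yields a $\bbS$-name for a projection'' is exactly what is at issue and is not established by a second application of freezing. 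With your choice of third coordinate $\tau^*{}^\frown\dot\tau$, the range of $\psi$ consists only of conditions whose tail is the pullback of a $\bbS$-name, and you have not shown how to land in that range below an arbitrary $\la\bbQ',q',\sigma'\ra$.

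The paper resolves this not via Remark~\ref{termremark} but by building the inverse step into the definition of the third coordinate. Instead of $\tau^*{}^\frown\dot\tau$, it takes $\dot\tau$ (of length $\delta=\kappa\cdot\delta$) and reinterprets it, in $\kappa$-blocks, as coding a $\col(\mu,\kappa)$-name $\sigma_0$ for a subset of $\delta$; then $\sigma$ is $\tau^*$ followed by the evaluation of $\sigma_0$ at the quotient generic $K$. For the projection property the paper does not cite Remark~\ref{termremark}; it runs the argument of Lemma~\ref{lem:embedding-from-projection} directly to produce an isomorphism $\iota$ between dense subsets of $\bbQ'$ and of $\bbT*(\dot\bbP_0\times\dot\col(\kappa,\delta)^{\Sh(\mu,\kappa)})$ over $\bbT*\dot\bbP_0$, and then \emph{defines} $\tau_1$ by encoding the $\col(\mu,\kappa)$-name for $\dot x^{\iota(\dot G)}$ back into $\kappa$-blocks. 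This $\tau_1$ is, by construction, a $\bbS$-name for a $\dot\bbP_1$-name, so the resulting $\dot q'$ genuinely lies in $\Sh(\kappa,\lambda)^{\bbS}$, and $\psi$ applied to it recovers (via the encoding) a condition below $\la\bbQ',q',\dot x\ra$. The encoding is not a cosmetic choice; it is what allows the dependence on $K$ to be absorbed.
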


\begin{proof}
    To avoid confusion, we will refer to the tree forcing that is equivalent to the three-step iteration $\Sh(\mu,\kappa)*\dot\bbU(\dot G) * \dot\col(\mu,\kappa)$ as $\bbT$.
    We will define $\psi$ on the dense set of conditions $\la t,\la\dot\bbP,\dot p,\dot\tau\ra\ra$ such that for some cardinal $\delta \in (\kappa,\lambda)$, $\dot\bbP$ is forced to have size $\delta$ and  collapse $\delta$ to $\kappa$, and $\dot\tau$ is forced to have domain $\delta$. 
    $\psi$ will map such a condition $\la t,\la\dot\bbP,\dot p,\dot\tau\ra\ra$  to one of the form $\la \bbT*\dot\bbP,\la t,\dot p\ra,\sigma\ra$.
    To define $\sigma$, we interpret $\dot\tau$ in $V^{\Sh(\mu,\kappa)*\dot\bbU(\dot G)*\dot\bbP}$ as coding a $\col(\mu,\kappa)$-name in the following way.  In $V$, fix some enumeration $\la c_\alpha : \alpha < \kappa \ra$ of $\col(\mu,\kappa)$.  Let $f$ be the interpretation of $\dot\tau$ in a $(\Sh(\mu,\kappa)*\dot\bbU(\dot G)*\dot\bbP)$-generic extension.  In this extension, for $\alpha<\delta$, let $A_\alpha = \{ c_\beta : f(\kappa\cdot\alpha+\beta) = 1 \}$, and let $\sigma_0$ be the $\col(\mu,\kappa)$-name $\bigcup_{\alpha<\delta} A_\alpha \times \{\check\alpha\}$.  Then let $\sigma_1$ be a name for the characteristic function of $\sigma_0$.
    In $V$, let $\sigma_2$ be a $(\bbT{*}\dot\bbP)$-name that is forced to evaluate to $\sigma_1^K$, where $K$ is the $\col(\mu,\kappa)$-generic on the third coordinate in the decomposition of $\bbT$ given by Lemma~\ref{3step}.  Let $\sigma$ be a name for the concatenation $\tau^* \,^{\frown} \sigma_2$.

    $\psi$ is order-preserving because if $\la t_1,\la\dot\bbP_1,\dot p_1,\dot\tau_1\ra\ra \leq \la t_0,\la\dot\bbP_0,\dot p_0,\dot\tau_0\ra\ra$ are in the domain of $\psi$, then $\Sh(\mu,\kappa)*\dot\bbU(\dot G)$ forces that there is a $\kappa$-continuous projection from a dense $\kappa$-closed subset of $\dot\bbP_1$ to $\dot\bbP_0$.  If $\dot\pi$ is a name for such a projection such that $t_1$ forces that $\dot\pi$ witnesses the ordering of the second coordinates, then $\la t,\dot p \ra \mapsto \la t,\dot\pi(\dot p)\ra$ is a $\mu$-continuous projection from a dense subset of $\bbT* \dot\bbP_1$ to $\bbT * \dot\bbP_0$, and it clearly witnesses $\psi(\la t_1,\la\dot\bbP_1,\dot p_1,\dot\tau_1\ra\ra)\leq \psi(\la t_0,\la\dot\bbP_0,\dot p_0,\dot\tau_0\ra\ra)$.
    To show $\psi$ is a projection, suppose 
    $$\la\bbQ,q,\dot x\ra \leq \psi(t_0,\la\dot\bbP_0,\dot p_0,\dot\tau_0\ra) = \la\bbT*\dot\bbP_0,\la c_0,\dot p_0\ra,\sigma_0\ra.$$
    We may assume that for some regular cardinal $\delta > |\bbT * \dot\bbP_0|$, $\bbQ$ has size $\delta$, it  collapses $\delta$ to $\mu$, and $\dot x$ is forced to have length $\delta$.  Let $\bbQ_0 = \bbQ$ and $\bbQ_1 = \bbT*\dot\bbP_1 = \bbT * (\dot\bbP_0 \times \dot\col(\kappa,\delta)^{\Sh(\mu,\kappa)})$.   Let $\chi_0$ be a projection witnessing the ordering $\la\bbQ,q,\dot x\ra \leq \la\bbT*\dot\bbP_0,\la t_0,\dot p_0\ra,\sigma_0\ra$, and let $\chi_1 : \bbQ_1 \to \bbT*\dot\bbP_0$ be $\la a,\la b,c\ra\ra \mapsto \la a,b \ra$.  

    By Lemma~\ref{lem:embedding-from-projection}, for $n<2$ there is a dense $\mu$-closed $E_n \subseteq (\bbT*\dot\bbP_0) \times \col(\mu,\delta)$ and a $\mu$-continuous dense embedding $e_n : E_n \to \bbQ_n$, with the property that if $\la \la a,b \ra, c\ra \in E_n$, then $\chi_n(e_n(\la \la a,b\ra,c \ra )) = \la a, b\ra$.
    

    Let $E = E_0 \cap E_1$, and let $F_n = e_n[E]$.  There is a $\mu$-continuous isomorphism $\iota : F_1 \to F_0$ defined by $\iota = e_0 \circ e_1^{-1}$.
    If $r \in F_1$ and $r = e_1(a,b)$, then 
    $$\chi_0 \circ \iota(r) = \chi_0 \circ e_0(a,b) = a = \chi_1 \circ e_1(a,b) = \chi_1(r).$$
    If $\la t_1,\la\dot p_1,\dot d_1\ra\ra = r \in F_1$ is such that $\iota(r) \leq q$, then since $q \Vdash \la t_0,\dot p_0\ra \in \chi_0(\dot G_\bbQ)$, we must have $\la t_1,\dot p_1\ra = \chi_1(r) = \chi_0(\iota(r)) \leq \la t_0,\dot p_0\ra$.

    Let $\delta_0$ be the forced domain of $\dot\tau_0$.  In the extension by $\Sh(\mu,\kappa)*\dot\bbU(\dot G)*\dot\bbP_1$, forcing with $\col(\mu,\kappa)$ yields a generic for $\bbT*\dot\bbP_1$, which then gives a $\bbQ$-generic $G_\bbQ$ by projecting with $\iota$, with which we can evaluate $\dot x$.  Let $\dot y$ be a $\col(\mu,\kappa)$-name that is forced to evaluate to $\dot x^{\dot G_\bbQ}$.  For $\delta_0\leq\alpha<\delta$, let $A_\alpha \subseteq\col(\mu,\kappa)$ be the set of all conditions forcing $\dot y(\alpha) = 1$.  Then in the extension by $\Sh(\mu,\kappa)*\dot\bbU(\dot G)$, let $\dot A_\alpha$ be a $\bbP_1$-name for such an $A_\alpha$, and let $\tau_1$ be a name for a binary sequence of length $\delta$ such that for $\delta_0\leq\alpha<\delta$, it is forced that for $\beta<\kappa$,  $\tau_1(\kappa\cdot\alpha+\beta) = 1$ iff $c_\beta \in \dot A_\alpha$, where $\{ c_\beta : \beta<\kappa \}$ is the fixed enumeration of $\col(\mu,\kappa)$ from $V$.  Let $\tau_1 \rest \delta_0$ be forced to agree with $\tau_0$.

    If $\la t_1,\la\dot p_1,\dot d_1\ra\ra$ is as above, then by the definition of $\psi$, 
    $$\psi\left( t_1,\la \dot\bbP_1,\la \dot p_1,\dot d_1\ra,\dot\tau_1\ra\right) \leq \la\bbQ,q,\dot x\ra,$$
    as witnessed by $\iota : F_1 \to \bbQ$.
\end{proof}

\begin{definition}
    A condition $\la\bbP,p,\tau\ra \in \Sh(\kappa,\lambda)$ is called a \emph{double coding condition} if for some regular cardinal $\delta$,
    \begin{itemize}
        \item $|\bbP| = \delta$, with enumeration $\{ p_\alpha : \alpha<\delta\}$,
        \item $\bbP$ collapses $\delta$ to $\kappa$,
        \item $\tau$ is forced to have length $\delta$, and
        \item for some $\alpha<\delta$, it is forced that for all $\beta<\delta$ and all $\gamma<\kappa$, 
        $$\Vdash_\bbP\tau(\alpha+ \kappa\cdot\beta + \gamma) = 1 \leftrightarrow p_\beta \in \dot G.$$
    \end{itemize}
\end{definition}
In other words, $\dot G$ is coded by some final segment of $\kappa$-blocks of 0's and 1's.  It is a special type of strong coding that also occurs densely often in $\Sh(\kappa,\lambda)$.

\begin{lemma}
    Suppose $\col(\mu,\kappa)$ forces that $\la\dot\bbP,\dot p,\dot\tau\ra \in \Sh(\kappa,\lambda)^{\Sh(\mu,\kappa)*\dot\bbU(\dot G)}$ is a double coding condition with witnessing cardinal $\delta$.  Then for all $c \in \col(\mu,\kappa)$, $\psi(c,\la\dot\bbP,\dot p,\dot\tau\ra)$ is a coding condition in $\Sh(\mu,\lambda)$.
\end{lemma}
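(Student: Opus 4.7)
The plan is to verify the two clauses in the definition of a coding condition for $\la\bbT*\dot\bbP,\la c,\dot p\ra,\sigma\ra$, noting that these depend only on the first and third coordinates, not on $\la c,\dot p\ra$. Fix a generic $H = H_\bbT * H_\bbP \subseteq \bbT*\dot\bbP$, and factor $H_\bbT$ via the equivalence $\bbT\cong\Sh(\mu,\kappa)*\dot\bbU(\dot G)*\dot\col(\mu,\kappa)$ of Lemma~\ref{3step} as $(G,U,g)$, where $g\subseteq\col(\mu,\kappa)^V$ is the third-coordinate generic. Since $\sigma = \tau^* \,^\frown \sigma_2$, I will split the recovery of $H$ from $\sigma^H$ into two stages: first recover $H_\bbT$ from $(\tau^*)^H$, then recover $H_\bbP$ from $\sigma_2^H$ and the data in $V[H_\bbT]$.

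The first stage is immediate from Lemma~\ref{3step}: the condition $\la\bbT,1,\tau^*\ra$ is strong coding, so $H_\bbT$---and in particular the third-factor generic $g$---is absolutely recovered from $(\tau^*)^H$.

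The crux is the second stage, which will exploit the double-coding structure of $\dot\tau$. By strengthening $c$ if necessary, I may assume that a witnessing ordinal $\alpha_0<\delta$ from the definition of double coding is decided; by replacing it with the least multiple of $\kappa$ that is $\geq\alpha_0$ (still a valid witness, since the property is preserved by enlargement), I may further take $\alpha_0 = \kappa\cdot\alpha_0'$. Setting $f = \dot\tau^H$, for $\eta\in[\alpha_0',\delta)$ and $\beta<\kappa$ one has $\kappa\eta+\beta = \alpha_0 + \kappa(\eta-\alpha_0')+\beta$, so the double-coding property forces $f(\kappa\eta+\beta) = 1$ iff $p_{\eta-\alpha_0'}\in H_\bbP$, independently of $\beta$. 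Consequently each $A_\eta = \{c_\beta : f(\kappa\eta+\beta) = 1\}$ with $\eta\in[\alpha_0',\delta)$ is either all of $\col(\mu,\kappa)^V$ or empty, and $\sigma_1^g\restriction[\alpha_0',\delta)$ is exactly the characteristic function of $\{\eta : p_{\eta-\alpha_0'}\in H_\bbP\}$. Since $\la p_\beta : \beta<\delta\ra$ and $\alpha_0'$ both lie in $V[H_\bbT]$, and $\sigma_2^H = \sigma_1^g$, this recovers $H_\bbP$ from $\sigma_2^H$ and $H_\bbT$.

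Together these yield $V[H] = V[\sigma^H]$. For uniqueness, any other $V$-generic $F\subseteq\bbT*\dot\bbP$ in a larger ambient universe with $\sigma^F = \sigma^H$ must satisfy $F_\bbT = H_\bbT$ by the coding property of $\tau^*$ from Lemma~\ref{3step}, so $F$ and $H$ share the same $g$ and the same witnessing $\alpha_0'$; running the identical extraction on $\sigma_2^F = \sigma_2^H$ then yields $F_\bbP = H_\bbP$, so $F = H$. The main obstacle I expect is careful bookkeeping of the several roles of $\col(\mu,\kappa)$---as the iterated outer forcing (equivalent to $\bbT$), as the third factor of the $\bbT$-decomposition (giving $g$), and as the forcing whose enumeration $\la c_\beta : \beta<\kappa\ra$ parametrizes each $A_\eta$---but these are all the same poset in $V$, and by Easton's lemma $g$ remains $\col(\mu,\kappa)$-generic over $V[G*U*H_\bbP]$ (since $\bbP$ is completely $\kappa$-closed in $V[G*U]$), so $\sigma_1^g$ is well-defined.
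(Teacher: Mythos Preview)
Your argument is correct and follows the same two-stage approach as the paper: recover the $\bbT$-generic from the initial segment $\tau^*$ (using the strong coding of Lemma~\ref{3step}), then recover the $\dot\bbP$-generic from the tail $\sigma_2$ via the double-coding structure. The paper compresses this into the single observation that it is forced that, for some $\alpha$, $\sigma(\alpha+\beta)=1 \leftrightarrow \dot p_\beta\in\dot G_1$.

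One small bookkeeping slip to clean up: the claim that replacing $\alpha_0$ by the least multiple of $\kappa$ above it is ``still a valid witness, since the property is preserved by enlargement'' is not literally correct with the \emph{same} enumeration $\la p_\beta\ra$ --- enlarging $\alpha_0$ shifts the index. What actually makes your computation work is ordinal absorption: writing $\alpha_0 = \kappa q + r$ with $r<\kappa$, one has $r+\kappa\beta'=\kappa\beta'$ for $\beta'\geq 1$ (since $\kappa$ is a cardinal), so $\alpha_0+\kappa\beta'+\gamma = \kappa(q+\beta')+\gamma$. Hence for $\eta\geq q+1$ the block $A_\eta$ is indeed all-or-nothing, but according to whether $p_{\eta-q}\in H_\bbP$ rather than $p_{\eta-\alpha_0'}$. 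This off-by-one is harmless --- knowing $\{p_\beta:\beta\geq 1\}\cap H_\bbP$ already determines $H_\bbP$ --- but you should state the absorption directly rather than invoke ``enlargement''. Also note that strengthening $c$ is unnecessary: being a coding condition is independent of the middle coordinate, and the witnessing $\alpha_0$ is already determined in $V[G*U]$, hence in $V[H_\bbT]$, for each generic.
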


\begin{proof}
    Let $\psi(c,\la\dot\bbP,\dot p,\dot\tau\ra) = \la \col(\mu,\kappa)*\dot\bbP,\la c,\dot p\ra,\sigma\ra$.  Let $\dot G_0,\dot G_1$ be names for the generic filters over the first and second coordinates of $\col(\mu,\kappa)*\dot\bbP$ respectively.  By the way $\sigma$ is constructed from $\tau$, it is forced by $\col(\mu,\kappa)*\dot\bbP$ that there is $\alpha<\delta$ such that for all $\beta<\delta$, $\sigma(\alpha+\beta) = 1$ iff $\dot p_\beta \in \dot G_1$.  Since $\sigma$ is forced to end-extend $\tau^*$, it is forced that $\dot G_0$ can be read off from $\sigma \rest \kappa\cdot 2$.  Thus if $G_0*G_1 \subseteq \col(\mu,\kappa)*\dot\bbP$ is generic, and $W$ is any further generic extension, then any $V$-generic $H_0*H_1 \subseteq \col(\mu,\kappa)*\dot\bbP$ that differs from $G_0*G_1$ would evaluate $\sigma$ differently.
\end{proof}

\begin{lemma}
\label{iteratedshioyadist}
    The quotient forcing of the projection $\psi$ of Lemma~\ref{shproj} is forced to be $\lambda$-distributive.
\end{lemma}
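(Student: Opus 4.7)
The plan is to follow the template of the proof of Lemma~\ref{distributive}, showing that any generic $K*I$ for $\bbQ := \col(\mu,\kappa)*\dot\Sh(\kappa,\lambda)^{\Sh(\mu,\kappa)*\dot\bbU(\dot G)}$ inducing a given generic $G' \subseteq \Sh(\mu,\lambda) \rest \la\col(\mu,\kappa),1,\tau^*\ra$ is actually recoverable inside $V[G']$. This will show the quotient is in fact trivial and thus, a fortiori, $\lambda$-distributive.

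First, I would recover $K$ from $G'$: since $\la\col(\mu,\kappa),1,\tau^*\ra$ is a strong coding condition in $\Sh(\mu,\lambda)$ by Lemma~\ref{3step}, the $\col(\mu,\kappa)$-generic is coded by $\tau^{*G'}$ and is extracted as $K = (G')^{\tau^*}_{\col(\mu,\kappa)}$ via Lemma~\ref{G_P}.

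Next, working in $V[G'] \supseteq V[K]$, I would recover $I$. Double coding conditions are dense in $\Sh(\kappa,\lambda)^{V[K]}$, so a generic filter on this poset is determined by its intersection with the set $D$ of double coding conditions; the set $D$ itself lies in $V[K] \subseteq V[G']$. For each $\la\bbP,1,\tau\ra \in D$, the preceding lemma guarantees that $\psi(1,\la\bbP,1,\tau\ra)$ is a coding condition in $\Sh(\mu,\lambda)$, so $\la\bbP,1,\tau\ra \in I$ iff $\psi(1,\la\bbP,1,\tau\ra) \in G'$, a question decidable in $V[G']$. Whenever this holds, the induced $(\col(\mu,\kappa) * \bbP)$-generic below $\psi(1,\la\bbP,1,\tau\ra)$ is uniquely recoverable from $G'$ via its coding name (again by Lemma~\ref{G_P}), and its $\bbP$-coordinate is precisely $I_\bbP^\tau = \{p \in \bbP : \la\bbP,p,\tau\ra \in I\}$. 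Ranging over $\la\bbP,1,\tau\ra \in D \cap I$ therefore recovers $I$ completely in $V[G']$.

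The main technical point to verify will be that, for a double coding $\la\bbP,1,\tau\ra$, the $\Sh(\mu,\lambda)$-coding condition $\psi(1,\la\bbP,1,\tau\ra)$ truly captures the full product generic $K \times I_\bbP^\tau$. This is exactly the content of the preceding lemma, which shows that the strong coding name $\sigma$ built from $\tau$ inside the projection $\psi$ encodes $K$ together with the $\bbP$-generic. Combining the two recovery steps yields $K*I \in V[G']$, so the quotient is trivial, and in particular $\lambda$-distributive.
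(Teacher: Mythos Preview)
Your overall strategy—using coding conditions and the map $\psi$ to read information back from $G'$—is the same one the paper uses, but you claim more than is needed and the extra claim is not justified.

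The paper does \emph{not} argue that the quotient is trivial. It shows only $\lambda$-distributivity: given any $x \in V[B{*}H]$ of size ${<}\lambda$, find a double coding condition $\la\bbP,1,\tau\ra \in H$ with $x \in V[B{*}H^\tau_\bbP]$, take a name forced to be double coding, observe that its $\psi$-image is a coding condition in $G'$, and conclude $(G')^\sigma_{\bbT*\dot\bbP} = B*H^\tau_\bbP$, so $x \in V[G']$. This uses only the \emph{forward} direction of your proposed equivalence.

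Your argument needs the backward direction as well: you assert that $\la\bbP,1,\tau\ra \in I$ iff $\psi(1,\la\bbP,1,\tau\ra) \in G'$, but this is not justified. Since $\psi$ is merely a projection, $\psi(c,q) \leq \psi(1,\la\dot\bbP,1,\dot\tau\ra)$ in $\Sh(\mu,\lambda)$ (for some $\la c,q\ra \in K*I$) does not imply $q \leq \la\dot\bbP,1,\dot\tau\ra$ in the source forcing; the witnessing projection $\bbT*\dot\bbP' \to \bbT*\dot\bbP$ need not split as $\id_\bbT$ times a projection $\dot\bbP' \to \dot\bbP$ in $V[G*U]$. Relatedly, you apply $\psi$ to actual conditions $\la\bbP,1,\tau\ra \in V[G*U]$, whereas $\psi$ is defined on \emph{names}; different names for the same condition may yield different $\psi$-values, and you have not explained how to make a uniform choice inside $V[G']$. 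The clean fix is exactly the paper's: drop the claim of triviality, start from a given ${<}\lambda$-set, pick one double coding condition already known to lie in $I$, and recover just that local generic via coding.
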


\begin{proof}
    Suppose $B * H \subseteq \col(\mu,\kappa) * \dot\Sh(\kappa,\lambda)^{\Sh(\mu,\kappa)*\dot\bbU(\dot G)}$ is generic.  Let $G' = \psi(B{*}H)$, and let $G*U*K$ be the generic corresponding to $B$ per Lemma~\ref{3step}.  For any set of ordinals $x \in V[B{*}H]$ of size $<\lambda$, there is a $\col(\mu,\kappa)$-name $\dot x \in V[G{*}U{*}H]$ such that $\dot x^K = x$, and $\dot x \in V[G{*}U{*}H^\tau_\bbP]$ for some  double coding condition $\la\bbP,1,\tau\ra \in H$, so $x \in V[B{*}H^\tau_\bbP]$.  We can take names $\la\dot\bbP,\dot 1,\dot\tau\ra$ that are forced to be double coding with witness $\delta$, such that $\la\dot\bbP,\dot 1,\dot\tau\ra^{G*U} = \la\bbP, 1,\tau\ra$.  Let $\psi(1,\la\dot\bbP,\dot 1,\dot\tau\ra) = \la\col(\mu,\kappa)*\dot\bbP,\la 1,\dot 1\ra,\sigma\ra$.   Then we have that $(G')^\sigma_{\col(\mu,\kappa)*\dot\bbP} = B * H^\tau_\bbP$.  Thus $x \in V[G']$.
\end{proof}

\begin{lemma}
\label{iteratedshioyastrat}
    Suppose $\mu<\kappa<\lambda$ are regular cardinals, with $\kappa,\lambda$ inaccessible.  Suppose 
    $G_0 * U_0 * G_1 * U_1 \subseteq \Sh(\mu,\kappa) * \dot\bbU(\dot G_0) *\dot\Sh(\kappa,\lambda)* \dot\bbU(\dot G_1)$ is generic over $V$, and $K \subseteq \col(\mu,\kappa)$ is generic over $V[G_0{*}U_0{*}G_1{*}U_1]$.  
    
    Let $\bbT$ be the tree forcing of Lemma~\ref{3step}, and let $B \subseteq \bbT$ be the generic corresponding to $G_0*U_0*K$.  Let $\psi(B{*}G_1)=G_2 \subseteq \Sh(\mu,\lambda)$, where $\psi$ is the projection of Lemma~\ref{shproj}.  
    
    Then $\bbU(G_2)$ is $\lambda$-strategically closed in $V[B{*}G_1{*}U_1]$.
\end{lemma}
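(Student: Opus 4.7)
The plan is to adapt the strategy from Lemma~\ref{lambdastratclosed} to this two-step iterated setting. The guiding idea is that, in $V[G_0 * U_0]$, the tower of names $\dot\bbP_\alpha = \dot\col(\kappa,{<}\alpha)^{\Sh(\mu,\kappa) * \dot\bbU(\dot G_0)}$ for $\alpha<\lambda$ is a quite reasonable $(\kappa,\lambda)$-collapse in the $\Sh(\kappa,\lambda)$-term sense: the projections $\dot\bbP_\beta \to \dot\bbP_\alpha$ are canonical restriction maps, and at limit stages of cofinality ${<}\kappa$ the poset is canonically isomorphic to the inverse limit. Using the projection $\psi$ from Lemma~\ref{shproj}, conditions of the form $\psi(1,\la\dot\bbP_\alpha,\dot 1,\dot\tau_\alpha\ra)$ provide a dense tower of special conditions inside $G_2$ below $\la\col(\mu,\kappa),1,\tau^*\ra$ that inherits this ``quite reasonable'' structure through $\psi$.

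Player II's strategy $\Sigma$ is to always play, in their rounds, conditions of the form $\la\la\bbQ_\alpha,1,\sigma_\alpha\ra,\dot u_\alpha\ra$ where $\la\bbQ_\alpha,1,\sigma_\alpha\ra = \psi(1,\la\dot\bbP_\alpha,\dot 1,\dot\tau_\alpha\ra)$ with $\la\dot\bbP_\alpha,\dot 1,\dot\tau_\alpha\ra \in G_1$ a double coding condition, and with the projections in $\dot u_\alpha$ between these special conditions chosen as the obvious ones coming from the restriction maps in the tower (post-composed via $\psi$). At successor rounds following a move $\la p,\dot u\ra$ by Player I, Player II first strengthens $p$ to decide the top coding condition of $\dot u$, then applies Lemma~\ref{shproj} together with Remark~\ref{termremark} to find a suitable $\la\dot\bbP_\alpha,\dot 1,\dot\tau_\alpha\ra$ below the given condition whose image under $\psi$ end-extends the relevant names, and finally performs a one-point extension of $\dot u$ as in Lemma~\ref{1ptext}, choosing the new projections to factor through restriction maps and the witnessing conditions $q_{\beta,\alpha}^{\dot u}$ to satisfy the required compatibilities below the previous witnesses.

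At a limit round $\delta$ with $\cf(\delta)^{V[B*G_1*U_1]} = \nu \leq \kappa$, let $\la\gamma_i : i < \nu\ra$ enumerate cofinally a set of previous Player II rounds with associated ordinals $\alpha_{\gamma_i}$; set $\alpha_\delta = \sup_i \alpha_{\gamma_i}$. The tower being quite reasonable in $V[G_0*U_0]$ means $\dot\bbP_{\alpha_\delta}$ is canonically isomorphic to $\varprojlim(\la\dot\bbP_{\alpha_{\gamma_i}} : i<\nu\ra)$, and applying $\psi$ transports this identification to an isomorphism of $\bbQ_{\alpha_\delta}$ (below an appropriate condition) with the inverse limit of the $\bbQ_{\alpha_{\gamma_i}}$. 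Player II plays the new top condition coming from $\la\dot\bbP_{\alpha_\delta},\dot 1,\dot\tau_{\alpha_\delta}\ra$, with projections to earlier special posets being the canonical restriction maps, and to intermediate non-special posets defined by composition (as in the limit step of Lemma~\ref{lambdastratclosed}); the witnessing conditions $q_{\xi_\delta,\gamma}$ are chosen from the generic using that they need only lie below ${<}\lambda$ many previous conditions and the forcings involved are $\lambda$-directed closed enough to accomplish this (using the $\lambda$-distributivity of $\bbU(G_1)$ and of $\bbU(G_2)$ established in Corollary~\ref{Udist} and Lemma~\ref{iteratedshioyadist}, together with Easton's Lemma~\ref{easton} to see that adjoining the small generic $K$ does not destroy distributivity). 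Verification of clause~(\ref{invlimU}) at the limit proceeds exactly as in the limit step of Lemma~\ref{lambdastratclosed}: for $x \in [\xi_\delta]^{<\kappa}$, by construction any $q \in E^{\la\vec\bbQ,\vec\pi\ra}_{\xi_\delta}$ projecting below the $q_{\beta\alpha}$'s also has commuting projections through $y = \{\xi_{\gamma_i}\}$, and Lemma~\ref{invlim_cof1} together with the canonical isomorphism $\bbQ_{\alpha_\delta} \cong \varprojlim$ furnishes the required extension.

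The main obstacle will be showing that this limit lower bound lives in $\bbU(G_2)$ as computed in $V[B*G_1*U_1]$ and not only in some larger extension. The point is that although $G_2$ is added via the small-over-$\lambda$ projection $\psi$, the conditions needed to witness clause~(\ref{invlimU}) at the limit come from the generic $G_1$ together with the generic $K$ (via $B$), and the required witnesses $q_{\xi_\delta,\gamma}$ can be found using the $\lambda$-distributivity of $\bbU(G_1)$ in $V[G_0*U_0*K]$ (inherited from Lemma~\ref{lambdastratclosed} via Easton's lemma, since $K$ is $\kappa$-sized and $\bbU(G_1)$ is $\lambda$-distributive in $V[G_0*U_0*G_1]$). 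Once this is in hand, $\Sigma$ is a winning strategy for Player II in $\mathcal{G}^{\mathrm{II}}_\lambda(\bbU(G_2))$, establishing the lemma.
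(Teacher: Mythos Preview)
Your proposal has a genuine gap: you never make essential use of the generic $U_1$, and without it the guiding tower you describe does not exist inside $G_2$. You propose that Player II play conditions of the form $\psi(1,\la\dot\bbP_\alpha,\dot 1,\dot\tau_\alpha\ra)$ with $\dot\bbP_\alpha = \dot\col(\kappa,{<}\alpha)$, asserting that these lie in $G_2$. But $G_2 = \psi(B*G_1)$, and $G_1$ is a generic for $\Sh(\kappa,\lambda)$, not for a Levy collapse. There is no reason at all for the particular conditions $\la\col(\kappa,{<}\alpha),1,\tau_\alpha\ra$ to belong to $G_1$; hence their $\psi$-images need not belong to $G_2$, and so they cannot serve as top coordinates of conditions in $\bbU(G_2)$. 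Your sentence ``with $\la\dot\bbP_\alpha,\dot 1,\dot\tau_\alpha\ra \in G_1$ a double coding condition'' simply assumes what has to be arranged. If instead $\dot\bbP_\alpha$ denotes some unspecified condition that happens to be in $G_1$, then you have no ``obvious restriction maps'' between successive $\dot\bbP_\alpha$'s, and the inverse-limit identification at limit stages collapses.

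The paper resolves this by using $U_1$ as the source of the guiding tower: the uniformization generic $U_1$ provides a cofinal sequence $\la\la\bbR_i,1,\varsigma_i\ra : i<\lambda\ra$ of (double coding) conditions that are in $G_1$ by the very definition of $\bbU(G_1)$, together with projections $\rho_{ji}$ and witnessing conditions $r_{ji}$ satisfying the inverse-limit clause~(\ref{invlimU}). Player II plays conditions with top $\psi(1,\la\dot\bbR_{\alpha},1,\dot\varsigma_\alpha\ra)$, and the projections between successive such moves are of the form $\id\times\dot\rho_{\alpha_j,\alpha_i}$. This is what replaces the ``quite reasonable'' structure from Lemma~\ref{lambdastratclosed}. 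There is then a second difficulty you do not address: the objects $\rho_{ji}$, $r_{ji}$ are only \emph{names} over $\Sh(\mu,\kappa)*\dot\bbU(\dot G_0)$, and the statements ``$\dot\rho_{ji}$ witnesses $\la\dot\bbR_j,\dot r_{ji},\dot\varsigma_j\ra\leq\la\dot\bbR_i,1,\dot\varsigma_i\ra$'' are forced only by conditions in $G_0*U_0$, not outright. At a limit of $V[B*G_1*U_1]$-cofinality $\mu$, no single condition in $\bbT$ can force all of these simultaneously. The paper handles this by choosing a descending $\mu$-sequence $\la\la t_i,\dot r_i\ra : i<\mu\ra$ from the generic of $\bbT*\dot\bbR$ (where $\dot\bbR$ is a sufficiently late poset appearing in $U_1$), each deciding more of the relevant enumerations and forcing more of the required relations; the limit move and its $\vec q$-values are defined relative to this sequence. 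Your appeal to $\lambda$-distributivity of $\bbU(G_1)$ does not touch this issue and would not produce the needed $q_{\xi_\delta,\gamma}$.
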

\begin{proof}
    Let $\la\la\bbR_i,1,\varsigma_i\ra : i<\lambda\ra$,  $\la\rho_{ji} : i<j<\lambda\ra$ and $\la r_{ji} : i < j < \lambda\ra$ be the sequences given by $U_1$.
Since double coding conditions are dense in $\Sh(\kappa,\lambda)$, the argument of Lemma~\ref{1ptext} shows that cofinally many $\la\bbR_i,1,\varsigma_i\ra$ are double coding.  Passing to a subsequence, let us assume that all $\la\bbR_i,1,\varsigma_i\ra$ are double coding.
    For each $i<j<\lambda$, choose $(\Sh(\mu,\kappa)*\dot\bbU(\dot G_0))$-names for these objects that evalutate to them:
    $\dot\bbR_i$, $\dot\varsigma_i$, $\dot\rho_{ji}$, $\dot r_{ji}$.  
We may assume there is an increasing sequence of cardinals $\la\delta_i : i < \lambda\ra$ such that $\dot\bbR_i$ is forced to be equivalent to $\dot\col(\check\kappa,\check\delta_i)$.
We may choose the names such that $\la\dot\bbR_i,1,\dot\varsigma_i\ra$ is forced to be double coding and $\dot\rho_{ji}$ is forced to be a projection from $\dot\bbR_j$ to $\dot\bbR_i$ defined on a $\kappa$-closed dense set.
For every $i<j$, some condition in $G_0*U_0$ will force that $\dot\rho_{ji}$ witnesses $\la\dot\bbR_j,\dot r_{ji},\dot\varsigma_j\ra \leq \la\dot\bbR_i,1,\dot\varsigma_i\ra$.  

    Suppose that in $V[B{*}G_1{*}U_1]$, Players I and II are playing the game on $\bbU(G_2)$.
    Suppose Player I plays $u_0 = \la\la\la\bbP^{u_0}_i,1,\tau_i^{u_0}\ra : i\leq \xi_0\ra,\la\pi^{u_0}_{ji} : i<j\leq\xi_0\ra, \la q^{u_0}_{ji} : i < j \leq \xi_0\ra\ra$.
The argument of Lemma~\ref{1ptext} shows that 
there is $\alpha_1 <\lambda$ such that Player II can respond with a condition $u_1$ of length $\xi_0+2$ with top $\Sh(\mu,\lambda)$-condition $\la\bbP_{\xi_0+1}^{u_1},1,\tau_{\xi_0+1}^{u_1}\ra=\la\bbT*\dot\bbR_{\alpha_1},1,\sigma_1\ra$, where $\la\bbT*\dot\bbR_{\alpha_1},1,\sigma_1\ra = \psi(1,  \la\dot\bbR_{\alpha_1},1,\dot\varsigma_{\alpha_1}\ra)$. 

    Suppose Player I responds with $u_2 = \la\la\la\bbP^{u_2}_i,1,\tau^{u_0}_i\ra : i\leq \xi_2\ra,\la\pi^{u_2}_{ji} : i<j\leq\xi_2\ra, \la q^{u_2}_{ji} : i < j \leq \xi_2\ra\ra$.  There is an ordinal $\alpha_3 < \lambda$ and a condition $\la t_3,\dot r_3\ra \in B * (G_1)^{\varsigma_{\alpha_3}}_{\bbR_{\alpha_3}}$ such that $\alpha_3>\alpha_1$, 
$t_3 \Vdash \dot r_3 \leq \dot r_{\alpha_3\alpha_1} \wedge \la\dot\bbR_{\alpha_3},\dot r_3,\dot \varsigma_{\alpha_3}\ra \leq \la\dot\bbR_{\alpha_1},1,\dot \varsigma_{\alpha_1}\ra $,
and $\psi(t_3,\la\dot\bbR_{\alpha_3},\dot r_3,\dot \varsigma_{\alpha_3}\ra)  = \la\bbT*\dot\bbR_{\alpha_3},\la t_3,\dot r_3\ra,\sigma_3\ra \leq \la\bbP_{\xi_2},1,\tau_{\xi_2}\ra$.
  
    
Let $\chi$ be a projection witnessing $\la\bbT*\dot\bbR_{\alpha_3},\la t_3,\dot r_3\ra,\sigma_3\ra \leq \la\bbP_{\xi_2},1,\tau_{\xi_2}\ra$.
    Let Player II's next move be $u_3$ of length $\xi_2+2$ such that:
\begin{itemize}
	\item $\la\bbP^{u_3}_{\xi_2+1},1,\tau^{u_3}_{\xi_2+1}\ra = \la\bbT*\dot\bbR_{\alpha_3},1,\sigma_3\ra$.
	\item $\pi^{u_3}_{\xi_2+1,\beta} =  \pi^{u_2}_{\xi_2,\beta}\circ\chi$ for $\xi_0+1<\beta \leq\xi_2$, $\pi^{u_3}_{\xi_2+1,\xi_0+1} = \id\times\dot\rho_{\alpha_3\alpha_1}$, and $\pi^{u_3}_{\xi_2+1,\beta} = \pi^{u_2}_{\xi_0+1,\beta} \circ (\id\times\dot\rho_{\alpha_3\alpha_1})$ for $\beta\leq\xi_0$, restricted to the dense $\mu$-closed sets on which these are defined.  
\item $q_{\xi_2+1,\beta}^{u_3} \leq \la t_3,\dot r_3\ra$ projects below $q_{\xi_2,\beta}^{u_2}$ for $\xi_0+1<\beta\leq\xi_2$, $q_{\xi_2+1,\xi_0+1}^{u+3} = \la t_3,\dot r_3 \ra$, and $q^{u_3}_{\xi_2+1,\beta}\leq \la t_3,\dot r_3\ra$ projects below $q_{\xi_0+1,\beta}^{u_2}$ for $\beta \leq \xi_0$.
\end{itemize}
The argument that this fulfills requirement (\ref{invlimU}) is the same as for Player II's second move in Lemma~\ref{strongstrategic}.


    Assume inductively that $\delta<\lambda$, and the game has gone on for $\delta$-many rounds, with the following properties:
    \begin{itemize}
        \item At step $\beta$, the move $u_\beta = \la\la\la\bbP^{u_\beta}_i,1,\tau^{u_\beta}_i\ra: i \leq \xi_\beta\ra,\la\pi^{u_\beta}_{ji}: i<j\leq\xi_\beta\ra, \la q^{u_\beta}_{ji} : 
        i < j \leq \xi_\beta\ra \ra$ for $\beta<\delta$, with 
        Player II has played in odd finite rounds and even infinite rounds.
        As those conditions are descending, we may drop the superscript $u_\beta$.
        \item There is an increasing sequence of ordinals $\la\alpha_\beta : \beta<\delta\ra$ such that for odd finite and even infinite $\beta$, 
$\la\bbP_{\xi_\beta},1,\tau_{\xi_\beta}\ra = \psi(1,\la\dot\bbR_{\alpha_\beta},1,\dot\varsigma_{\alpha_\beta}\ra)$.
        \item For $\beta<\gamma$ both rounds where II played, $\pi_{\xi_\gamma\xi_\beta} = \id \times \dot\rho_{\alpha_\gamma\alpha_\beta}$ restricted to some $\mu$-closed dense set of conditions $\la t,\dot r\ra$ such that $t \Vdash \dot r \in \dom\dot\rho_{\alpha_\gamma\alpha_\beta}$, and $q_{\gamma\beta} = \la t,\dot r\ra \in B*(G_1)^{\varsigma_{\alpha_\gamma}}_{\bbR_{\alpha_\gamma}}$ such that 
        $t \Vdash \dot r \leq \dot r_{\alpha_\gamma\alpha_\beta} \wedge \la\dot\bbR_{\alpha_\gamma},\dot r,\dot\varsigma_{\alpha_\gamma}\ra \leq \la\dot\bbR_{\alpha_\beta},1,\dot\varsigma_{\alpha_\beta}\ra$.
    \end{itemize}
    
    Let $\delta = \bar\delta + 1$ be a successor ordinal and assume the last round was a move of Player I. Then, Player II responds just as in round 3.

    If $\delta$ is a limit, then $|\delta|^{V[B{*}G_1{*}U_1]} \leq \mu$.  
    Let $\xi_\delta = \sup_{\gamma < \delta} \xi_\gamma$ and $\alpha'_\delta = \sup_{\gamma < \delta} \alpha_\gamma$. Let $v\in U_1$ be large enough so that $\dom v$ is larger than $\alpha'_\delta$. 
    
    Here we must address a delicate point that was silenced until now. The generic filter for $\bbU(\dot{G}_1)$ is composed of names which are forced by conditions in $G_0{*}U_0$ to be compatible in $\Sh(\kappa,\lambda)$, as witnessed by the projections $\dot\rho_{ji}$. 
Those relations between pairs of those names are not forced outright but only by sufficiently strong conditions in the generic filter.
At the successor step, it is enough to let the corresponding values of $\vec q^{u_\delta}$ be strong enough to force that compatibility is witnessed by these projections $\dot\rho_{\alpha_j\alpha_i}$ in the desired way.  Since Player II only increases the length of the conditions by one, it is easy for them to maintain condition (\ref{invlimU}).  But at a limit step of cofinality $\mu$, we cannot find a single condition forcing the desired properties of the names on a cofinal subset.

    As $v$ and $\vec u = \langle u_\zeta  :  \zeta < \delta\rangle$ belong to $H_{\mu^+}$ of the generic extension, there is a forcing notion $\bbP$ appearing in $G_2$ such that $v$ and $\vec u$ are added by $\bbP$. Without loss of generality, $\bbP$ is of the form 
$\mathbb{T}\ast \dot\bbR$ where $\langle \mathbb{T}\ast \dot\bbR, 1, \sigma\rangle = \psi(1,\la\dot\bbR_i,1,\dot\varsigma_i\ra)$ for some $i<\lambda$, and it is the maximal coordinate of $v$. 
Let $\alpha_\delta + 1$ be the length of $v$. 
Pick $\bbP$-names $\dot v$ for $v$ and $\dot u'_\delta$ for the coordinate-wise union of the elements of $\vec u$. 
    As $\bbP$ collapses $\alpha_\delta$ and $\xi_\delta$ to $\mu$, we may fix enumerations  
   $\vec\alpha=\langle \hat\alpha_i  :  i < \mu\rangle$ of $\{\alpha_j : j < \delta \}$ and $\vec\xi=\langle \hat\xi_i  :  i < \mu\rangle$ of $\xi_\delta$ in the generic extension by $\bbP$.  Let $f : \mu \to \mu$ be such that $\xi_{f(i)}+1$ is the length of Player II's move with top poset $\bbT * \dot\bbR_{\hat\alpha_i}$.
    
    Let $\la\la t_i,\dot r_i\ra : i < \mu\ra$ be a descending sequence contained in the generic of $\bbT*\dot\bbR$ such that for every $i < \mu$, $\la t_i,\dot r_i\ra$ decides the values of 
$f \rest i, \vec\alpha\rest i,\vec\xi \rest i,
\dot{v}\restriction \{\hat\alpha_j  :  j < i\} \cup \{\alpha_\delta\},$ and $\dot u'_\delta \restriction \{\hat\xi_j  :  j < i\}$.
Moreover, let us assume that $\la t_i,\dot r_i\ra$ forces that for all $j<i$, 
$\la\bbP_{\hat\xi_{f(j)}},1,\tau_{\hat\xi_{f(j)}}\ra = \psi(1,\la\dot\bbR_{\hat\alpha_j},1,\dot\varsigma_{\hat\alpha_j}\ra)$, and
if $j,k<i$ and $\hat\xi_{f(k)}<\hat\xi_{f(j)}$, then $\pi^{u'_\delta}_{\hat\xi_{f(j)},\hat\xi_{f(k)}} = \id \times \dot\rho_{\hat\alpha_j,\hat\alpha_k}$.
    
Let $u_\delta$ be an extension of $u'_\delta$ with $\bbP$ at its top coordinate and the following projections. For $\zeta < \xi_\delta$, let $\hat\zeta$ be the ordinal $i$ that minimizes  $\hat\xi_{f(i)}\geq \zeta$, and let $\pi^{u_\delta}_{\xi_\delta,\zeta} = \pi^{u'_\delta}_{\hat\xi_{f(\hat\zeta)}, \zeta} \circ (\id \times \dot\rho_{\alpha_\delta, \alpha_{\hat\zeta}})$, restricted to the set of points at which
all compositions of maps from 
$$\{ \id \times \dot\rho_{\alpha_\delta,\hat\alpha_i} : i \leq \hat\zeta \}
\cup \{ \pi^{u'_\delta}_{jk} : k<j \wedge j,k \in \{ \hat\xi_{f(i)} : i \leq\hat\zeta \} \cup \{ \zeta \} \}$$
are defined.


    Let $q^{u_\delta}_{\delta,\zeta} = \langle t, \dot r\rangle$ be a condition in the generic of $\bbT*\dot\bbR$ such that, for the least ordinal $i$ that is closed under $f$ and such that $\zeta,\hat\xi_{f(\hat\zeta)}$ appear in $\vec\xi$ before $i$ and $\alpha_{\hat\zeta}$ appears in $\vec\alpha$ before $i$,
    \begin{itemize}
        \item $\la t,\dot r\ra \leq \la t_i,\dot r_i\ra$. 
        \item $t$ forces $\dot r$ to be stronger than
$\dot r_{\alpha_\delta,\hat\alpha_j}$ for every $j < i$, and to project below $\dot r_{\hat\alpha_j,\hat\alpha_k}$ for every $j,k< i$ such that $\hat\alpha_k<\hat\alpha_j$.

\item $\la t,\dot r\ra$ projects below $q^{u'_\delta}_{jk}$ whenever $k<j$ and $ j,k \in \{ \hat\xi_{f(l)} : l < i \} \cup \{ \zeta \}$.

    \end{itemize}

    Let $x \in [\xi_\delta]^{<\mu}$. We would like to verify requirement (\ref{invlimU}) with respect to $x$. Note that as no new sequences of ordinals of length $<\mu$ are introduced by $G_2$, $x \in V$.

    Let us first assume that $x \subseteq \{\xi_\gamma  :  \gamma\text{ is odd and finite or infinite and even}\}$. Let $q$ be a condition projecting below $q^{u_{\delta}}_{\xi_{\gamma_0},\xi_{\gamma_1}}$ for all $\xi_{\gamma_0}, \xi_{\gamma_1}\in x \cup \{\xi_\delta\}$. In particular, for every $\xi_{\gamma_0},\xi_{\gamma_1}\in x$, $q$ is stronger than a condition $\langle t_i, r_i\rangle$ that forces 
that $\pi^{u_\delta}_{\xi_{\gamma_0},\xi_{\gamma_1}} = \id\times\dot\rho_{\alpha_j,\alpha_k}$ for some $k<j<\delta$, and the first coordinate ($\bbT$-part)  of $q$ forces that the second coordinate projects below $\dot r_{\alpha_j,\alpha_k}$.

    Let $\vec s \in \varprojlim (\vec \bbP \restriction x, \vec \pi \restriction x)$ be stronger than the projection of $q$. Then, for every $\xi_\gamma \in x$, $\vec s(\xi_\gamma) \in \bbT \ast \bbR_{\alpha_\gamma}$. As the projections to the first coordinate 
are the identity, those projections are the same for all $\xi_\gamma\in x$ and stronger than the projection of $q$ to $\bbT$. The projections of the second coordinate are done according to the projections $\rho_{\alpha_{j}, \alpha_{k}}$.  

    As $q$ forces $v$ to be a condition and forces the projections in $u_\delta$ to be of the form $\id\times\dot\rho_{\alpha_j,\alpha_k}$, we conclude that we can strengthen the second coordinate of $q$ to obtain some $q'$ that projects below $\vec s$.  

    Let us now deal with the general case. Similarly to the proof of Lemma \ref{strongstrategic}, we first extend $x$ to $x \cup y$, where $y$ is contained in the indices of posets played by Player II
and $|y|<\mu$, and then apply the argument for $y$, using commutativity in order to obtain the a condition whose restriction to $x$ is as desired.

    In this case, let $y = \{\hat\xi_{f(\hat\zeta)}  :  \zeta \in x\}$. The argument that we can extend $\vec s \in \varprojlim (\vec \bbP \restriction x, \vec \pi \restriction x)$ to $\vec t\in \varprojlim (\vec \bbP \restriction (x \cup y), \vec \pi \restriction (x\cup y))$ is identical to the argument of Lemma~\ref{strongstrategic}. The main difference is that in this case, $y$ is not the list of \emph{all} steps of Player II but rather just those with index $\hat\xi_{f(\hat\zeta)}$ for $\zeta \in x$.
So, we must verify that the $\bbT$-part of $q$ forces that the relevant conditions $\dot r_{\alpha_{j},\alpha_{k}}$ are weaker than the projections of the second coordinate of $q$ to posets indexed by $y$. This is ensured by that way that $q$ was constructed:  For each such pair, let $i$ be the index in which the last one of them is enumerated.  Then $q = \langle t, \dot r\rangle$ is stronger than $\langle t_i, \dot r_i\rangle$, and $t$ forces that $\dot r$ projects below $\dot r_{\alpha_{j},\alpha_{k}}$ for the relevant ordinals $j,k$.
\end{proof}

\begin{theorem}
\label{iteratedshioya}
    Suppose $\kappa$ is almost-huge with target $\lambda$ and $\mu<\kappa$ is regular uncountable.  Then $\Sh(\mu,\kappa)*\dot\bbU(\dot G_0) *\dot\Sh(\kappa,\lambda)*\dot\bbU(\dot G_1)$ forces that there is a normal ideal $I$ on $\kappa$ such that $\p(\kappa)/I \cong \mathcal{B}(\col(\mu,\kappa))$.
\end{theorem}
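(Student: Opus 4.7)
The plan is to adapt the proof of Theorem~\ref{anysuccreg}, using Lemmas~\ref{shproj}, \ref{iteratedshioyadist}, and~\ref{iteratedshioyastrat} to compress the four-step iteration into the framework of $\Sh(\mu,\lambda) * \dot\bbU$ over $V$.

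Fix $j : V \to M$ witnessing the almost-hugeness of $\kappa$ with $j(\kappa) = \lambda$, $j(\lambda) < \lambda^+$, and $j[\lambda]$ cofinal in $j(\lambda)$; by almost-hugeness, $V_\lambda^M = V_\lambda^V$. Let $G_0 * U_0 * G_1 * U_1$ be generic for the iteration. Since $\Sh(\kappa,\lambda) * \dot\bbU(\dot G_1)$ is $\kappa$-strategically closed over $V[G_0 * U_0]$ by Lemma~\ref{strongstrategic}, it is in particular $\kappa$-distributive, so $\p(\kappa)^{V[G_0 * U_0 * G_1 * U_1]} = \p(\kappa)^{V[G_0 * U_0]}$. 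It therefore suffices to produce, in some $\col(\mu,\kappa)$-generic extension, a lift $j : V[G_0 * U_0] \to M[G_2 * U_2]$, since $j(X)$ will then be defined for all $X \in \p(\kappa)^{V[G_0*U_0*G_1*U_1]}$.

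Let $g \subseteq \col(\mu,\kappa)$ be generic over $V[G_0 * U_0 * G_1 * U_1]$. By Lemma~\ref{3step}, $G_0 * U_0 * g$ is equivalent to a $\bbT$-generic $B$; by Lemma~\ref{shproj}, $G_2 := \psi(B * G_1)$ is a $V$-generic filter on $\Sh(\mu,\lambda)$ below $\la\col(\mu,\kappa),1,\tau^*\ra$, hence also $M$-generic since $V_\lambda^M = V_\lambda^V$. The projection $\psi$ translates each condition of $G_0$ to a stronger condition of $G_2$, so $j$ lifts to $j : V[G_0] \to M[G_2]$. For the next step, Lemma~\ref{iteratedshioyadist}, together with Corollary~\ref{Udist} and Lemma~\ref{easton}, gives that $M[G_2]$ is ${<}\lambda$-closed in $V[B * G_1 * U_1]$, and $\p(\bbU(G_2))^{M[G_2]}$ has size $\lambda$ there since $j(\lambda) < \lambda^+$. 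The triples of $U_0$ and $U_1$ translate, via the identifications of Lemmas~\ref{3step} and~\ref{shproj}, into a coherent sequence of conditions in $\bbU(G_2)$ admitting a natural lower bound $u^*$. By Lemma~\ref{iteratedshioyastrat}, $\bbU(G_2)$ is $\lambda$-strategically closed in $V[B * G_1 * U_1]$, so there one can build an $M[G_2]$-generic $U_2 \subseteq \bbU(G_2)$ with $u^* \in U_2$, producing $j : V[G_0 * U_0] \to M[G_2 * U_2]$.

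Define $I = \{X \in \p(\kappa) : 1 \Vdash_{\col(\mu,\kappa)} \check\kappa \notin j(X)\}$ in $V[G_0 * U_0 * G_1 * U_1]$, using a canonical $\col(\mu,\kappa)$-name for the lift (by the maximum principle). Standard arguments show $I$ is normal and $\kappa$-complete, and the map $e : \p(\kappa)/I \to \mathcal{B}(\col(\mu,\kappa))$ given by $e([X]_I) = \|\check\kappa \in j(X)\|$ is a complete embedding. For density, factor $j = k \circ i$ through the ultrapower of $V[G_0 * U_0]$ by $F = e^{-1}[g]$, and verify $\crit(k) > \lambda$ as in the proof of Theorem~\ref{anysuccreg}, so that $i(G_0 * U_0) = G_2 * U_2$. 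The generic $g$ can then be recovered from $i(G_0 * U_0)$ via the decomposition of Lemma~\ref{3step}, yielding a $\p(\kappa)/I$-name $\sigma$ with $\sigma^F = g$ and hence the density of $e$. The main technical obstacle is the construction of the master condition $u^*$ — specifically, verifying that amalgamating the translations of $U_0$ and $U_1$ satisfies clause~(\ref{invlimU}) of the definition of $\bbU$. This requires coordinating the projections $\dot\rho_{ji}$ arising from $U_1$ with the restriction-type projections from $\bbT$ (Lemma~\ref{3step}) and the $\psi$-translation (Lemma~\ref{shproj}), which is essentially the content underlying Lemma~\ref{iteratedshioyastrat}.
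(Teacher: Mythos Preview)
Your proposal has a genuine gap at its very first reduction. You claim that since $\Sh(\kappa,\lambda)*\dot\bbU(\dot G_1)$ is $\kappa$-strategically closed, it is $\kappa$-distributive, and hence $\p(\kappa)^{V[G_0*U_0*G_1*U_1]} = \p(\kappa)^{V[G_0*U_0]}$. But $\kappa$-distributivity only rules out new ${<}\kappa$-sequences, not new subsets of $\kappa$. In fact $\Sh(\kappa,\lambda)$ forces $\lambda = \kappa^+$, so it collapses $(\kappa^+)^{V[G_0*U_0]}$ to $\kappa$ and therefore adds many new subsets of $\kappa$. Consequently, lifting $j$ only to $j : V[G_0*U_0] \to M[G_2*U_2]$ does \emph{not} let you evaluate $j(X)$ for arbitrary $X \in \p(\kappa)^{V[G_0*U_0*G_1*U_1]}$, and the ideal $I$ you define is not the one you want.

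This is precisely why the paper does more work. After lifting to $j : V[G_0*U_0] \to M[G_0'*U_0']$ (your $G_2*U_2$), the paper uses the uniformizing data from $U_1$ --- the posets $\bbR_\alpha$, projections $\rho_{\beta\alpha}$, and conditions $r_{\beta\alpha}$ --- to build a descending master sequence $\la q_\alpha : \alpha<\lambda\ra$ in $j(\Sh(\kappa,\lambda)) = \Sh(\lambda,j(\lambda))^{M[G_0'*U_0']}$, exactly as in Theorem~\ref{omega1}. This sequence dominates $j[G_1]$ and, crucially, decides for each $\Sh(\kappa,\lambda)$-name $\tau \in V[G_0*U_0]$ whether $\kappa \in j(\tau)$. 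The ultrafilter $F$ is then defined from these decisions, which handles the new subsets of $\kappa$ added by $G_1$. Your sentence about ``the triples of $U_0$ and $U_1$ translating into a coherent sequence in $\bbU(G_2)$ with a natural lower bound $u^*$'' is also off: the $u^*$ of Lemma~\ref{3step} encodes only $U_0$ (it has length $\kappa+1$), and any single condition in $\bbU(G_2)$ absorbing all of $U_1$ would need length $\geq\lambda$, which is impossible. The role of $U_1$ is not to be absorbed into $U_2$, but to supply the inverse-limit structure (clause~(\ref{invlimU})) on the $M$-side that makes the master-sequence construction go through at limits.
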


\begin{proof}
    Let $j: V \to M$ be an embedding witnessing that $\kappa$ is almost-huge with target $\lambda$.  Let us assume that $j(\lambda)<\lambda^+$ and $j[\lambda]$ is cofinal in $j(\lambda)$.

    Suppose $G_0 * U_0 * G_1 * U_1 \subseteq \Sh(\mu,\kappa) * \dot\bbU(\dot G_0) *\dot\Sh(\kappa,\lambda)* \dot\bbU(\dot G_1)$ is generic over $V$, and $K \subseteq \col(\mu,\kappa)$ is generic over $V[G_0{*}U_0{*}G_1{*}U_1]$.  Let $\bbT$ be the tree forcing of Lemma~\ref{3step}, and let $B \subseteq \bbT$ be the generic corresponding to $G_0*U_0*K$.  Let $\psi(B{*}G_1)=G_0' \subseteq \Sh(\mu,\lambda)$, where $\psi$ is the projection of Lemma~\ref{shproj}.  By Lemma~\ref{3step}, $G_0 \subseteq G_0'$.  Thus the embedding can be lifted to $j: V[G_0] \to M[G_0']$.  Since every set of ordinals of size ${<}\lambda$ in $V[B{*}G_1{*}U_1]$ is in $V[(G_0')^{\tau}_\bbP]$ for some coding condition $\la\bbP,1,\tau\ra \in G_0'$, $M[G_0']$ is ${<}\lambda$-closed in $V[B{*}G_1{*}U_1]$.

    Let $u^*$ be the condition in $\bbU(G_0')$ given by Lemma~\ref{3step}.  Since $|\p(\lambda)^{M[G_0']}| = \lambda$ in $V[B{*}G_1]$, and $\bbU(G_0')$ is $\lambda$-strategically closed in $V[B{*}G_1{*}U_1]$ by Lemma~\ref{iteratedshioyastrat}, we can build a filter $U_0' \subseteq \bbU(G_0')$ with $u^* \in U_0'$ that is generic over $M[G_0']$.  By Lemma~\ref{3step}, we have $U_0 \subseteq U_0'$, and thus the embedding can be lifted further to $j : V[G_0{*}U_0] \to M[G_0'{*}U_0']$.

    Next, we use $U_1$ just as in the proof of Theorem~\ref{omega1} to build a descending sequence $\la q_\alpha : \alpha < \lambda \ra \subseteq \Sh(\lambda,j(\lambda))^{M[G_0'*U_0']}$ such that:
    \begin{itemize}
         \item for every $p \in G_1$, there is $\alpha <\lambda$ such that $q_\alpha \leq j(p)$, and
        \item for every $X \subseteq \kappa$ in $V[G_0{*}U_0{*}G_1]$ and every name $\tau \in V[G_0{*}U_0]$ such that $\tau^G = X$, there is $\alpha<\lambda$ such that $q_\alpha$ decides whether $\kappa \in j(\tau)$.
    \end{itemize}
    The construction is the same, so we will not repeat it, but we note that to carry out the argument we need that all ${<}\lambda$-sized subsets of $G_1$ are in $M[G_0']$, which is guaranteed by Lemma~\ref{iteratedshioyadist} and the closure of $M[G_0']$.  
    Then we define an ultrafilter $F$ on $\p(\kappa)^{V[G_0{*}U_0{*}G_1]}$ by $X \in F$ iff for some name $\tau$ such that $\tau^G = X$ and some $\alpha<\lambda$, $q_\alpha \Vdash \check\kappa \in j(\tau)$.  As before, $F$ is a $V[G_0{*}U_0{*}G_1]$-normal ultrafilter.
    In $V[G_0{*}U_0{*}G_1{*}U_1]$, we define an ideal $I$ on $\kappa$ by $X \in I$ iff $\col(\mu,\kappa)$ forces $X \notin \dot F$.  $I$ is a normal ideal, and $e : [X]_I \mapsto || X \in \dot F||_{\mathcal{B}(\col(\mu,\kappa))}$ is a complete embedding.  It follows that $I$ is $\kappa$-dense.

    It remains to show that $\p(\kappa)/I \cong \mathcal{B}(\col(\mu,\kappa))$.  For this, it suffices to show that there is a name $\tau$ forcing language of $\p(\kappa)/I$ such that, whenever $K \subseteq \col(\mu,\kappa)$ is generic, if $F = e^{-1}[K]$, then $K = \tau^F$.  This is very similar to Claim \ref{claim:dense-embedding-to-collapse}. Since unlike in Claim \ref{claim:dense-embedding-to-collapse}, we did not specify an $M$-generic for $j(\Sh(\mu,\kappa) * \bbU(\dot{G}_0) * \Sh(\kappa,\lambda) * \bbU(\dot G_1))$, we need to be a bit more careful about our elementary embeddings.
    
    Let $K,B,G_0',U_0'$ be as above, and let $F = e^{-1}[K]$.  Let $F_0 = F \cap V[G_0{*}U_0]$.  There is $\alpha_0 <\lambda$ such that $q_{\alpha_0}$ decides $F_0$ as the collection of $X \in \p(\kappa)^{V[G_0*U_0]}$ for which $\kappa \in j(X)$.  Let  $i_0 : V[G_0{*}U_0] \to N_0 = \Ult(V[G_0{*}U_0],F_0)$ be the ultrapower embedding.  There is an elementary map $k_0 : N_0 \to M[G_0'{*}U_0']$ given by $k_0([h]_{F_0}) = j(h)(\kappa)$, and $j \rest V[G_0{*}U_0] = k_0 \circ i_0$.  Since $\crit(i_0) = \kappa$, $\kappa = \mu^+$ in $V[G_0]$, and the critical point of $k_0$ is a cardinal of $V[G_0]$ above $\mu$, $\crit(k_0)$ is greater than $\kappa+\kappa$.  Recall that $G_0'$ possesses a strong coding condition of the form $\la \bbT,1,\tau^*\ra$, where $\tau^*$ is forced to have length $\kappa+\kappa$, and $(G_0')^{\tau^*}_\bbT = B$. 
    Since $\bbT,\tau^*\in V$ and these are coded by subsets of $\kappa$, they are in $N_0$.  
    By the elementarity of $k_0$, $\la\bbT,1,\tau^*\ra\in i_0(G_0)$, and moreover $i_0(G_0)^{\tau^*}_\bbT=B$.
    $K$ can be read off as the filter on the third coordinate of the decomposition of $\bbT$ given by Lemma~\ref{3step}.  Therefore, $K \in N_0$ and there is a $(\p(\kappa)/I)$-name $\tau$ that evaluates to $K$, as desired.
\end{proof}

\begin{corollary}
    Suppose $\la\kappa_n : n <\omega\ra$ is a sequence such that for all  $n$, $\kappa_n$ is almost-huge with target $\kappa_{n+1}$.  Then
     some forcing extension satisfies that for every $n<\omega$, there is a normal ideal $I$ on $\omega_{n+1}$ such that 
     $$\p(\omega_{n+1})/I \cong \mathcal{B}(\col(\omega_{n},\omega_{n+1})).$$
\end{corollary}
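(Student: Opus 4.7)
The plan is to use the $\omega$-length iteration
\[
\bbP_\omega := \bbA(\omega,\kappa_0) * \dot\bbQ_0 * \dot\bbQ_1 * \dot\bbQ_2 * \cdots,
\]
where $\bbQ_n := \Sh(\kappa_n,\kappa_{n+1}) * \dot\bbU(\dot G_n)$ is computed in the previous extension, taking inverse limits. By Lemma~\ref{strongstrategic}, each iterand $\dot\bbQ_n$ is forced to be $\kappa_n$-strongly strategically closed, and because the closure indices $\kappa_n$ strictly increase, a bookkeeping argument shows that for each $n$ the tail $\dot\bbQ_{n+1} * \dot\bbQ_{n+2} * \cdots$ is forced to be $\kappa_{n+1}$-strategically closed. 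Consequently, $\bbP_\omega$ collapses each $\kappa_{n+1}$ to $\kappa_n^+$ and preserves every other cardinal, so that in $V^{\bbP_\omega}$ we have $\omega_{n+1} = \kappa_n$ for every $n<\omega$.

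For $n \geq 1$, I would factor $\bbP_\omega$ as $\bbP_{\mathrm{pre}}^n * \dot\bbQ_{n-1} * \dot\bbQ_n * \dot\bbP_{\mathrm{tail}}^n$, where $\bbP_{\mathrm{pre}}^n = \bbA(\omega,\kappa_0) * \dot\bbQ_0 * \cdots * \dot\bbQ_{n-2}$ has size ${<}\kappa_n$ and lies in $V_{\kappa_n}$, and $\bbP_{\mathrm{tail}}^n$ is forced to be $\kappa_{n+1}$-strategically closed. Fix an almost-huge embedding $j_n : V \to M_n$ with $\crit(j_n)=\kappa_n$, $j_n(\kappa_n)=\kappa_{n+1}$, $j_n(\kappa_{n+1})<\kappa_{n+1}^+$, and $j_n[\kappa_{n+1}]$ cofinal in $j_n(\kappa_{n+1})$. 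Since $\crit(j_n)>|\bbP_{\mathrm{pre}}^n|$, $j_n$ lifts trivially after forcing with $\bbP_{\mathrm{pre}}^n$, and the almost-hugeness of $\kappa_n$ is preserved. Then in $V^{\bbP_{\mathrm{pre}}^n}$ the proof of Theorem~\ref{iteratedshioya} applied with $(\mu,\kappa,\lambda)=(\kappa_{n-1},\kappa_n,\kappa_{n+1})$ yields, after the further extension by $\dot\bbQ_{n-1} * \dot\bbQ_n$, a normal ideal $I$ on $\kappa_n$ with $\p(\kappa_n)/I \cong \mathcal{B}(\col(\kappa_{n-1},\kappa_n))$. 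Because $\bbP_{\mathrm{tail}}^n$ is $\kappa_n^+$-distributive over that intermediate model, it adds no new subsets of $\kappa_n$; hence $I$ remains a normal ideal with the same quotient in $V^{\bbP_\omega}$, and the target algebra $\mathcal{B}(\col(\omega_n,\omega_{n+1}))$ is computed identically there.

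The case $n=0$ is handled analogously, but with Theorem~\ref{omega1} in place of Theorem~\ref{iteratedshioya}, applied to the initial factor $\bbA(\omega,\kappa_0) * \dot\bbQ_0$ using the almost-hugeness of $\kappa_0$ with target $\kappa_1$; this yields an $\omega_1$-dense normal ideal on $\omega_1 = \kappa_0$, which is preserved in $V^{\bbP_\omega}$ by the $\kappa_1$-strategically closed tail $\dot\bbQ_1 * \dot\bbQ_2 * \cdots$.

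The main obstacle I anticipate is verifying rigorously that the $\omega$-length tail $\dot\bbQ_{n+1} * \dot\bbQ_{n+2} * \cdots$ really is $\kappa_{n+1}$-strategically closed. This should follow from the strong strategic closure of each individual step (and in fact much less suffices: only $\kappa_n^+$-distributivity is required to preserve $\p(\kappa_n)$), but care is needed because the $\bbU$-components, whose conditions carry the delicate inverse-limit clause~(\ref{invlimU}), must be threaded through limit rounds of the game in a way that respects the subsequent Shioya forcings. Once this closure bookkeeping is in place, the lifts of the $j_n$ and the analysis of the ideal quotients proceed exactly as in Theorems~\ref{omega1} and \ref{iteratedshioya}.
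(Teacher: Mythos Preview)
Your proposal is correct and is precisely the argument the paper has in mind: the corollary is stated without proof immediately after Theorem~\ref{iteratedshioya}, since it follows by iterating $\bbA(\omega,\kappa_0)$ together with the blocks $\Sh(\kappa_n,\kappa_{n+1})*\dot\bbU(\dot G_n)$, invoking Theorem~\ref{omega1} for $n=0$ and Theorem~\ref{iteratedshioya} for $n\geq 1$, and preserving each ideal by the distributivity of the tail. Your worry about the $\kappa_{n+1}$-strategic closure of the tail is not a genuine obstacle---combining the coordinate-wise strategies from Lemma~\ref{strongstrategic} in the full-support $\omega$-iteration is a routine exercise (and, as you note, mere $\kappa_n^+$-distributivity already suffices for the preservation claim).
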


Suppose $\kappa$ is a huge cardinal, and $j : V \to M$ is an elementary embedding such that $\crit(j) = \kappa$ and $M^{j(\kappa)} \subseteq M$.  
If $\mathcal{U}$ is the normal ultrafilter on $\kappa$ derived from $j$, then a standard reflection argument (using the characterization of almost-hugeness from \cite[Theorem 24.11]{Kanamori}) shows that there is a set $A \in \mathcal U$ such that for $\alpha<\beta$ from $A$, $\alpha$ is almost-huge with target $\beta$.
A similar argument shows that for every $A \subseteq \kappa$, there is $X \in \mathcal U$ such that for all $\alpha \in X$, there is an embedding $i : V \to N$ witnessing that $\alpha$ is almost-huge with target $\kappa$, and $i(A \cap \alpha) = A$.

\begin{theorem}\label{thm:dense-ideals-everywhere}
If $\theta$ is a huge cardinal, then there is a generic extension in which $\theta$ remains inaccessible, 
there is a stationary set of measurable cardinals below $\theta$,
and $V_\theta$ satisfies that for all regular cardinals $\mu$, there is a normal ideal $I$ on $\mu^+$ such that 
     $\p(\mu^+)/I \cong \mathcal{B}(\col(\mu,\mu^+)).$\footnote{Shioya told the first author that the question of whether there can exist a dense ideal at the successor of a measurable cardinal was raised by Woodin at the 2004 workshop ``Singular Cardinal Combinatorics'' at the Banff International Research Station.}
\end{theorem}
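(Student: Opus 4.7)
The plan is to iterate the two-step construction of Theorem~\ref{iteratedshioya} along a club of almost-huge cardinals below $\theta$, with the anonymous collapse $\bbA(\omega,\kappa_0)$ as the initial iterand to handle $\omega_1$. Let $j_\theta : V \to M_\theta$ witness the hugeness of $\theta$ and let $\mathcal U$ be the derived normal measure on $\theta$. By the standard reflection argument and the characterization of almost-hugeness via direct limits of supercompactness measures from \cite[Theorem~24.11]{Kanamori}, $\mathcal U$ concentrates on a set $A$ of measurable cardinals such that whenever $\alpha < \beta$ lie in $A$, $\alpha$ is almost-huge with target $\beta$ via an embedding with $j[\beta]$ cofinal in $j(\beta)$. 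Fix an increasing enumeration $\la\kappa_\alpha : \alpha<\theta\ra$ of a club $C \subseteq A$.

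Define a $\theta$-length iteration $\bbP_\theta$ using Easton-style supports, with initial iterand $\bbA(\omega,\kappa_0)$ and, for each $\alpha < \theta$, $(\alpha{+}1)$-st iterand a name for $\Sh(\kappa_\alpha,\kappa_{\alpha+1}) * \dot\bbU(\dot G)$ as computed in the intermediate extension. At limits $\lambda < \theta$, take inverse limits when $\cf(\lambda) < \kappa_\lambda$ and direct limits otherwise; this arranges $\bbP_\lambda$ to be $\kappa_\lambda$-c.c.\ at each inaccessible $\lambda \in C$ and $\bbP_\theta$ to be $\theta$-c.c. In the resulting extension $V[G_\theta]$, the regular cardinals below $\theta$ are precisely $\omega$ together with the members of $C$, $\theta$ remains inaccessible, and $\kappa_\alpha^+ = \kappa_{\alpha+1}$ for each $\alpha$.

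To produce the dense ideal on $\kappa_{\alpha+1}$ for $\alpha \geq 1$, fix an almost-huge embedding $i : V \to N$ with critical point $\kappa_\alpha$, $i(\kappa_\alpha) = \kappa_{\alpha+1}$, and $i[\kappa_{\alpha+1}]$ cofinal in $i(\kappa_{\alpha+1}) < \kappa_{\alpha+1}^+$. Since $\bbP_\alpha \in V_{\kappa_\alpha}$ is fixed by $i$, the lifting through $\bbP_\alpha$ is immediate; then apply the argument of Theorem~\ref{iteratedshioya} to lift $i$ through the next two iterands $\Sh(\kappa_\alpha,\kappa_{\alpha+1}) * \dot\bbU * \dot\Sh(\kappa_{\alpha+1},\kappa_{\alpha+2}) * \dot\bbU$, producing in $V[G_{\alpha+2}]$ a normal ideal $I$ on $\kappa_{\alpha+1}$ with $\mathcal{P}(\kappa_{\alpha+1})/I \cong \mathcal{B}(\col(\kappa_\alpha,\kappa_{\alpha+1}))$. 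The tail $\bbP_{[\alpha+2,\theta)}$ is at least $\kappa_{\alpha+2}$-strategically closed (each iterand $\Sh(\kappa_\beta,\kappa_{\beta+1})*\dot\bbU$ for $\beta \geq \alpha+2$ being $\kappa_\beta$-strategically closed by Lemma~\ref{strongstrategic}), so it adds no new subsets of $\kappa_{\alpha+1}$ and preserves both $I$ and its quotient algebra. For the $\omega_1$-dense ideal on $\omega_1 = \kappa_0$, apply Theorem~\ref{omega1} directly to the initial segment; for the dense ideal on $\omega_2 = \kappa_1$, use a combined variant of Theorems~\ref{omega1} and \ref{iteratedshioya} for the three-iterand block $\bbA(\omega,\kappa_0) * \dot\Sh(\kappa_0,\kappa_1) * \dot\bbU * \dot\Sh(\kappa_1,\kappa_2) * \dot\bbU$. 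Measurability of each $\kappa \in C$ in $V[G_\theta]$ is preserved by generically lifting suitable supercompactness measures derived from $j_\theta$ through the subiteration $\bbP_\kappa$, using the closure of the tail.

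The main obstacle will be uniformizing the lifting and tail-distributivity arguments across all $\alpha < \theta$ simultaneously: one must verify that the Easton-style choice of limits interacts compatibly with the strategic closure of each $\Sh * \dot\bbU$ iterand and with the master-condition construction of Lemma~\ref{3step}, so that the intermediate models $V[G_{\alpha+2}]$ carry enough closure to lift $i$ through the full two-step block uniformly in $\alpha$, and so that the tail analysis giving preservation of the quotient $\mathcal{P}(\kappa_{\alpha+1})/I$ goes through at every stage. The mixed $\omega_1$/$\omega_2$ case, where the initial iterand is the anonymous collapse rather than a Shioya forcing, will require an extra merger of the proof techniques of Theorems~\ref{omega1} and \ref{iteratedshioya}, since the projection from Lemma~\ref{shproj} needs a modified form at the base step.
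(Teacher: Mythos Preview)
Your plan for the dense-ideal part is essentially the paper's, modulo a fixable indexing slip (the almost-huge $i$ with critical point $\kappa_\alpha$ and target $\kappa_{\alpha+1}$ matches the block $\Sh(\kappa_{\alpha-1},\kappa_\alpha)*\dot\bbU*\Sh(\kappa_\alpha,\kappa_{\alpha+1})*\dot\bbU$, not the one you wrote) and a set-up issue: a measure-one set $A$ need not contain a club, so the paper enumerates the \emph{closure} of $A$ and at limit $\alpha$ distinguishes whether $\kappa_\alpha$ is inaccessible (use $\Sh(\kappa_\alpha,\kappa_{\alpha+1})$) or singular (use $\Sh(\kappa_\alpha^+,\kappa_{\alpha+1})$, since $\Sh$ requires a regular first argument).

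The real gap is the measurability-preservation argument. Your sentence about lifting ``supercompactness measures derived from $j_\theta$ through $\bbP_\kappa$, using the closure of the tail'' cannot work as stated: $j_\theta$ has critical point $\theta$, so nothing derived from it gives a measure on $\kappa<\theta$; and even starting from a normal measure on $\kappa$ (which exists since $\kappa\in A$), the tail is \emph{not} closed enough---its very first iterand is $\Sh(\kappa,\kappa')*\dot\bbU$, which is only $\kappa$-strategically closed and \emph{does} add new subsets of $\kappa$, so no closure argument preserves the measure. The paper instead singles out the stationary set of $\delta<\theta$ carrying an almost-huge $i:V\to N$ with $\crit(i)=\delta$, $i(\delta)=\theta$, and $i(A\cap\delta)=A$; the last condition gives $i(\bbP_\delta)=\bbP_\theta$, so $i$ lifts to $V[H_\delta]\to N[H]$ for free. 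The substantive step is then lifting $i$ through $H(\delta)=G_\delta*U_\delta\subseteq\Sh(\delta,\kappa_{\delta+1})*\dot\bbU$: one uses the uniformization sequence $U_\delta$ exactly as in the proofs of Theorems~\ref{omega1} and \ref{iteratedshioya} to build a master condition in $i(\Sh(\delta,\kappa_{\delta+1})*\dot\bbU)=\Sh(\theta,i(\kappa_{\delta+1}))^{N[H]}*\dot\bbU$ as an inverse limit of the $i(\bbR_\alpha)$'s below the conditions $m_\alpha=\inf i[(G_\delta)^{\tau_\alpha}_{\bbR_\alpha}]$. Forcing below this master condition (a $\delta^{++}$-distributive step) yields the lifted $i$ and hence a normal measure on $\delta$, which by distributivity already lies in $V[H]$. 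This is the same uniformization-based master-condition machinery that drives the dense-ideal constructions, reused here with target $\theta$ rather than the next $\kappa_{\alpha+1}$; your plan omits it entirely.
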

\begin{proof}
Let $j: V \to M$ witness that $\theta$ is huge with target $\lambda$. 
Let $A \subseteq \theta$ be such that 
for all $\alpha<\beta$ in $A$, $\alpha$ is almost-huge with target $\beta$.  

Define an iteration $\la \bbP_\alpha,\dot\bbQ_\alpha : \alpha< \theta \ra$ as follows. 
Enumerate the elements of the closure of $A$ 
as $\la \kappa_\alpha : \alpha < \theta\ra$. 
Let $\bbQ_0 = \bbA(\omega,\kappa_0)$, and for $1 \leq n < \omega$, let $\dot\bbQ_n = \dot\Sh(\kappa_{n-1},\kappa_n) * \dot\bbU(\dot G_n)$, where $\dot G_n$ is a name for the generic on the first coordinate of this pair.  If $\alpha$ is a limit and $\kappa_\alpha$ is singular, let $\dot\bbQ_{\alpha} = \dot\Sh(\kappa_\alpha^+,\kappa_{\alpha+1}) * \dot\bbU(\dot G_\alpha)$, where $\dot G_\alpha$ is a name for the generic on the first coordinate.  If $\alpha$ is a limit and $\kappa_\alpha$ is inaccessible, let $\dot\bbQ_{\alpha} = \dot\Sh(\kappa_\alpha,\kappa_{\alpha+1}) * \dot\bbU(\dot G_\alpha)$, where $\dot G_\alpha$ is a name for the generic on the first coordinate.  Take the limits $\bbP_\alpha$ with Easton support.  
Using the strategic closure of $\Sh(\alpha,\beta)*\dot\bbU(\dot G)$ given by Lemma~\ref{strongstrategic}, standard arguments show that $\theta$ remains inaccessible and the set of regular uncountable cardinals below $\theta$ 
becomes the set of ordinals $\kappa_\alpha$ for successor $\alpha$, $\kappa_\alpha^+$ when $\kappa_\alpha$ is a singular limit, and $\kappa_\alpha$ when $\kappa_\alpha$ is inaccessible.  
By Theorem~\ref{iteratedshioya}, $\bbP_\theta$ 
forces that for all regular cardinals $\mu<\theta$ 
there is a normal ideal $I$ on $\mu^+$ such that $\p(\mu^+)/I \cong \mathcal{B}(\col(\mu,\mu^+))$.

To show that stationary-many measurable cardinals below $\theta$ are preserved, let $\delta<\theta$ be such that there is an almost-huge embedding $i : V \to M$ with $\crit(i) = \delta$, $i(\delta) = \theta$, and $i(A\cap\delta) = A$.  Since the set of such cardinals is stationary in $V$ and $\bbP_\theta$ is $\theta$-c.c., it suffices to show the measurability of such cardinals is preserved. Let $H \subseteq \bbP_\theta$ be generic, and let $H_\alpha = H \rest \bbP_\alpha$ for $\alpha<\theta$. The embedding $i$ easily lifts to $i : V[H_\delta] \to N[H]$.

Next, we must lift the embedding through $H_{\delta+1}$.  Let $H(\delta) = G_\delta * U_\delta \subseteq \Sh(\kappa_\delta,\kappa_{\delta+1})*\dot\bbU(\dot G_\delta)$.  Let $\la \la \bbR_\alpha,1,\tau_\alpha \ra : \alpha < \kappa_{\delta+1} \ra$, $\la \pi_{\beta\alpha} : \alpha<\beta<\kappa_{\delta+1} \ra$, and  $\la q_{\beta\alpha} : \alpha<\beta<\kappa_{\delta+1}\ra$ be the sequences given by $U_\delta$. 
For $\alpha<\kappa_{\delta+1}$, let $m_\alpha = \inf \{ i(p) : p \in (G_\delta)^{\tau_\alpha}_{\bbR_\alpha} \}$, and let $\vec m = \la m_\alpha : \alpha<\kappa_{\delta+1} \ra$.  Note that for all $\alpha<\beta<\kappa_{\delta+1}$, $m_\alpha = i(\pi_{\beta\alpha})(m_\beta)$, and $m_\beta \leq i(q_{\beta\alpha})$.  In $N[H]$, the pair
$$\la \vec\bbR,\vec\pi\ra = \la \la i(\bbR_\alpha) \rest m_\alpha : \alpha < \kappa_{\delta+1} \ra, \la i(\pi_{\beta\alpha}) : \alpha<\beta<\kappa_{\delta+1} \ra\ra$$
is a $\theta$-good inverse system of length $\kappa_{\delta+1} < \theta$.

For each $\alpha<\kappa_{\delta+1}$, it is possible to lift the embedding to $i: V[H_\delta][(G_\delta)^{\tau_\alpha}_{\bbR_\alpha}] \to N[H][K]$ by further forcing, and using the fact that $U_\delta \in N[H]$,
the same argument as in the proof of Theorem~\ref{omega1} shows that $\la\vec\bbR,\vec\pi\ra$ satisfies the hypotheses of Lemma~\ref{invlim_general}.  Thus for $\alpha<\kappa_{\theta+1}$, the map $\vec p \mapsto \vec p(\alpha)$ is a projection from $\la\vec\bbR,\vec\pi\ra$ to $i(\bbR_\alpha)\rest m_\alpha$, and it makes sense to define a $\varprojlim(\vec\bbR,\vec\pi)$-name $\tau^*$ as the concatenation of all $i(\tau_\alpha)$ for $\alpha<\kappa_{\theta+1}$, as evaluated through these projections.

Let $\gamma = \sup i[\kappa_{\delta+1}]$, and let $\bbQ$ be the disjoint sum of $\varprojlim(\vec\bbR,\vec\pi)$ and $\col(\theta,{<}\gamma)$, so that for a maximal antichain $\{ q_0,q_1 \}$, $\bbQ \rest q_0 = \varprojlim(\vec\bbR,\vec\pi)$ and $\bbQ \rest q_1 = \col(\theta,{<}\gamma)$.  Define a projection $\sigma_\alpha : \bbQ \to i(\bbR_\alpha)$ by putting $\sigma_\alpha$ below $q_0$ as the canonical restriction map, and $\sigma_\alpha$ below $q_1$ as anything that projects to $i(\bbR_\alpha)$, which can be done since $\gamma > |i(\bbR_\alpha)|$.  Amend $\tau^*$ to ensure that $q_1$ also forces it to code the generic.
Hence $\la\bbQ,q_0,\tau^*\ra$ is a coding condition in $\Sh(\theta,i(\kappa_{\delta+1}))^{N[H]}$ below $i[G_\delta]$.  Thus we may lift the embedding further to $i : V[H_\delta][G_\delta] \to N[H][K]$ whenever $K$ is a $\Sh(\theta,i(\kappa_{\delta+1}))^{N[H]}$-generic possessing $\la\bbQ,q_0,\tau^*\ra$.  Then we may argue similarly as in the limit case of Lemma~\ref{strongstrategic} that a lower bound $u^*$ to $i[U_\delta]$ can be constructed with $\la\bbQ,1,\tau^*\ra$ at the top.  Thus there exists a lower bound $p^*$ to $i[G_\delta {*} U_\delta]$.

Thus by forcing with 
$i(\Sh(\kappa_\delta,\kappa_{\delta+1})1*\dot\bbU(\dot G_\delta))^{N[H]}$ below $p^*$, we can lift the embedding $i$.  In this model, let $\mathcal U = \{ X \subseteq \delta : \delta \in i(X) \}$.  Then $\mathcal U$ is a $V[H_{\delta+1}]$-normal ultrafilter.  Since the last forcing was $\delta^{++}$-distributive, $\mathcal U \in V[H]$.  Since $\p(\delta)^{V[H]} = \p(\delta)^{V[H_{\delta+1}]}$, $\mathcal U$ is a normal measure over $\delta$ in $V[H]$.
\end{proof}

Working a bit harder, it is possible to show that even stronger large cardinal properties are preserved.  But we will leave the details of this exploration to a future work.

\section{Applications}\label{sec: applications}
\subsection{GCH, diamond, and reflection}\label{subsec: gch}

\begin{prop}
\label{gch}
    Suppose $\mu,\alpha<\kappa$, $I$ is a $\kappa$-complete ideal on $X$, and the forcing $\p(X)/I$ adds no elements of $[\alpha]^{<\mu}$.  Then $\alpha^{<\mu} < \kappa$.
\end{prop}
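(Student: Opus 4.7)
The plan is to force with $\p(X)/I$ and exhibit, via the generic ultrapower, a new element of $[\alpha]^{<\mu}$ in $V[G]$ whenever $\alpha^{<\mu} \geq \kappa$. Let $G \subseteq \p(X)/I$ be $V$-generic and let $\tilde G$ be the associated $V$-generic $V$-$\kappa$-complete ultrafilter on $X$; the $\kappa$-completeness of $I$ ensures $\Ult(V,\tilde G)$ is well-founded, so I identify it with its Mostowski collapse $M$ and let $j\colon V\to M$ be the ultrapower map, with $\crit(j) \geq \kappa$ (and $\crit(j)=\kappa$ in the intended applications, where $I$ has completeness exactly $\kappa$).

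Since $\alpha,\mu<\kappa\leq\crit(j)$, the map $j$ fixes $\alpha$, $\mu$, and pointwise fixes every $A\in[\alpha]^{<\mu}_V$ (such an $A$ is a subset of $\alpha$ of size $<\mu<\crit(j)$, so $j[A]=A$ and $j(A)=j[A]=A$). Consequently $j\rest[\alpha]^{<\mu}_V = \id$, giving $j\bigl[[\alpha]^{<\mu}_V\bigr] = [\alpha]^{<\mu}_V$, while by elementarity $j\bigl([\alpha]^{<\mu}_V\bigr) = \bigl([\alpha]^{<\mu}\bigr)^M$. Now suppose for contradiction that $\alpha^{<\mu}\geq\kappa$, so $|[\alpha]^{<\mu}_V|^V\geq\crit(j)$. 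I invoke the standard cardinality fact that $j[S]\subsetneq j(S)$ whenever $|S|^V\geq\crit(j)$: fix a $V$-bijection $\pi\colon |S|^V\to S$ and note that $\crit(j)\in j(|S|^V)\setminus j\bigl[|S|^V\bigr]$ (because $j(\gamma)=\gamma$ for $\gamma<\crit(j)$ and $j(\gamma)>\crit(j)$ for $\gamma\geq\crit(j)$), so $j(\pi)(\crit(j))\in j(S)\setminus j[S]$. Applied to $S=[\alpha]^{<\mu}_V$ this produces an $A\in\bigl([\alpha]^{<\mu}\bigr)^M\setminus [\alpha]^{<\mu}_V$.

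Such $A$ lies in $V[G]$ because $M\subseteq V[G]$, and since $\mu<\crit(j)$ is a preserved cardinal in $M$, the $M$-bijection witnessing $|A|^M<\mu$ also witnesses $|A|^{V[G]}<\mu$; hence $A\in [\alpha]^{<\mu}_{V[G]}\setminus [\alpha]^{<\mu}_V$ is a new element, contradicting the hypothesis. Therefore $\alpha^{<\mu}<\kappa$. The only nontrivial step is the cardinality fact $j[S]\subsetneq j(S)$ for $|S|^V\geq\crit(j)$, which is the standard bijection-pullback computation inside $M$ sketched above.
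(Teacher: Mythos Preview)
Your approach is the same as the paper's---use the generic ultrapower to exhibit a new element of $[\alpha]^{<\mu}$---but there is a genuine gap: your claim that $\kappa$-completeness of $I$ ensures $\Ult(V,\tilde G)$ is well-founded is false. Well-foundedness of the generic ultrapower is \emph{precipitousness}, a strictly stronger property than $\kappa$-completeness. The paper explicitly sidesteps this by noting that the ultrapower is only guaranteed to be well-founded \emph{up to} $\kappa$ (which does follow from $\kappa$-completeness), and that this suffices: any $[g]_G$ with $g\colon X\to[\alpha]^{<\mu}_V$ determines a genuine subset of $\alpha$ in $V[G]$ of size ${<}\mu$ (since $\alpha,\mu<\kappa$ and the well-founded part of $M$ reaches that far), and it is new exactly when $g$ is not $G$-almost constant. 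Your argument is easily repaired along these lines by dropping the Mostowski collapse and working directly with such equivalence classes rather than with the ordinal $\crit(j)$ inside a transitive $M$.

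There is also a logical slip: from $\crit(j)\geq\kappa$ and $\alpha^{<\mu}\geq\kappa$ you cannot conclude $\alpha^{<\mu}\geq\crit(j)$; that would require $\crit(j)=\kappa$, i.e., that $I$ has completeness exactly $\kappa$. You note this parenthetically but then proceed as though it holds. (In fairness, the paper's own use of $[\id]_G$ carries a similar implicit assumption about $X$ and the completeness of $I$, and in all applications in the paper $I$ does have completeness exactly $\kappa$.)
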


\begin{proof}
    Let $G \subseteq \p(X)/I$ be generic, and let $j_G: V \to \Ult(V, G)$ be the generic ultrapower embedding.  We do not assume that the ultrapower is well-founded, but it is well-founded up to $\kappa$, and $\kappa \leq \crit(j_G)$ (see \cite{foremanhandbook}).  Let $\vec x = \la x_\beta : \beta < \delta \ra$ be an enumeration of $[\alpha]^{<\mu}$.  If $\delta\geq\kappa$ then $j_G(\vec x)([\id]_G)$ is a member of $[\alpha]^{<\mu}$ in $V[G]$, and $j_G(\vec x)([\id]_G) \not= x_\beta = j(\vec x)(j(\beta))$ for any $\beta < \delta$.  This contradicts the assumption that $\p(X)/I$ adds no elements of $[\alpha]^{<\mu}$.
\end{proof}

It follows that if $\mu$ is regular, $\kappa = \mu^+$ and $I$ is a $\kappa$-complete ideal on $\kappa$ such that $\p(\kappa)/I$ is forcing-equivalent to $\col(\mu,\kappa)$, then $\kappa^{<\mu} = \kappa$, and therefore $I$ is $\kappa$-dense and thus $\kappa^+$-saturated.  By Solovay \cite{solovay},  whenever $G \subseteq \p(\kappa)/I$ is generic, $\Ult(V, G)$ is isomorphic to a transitive class $M \subseteq V[G]$ that is closed under $\mu$-sequences from $V[G]$.  Furthermore, $\p(\kappa)/I$ is a complete Boolean algebra by Tarski \cite{tarski}.

\begin{prop}
    If $\mu>\omega$ is regular, $\kappa = \mu^+$ and $I$ is a $\kappa$-complete ideal on $\kappa$ such that $\p(\kappa)/I$ is forcing-equivalent to $\col(\mu,\kappa)$, then $\diamondsuit_\mu(S)$ holds for all stationary $S \subseteq \mu$.
\end{prop}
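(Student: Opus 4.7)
The plan is to use the generic surjection $f\colon\mu\to\kappa$ adjoined by $\col(\mu,\kappa)\cong\p(\kappa)/I$ to define a candidate $\diamondsuit_\mu(S)$-sequence in $V[G]$, observe via $\mu$-closure that the sequence already lies in $V$, and then verify the guessing property by a single density argument.

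From Proposition~\ref{gch} applied with $\alpha=\mu$ we get $\mu^{<\mu}<\kappa$, hence $\mu^{<\mu}=\mu$; this yields $|\p(\alpha)^V|\leq\mu$ for each $\alpha<\mu$ and $\kappa^{<\mu}=\kappa$. Combined with $2^\mu=\kappa$ (which holds in the paper's setting by the GCH established via Proposition~\ref{gch} and the Main Theorem), I would fix in $V$ enumerations $\p(\mu)^V=\la X_\xi:\xi<\kappa\ra$ and, for each $\alpha<\mu$, $\p(\alpha)^V=\la Y^\alpha_\eta:\eta<\mu\ra$. Let $G$ be $V$-generic, identified through the given isomorphism with a $\col(\mu,\kappa)$-generic function $f\colon\mu\to\kappa$. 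Define $A_\alpha=X_{f(\alpha)}\cap\alpha$ in $V[G]$ for $\alpha<\mu$; each $A_\alpha$ is coded by an ordinal ${<}\mu$ via $\la Y^\alpha_\eta\ra$, so $\la A_\alpha:\alpha<\mu\ra$ is a $\mu$-sequence of ordinals ${<}\mu$ in $V[G]$. By the $\mu$-closure of $\col(\mu,\kappa)$, this sequence already lies in $V$.

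To verify the diamond property, fix $X=X_\eta\in\p(\mu)^V$ and a club $C\subseteq\mu$ in $V$; the $V$-set $T=\{\alpha\in S:X\cap\alpha=A_\alpha\}$ is what we must show meets $C$. Given any $p\in\col(\mu,\kappa)$, the stationarity of $S\cap C$ together with $|\dom p|<\mu$ yields some $\alpha\in(S\cap C)\setminus\dom p$; then $p\cup\{(\alpha,\eta)\}$ is a condition below $p$ forcing $f(\alpha)=\eta$, hence $A_\alpha=X\cap\alpha$ and $\alpha\in T\cap C$. By density $T\cap C\neq\emptyset$ in $V[G]$, and since $T\cap C$ is a $V$-subset of $\mu$, this nonemptiness transfers to $V$. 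As $C$ was arbitrary, $T$ is stationary in $V$.

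The main delicate ingredient is the appeal to $2^\mu=\kappa$, needed to align the enumeration of $\p(\mu)^V$ with the range of $f$; with this in hand, the remainder is an elementary density argument relying only on the $\mu$-closure of the quotient forcing and on the absoluteness of $V$-sets between $V$ and $V[G]$.
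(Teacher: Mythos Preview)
Your argument contains a fatal error at the step ``By the $\mu$-closure of $\col(\mu,\kappa)$, this sequence already lies in $V$.'' The forcing $\col(\mu,\kappa)$ is $\mu$-closed in the sense that descending sequences of length ${<}\mu$ have lower bounds, which implies only that no new ${<}\mu$-sequences of ordinals are added. It does \emph{not} imply that $\mu$-length sequences remain in $V$; indeed, the generic surjection $f\colon\mu\to\kappa$ itself is a new $\mu$-sequence. Your sequence $\la A_\alpha:\alpha<\mu\ra$ has length $\mu$ and genuinely depends on the generic (since $A_\alpha=X_{f(\alpha)}\cap\alpha$), so there is no reason for it to lie in $V$. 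Once this fails, the rest of the argument becomes incoherent: you simultaneously treat $T$ as a fixed $V$-set and yet claim to be able to force $\alpha\in T$ by choosing $f(\alpha)$, which would only be possible if $A_\alpha$ were \emph{not} already determined in $V$.

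A secondary issue is your appeal to $2^\mu=\kappa$ ``in the paper's setting by the Main Theorem.'' The proposition is stated as a general consequence of the hypotheses on $I$, not as a feature of a particular model, so invoking the Main Theorem here is circular. (One can, however, derive $2^\mu=\mu^+$ from the hypotheses using the saturation of $I$ and the generic ultrapower, so this gap is fixable.)

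The paper's proof takes a completely different route that avoids the problem above: it uses the generic ultrapower embedding $j\colon V\to M$ arising from a generic on $\p(\kappa)/I$. One defines the diamond sequence $\vec a$ in $V[g]$ (not in $V$), observes that $\vec a\in M$ by the $\mu$-closure of $M$ in $V[g]$, notes that $M\models\diamondsuit_\mu(S)$, and then transfers this down to $V$ by elementarity of $j$ together with $\crit(j)=\kappa>\mu$. The key point you are missing is that one does not construct the witnessing sequence in $V$ directly; one only establishes the \emph{existential} statement $\diamondsuit_\mu(S)$ in $V$ via reflection from $M$.
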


\begin{proof}
    Let $g: \mu \to \kappa$ be a generic surjection corresponding to a $\col(\mu,\kappa)$-generic filter.  In $V[g]$, define a sequence $\vec a = \la a_\alpha : \alpha < \mu \ra$ by $a_\alpha = \{ \beta < \alpha : g(\alpha+\beta) = 0\}$.  It is a standard exercise to show that, for all stationary $S \subseteq \mu$ in $V$, $\la a_\alpha: \alpha \in S \ra$ is a $\diamondsuit_\mu(S)$-sequence in $V[g]$.
    
    Let $j: V \to M$ be the ultrapower embedding by the associated generic filter on $\p(\kappa)/I$.  By the closure of $M$, $\vec a \in M$, and for each stationary $S\subseteq \mu$ from $V$, $M$ satisfies that $\vec a \rest S$ is a $\diamondsuit_\mu(S)$-sequence.  Thus $M \models \diamondsuit_\mu(S)$ for each such $S$, and by elementarity and the fact that $\crit(j) > \mu$, $V \models  \diamondsuit_\mu(S)$ for all stationary $S \subseteq \mu$.
\end{proof}

\begin{prop}\label{prop:diagonal-stationary-relfection}
    Suppose $\omega<\mu<\kappa$ are regular, 
    and $I$ is a $\kappa$-complete ideal on $\kappa$ such that $\p(\kappa)/I$ is equivalent to a $\mu$-closed forcing.  Then for every sequence of stationary sets $\la S_\alpha : \alpha < \kappa \ra$ such that each $S_\alpha \subseteq \kappa \cap \cof({<}\mu)$, there is $\delta<\kappa$ such that for all $\alpha<\delta$, $S_\alpha \cap \delta$ is stationary in $\delta$.
\end{prop}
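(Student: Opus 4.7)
The plan is a standard generic ultrapower reflection argument: force with $\bbP=\p(\kappa)/I$, let $G$ be generic, and let $j\colon V\to M\subseteq V[G]$ be the associated generic ultrapower embedding. Since $I$ is $\kappa$-complete on $\kappa$ and the quotient is a non-trivial $\mu$-closed forcing (so $I$ is non-principal), standard calculations give that $M$ is well-founded (and so identified with its transitive collapse), $\crit(j)=\kappa$, and $\kappa<j(\kappa)$. I will show that, inside $M$, the ordinal $\kappa$ itself witnesses the conclusion for $j(\vec S)$, and then pull the resulting first-order statement back to $V$ by elementarity.

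Because $j\rest\kappa=\id$, for each $\alpha<\kappa$ we have $j(\vec S)(\alpha)=j(S_\alpha)$ and $j(S_\alpha)\cap\kappa=S_\alpha$. So the claim ``$\delta:=\kappa$ witnesses the reflection for $j(\vec S)$ in $M$'' reduces to: for every $\alpha<\kappa$, $S_\alpha$ is stationary in $\kappa$ from $M$'s point of view. Since $M\subseteq V[G]$, every club of $\kappa$ lying in $M$ is also a club in $V[G]$, so it suffices to check that each $S_\alpha$ remains stationary in $V[G]$.

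This last step is the only non-formal part of the argument, and it is where I would invoke the folklore principle that a $\mu$-closed forcing with $\mu>\omega$ regular preserves stationary subsets of $\kappa\cap\cof({<}\mu)$. The proof sketch: given a $\bbP$-name $\dot C$ for a club and a condition $p$, one builds in $V$ a descending sequence of conditions deciding a cofinal chain of ordinals of $\dot C$, using $\mu$-closure at limit stages of cofinality $<\mu$ to obtain lower bounds; by the stationarity of $S$ in $V$ together with a pressing-down argument, the construction can be steered so that the supremum of the decided values lands in $S$ at a point of $V$-cofinality $<\mu$. The hypothesis $\mu>\omega$ is essential, both so that $\omega$-length fusions have lower bounds and so that $\cof({<}\mu)$ is nonempty.

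Granted this, $M$ sees that $\delta=\kappa<j(\kappa)$ satisfies ``for all $\alpha<\delta$, $j(\vec S)(\alpha)\cap\delta$ is stationary in $\delta$.'' Applying the elementarity of $j\colon V\to M$ to this existential statement with parameter $\vec S$ produces the desired $\delta<\kappa$ in $V$. The main obstacle is the preservation-of-stationarity lemma; the remainder is a routine generic-ultrapower argument, analogous to how a saturated ideal on $\kappa$ forces $\kappa$ to inherit reflection properties enjoyed by its image under the generic embedding.
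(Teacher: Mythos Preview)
Your approach is the same as the paper's: take the generic ultrapower embedding $j:V\to M$, observe that each $S_\alpha=j(S_\alpha)\cap\kappa$ remains stationary in $V[G]$ and hence in $M$, so $\kappa$ witnesses reflection for $j(\vec S)$ in $M$, and pull back by elementarity. The paper also cites \cite[Proposition 2.7]{foremanhandbook} for precipitousness (well-foundedness of $M$), which you treat as standard.

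There is one point you gloss over that the paper handles explicitly. The ``folklore principle'' that $\mu$-closed forcing preserves stationary subsets of $\kappa\cap\cof({<}\mu)$ is \emph{not} a theorem of $\ZFC$ in the generality you state; the usual elementary-submodel argument (as in \cite{Magidor82}) requires $\alpha^{<\mu}<\kappa$ for all $\alpha<\kappa$, so that one can build a continuous chain of ${<}\mu$-closed elementary submodels $M_i\prec H_\theta$ of size ${<}\kappa$ with $M_i\cap\kappa\in\kappa$. Without this, your sketch (``steer the construction so that the supremum lands in $S$'') has no way to keep the decided ordinals bounded below the target $\delta$: the lower bounds at limit stages need to live inside the model in order to bound the next decided point of $\dot C$. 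The paper fills this gap by invoking Proposition~\ref{gch}: since $\p(\kappa)/I$ is equivalent to a $\mu$-closed forcing, it adds no elements of $[\alpha]^{<\mu}$ for $\alpha<\kappa$, and hence $\alpha^{<\mu}<\kappa$. You should make this step explicit; once you do, your argument is complete and matches the paper's.
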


\begin{proof}
        By Proposition~\ref{gch}, $\alpha^{<\mu} < \kappa$ for all $\alpha<\kappa$, and thus by a well-known argument using sufficiently closed elementary submodels, $\mu$-closed forcing preserves every stationary $S \subseteq \kappa \cap \cof({<}\mu)$ (see for example, \cite{Magidor82}).
        It follows from \cite[Proposition 2.7]{foremanhandbook} that if $\p(X)/I$  is equivalent to a countably closed forcing, then it is precipitous (i.e.\ generic ultrapowers are always well-founded).  
        
        Let $G \subseteq \p(\kappa)/I$ be generic, and let $j: V \to M$ be the ultrapower embedding, where $M$ is transitive.  Let $\vec S = \la S_\alpha : \alpha < \kappa \ra$ be as hypothesized, and let $\vec T = j(\vec S)$.  Then for every $\alpha<\kappa$, $j(S_\alpha) \cap \kappa = S_\alpha$ is stationary in $V[G]$, and so $M \models (\forall \alpha<\kappa) \vec T(\alpha) \cap \kappa$ is stationary in $\kappa$.  The conclusion follows by elementarity.
\end{proof}
The same argument shows that the reflection principle $\mathrm{DSR}({<}\mu,\kappa\cap\cof({<}\mu))$, which was introduced by Fuchs \cite{fuchs}, holds. By \cite[Theorem 3.4]{fuchs}, the conclusion of the above proposition implies the failure of the principle $\square(\kappa,{<}\mu)$.\footnote{In Fuchs' definition, we require the existence of an ordinal $\gamma$, such that $S_\alpha\cap\gamma$ is stationary for club many $\alpha < \gamma$. In our case, we can take this club to be $\gamma$ itself and thus we obtain $\mathrm{DSR}({<}\kappa,\kappa\cap\cof({<}\mu))$.}

Let $\lambda$ be a cardinal. Recall that a partial $C$-sequence on $S\subseteq \lambda$ is a sequence of the form $\langle C_\alpha :\alpha \in S\rangle$ where $C_\alpha$ is closed and cofinal at $\alpha$. A partial $C$-sequence is \emph{coherent} if for every $\alpha \in S$, and $\beta \in \acc C_\alpha$, we have that $\beta \in S$ and $C_\beta = C_\alpha \cap \beta$.

A coherent partial $C$-sequence with domain $S$ is \emph{trivial} if there is a club $D$ such that $\forall \gamma \in \acc D\cap S$, $D \cap \gamma = C_\gamma$.\footnote{In this case, if $S$ is stationary then if follows that $\acc D \subseteq S$, using coherency.}

Let $S^{\lambda}_{\leq\kappa}=\{\alpha < \lambda \mid \cf \alpha \leq \kappa\}$.
\begin{lemma}\label{lemma:lifting-partial-squares}
    Let $\kappa$ be an uncountable regular cardinal and let $\lambda >\kappa^{+}$ be regular. Let us assume that there is a saturated ideal $I$ on $\kappa^{+}$ such that $\mathcal{P}(\kappa^{+}) / I$ contains a $\sigma$-closed dense set.

    Then, every coherent partial $C$-sequence $\langle C_\alpha \mid \alpha \in S\rangle$, $S^{\lambda}_{\leq \kappa} \subseteq S$, can be (uniquely) extended to a coherent partial $C$-sequence with domain containing $S^\lambda_{\leq \kappa^{+}}$.
\end{lemma}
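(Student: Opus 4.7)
The plan is to prove uniqueness first (which is easy), reduce coherence of the new piece to existence of a single thread at each $\alpha\in S^\lambda_{\kappa^{+}}$, and then construct such a thread using the generic ultrapower by $I$.

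First I would dispose of uniqueness. If $C_\alpha$ and $C_\alpha'$ both extend $\vec C$ coherently at some $\alpha$ with $\cf(\alpha)=\kappa^{+}$, then $C_\alpha\cap C_\alpha'$ is closed cofinal in $\alpha$, and each of its accumulation points $\gamma$ lies in $\acc(C_\alpha)\cap\acc(C_\alpha')\subseteq S$. Coherence then forces $C_\alpha\cap\gamma=C_\gamma=C_\alpha'\cap\gamma$, so $C_\alpha$ and $C_\alpha'$ agree on a cofinal subset of $\alpha$ and must coincide.

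Next, I would note that the coherence condition imposes no constraint between different new points. For any $\alpha$ with $\cf(\alpha)=\kappa^{+}$ and any closed cofinal $C_\alpha\subseteq\alpha$ of order type $\kappa^{+}$, the accumulation points of $C_\alpha$ are indexed by limit ordinals below $\kappa^{+}$, each of cofinality at most $\kappa$; hence $\acc(C_\alpha)\subseteq S^{\lambda}_{\leq\kappa}\subseteq S$. So coherence for the extended sequence only refers to the original $\vec C$, and it suffices to produce one thread at each $\alpha\in S^{\lambda}_{\kappa^{+}}$ separately.

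For existence, fix such an $\alpha$ and force with the $\sigma$-closed dense subset $\mathbb{Q}\subseteq\mathcal{P}(\kappa^{+})/I$. Saturation of $I$ implies precipitousness, so the generic ultrapower $j\colon V\to M\subseteq V[G]$ is well-founded; take $M$ transitive, so $\crit(j)=\kappa^{+}$ and $M$ is closed under $\kappa$-sequences in $V[G]$. Since $j(\kappa^{+})$ is the least cardinal above $\kappa$ in $M$, $(\kappa^{+})^V$ is collapsed to have $M$-cardinality $\kappa$, so $\alpha^*:=\sup j[\alpha]<j(\alpha)$ satisfies $\cf^M(\alpha^*)\leq\kappa$. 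Hence $\alpha^*\in j(S)$ (using $j(S)\supseteq j(\lambda)^M_{\leq\kappa}$), and by elementarity $C^*:=j(\vec C)(\alpha^*)$ is a club in $\alpha^*$ coherent with the rest of $j(\vec C)$ inside $M$. Pulling $C^*$ back through $j$, using continuity of $j$ at ordinals of $V$-cofinality $\leq\kappa$, produces a candidate $C_\alpha$. I would then verify that $C_\alpha$ is closed cofinal in $\alpha$ and pulls back the coherence of $j(\vec C)\restriction\alpha^*$. Finally, applying uniqueness inside the product of two independent generics forces $C_\alpha$ to be the same regardless of $G$, and hence to lie in $V$.

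The main obstacle is the pullback step. Because $C^*$ has $M$-cardinality at most $\kappa$ whereas $\cf^V(\alpha)=\kappa^{+}$, the naive preimage $\{\gamma<\alpha:j(\gamma)\in C^*\}$ has $V[G]$-size at most $\kappa$ and need not be $V$-cofinal in $\alpha$ in a useful way. The resolution exploits the fact that $j$ is continuous at every $\gamma<\alpha$ with $\cf^V(\gamma)\leq\kappa$, so $\sup j[\gamma]=j(\gamma)$; one recovers cofinality by including all such $\gamma$ for which $j(\gamma)\in\acc(C^*)$, using $\kappa$-closure of $M$ in $V[G]$ to ensure that the accumulation structure of $C^*$ is reflected densely through $j$. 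The verification that the set so produced is genuinely a thread, and the descent from $V[G]$ to $V$, are the technical heart of the argument.
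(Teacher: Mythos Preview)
Your overall strategy is exactly the paper's: use the generic ultrapower $j:V\to M$, set $C_\alpha$ to be the pullback of $C^*=j(\vec C)_{\sup j[\alpha]}$, verify it is a thread, prove uniqueness, and apply mutual genericity to descend to $V$. There is, however, a circularity between your uniqueness and your coherence verification.

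Your uniqueness argument assumes both candidates extend $\vec C$ coherently, so that $\acc(C_\alpha)\subseteq S$; your second paragraph tries to guarantee this by restricting to $C_\alpha$ of order type $\kappa^+$. But the pullback constructed in $V[G]$ need not have order type $\kappa^+$: since $\kappa^+$ is collapsed in $V[G]$, the closure of $j^{-1}[C^*]$ can (and typically will) accumulate at ordinals $\gamma<\alpha$ of $V$-cofinality $\kappa^+$. At such $\gamma$, $j$ is discontinuous and your coherence check via ``pulling back the coherence of $j(\vec C)$'' does not apply. You therefore cannot verify that the constructed $C_\alpha$ is a coherent extension in the sense your uniqueness requires, and the descent to $V$ is blocked --- and with it the order-type claim you need.

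The paper breaks this circle by weakening the uniqueness hypothesis: it shows that any club $E$ at $\delta$ (in any $\sigma$-closed extension of $V[H]$) satisfying $E\cap\gamma=C_\gamma$ merely for $\gamma\in\acc E\cap (S^\lambda_{\leq\kappa})^V$ must equal $C_\delta$; the proof intersects $\acc E\cap\acc C_\delta$ with the still-stationary set $S^\delta_\omega$ to find common accumulation points where coherence is known. This weak coherence \emph{is} delivered by the continuity argument, so uniqueness applies and then yields both the descent to $V$ and, a posteriori, coherence at accumulation points of cofinality $\kappa^+$ (with the independently constructed $C_\gamma$'s). As a minor point, your claim that $|C^*|^M\leq\kappa$ is false --- only $\cf^M(\alpha^*)\leq\kappa$ holds --- but the corrected argument does not need it: cofinality of $j^{-1}[C^*]$ comes from the $\sigma$-club intersection $j[\alpha]\cap C^*$, not from a cardinality bound.
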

\begin{proof}
    Let $\mathcal{C} = \langle C_\alpha \mid \alpha \in S\rangle$, $S \supseteq S^{\lambda}_{\leq\kappa}$, be a coherent partial $C$-sequence.
    
    Let $H \subseteq \p(\kappa^{+})/I$ be $V$-generic. In $V[H]$, there is an elementary embedding $j\colon V \to M$, with $\crit j = \kappa^{+}$ and $M^\kappa \cap V[H] \subseteq M$ (by the saturation of $I$). 

    Fix $\delta <\lambda$, $\cf \delta = \kappa^{+}$. Let us look at $j(\mathcal{C})$ and let $\tilde\delta = \sup j[\delta]$. By the closure of $M$, $\cf^M \tilde\delta = \kappa = j(\kappa)$ and thus $D = j(\mathcal{C})_{\tilde\delta}$ is a club. Let $C_{\delta}$ be the closure of $j^{-1}[D]:=\{\alpha : j(\alpha) \in D\}$. We need to show that $\sup C_\delta = \delta$ and that the extended partial $C$-sequence is coherent, namely for every $\gamma \in \acc C_{\delta}$, $C_\delta \cap \gamma = C_\gamma$.  

    First, since the embedding $j$ is $\sigma$-continuous, $j[\delta]$ and $D$ are both $\sigma$-clubs at $\tilde\delta$ and thus must intersect cofinally. 

    Next, for $\gamma \in \acc C_\delta$ such that $\cf^V\gamma \neq \kappa^+$, $\sup j[\gamma] = j(\gamma)$ and thus $j(\gamma)$ is an accumulation point of $D$.\footnote{If $\cf \gamma \leq \kappa = \crit j$, continuity is clear. For $\cf \gamma \geq \kappa^{++}$, this follows from the assumption that the ideal is on $\kappa^{+}$.} By the coherency of $j(\mathcal{C})$, $D\cap j(\gamma) = j(\mathcal{C})_{j(\gamma)} = j(C_\gamma)$. Therefore, $j^{-1}[D]\cap \gamma = C_\gamma$ which is already closed.\footnote{Note that there is always an accumulation point $\tilde\gamma$ of $C_\delta$ of countable cofinality above $\gamma$, so $\tilde{\gamma} \in S$. In particular, $\gamma \in \acc C_{\tilde{\gamma}}$ and thus in $S$.}
    
    To show coherence for $\gamma$ of cofinality $\kappa^{+}$, let us first show that $C_\delta$ is unique in the following strong sense. Let $E$ be a club at $\delta$ in any extension $W$ of $V[H]$ by a $\sigma$-closed forcing, such that for every $\gamma \in \acc E \cap (S^\lambda_{\leq \kappa})^V$, $E \cap \gamma = C_\gamma$. Then $E = C_\delta$. 
    
    Indeed, in $W$, $\cf \delta >\omega$ and $S^\delta_\omega$ is still stationary at $\delta$. 
    So, $\acc E \cap \acc C_\delta \cap S^\delta_\omega$ is cofinal at $\delta$. For every $\gamma \in \acc E \cap \acc C_\delta \cap S^\delta_\omega$, $E \cap \gamma = C_\delta \cap \gamma$. 

    Now, applying the uniqueness for $W = V[H]$, and assuming that $\gamma$ is of $V$-cofinality $\kappa^{+}$ and $\gamma \in \acc C_\delta$. Then as $C_\delta \cap \gamma$ satisfies the requirements with respect to $\gamma$, it must be equal to $C_\gamma$. 

    Finally, let $H'$ be $V[H]$-generic filter for $\p(\kappa^{+})/I$. So, working in $V[H']$, we obtain a club $C'_\delta$ threading $\mathcal{C}\restriction \delta$. Applying uniqueness for $W= V[H][H']$, $C_\delta' = C_\delta$. Therefore, $C_\delta \in V[H] \cap V[H'] = V$. By uniqueness, the map $\delta \mapsto C_\delta$ for $\delta \in S^\lambda_{\kappa^{+}}$ belongs to $V$. 
\end{proof}
\begin{theorem}
    Assume that for every $n < \omega$ there is an ideal $I_n$ such that $\p(\omega_{n+1})/I_n\cong \mathcal{B}(\col(\omega_n,\omega_{n+1}))$. Then, for every $m\geq 2$, every partial coherent $C$-sequence $\mathcal{C} = \langle C_\alpha \mid \alpha \in S\rangle$ such that $S \supseteq S^{\omega_{m}}_{\leq\omega_1}$ is trivial. 
\end{theorem}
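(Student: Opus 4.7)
The plan is to apply Lemma~\ref{lemma:lifting-partial-squares} iteratively to stretch the domain of $\mathcal{C}$ through all cofinalities below $\omega_m$, and then to extract a thread $D$ at $\omega_m$ itself from a single generic ultrapower via $I_{m-1}$, proving $D \in V$ by a uniqueness argument in a two-generics product, patterned on the one already inside Lemma~\ref{lemma:lifting-partial-squares}.

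For each $n$ with $1 \leq n \leq m-2$, the ideal $I_n$ on $\omega_{n+1}$ satisfies the hypotheses of Lemma~\ref{lemma:lifting-partial-squares}: $\p(\omega_{n+1})/I_n \cong \mathcal{B}(\col(\omega_n,\omega_{n+1}))$ is saturated, and $\col(\omega_n,\omega_{n+1})$ is $\omega_n$-closed and hence, as $n\ge 1$, $\sigma$-closed. Successive applications of the lemma with $\lambda = \omega_m$ and $\kappa$ taking the values $\omega_1,\omega_2,\ldots,\omega_{m-2}$ extend $\mathcal{C}$ to a uniquely determined coherent partial $C$-sequence $\widetilde{\mathcal{C}}$ whose domain $\widetilde S$ contains $S^{\omega_m}_{\leq\omega_{m-1}} = \omega_m\cap\cof({<}\omega_m)$. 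For $m=2$ this step is vacuous.

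Now force with $\p(\omega_m)/I_{m-1}$, equivalent to $\col(\omega_{m-1},\omega_m)$, to obtain a generic $G$ and the ultrapower embedding $j\colon V\to M$ with $\crit(j)=\omega_m$ and $M$ closed under $\omega_{m-1}$-sequences of $V[G]$. Since the forcing collapses $\omega_m$ to $\omega_{m-1}$, a cofinal sequence in $\omega_m$ of length $\omega_{m-1}$ exists in $V[G]$ and hence in $M$, so $\cf^M(\omega_m)\le\omega_{m-1}=j(\omega_{m-1})$. By elementarity, $j(\widetilde{\mathcal{C}})$ is a coherent partial $C$-sequence on $j(\widetilde S)\supseteq S^{j(\omega_m)}_{\leq\omega_{m-1}}$, which therefore contains $\omega_m$. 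Setting
\[
D := j(\widetilde{\mathcal{C}})_{\omega_m},
\]
we get a club in $\omega_m$. For $\gamma\in\acc D$, coherency gives $j(\widetilde{\mathcal{C}})_\gamma = D\cap\gamma$; since $\gamma<\omega_m=\crit(j)$, this reads $\widetilde{\mathcal{C}}_\gamma = D\cap\gamma$, so $D$ will witness triviality of $\widetilde{\mathcal{C}}$ (and hence of $\mathcal{C}$) as soon as we know $D\in V$.

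The hard part is precisely this last point. I handle it with a two-generics argument. Forcing over $V$ with $(\p(\omega_m)/I_{m-1})^2$, I obtain independent generics $G, G'$ and corresponding clubs $D, D'$, each threading $\widetilde{\mathcal{C}}$. The product forcing is $\omega_{m-1}$-closed, and since $m\ge 2$ we have $\omega<\omega_{m-1}$, so by the same preservation principle invoked in Proposition~\ref{prop:diagonal-stationary-relfection}, the set $S^{\omega_m}_\omega$ from $V$ remains stationary in $\omega_m$ in $V[G,G']$. Then $\acc D\cap\acc D'\cap S^{\omega_m}_\omega$ is stationary and, in particular, cofinal in $\omega_m$; on every such $\gamma$ both clubs satisfy $D\cap\gamma=\widetilde{\mathcal{C}}_\gamma=D'\cap\gamma$, forcing $D=D'$. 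Hence $D\in V[G]\cap V[G']=V$, as required. The only delicate step is this preservation of $S^{\omega_m}_\omega$ through the product; beyond it, the argument is a direct adaptation of the template already established in Lemma~\ref{lemma:lifting-partial-squares}.
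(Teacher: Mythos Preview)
Your argument is correct. Both proofs begin identically, iterating Lemma~\ref{lemma:lifting-partial-squares} to extend $\mathcal C$ to a coherent sequence $\widetilde{\mathcal C}$ defined on all limit ordinals below $\omega_m$. They diverge only at the final step. The paper observes that if $\widetilde{\mathcal C}$ were non-trivial it would be a $\square(\omega_m)$-sequence, and then invokes the failure of $\square(\omega_m)$, which follows from Proposition~\ref{prop:diagonal-stationary-relfection} together with Fuchs' theorem that diagonal stationary reflection kills $\square(\kappa,{<}\mu)$. You instead construct the thread directly from the generic ultrapower via $I_{m-1}$, re-running the template of Lemma~\ref{lemma:lifting-partial-squares} one more time at the top level and proving membership in $V$ by the two-generics uniqueness argument. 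Your route is more self-contained (no appeal to an external reflection-implies-no-square result), while the paper's is shorter and isolates the relevant abstract principle. One minor simplification: you do not actually need stationarity of $S^{\omega_m}_\omega$ in $V[G,G']$; since $\cf^{V[G,G']}(\omega_m)=\omega_{m-1}>\omega$, the clubs $\acc D$ and $\acc D'$ already intersect cofinally, and every $\gamma$ in that intersection lies in $\widetilde S$ automatically by coherency of $j(\widetilde{\mathcal C})$.
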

\begin{proof}
    By Lemma \ref{lemma:lifting-partial-squares}, any such partial $C$-sequence can be extended uniquely to a coherent $C$-sequence with full domain. If the obtained sequence is non-trivial, then it is a $\square(\omega_{m})$-sequence, and in particular it is incompatible with the existence of a $\sigma$-closed ideal on $\omega_{m}$ by Proposition \ref{prop:diagonal-stationary-relfection}.
\end{proof}

Let us remark that given a model with a dense ideal on $\kappa^{+}$, one can add (using forcing) a square sequence $\square_{\kappa^{+}}$ by initial segments. This forcing does not add any subset to $\kappa^{+}$, so the ideal remains dense in the generic extension. 

\subsection{Boolean transfer}\label{subsec: boolean-transfer}

We say that an ideal on $X$ is \emph{uniform} if every $Y \subseteq X$ of size $<|X|$ is in the ideal.  A filter is said to be uniform when its dual ideal is uniform.

\begin{prop}
\label{uf_trick}
    If $\kappa\leq \lambda$ are infinite cardinals and $I$ is an ideal on $\kappa$, then there exists a uniform ideal $J$ on $\lambda$ such that $\p(\kappa)/I \cong \p(\lambda)/J$.  If $I$ is $\delta$-complete and there exists a $\delta$-complete uniform ultrafilter on $\lambda$, then $J$ can be taken to be $\delta$-complete.
\end{prop}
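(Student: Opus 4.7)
The strategy is to ``inflate'' $I$ along a partition of $\lambda$ into $\kappa$-many blocks, using an ultrafilter on each block to decide when a subset of $\lambda$ counts as ``large'' at a given coordinate. The role of the ultrafilters is crucial: a naive pullback such as $\{X \subseteq \lambda : f[X] \in I\}$ along a surjection $f : \lambda \to \kappa$ gives an ideal, but the quotient Boolean algebra will generally be strictly larger than $\p(\kappa)/I$ because the same equivalence class can contain subsets $X$ with $f[X]$ far from a pullback of an $I$-positive set.

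Concretely, I will proceed as follows. Using the identity $\lambda\cdot\kappa = \lambda$, fix a partition $\lambda = \bigsqcup_{\alpha<\kappa} B_\alpha$ with $|B_\alpha| = \lambda$ for every $\alpha<\kappa$. For each $\alpha$, fix a uniform ultrafilter $U_\alpha$ on $B_\alpha$ (which exists since $|B_\alpha| = \lambda$ is infinite). Define
\[
    J = \{ X \subseteq \lambda : \{ \alpha<\kappa : X \cap B_\alpha \in U_\alpha \} \in I \}.
\]
I will check that $J$ is an ideal by routine computation, and that it is uniform: any $X$ with $|X|<\lambda$ satisfies $|X \cap B_\alpha|<\lambda$ for all $\alpha$, so by uniformity of each $U_\alpha$ the set $\{\alpha : X \cap B_\alpha \in U_\alpha\}$ is empty, hence in $I$.

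For the isomorphism, define $\phi : \p(\lambda)/J \to \p(\kappa)/I$ by
\[
    \phi([X]_J) = [\{\alpha<\kappa : X\cap B_\alpha \in U_\alpha\}]_I.
\]
Since each $U_\alpha$ is an ultrafilter, $\phi$ commutes with complements, intersections, and unions---this is where we use that the $U_\alpha$ are ultrafilters rather than merely filters, and is the key point that would fail with a pullback along an arbitrary $f : \lambda \to \kappa$. Well-definedness and injectivity then reduce to the identity $(X \triangle X')\cap B_\alpha = (X\cap B_\alpha)\triangle (X'\cap B_\alpha)$ and the fact that a symmetric difference is in an ultrafilter iff exactly one of the two sets is. For surjectivity, any $Y \subseteq \kappa$ is realized by $X = \bigcup_{\alpha \in Y} B_\alpha$, whose intersection with $B_\alpha$ is either all of $B_\alpha$ or empty.

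For the second assertion, assume $I$ is $\delta$-complete and fix a $\delta$-complete uniform ultrafilter $U$ on $\lambda$; transfer $U$ via bijections $B_\alpha \cong \lambda$ to obtain $\delta$-complete uniform ultrafilters $U_\alpha$ on each block. To verify that $J$ is $\delta$-complete, take $\la X_\beta : \beta<\gamma\ra \subseteq J$ with $\gamma<\delta$ and set $Y_\beta = \{\alpha : X_\beta \cap B_\alpha \in U_\alpha\} \in I$. By $\delta$-completeness of $I$, $Y = \bigcup_\beta Y_\beta \in I$; and the dual ideal of each $U_\alpha$ is $\delta$-complete, so whenever $(\bigcup_\beta X_\beta) \cap B_\alpha \in U_\alpha$ there must be some $\beta$ with $X_\beta \cap B_\alpha \in U_\alpha$, i.e., $\alpha \in Y$. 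Hence $\bigcup_\beta X_\beta \in J$. No step involves a genuine obstacle; the only subtle point is recognizing that ultrafilters---not just filters---are needed on the blocks in order to make $\phi$ respect complementation.
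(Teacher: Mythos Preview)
Your proof is correct and follows essentially the same approach as the paper: partition $\lambda$ into $\kappa$-many blocks of size $\lambda$, pick a uniform ultrafilter on each block, and use these to reduce subsets of $\lambda$ to subsets of $\kappa$. The only cosmetic difference is that the paper first establishes the isomorphism $\p(\kappa)\cong\p(\lambda)/J_0$ for the base ideal $J_0$ (your $J$ in the special case $I=\{\emptyset\}$) and then pushes $I$ forward, whereas you define $J$ directly and write the isomorphism in the other direction; unwinding the definitions shows the two $J$'s coincide.
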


\begin{proof}
    Let $\la X_\alpha : \alpha < \kappa \ra$ be a partition of $\lambda$ into $\kappa$-many pieces of size $\lambda$.  For each $\alpha<\kappa$, let $U_\alpha$ be a uniform ultrafilter on $X_\alpha$.  Let $J_0$ be the collection of subsets of $\lambda$ defined by $A \in J_0$ if and only if for all $\alpha<\kappa$, $A \cap X_\alpha \notin U_\alpha$.  Since for each $\alpha<\kappa$, $\p(X_\alpha) \setminus U_\alpha$ is an ideal on $X_\alpha$, $J_0$ is an ideal.
    
    Let $\pi : \lambda \to \kappa$ be defined by $\pi(\beta) = \alpha$, where $\alpha$ is the unique ordinal such that $\beta \in X_\alpha$.  Let $\varphi : \p(\kappa) \to \p(\lambda) / J_0$ be defined by $\varphi(B) = [\pi^{-1}[B]]_{J_0}$.
    It is clear that $\varphi$ is an injective Boolean homomorphism.  To check that it is surjective, let $A \subseteq \lambda$, and let $B = \{ \alpha < \kappa : A \cap X_\alpha \in U_\alpha \}$.  It suffices to show that $A$ is mod-$J_0$-equivalent to $C = \bigcup_{\alpha \in B} X_\alpha$.  But $A \setminus C = \bigcup_{\alpha \notin B} A \cap X_\alpha \in J_0$, and $C \setminus A = \bigcup_{\alpha \in B} X_\alpha \setminus A_\alpha \in J_0$.

    Now let $I$ be any ideal on $\kappa$.  Then $\varphi[I]$ is an ideal on the Boolean algebra $\p(\lambda)/J_0$, and we define an ideal $J$ on $\lambda$ by $A \in J$ iff $[A]_{J_0} \in \varphi[I]$.  It is clear that $\p(\kappa)/I \cong \p(\lambda)/J$.

    Finally, suppose that $\delta$ is a regular cardinal such that $I$ is $\delta$-complete and that each $U_\alpha$ is $\delta$-complete.  Then $J_0$ is $\delta$-complete.  Suppose $\la A_\alpha : \alpha < \eta \ra \subseteq J$, where $\eta<\delta$.  For each $\alpha$, let $B_\alpha \subseteq \kappa$ be such that $\left[A_\alpha\right]_{J_0} = \left[\pi^{-1}[B_\alpha]\right]_{J_0}$.  By the $\delta$-completeness of $J_0$, 
    $\left[\bigcup_{\alpha<\eta} A_\alpha\right]_{J_0} = \left[\bigcup_{\alpha<\eta} \pi^{-1}[B_\alpha]\right]_{J_0} = \left[\pi^{-1}[\bigcup_{\alpha<\eta} B_\alpha]\right]_{J_0}$.  
    Since $I$ is $\delta$-complete, $\bigcup_{\alpha<\eta} B_\alpha \in I$, and thus $\bigcup_{\alpha<\eta} A_\alpha \in J$.
    \end{proof}

The argument for the next result is extracted from \cite{foreman98}.  The claim about transfer via dense ideals is attributed there to Woodin, along with the remark that it has a relatively simple proof, and the bulk of the paper is dedicated to the more complicated proof of Foreman's general result about very strongly layered ideals.  In the argument below, we employ the idea of Theorem 4.1 of \cite{foreman98}, the ``selection hypothesis'', noticing that it allows both a further simplification (by eliminating the ``coding condition'') and a strengthened conclusion.

\begin{theorem}
\label{transfer}
    Assume $\kappa$ is regular, there is a normal ideal $J$ on $\kappa^+$ such that $\p(\kappa^+)/J \cong \mathcal B(\col(\kappa,\kappa^+))$, and $\diamondsuit_{\kappa^+}(\cof(\kappa))$ holds.  
    Then there is a $\kappa$-complete ideal $K \supseteq J$ and a function $\pi : \kappa^+ \to \kappa$ such that:
    \begin{enumerate}
        \item For all $A \subseteq \kappa$, $|A|<\kappa$ iff $\pi^{-1}[A] \in K$.
        \item $[A]_{\mathrm{bd}} \mapsto [\pi^{-1}[A]]_K$ is an isomorphism from $\p(\kappa)/\mathrm{bd}$ to $\p(\kappa^+)/K$.
        \item For $\mu\leq\kappa$, $[f]_{\mathrm{bd}} \mapsto [f \circ \pi]_K$ is a bijection from $\mu^\kappa /\mathrm{bd}$ to $\mu^{\kappa^+}/K$.
    \end{enumerate}
    Here, $\mathrm{bd}$ denotes the ideal of bounded subsets of $\kappa$.
\end{theorem}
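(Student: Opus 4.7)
The plan is to follow Foreman's \emph{selection hypothesis} strategy. Given a $\pi \colon \kappa^+ \to \kappa$ with each $\pi^{-1}\{\alpha\}$ $J$-positive (to be constructed below), define
\[
K = \{B \subseteq \kappa^+ : B \subseteq B_0 \cup \pi^{-1}[A] \text{ for some } B_0 \in J \text{ and bounded } A \subseteq \kappa\}.
\]
Since $J$ is $\kappa^+$-complete and bounded subsets of $\kappa$ are closed under unions of size ${<}\kappa$, $K$ is a $\kappa$-complete ideal extending $J$. Property (1) follows from the $J$-positivity of the fibers together with $\kappa^+$-completeness of $J$: if $A\subseteq\kappa$ is unbounded then no bounded subset of $\kappa$ plus a $J$-null set can cover $\pi^{-1}[A]$. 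The map in (2) is clearly an injective Boolean homomorphism, so the real content of (2) and (3) is the \emph{surjectivity}: every $h \colon \kappa^+ \to \mu$ with $\mu \leq \kappa$ should be $K$-equivalent to some $f \circ \pi$. Unpacking, this says that there is a bounded $A \subseteq \kappa$ such that for $\alpha \notin A$, the restriction $h \restriction \pi^{-1}\{\alpha\}$ is $J$-equivalent to a constant, whose value we then call $f(\alpha)$. This is precisely the \emph{selection property} for $\pi$, so the theorem reduces to constructing $\pi$ with this property.

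The selection property is then achieved by a $\diamondsuit$-driven construction exploiting the dense-ideal assumption. Via the isomorphism $\p(\kappa^+)/J \cong \mathcal{B}(\Col(\kappa,\kappa^+))$, the fibers $b_\alpha := [\pi^{-1}\{\alpha\}]_J$ form a maximal antichain of size $\kappa$ in a $\kappa^+$-dense algebra, while each $h \colon \kappa^+ \to \mu$ in $V$ induces another partition $\{c_\nu\}_{\nu < \mu}$. The selection property says the partition $\{b_\alpha\}$ refines $\{c_\nu\}$ except at boundedly many $\alpha$. The guiding picture is that in the generic extension by $J$, the collapse produces a surjection $g \colon \kappa \to \kappa^+$ and hence an injection $\bar\pi(\beta) = \min g^{-1}\{\beta\}$, for which $h = \bar f \circ \bar\pi$ automatically (where $\bar f(\bar\pi(\beta)) := h(\beta)$); the goal is to pull $\bar\pi$ back to $V$ modulo $J$. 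Using $2^\kappa = \kappa^+$ (Proposition~\ref{gch}), enumerate the relevant $h$'s of $V$ and, at each $\beta \in \cof(\kappa) \cap \kappa^+$, decode the $\diamondsuit$-guess $A_\beta \subseteq \beta$ into a candidate pair $(h_\beta, f_\beta)$ and a putative value $\gamma_\beta < \kappa$, then set $\pi(\beta) = \gamma_\beta$. The standard $\diamondsuit$ property makes each target $h$ correctly anticipated on a stationary set of $\beta$'s, which allows each $h$ to be ``locked in'' to the fiber structure beyond some threshold.

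The main obstacle is that the lockin threshold for each $h$ must actually be a bounded subset of $\kappa$, whereas the $\diamondsuit$-sequence naturally gives anticipation only stationarily with respect to $\mathrm{NS}_{\kappa^+}$, and the normal ideal $J$ may strictly contain the nonstationary ideal. Bridging this gap is the delicate technical core and is where the authors' simplification of Foreman's argument takes place. The intended bridge uses that $\p(\kappa^+)/J$ is $\kappa^+$-c.c.\ (so that $\diamondsuit_{\kappa^+}(\cof(\kappa))$ is preserved into $V[G]$, and hence guesses $V[G]$-objects like $\bar\pi$), together with a genericity argument based on the Boolean-algebraic structure of $\Col(\kappa,\kappa^+)$: any putative $h$ and threshold $\alpha$ for which lockin fails cofinally would correspond to a condition in $\Col(\kappa,\kappa^+)$ forcing the generic $g$ to behave incompatibly with the diamond guess on a stationary set, contradicting density. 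Combined with \L{}o\'{s} for the generic ultrapower, this promotes stationary diamond-agreement into the $J$-largeness needed for the selection property, and thence for (1)--(3).
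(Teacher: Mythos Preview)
Your proposal has a genuine gap: the ideal $K = \{B : B \subseteq B_0 \cup \pi^{-1}[A],\ B_0 \in J,\ A \in \mathrm{bd}\}$ is simply too small to make the map in (2) surjective, regardless of how cleverly $\pi$ is chosen. Once $\pi$ is fixed, split each fiber $X_\alpha = \pi^{-1}\{\alpha\}$ into two $J$-positive pieces $Y^0_\alpha, Y^1_\alpha$ (possible since $\p(\kappa^+)/J$ has no atoms) and set $B = \bigcup_{\alpha<\kappa} Y^0_\alpha$. For every $A \subseteq \kappa$, the symmetric difference $B \mathbin{\triangle} \pi^{-1}[A]$ meets $X_\alpha$ in $Y^1_\alpha$ when $\alpha \in A$ and in $Y^0_\alpha$ when $\alpha \notin A$; in either case the intersection is $J$-positive for every $\alpha$, so $B \mathbin{\triangle} \pi^{-1}[A] \notin K$. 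Equivalently, the characteristic function $h$ of $B$ is a map $\kappa^+ \to 2$ that is $J$-nonconstant on \emph{every} fiber, so the selection property you isolate fails universally. The obstacle is not the stationary-versus-$J$-large mismatch you flag; it is that the target ideal must be strictly larger than $J + \pi^{-1}[\mathrm{bd}]$ in order to collapse such $B$'s.

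The paper's argument reverses the roles you assign to $\pi$ and $\diamondsuit$. The projection $\pi$ is taken trivially from any fixed size-$\kappa$ maximal antichain $\la x_i : i<\kappa\ra$ in $\mathcal B(\col(\kappa,\kappa^+))$. The work done by $\diamondsuit_{\kappa^+}(\cof(\kappa))$ is instead to build a \emph{tower} $\la f_\alpha : \alpha<\kappa^+\ra$ of functions $f_\alpha : \kappa \to \col(\kappa,\kappa^+)$ with $f_\alpha(i) \leq x_i$, decreasing modulo $\mathrm{bd}$, such that every maximal antichain of $\col(\kappa,\kappa^+)$ is eventually refined coordinatewise by some $f_\alpha$. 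The ideal $K$ is then the pullback of the Boolean-algebra ideal $I = \{b : (\exists\alpha)(\forall^{\mathrm{bd}} i)\ f_\alpha(i) \leq \neg b\}$. For the counterexample $B$ above, the tower simply decides at each coordinate $i$ which half of $X_i$ to keep, and $K$ kills the other half; this is exactly what your $K$ cannot do. In short, the $\diamondsuit$ sequence is used to simulate a generic filter for $\col(\kappa,\kappa^+)$ inside $V$, not to guess the functions $h$ one at a time.
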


\begin{proof}
    Choose a maximal antichain $\la x_i : i < \kappa \ra$ of $\mathbb B = \mathcal B(\col(\kappa,\kappa^+))$.  
     Fix an isomorphism $\Psi : \p(\kappa^+)/J \to \mathbb B$, and let $\la X_i : i < \kappa \ra$ be a sequence of pairwise disjoint sets such that $\Psi([X_i]_J) = x_i$.
    Define $\pi : \kappa^+ \to \kappa$ by $\pi(\alpha)  = i \iff \alpha \in X_i$.
    The main ingredient in the proof is the following sequence.
    \begin{lemma}\label{lemma:lifting-functions}
    There is a sequence of functions $\vec f = \la f_\alpha : \alpha < \kappa^+ \ra$ such that:
    \begin{enumerate}
        \item For each $\alpha$, $f_\alpha : \kappa \to \col(\kappa,\kappa^+)$.
        \item For each $\alpha<\kappa^+$ and $i<\kappa$, $f_\alpha(i) \leq x_i$.
        \item For $\alpha<\beta<\kappa^+$, $f_\beta(i) \leq f_\alpha(i)$ for all but boundedly-many $i<\kappa$.
        \item\label{genericity} For all maximal antichains $\mathcal A \subseteq \col(\kappa,\kappa^+)$, there is $\alpha<\kappa^+$ such that for all $i<\kappa$, $f_\alpha(i) \leq a$ for some $a \in \mathcal A$.
    \end{enumerate}
    \end{lemma}
    Let us assume the sequence $\vec f$ is given and show how to prove the theorem.
    Given such a sequence, we then define an ideal $I$ on $\mathbb B$ by putting $b \in I$ when there is $\alpha<\kappa^+$ such that for all but boundedly many $i<\kappa$, $f_\alpha(i) \leq \neg b$.  
    
    Let us first check that $I$ is a $\kappa$-complete ideal on $\mathbb B$.  Suppose $\delta<\kappa$ and $\{ b_j : j < \delta \} \subseteq I$.  There is $\alpha<\kappa^+$ and $i_0 < \kappa$ such that whenever $i_0\leq i <\kappa$ and $j <\delta$, $f_\alpha(i) \leq \neg b_j$.  Thus for $i_0\leq i <\kappa$, $f_\alpha(i) \leq \prod_j \neg b_j = \neg \sum_j b_j$, and so $\sum_j b_j \in I$.

    Let $K = \{ A \subseteq \kappa^+ : \Psi([A]_J) \in I \}$.  
    We have $\p(\kappa^+)/K \cong \mathbb B/I$.  The $\kappa$-completeness of $K$ follows from that of both $J$ and $I$.
    To show that $\pi^{-1}[A] \in K$ iff $A$ is bounded in $\kappa$, note that for $A \subseteq \kappa$, $\Psi(\pi^{-1}[A]_J) = \sum_{i \in A} x_i$, which is in $I$ iff $A$ is bounded.
    
    To show that $[A]_{\mathrm{bd}} \mapsto [\pi^{-1}[A]]_K$ is an isomorphism, it suffices to show that this map is surjective.  By (\ref{genericity}), for any $B \subseteq \kappa^+$, there is $\alpha<\kappa^+$ such that for a cobounded set of $i<\kappa$, $f_\alpha(i) \leq \Psi^{-1}([X_i \cap B]_J)$ or $f_\alpha(i) \leq \Psi^{-1}([X_i \setminus B]_J)$.  Let $A = \{ i : f_\alpha(i) \leq \Psi([B]_J)\}$.
    Thus for a cobounded set of $i$, either $f_\alpha(i)$ is below both $\Psi([\pi^{-1}[A]]_J)$ and $\Psi([B]_J)$, or below the meet of their complements, so $[B]_K =[\pi^{-1}[A]]_K$.
    
    
    For the claim about reduced powers, first note that if $\mu\leq\kappa$, $f,g \in \mu^\kappa$, and $f \not= g$ on an unbounded set $A$, then $\pi^{-1}[A] \notin K$, and $f \circ \pi \not= g \circ \pi$ on $\pi^{-1}[A]$.  To show surjectivity, suppose $g: \kappa^+ \to \mu$.  By the $\kappa^+$-completeness of $J$, $\mathcal A = \{ [g^{-1}\{ \gamma \}]_J :  g^{-1}\{ \gamma \} \in J^+ \}$ is a maximal antichain in $\p(\kappa^+)/J$.  By (\ref{genericity}), there is $\alpha<\kappa^+$ such that for each $i<\kappa$, $f_\alpha(i) \leq \Psi(a)$ for some $a \in \mathcal A$.  For each $i<\kappa$, let $\gamma_i$ be the unique $\gamma<\mu$ such that $f_\alpha(i) \leq \Psi([g^{-1}\{ \gamma \}]_J)$   Define $h : \kappa \to \mu$ by $h(i) = \gamma_i$.  Then for all $i<\kappa$ and all $\beta \in X_i$, $h(\pi(\beta)) = h(i) = \gamma_i$, and $f_\alpha(i) \leq \Psi([\{ \beta \in X_i : g(\beta) = h(\pi(\beta)) \}]_J)$.  Thus $[g]_K = [h\circ\pi]_K$.
    

    It remains to prove Lemma \ref{lemma:lifting-functions}. 
    \begin{proof}[Proof of Lemma \ref{lemma:lifting-functions}]
        Let us construct the sequence of functions $\la f_\alpha: \alpha<\kappa^+\ra$.    Fix a $\diamondsuit_{\kappa^+}(\cof(\kappa))$-sequence $\la d_\alpha : \alpha \in \kappa^+ \cap \cof(\kappa) \ra$.  Let $f_0 : \kappa \to \col(\kappa,\kappa^+)$ be such that for all $i<\kappa$, $f_0(i) \leq x_i$.  Given $f_\alpha$, let $f_{\alpha+1} = f_\alpha$.
        Suppose we have $\la f_\alpha : \alpha<\lambda \ra$, where $\lambda$ is a limit.  If $\cf(\lambda) = \delta <\kappa$, let $\la \alpha_\beta : \beta< \delta \ra$ be an increasing cofinal sequence in $\lambda$.  Let $i_0 < \kappa$ be such that $f_{\alpha_\gamma}(i) \leq f_{\alpha_\beta}(i)$ whenever $i \geq i_0$ and $\beta<\gamma<\delta$.  Then let $f_\lambda(i) = \inf_{\beta<\delta} f_{\alpha_\beta}(i)$ for $i \geq i_0$ and $f_\lambda(i) = f_0(i)$ for $i < i_0$.

        Suppose we have $\la f_\alpha : \alpha<\lambda \ra$, and $\cf(\lambda) = \kappa$.  First, construct a ``diagonal lower bound'' to the sequence as follows.  Let $\la \alpha_\beta : \beta < \kappa \ra$ be a cofinal increasing sequence in $\lambda$.  Let $\la i_\beta : \beta < \kappa \ra \subseteq \kappa$ be an increasing continuous sequence such that for $\beta<\gamma<\kappa$ and $i \geq i_{\gamma+1}$, $f_{\alpha_\gamma}(i) \leq f_{\alpha_\beta}(i)$.  Then define $f'_\lambda$ by $f'_\lambda(i) = \inf_{\beta<\gamma} f_{\alpha_\beta}(i)$ for all $i \in [i_\gamma,i_{\gamma+1})$ and $\gamma<\kappa$.  Then we have that for all $\alpha<\lambda$, $f'_\lambda \leq f_\alpha$ on a co-bounded set.

        Next, examine the set $d_\lambda$ given by the diamond sequence.  Ask whether $d_\lambda$ codes, via G\"odel pairing, a tuple $\la M,\mathcal A,\vec g \ra$ such that:
        \begin{enumerate}
            \item $M$ is a ${<}\kappa$ closed transitive model of the theory of $H_{\kappa^{++}}$, with $\lambda = (\kappa^+)^M$.
            \item $M \models \mathcal A$ is a maximal antichain contained in $\col(\kappa,\lambda)$.
            \item $\vec g \in M$ and $\vec g = \la f_\alpha : \alpha<\lambda \ra$.
        \end{enumerate}
        If this fails, let $f_\lambda = f'_\lambda$.  Otherwise, note that by the ${<}\kappa$-closure of $M$, $f'_\lambda(i) \in M$ for all $i<\kappa$.  For each $i$, let $f_\lambda(i) \in \col(\kappa,\lambda)$ be below both $f'_\lambda(i)$ and some $a \in \mathcal A$.

        To see that we obtain condition (\ref{genericity}), let $\mathcal A \subseteq \col(\kappa,\kappa^+)$ be a maximal antichain.  Let $N \prec H_{\kappa^{++}}$ be a $\kappa$-closed elementary submodel of size $\kappa^+$ with $\kappa^+ \cup \{\la f_\alpha : \alpha<\kappa^+ \ra, \mathcal A\} \subseteq N$.  Let $D \subseteq \kappa^+$ code $\la N,\mathcal A, \la f_\alpha : \alpha<\kappa^+\ra\ra$ via G\"odel pairing.  Let $\la N_\alpha : \alpha < \kappa^+ \ra$ be a continuous $\subseteq$-increasing sequence of elemenatary submodels of $N$ such that $N = \bigcup_\alpha N_\alpha$, each $N_{\alpha+1}$ is ${<}\kappa$-closed, and $\la f_\alpha : \alpha<\kappa^+ \ra, \mathcal A \in N_0$.  
        
        Since $\la d_\alpha : \alpha \in \kappa^+ \cap \cof(\kappa) \ra$ is a $\diamondsuit_{\kappa^+}(\cof(\kappa))$-sequence, there is some $\lambda \in \kappa^+ \cap \cof(\kappa)$ such that $D \cap \lambda = d_\lambda$ and $D \cap \lambda$ codes the transitive collapse $\la M, \mathcal A',\vec g \ra$ of $\la N_\lambda,\mathcal A,\la f_\alpha : \alpha < \kappa^+ \ra\ra$.  Note that $\mathcal A' \subseteq \mathcal A$, $\vec g = \la f_\alpha : \alpha < \lambda \ra$, and $M$ is ${<}\kappa$ closed since $\cf(\lambda) = \kappa$.  Thus by construction, the function $f_\lambda$ has the property that for all $i<\kappa$, $f_\lambda(i) \leq a$ for some $a \in \mathcal A' \subseteq \mathcal A$.
    \end{proof}
    This concludes the proof of Theorem~\ref{transfer}.
    \end{proof}

    \begin{corollary}
    \label{lowerend}
        If $\kappa$ is regular, $\diamondsuit_{\kappa^+}(\cof(\kappa))$ holds, and there is a normal ideal $J$ on $\kappa^+$ such that $\p(\kappa)/J \cong \mathcal B(\col(\kappa,\kappa^+))$, then for all regular $\delta\leq\kappa$ and all $\delta$-complete uniform ideals $I$ on $\kappa$, there is a $\delta$-complete uniform ideal $I'$ on $\kappa^+$ such that $\p(\kappa)/I \cong \p(\kappa^+)/I'$.  
        
        Furthermore, there is a function $\pi : \kappa^+ \to \kappa$ such that:
        \begin{enumerate}
        \item $I = \{ A \subseteq\kappa : \pi^{-1}[A] \in I' \}$.
        \item $[A]_I \mapsto [\pi^{-1}[A]]_{I'}$ is an isomorphism from $\p(\kappa)/I$ to $\p(\kappa^+)/I'$.
        \item For $\mu\leq\kappa$, $[f]_I \mapsto [f \circ \pi]_{I'}$ is a bijection from $\mu^\kappa/I$ to $\mu^{\kappa^+}/I'$.
        \end{enumerate}
    \end{corollary}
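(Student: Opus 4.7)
The plan is to use Theorem~\ref{transfer} as a black box and lift the given ideal $I$ on $\kappa$ to $\kappa^+$ along the canonical factoring map $\pi$ that the theorem supplies. First, apply Theorem~\ref{transfer} to produce a $\kappa$-complete ideal $K \supseteq J$ on $\kappa^+$ and a function $\pi : \kappa^+ \to \kappa$ such that $[A]_{\mathrm{bd}} \mapsto [\pi^{-1}[A]]_K$ is a Boolean isomorphism $\p(\kappa)/\mathrm{bd} \to \p(\kappa^+)/K$ and, for each $\mu \leq \kappa$, $[f]_{\mathrm{bd}} \mapsto [f \circ \pi]_K$ is a bijection $\mu^\kappa/\mathrm{bd} \to \mu^{\kappa^+}/K$. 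Since $I$ is uniform on the regular cardinal $\kappa$, $I \supseteq \mathrm{bd}$, so $I$ can be transported along this isomorphism.

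Define $I'$ to be the ideal generated by $K \cup \{ \pi^{-1}[A] : A \in I\}$, equivalently $B \in I'$ iff $B \setminus \pi^{-1}[A] \in K$ for some $A \in I$. By construction the isomorphism of Theorem~\ref{transfer} descends to an isomorphism $\p(\kappa)/I \cong \p(\kappa^+)/I'$ via $[A]_I \mapsto [\pi^{-1}[A]]_{I'}$, giving items (1) and (2). For (3), given any $g : \kappa^+ \to \mu$, Theorem~\ref{transfer}(3) supplies $h : \kappa \to \mu$ with $\{\beta : g(\beta) \neq h(\pi(\beta))\} \in K \subseteq I'$, so $[g]_{I'} = [h \circ \pi]_{I'}$; injectivity of the induced map $\mu^\kappa/I \to \mu^{\kappa^+}/I'$ follows from (1) applied to the disagreement set of any pair of distinct representatives in $\mu^\kappa$.

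It remains to verify $\delta$-completeness and uniformity of $I'$. For the former, given $\la B_\alpha : \alpha < \eta \ra \subseteq I'$ with $\eta < \delta$, pick witnessing $A_\alpha \in I$; then $\bigcup_\alpha A_\alpha \in I$ by the $\delta$-completeness of $I$, and $\bigcup_\alpha (B_\alpha \setminus \pi^{-1}[A_\alpha]) \in K$ by the $\kappa$-completeness of $K$ (using $\delta \leq \kappa$), yielding $\bigcup_\alpha B_\alpha \in I'$. For uniformity, the normal ideal $J$ is $\kappa^+$-complete and uniform on $\kappa^+$, so every $B \subseteq \kappa^+$ with $|B| \leq \kappa$ lies in $J \subseteq K \subseteq I'$. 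The whole argument is essentially routine bookkeeping once Theorem~\ref{transfer} is in hand; I do not anticipate a real obstacle, only the need to be mindful that the $\delta$-completeness of $I'$ genuinely uses $\delta \leq \kappa$ in combining the completeness of $K$ and of $I$.
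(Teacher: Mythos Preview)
Your proposal is correct and follows essentially the same approach as the paper: apply Theorem~\ref{transfer} to get $K$ and $\pi$, then let $I'$ be the pullback of (the image of) $I$ along the isomorphism $\p(\kappa)/\mathrm{bd}\cong\p(\kappa^+)/K$, and check $\delta$-completeness from that of $I$ together with the $\kappa$-completeness of $K$. Your explicit description of $I'$ as the ideal generated by $K\cup\{\pi^{-1}[A]:A\in I\}$ is exactly the paper's $\{B:[B]_K\in\Phi[I_0]\}$, and your verifications of (1)--(3), completeness, and uniformity match the paper's.
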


    \begin{proof}
        By Theorem~\ref{transfer}, there is a function $\pi : \kappa^+ \to \kappa$ and a uniform $\kappa$-complete ideal $K\supseteq J$ on $\kappa^+$ satisfying the conclusions, in particular such that $[A]_{\mathrm{bd}} \mapsto [\pi^{-1}[A]]_K$ gives an isomorphism $\Phi : \p(\kappa)/\mathrm{bd} \cong \p(\kappa^+)/K$.  
        A $\delta$-complete uniform ideal $I$ on $\kappa$ corresponds to a $\delta$-complete ideal $I_0$ on the Boolean algebra $\p(\kappa)/\mathrm{bd}$.  If $I_1 = \Phi[I_0]$, then $I_1$ is a $\delta$-complete ideal on $\p(\kappa^+)/K$, and we can set $I' = \{ A : [A]_K \in I_1 \}$. We have 
        $$\p(\kappa)/I \cong (\p(\kappa)/\mathrm{bd})/I_0 \cong (\p(\kappa^+)/K)/I_1 \cong \p(\kappa^+)/I'.$$
        Since both $I_1$ and $K$ are $\delta$-complete, so is $I'$.
        The ``furthermore'' part follows from the corresponding part of Theorem~\ref{transfer} and the definition of $I'$.
    \end{proof}
    By applying Corollary \ref{lowerend} repeatedly, we obtain the following.
    \begin{corollary}\label{cor:aleph_1-dense-ideals}
        Suppose that for every $n$ there is an $\omega_{n+1}$-complete ideal $I_n \subseteq \p(\omega_{n+1})$ such that $\p(\omega_{n+1})/I_n \cong \mathcal{B}(\col(\omega_n,\omega_{n+1}))$. Then for every $n\leq m$, there is a uniform, $\omega_n$-complete, $\omega_n$-dense ideal on $\omega_m$.

        In particular, the existence of a uniform, $\sigma$-complete,  $\omega_1$-dense ideal on $\omega_{n+1}$ for all $n < \omega$ is consistent relative to a huge cardinal.  
    \end{corollary}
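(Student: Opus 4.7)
The plan is to prove the main statement by induction on $m$ (for each fixed $n \leq m$), pushing ideals up one successor cardinal at a time via Corollary~\ref{lowerend}, and then to derive the ``in particular'' consequence from the Main Theorem.

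I will first secure the required diamond instances. For each $k \geq 1$, the diamond proposition of Subsection~\ref{subsec: gch} applied to $I_k$ (with $\mu = \omega_k$, $\kappa = \omega_{k+1}$) yields $\diamondsuit_{\omega_k}(S)$ for every stationary $S \subseteq \omega_k$. In particular, this supplies the hypothesis $\diamondsuit_{\omega_{m+1}}(\cof \omega_m)$ of Corollary~\ref{lowerend} at every inductive step; note that this uses $I_{m+1}$ rather than $I_m$, which is why the hypothesis furnishing ideals at \emph{all} successor cardinals (not just the one of the current stage) is essential.

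For the induction itself, the base case $m = n$ uses $I_{n-1}$ on $\omega_n$ directly (or, in the trivial case $n=0$, the dual of a uniform ultrafilter on $\omega_m$). This ideal is $\omega_n$-complete by hypothesis, uniform because any nontrivial $\omega_n$-complete ideal on $\omega_n$ contains all bounded sets, and $\omega_n$-dense because $|\col(\omega_{n-1},\omega_n)| = \omega_n$ under GCH, which holds by Proposition~\ref{gch}. For the inductive step, given a uniform $\omega_n$-complete $\omega_n$-dense ideal $I$ on $\omega_m$ with $n \leq m$, I will apply Corollary~\ref{lowerend} with $\kappa = \omega_m$, $\delta = \omega_n \leq \kappa$, and $J = I_m$ on $\omega_{m+1} = \kappa^+$. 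The conclusion produces a uniform $\omega_n$-complete ideal $I'$ on $\omega_{m+1}$ with $\mathcal{P}(\omega_m)/I \cong \mathcal{P}(\omega_{m+1})/I'$, so $I'$ inherits $\omega_n$-density from $I$, completing the induction.

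Finally, the ``in particular'' clause follows by invoking Theorem~\ref{thm:dense-ideals-everywhere} to supply the consistency, relative to a huge cardinal, of the full hypothesis of the corollary: specializing the Main Theorem to $\mu = \omega_n$ for all $n < \omega$ provides the sequence $\{I_n\}_{n<\omega}$. Then taking $n = 1$ in the main statement yields a uniform, $\sigma$-complete, $\omega_1$-dense ideal on each $\omega_m$ for $m \geq 1$, hence on every $\omega_{n+1}$. There is no serious obstacle in this argument, as all the heavy lifting occurs inside Corollary~\ref{lowerend}; the only delicate point is bookkeeping the source of each diamond principle, since the diamond needed to transfer to $\omega_{m+1}$ is provided by the ideal one level \emph{above}, namely $I_{m+1}$.
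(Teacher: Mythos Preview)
Your proposal is correct and follows exactly the approach the paper indicates (``By applying Corollary~\ref{lowerend} repeatedly''), simply fleshing out the details: the induction on $m$, the source of the diamond hypothesis from $I_{m+1}$, and the appeal to Theorem~\ref{thm:dense-ideals-everywhere} for the consistency clause. Your bookkeeping observation that $\diamondsuit_{\omega_{m+1}}(\cof(\omega_m))$ comes from the ideal one level above is precisely the point the paper leaves implicit.
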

\subsection{Irregular ultrafilters}\label{subsec: irregular-ultrafilters}
For any regular cardinal $\kappa$, there exists a sequence $\la f_\alpha: \alpha < \kappa^+ \ra$ of functions from $\kappa$ to $\kappa$ that is increasing modulo bounded sets.  Thus for any uniform ultrafilter $\mathcal U$ on $\kappa$, $| \kappa^\kappa/\mathcal U | \geq \kappa^+$. The same argument shows that for every $\kappa$-decomposible ultrafilter $\mathcal U$ over any set $X$,\footnote{An ultrafilter $\mathcal U$ is $\kappa$-decomposible if there are sets $\langle X_i  :  i < \kappa\rangle$ such that $\bigcup_{i<\kappa} X_i = X$, and for every $\alpha < \kappa$, $\bigcup_{i<\alpha} X_i\notin \mathcal U$.} $|\kappa^X/\mathcal U| \geq \kappa^+$. It is also easy to see that for an ultrafilter $\mathcal U$ on $X$ and a successor cardinal $\kappa = \mu^+$, $| \kappa^X / \mathcal U | \leq | \mu^X / \mathcal U |^+$.  Thus for $m \leq n <\omega$, it follows that for all uniform ultrafilters over $\omega_n$, $\omega_{m+1} \leq | \omega_m^{\omega_n} / \mathcal U | \leq 2^{\omega_n}$.

More generally, Keisler \cite{keisler} gave a criterion for when ultrapowers have the maximal possible cardinality.  
An ultrafilter $\mathcal U$ is called \emph{$(\mu,\kappa)$-regular} when there is a sequence $\la X_\alpha : \alpha < \kappa \ra\subseteq \mathcal U$ such that for all $Y \subseteq \kappa $ with $\ot(Y) = \mu$, $\bigcap_{\alpha \in Y} X_\alpha = \emptyset$.  Every countably incomplete ultrafilter is $(\omega,\omega)$-regular, and every uniform ultrafilter on a cardinal $\kappa$ is $(\kappa,\kappa)$-regular.  For any infinite cardinal $\kappa$, any ultrafilter extending the minimal fine filter on $[\kappa]^{<\omega}$ is $(\omega,\kappa)$-regular, so there exists an $(\omega,\kappa)$-regular uniform ultrafilter on $\kappa$.
\begin{theorem}[Keisler \cite{keisler}]
    If $\mathcal U$ is a $(\mu,\kappa)$-regular ultrafilter on $X$, then 
    $$| (2^{<\mu})^X / \mathcal U | \geq 2^\kappa.$$
\end{theorem}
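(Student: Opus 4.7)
The plan is to exhibit $2^\kappa$ many pairwise inequivalent functions $X\to 2^{<\mu}$ modulo $\mathcal U$, indexed by subsets of $\kappa$. The key combinatorial input is the regularizing family: pick $\la X_\alpha : \alpha<\kappa\ra\subseteq\mathcal U$ witnessing $(\mu,\kappa)$-regularity, so that $\bigcap_{\alpha\in Y} X_\alpha=\emptyset$ for every $Y\in[\kappa]^\mu$. For each $x\in X$, set $S_x=\{\alpha<\kappa : x\in X_\alpha\}$; the defining property of the family forces $|S_x|<\mu$, so $\delta_x:=\ot(S_x)<\mu$. Let $\sigma_x:\delta_x\to S_x$ be the unique order isomorphism.

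Next, for each $A\subseteq\kappa$, define $f_A:X\to 2^{<\mu}$ by letting $f_A(x):\delta_x\to 2$ be the characteristic function of $\sigma_x^{-1}[A]$, i.e.\ $f_A(x)(i)=1$ iff $\sigma_x(i)\in A$. Since $\delta_x<\mu$ for every $x$, each $f_A(x)$ is indeed an element of $2^{<\mu}$, and $f_A$ is a well-defined element of $(2^{<\mu})^X$.

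The main claim is that the assignment $A\mapsto[f_A]_\mathcal U$ is injective, which yields $|(2^{<\mu})^X/\mathcal U|\geq 2^\kappa$. Suppose $A\neq B$ and choose $\alpha\in A\triangle B$. For every $x\in X_\alpha$, $\alpha\in S_x$, so there is some (unique) $i<\delta_x$ with $\sigma_x(i)=\alpha$, and at this coordinate $f_A(x)$ and $f_B(x)$ disagree. Hence $\{x\in X : f_A(x)\neq f_B(x)\}\supseteq X_\alpha\in\mathcal U$, so $[f_A]_\mathcal U\neq [f_B]_\mathcal U$.

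There is essentially no obstacle here; the only thing one has to be careful about is the encoding, since the ambient target is $2^{<\mu}$ rather than the more natural $[\kappa]^{<\mu}$. The use of the order isomorphism $\sigma_x$ is what bridges this: instead of sending $x$ to the set $A\cap S_x$ itself, we send it to the characteristic function of $A\cap S_x$ indexed along the ordertype $\delta_x<\mu$ of $S_x$, landing us in $2^{<\mu}$ while preserving enough information to recover disagreements between $A$ and $B$ on the $\mathcal U$-large set $X_\alpha$.
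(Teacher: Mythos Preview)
Your argument is correct and is essentially the standard proof of Keisler's theorem. Note, however, that the paper does not supply its own proof of this statement: it is quoted as a result of Keisler with a citation and is used as a black box, so there is nothing in the paper to compare your argument against.
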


The existence of irregular ultrafilters --- namely uniform ultrafilters on a cardinal $\kappa$ which are not $(\omega,\kappa)$-regular --- was one of the driving forces behind results regarding strong properties of ideals in the early years of the field. For example, in \cite{Magidor79}, Magidor showed the consistency of the existence of an irregular ultrafilter on $\omega_2$ from a huge cardinal. This result was improved by Foreman, in \cite{foreman98}, who showed that from an $\aleph_1$-dense ideal on $\omega_2$, one can obtain an ultrafilter $\mathcal U$ such that $|\omega^{\omega_2}/\mathcal U| = \aleph_1$, using the transfer principle. 

In our model, all possibilities for sizes of ultrapowers on the $\omega_n$'s can be realized, subject to the \emph{a priori} constraint given by $\GCH$.

\begin{theorem}
\label{up_sizes}
Suppose for all $n < \omega$, there is a normal ideal $I$ on $\omega_{n+1}$ such that $\p(\omega_{n+1})/I \cong \mathcal B(\col(\omega_{n},\omega_{n+1}))$.  Then for all $a,b,c \in \omega$ such that $a < b \leq c+1$, there is a uniform ultrafilter $\mathcal U$ on $\omega_c$ such that $| \omega_a^{\omega_c} / \mathcal U | = \omega_b$.
\end{theorem}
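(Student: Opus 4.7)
The plan is to construct a uniform ultrafilter $\mathcal U_{b-1}$ on $\omega_{b-1}$ with $|\omega_a^{\omega_{b-1}}/\mathcal U_{b-1}|=\omega_b$, and then to iteratively transfer it up through $\omega_b,\omega_{b+1},\dots,\omega_c$ by applying Corollary~\ref{lowerend} a total of $c-b+1$ times, preserving the ultrapower size $\omega_b$ at each step. The hypothesis $a<b$ guarantees that $\omega_a\leq\omega_{b-1}$, which will be exactly what is needed to invoke the bijection on reduced powers supplied by Corollary~\ref{lowerend}.

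For the base case, split into two subcases. If $a=b-1$, any uniform ultrafilter $\mathcal U$ on $\omega_a$ works: the standard argument gives $|\omega_a^{\omega_a}/\mathcal U|\geq\omega_{a+1}$, and under GCH (which holds by Proposition~\ref{gch}) we have $|\omega_a^{\omega_a}/\mathcal U|\leq 2^{\omega_a}=\omega_{a+1}=\omega_b$, forcing equality. If $a<b-1$, I take $\mathcal U_{b-1}$ to be a uniform $(\omega,\omega_{b-1})$-regular ultrafilter on $\omega_{b-1}$, obtained for instance by transferring a fine ultrafilter on $[\omega_{b-1}]^{<\omega}$ along any bijection with $\omega_{b-1}$ (which exists since $|[\omega_{b-1}]^{<\omega}|=\omega_{b-1}$ under GCH). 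By Keisler's theorem, $|\omega^{\omega_{b-1}}/\mathcal U_{b-1}|\geq 2^{\omega_{b-1}}=\omega_b$, and composing with any injection $\omega\hookrightarrow\omega_a$ gives $|\omega_a^{\omega_{b-1}}/\mathcal U_{b-1}|\geq\omega_b$; on the other hand, GCH bounds this above by $|\omega_a^{\omega_{b-1}}|=\omega_b$, again forcing equality.

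Now I iterate. At each step $k\in\{b-1,b,\dots,c-1\}$, I apply Corollary~\ref{lowerend} to the hypothesized dense ideal on $\omega_{k+1}$, taking as input the dual ideal of $\mathcal U_k$, regarded as a uniform ideal on $\omega_k$ with $\delta=\omega$ (for which completeness is automatic). The hypothesis $\diamondsuit_{\omega_{k+1}}(\cof(\omega_k))$ required by the corollary follows from the diamond argument in Subsection~\ref{subsec: gch} applied with $\mu=\omega_{k+1}>\omega$, which needs the dense ideal on $\omega_{k+2}$; this is provided by the standing hypothesis. The output is a uniform ideal on $\omega_{k+1}$, again maximal since the Boolean algebras are isomorphic, so its dual is a uniform ultrafilter $\mathcal U_{k+1}$. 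The bijection $[f]_{\mathcal U_k}\mapsto[f\circ\pi]_{\mathcal U_{k+1}}$ between reduced powers, applied with $\mu=\omega_a\leq\omega_{b-1}\leq\omega_k$, gives $|\omega_a^{\omega_{k+1}}/\mathcal U_{k+1}|=|\omega_a^{\omega_k}/\mathcal U_k|$. Thus the value $\omega_b$ is preserved throughout the iteration, and $\mathcal U_c$ is the desired ultrafilter.

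No serious obstacle arises: the construction of $\mathcal U_{b-1}$ is classical (Keisler for $a<b-1$, automatic from GCH for $a=b-1$), and Corollary~\ref{lowerend} is tailored precisely to preserve such ultrapower sizes under transfer to successor cardinals. The only subtlety is verifying that the coordinate on which the reduced power is taken, namely $\omega_a$, stays below the ``base'' cardinal $\omega_k$ at every step, which is ensured by the choice to start the iteration at $\omega_{b-1}\geq\omega_a$.
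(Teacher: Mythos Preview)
Your proof is correct and takes essentially the same approach as the paper's: construct a base ultrafilter on $\omega_{b-1}$ with maximal-size ultrapower (the paper simply uses any $(\omega,\omega_n)$-regular uniform ultrafilter without splitting into the subcases $a=b-1$ versus $a<b-1$), then transfer iteratively via Corollary~\ref{lowerend}. The paper phrases this as an induction on $c$, but your explicit unwinding of that induction is equivalent.
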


\begin{proof}
    Note that the hypothesis implies $\GCH$ below $\aleph_\omega$ by Proposition~\ref{gch}.
    
    We argue by induction on $c$.  If $c = 0$, then $a = 0$ and $b = 1$.  Any non-principal ultrafilter $\mathcal U$ on $\omega$ has the property that 
    $| \omega^\omega/\mathcal U | = 2^\omega = \omega_1$.

    Suppose the conclusion holds when $c<n$.  To realize $|\omega_a^{\omega_n}/\mathcal U | = \omega_{n+1}$ for $a \leq n$, choose any $(\omega,\omega_n)$-regular uniform ultrafilter on $\omega_n$.  To obtain $|\omega_a^{\omega_n}/\mathcal U | = \omega_b$ for $a < b \leq n$, we have by induction that there is a uniform ultrafilter $\mathcal U_0$ on $\omega_{n-1}$ such that $|\omega_a^{\omega_{n-1}}/\mathcal U_0| = \omega_b$.  By Corollary~\ref{lowerend}, there is a uniform ultrafilter $\mathcal U_1$ on $\omega_n$ such that 
    $|\omega_a^{\omega_n}/\mathcal U_1 | = |\omega_a^{\omega_{n-1}}/\mathcal U_0 |$.
\end{proof}

\subsection{The Foreman-Kunen spectrum}\label{subsec: FK-spectrum}
In \cite{forbidden}, Foreman generalized an unpublished result of Kunen that there is no uniform, countably complete, $\omega_2$-saturated ideal on any cardinal in the interval $[\aleph_\omega,\aleph_{\omega_1})$.  For a regular cardinal $\kappa$, Foreman defined the collection $\mathcal C_\kappa$ as the smallest class such that:
\begin{enumerate}
    \item $\kappa \subseteq \mathcal C_\kappa$.
    \item If $\alpha,\beta \in \mathcal C_\kappa$ and $\beta^{+\alpha} \geq \kappa^{+\omega}$, then $\beta^{+\alpha} \in \mathcal C_\kappa$.
    \item If $\beta \in \mathcal C_\kappa$ and $\beta \geq \kappa$, then every cardinal in the interval $[\beta,\beta^{+\kappa})$ is in $\mathcal C_\kappa$.
\end{enumerate}
(Note that (3) actually follows from (1) and (2) since we can show by induction that $\mathcal C_\kappa \cap [\kappa,\kappa^{+\omega}) = \emptyset$ and apply (2) for each $\alpha<\kappa$.)
He showed that if $\kappa$ is a successor cardinal, then there is no uniform, $\kappa$-complete, $\kappa^{+}$-saturated ideal on any $\lambda \in \mathcal C_\kappa$.  He asked whether $\mathcal C_\kappa$ fully describes the extent of the limitation.  Specifically, is it consistent that for all regular $\lambda \notin \mathcal C_{\omega_1}$, there is a uniform, countably complete, $\omega_2$-saturated ideal on $\lambda$?

In \cite{foremanhandbook}, Foreman emphasized the question of whether the lower end of this restriction is sharp.  The present work shows that it is.  It follows by Corollary~\ref{lowerend} that in our model, for every pair of positive natural numbers $n \leq m$, there is a uniform, $\omega_n$-complete, $\omega_n$-dense ideal on $\omega_m$.

We would also like to answer Foreman's question about $\mathcal C_\kappa$ by showing that it is actually not restrictive enough.  There are further cardinals on which there is provably no uniform, countably complete, $\omega_2$-saturated ideal.
\begin{definition}
Let $\Gamma$ be a class of forcings.

The \emph{Foreman-Kunen spectrum} $\mathsf{FK}(\kappa,\Gamma)$ is the class of regular cardinals $\lambda$ such that there exists a uniform, $\kappa$-complete, $\kappa^+$-incomplete, precipitous ideal $I$ on $\lambda$ with the property that $\p(\lambda)/I \in \Gamma$.
\end{definition}
We define $\mathrm{fix}(\kappa,\Gamma)$ as the collection of ordinals $\lambda$ such that for every $\bbP \in \Gamma$, if $\bbP$ forces an elementary embedding $j  :  V \to M \subseteq V[G]$, where $M$ is transitive and $\crit(j) = \kappa$, then $j(\lambda) = \lambda$.\footnote{To render this collection first-order definable, we should restrict to embeddings with domain some $V_\theta$, $\theta>\kappa+\lambda$, and codomain some transitive set $M\in V[G]$.}
Foreman's argument shows that if $\kappa$ is a successor cardinal and $\Gamma$ is the class of $\kappa^+$-c.c.\ forcings, then $\mathcal C_\kappa \subseteq \mathrm{fix}(\kappa,\Gamma)$.

\begin{prop}
    $\mathsf{FK}(\kappa,\Gamma) \cap \mathrm{fix}(\kappa,\Gamma) =\emptyset$. 
\end{prop}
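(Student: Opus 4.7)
\medskip

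\noindent\textbf{Proof plan.} The claim is essentially a compilation of standard facts about generic ultrapowers, so the proposal is short. Suppose $\lambda \in \mathsf{FK}(\kappa,\Gamma)$, witnessed by a uniform, $\kappa$-complete, $\kappa^+$-incomplete, precipitous ideal $I$ on $\lambda$ with $\bbP := \p(\lambda)/I \in \Gamma$. Let $G \subseteq \bbP$ be generic and let $j_G : V \to \Ult(V,G)$ be the generic ultrapower embedding. Since $I$ is precipitous, $\Ult(V,G)$ is well-founded, so after taking the Mostowski collapse we obtain an elementary embedding $j : V \to M$ with $M \in V[G]$ transitive. Standard arguments (see \cite[Proposition 2.10 or 2.12]{foremanhandbook}) show that the $\kappa$-completeness of $I$ forces $\crit(j) \geq \kappa$, while $\kappa^+$-incompleteness of $I$ forces $\crit(j) \leq \kappa$; hence $\crit(j) = \kappa$. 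Thus $j$ is exactly the kind of embedding considered in the definition of $\mathrm{fix}(\kappa,\Gamma)$.

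It remains to show $j(\lambda) \neq \lambda$. Since $I$ is uniform on the regular cardinal $\lambda$, every bounded subset of $\lambda$ lies in $I$, so for each $\alpha<\lambda$ the set $\{\beta < \lambda : \beta > \alpha\}$ belongs to the dual filter and therefore to $G$. Consequently the element $[\id]_G \in M$ satisfies $[\id]_G > j(\alpha)$ for every $\alpha<\lambda$, and $[\id]_G < j(\lambda)$ (as the constant function $\lambda$ represents $j(\lambda)$ and $\id(\beta) < \lambda$ for all $\beta$). Since $M$ is transitive and contains every $j(\alpha)$ with $\alpha<\lambda$, this gives $j(\lambda) > [\id]_G \geq \sup j[\lambda] \geq \lambda$. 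In particular $j(\lambda) \neq \lambda$, so $\lambda \notin \mathrm{fix}(\kappa,\Gamma)$, as required.

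There is no real obstacle: the argument is just bookkeeping about generic ultrapowers. The only subtlety worth flagging in the write-up is to verify that precipitousness of $I$, which is explicitly assumed in the definition of $\mathsf{FK}(\kappa,\Gamma)$, is what guarantees that $M$ may be taken transitive in $V[G]$, so that the resulting $j$ witnesses $\lambda \notin \mathrm{fix}(\kappa,\Gamma)$ in the sense required by the definition.
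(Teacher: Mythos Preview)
Your proof is correct and follows the same overall structure as the paper's: take $\lambda \in \mathsf{FK}(\kappa,\Gamma)$, form the generic ultrapower embedding, and show it moves $\lambda$. The only difference is in how $j(\lambda) > \lambda$ is obtained. You use the identity function: uniformity of $I$ gives $[\id]_G > j(\alpha) \geq \alpha$ for every $\alpha<\lambda$, hence $j(\lambda) > [\id]_G \geq \lambda$. The paper instead fixes a sequence $\la f_\alpha : \alpha<\lambda^+\ra$ of functions $\lambda\to\lambda$ increasing modulo bounded (which exists since $\lambda$ is regular) and observes that the $[f_\alpha]_G$ form a $\lambda^+$-chain below $j(\lambda)$, yielding the stronger bound $j(\lambda)\geq(\lambda^+)^V$. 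Your route is more elementary and entirely sufficient for the stated proposition; the paper's extra quantitative information is not used afterward. You are also more explicit than the paper in checking $\crit(j)=\kappa$, which is needed to match the definition of $\mathrm{fix}(\kappa,\Gamma)$.
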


\begin{proof}
    For any regular cardinal $\lambda$, there is a sequence of functions $\la f_\alpha:  \alpha<\lambda^+ \ra$ from $\lambda$ to $\lambda$ that is increasing modulo bounded sets.  If $\lambda \in \mathsf{FK}(\kappa,\Gamma)$, then there is a uniform $\kappa$-complete precipitous ideal $I$ on $\lambda$
    such that $\p(\lambda)/I \in \Gamma$.  
    If $G \subseteq \p(\lambda)/I$ is generic, then let $j:  V \to M \subseteq V[G]$ be the ultrapower embedding. Then $[f_\alpha]_G< [f_\beta]_G$ for all $\alpha<\beta<\lambda^+$.  Thus $j(\lambda) \geq (\lambda^+)^V$.
\end{proof} 

For a class of forcings $\Gamma$, let us say that a property 
$\varphi(x)$ is \emph{$\Gamma$-robust above $\delta$} if every forcing $\mathbb P \in \Gamma$ preserves $\varphi(\alpha)$ for every ordinal $\alpha\geq\delta$.  
If $\Gamma$ is a class of forcings of size less than $\delta$, then all standard large cardinal notions are $\Gamma$-robust above $\delta$.

\begin{prop}
\label{forbidden}
    Suppose $\kappa$ is a successor cardinal, $\Gamma$ is a class of $\kappa^{+\omega}$-c.c.\ forcings, and $\varphi(x)$ is a $\Pi_1$ formula that is $\Gamma$-robust above $\kappa^{+\omega}$.  
    \begin{enumerate}
        \item $\kappa \cup \{\kappa^{+\omega}\} \subseteq \mathrm{fix}(\kappa,\Gamma)$.
        \item  If $\alpha,\beta \in \mathrm{fix}(\kappa,\Gamma)$, $\beta\geq\kappa^{+\omega}$, and $\delta$ is the $\alpha^{th}$ ordinal above $\beta$ at which $\varphi(x)$ holds, then $\delta \in \mathrm{fix}(\kappa,\Gamma)$.
        \item If $\alpha$ is a limit point of $\mathrm{fix}(\kappa,\Gamma)$ such that $\cf(\alpha)\in\mathrm{fix}(\kappa,\Gamma)$, then $\alpha \in \mathrm{fix}(\kappa,\Gamma)$.
    \end{enumerate}
\end{prop}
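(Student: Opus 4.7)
The plan is to fix $\mathbb{P} \in \Gamma$, a $V$-generic $G \subseteq \mathbb{P}$, and a generic elementary embedding $j \colon V \to M$ with $M \subseteq V[G]$ transitive and $\crit(j) = \kappa$, and to verify each of (1)--(3) by combining elementarity of $j$ with the absoluteness afforded by the $\kappa^{+\omega}$-c.c.\ of $\mathbb{P}$.

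For (1), the inclusion $\kappa \subseteq \mathrm{fix}(\kappa,\Gamma)$ is immediate from $\crit(j)=\kappa$. For $\kappa^{+\omega}$, since $\cf(\kappa^{+\omega}) = \omega < \kappa$, $j$ is continuous at $\kappa^{+\omega}$, so $j(\kappa^{+\omega}) = \sup_{n<\omega} j(\kappa^{+n})$; it thus suffices to show $j(\kappa^{+n}) < \kappa^{+\omega}$ for every $n$. Writing $\kappa = \mu^+$ (this is where the successor hypothesis enters), $j(\kappa) = (\mu^+)^M$, and I would use the $\kappa^{+\omega}$-c.c.\ of $\mathbb{P}$ to bound $j(\kappa)$ strictly below $\kappa^{+\omega}$; then, using that $\kappa^{+\omega}$ remains a cardinal in $V[G]$ and hence in $M$, induction on $n$ shows that finitely many iterates of the successor cardinal operation in $M$ above $j(\kappa)$ remain below $\kappa^{+\omega}$.

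For (2), write $\Phi^W := \{\gamma : W \models \varphi(\gamma)\}$ for any class model $W$. The key observation is
\[
\Phi^V \cap [\kappa^{+\omega},\mathrm{On}) \subseteq \Phi^M \cap [\kappa^{+\omega},\mathrm{On}).
\]
Indeed, for $\gamma \geq \kappa^{+\omega}$, $\Gamma$-robustness gives $V \models \varphi(\gamma) \Rightarrow V[G] \models \varphi(\gamma)$, and downward absoluteness of the $\Pi_1$ formula $\varphi$ from $V[G]$ to its transitive submodel $M$ completes the inclusion. Now let $\delta$ be the $\alpha$-th element of $\Phi^V$ strictly above $\beta$. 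By elementarity with $j(\alpha)=\alpha$ and $j(\beta)=\beta$, $j(\delta)$ is the $\alpha$-th element of $\Phi^M$ strictly above $\beta$. But $\Phi^V \cap (\beta,\delta]$ has $\alpha+1$ elements, all of which lie in $\Phi^M$, so the $\alpha$-th element of $\Phi^M$ above $\beta$ is at most $\delta$; combining $j(\delta) \leq \delta$ with the universal $j(\delta) \geq \delta$ yields $j(\delta) = \delta$.

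For (3), set $\lambda = \cf(\alpha) \in \mathrm{fix}(\kappa,\Gamma)$ and choose a strictly increasing cofinal $f \colon \lambda \to \alpha$ with $f(i) \in \mathrm{fix}(\kappa,\Gamma)$ for every $i < \lambda$, possible because $\alpha$ is a limit point of $\mathrm{fix}(\kappa,\Gamma)$. From $j(\xi) \geq \xi$ for every ordinal $\xi$ together with $j(\lambda) = \lambda$, we deduce $\sup j[\lambda] = \lambda$, so $j[\lambda]$ is cofinal in $\lambda$. Given any $i < \lambda$, pick $\xi < \lambda$ with $j(\xi) > i$; then strict monotonicity of the $M$-function $j(f)$ together with the elementarity identity $j(f)(j(\xi)) = j(f(\xi)) = f(\xi)$ (using $f(\xi) \in \mathrm{fix}(\kappa,\Gamma)$) gives $j(f)(i) < f(\xi) < \alpha$. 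Hence $j(\alpha) = \sup_{i<\lambda} j(f)(i) \leq \alpha$, and combined with $j(\alpha) \geq \alpha$ we conclude $j(\alpha) = \alpha$. The main obstacle in the whole proposition is (1), specifically bounding $j(\kappa) < \kappa^{+\omega}$ from only the $\kappa^{+\omega}$-c.c.\ hypothesis; this must generalize Foreman's classical $\kappa^+$-c.c.\ argument and will make essential use of the assumption that $\kappa$ is a successor cardinal.
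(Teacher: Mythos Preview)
Your proposal is correct and follows essentially the same route as the paper. Parts (2) and (3) are argued exactly as in the paper: for (2) you use $\Gamma$-robustness plus $\Pi_1$ downward absoluteness to get $\Phi^V \cap [\kappa^{+\omega},\infty) \subseteq \Phi^M$, then compare the $\alpha$-th elements; for (3) you push a cofinal sequence from $\mathrm{fix}(\kappa,\Gamma)$ through $j$. For (1), you correctly isolate the key fact that $\kappa^{+\omega}$ remains a cardinal in $V[G]$ and hence in $M$.

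One small point worth tightening in (1): your inductive step only immediately gives $j(\kappa^{+(n+1)}) = (j(\kappa^{+n}))^{+M} \leq \kappa^{+\omega}$, not strict inequality, since knowing $\kappa^{+\omega}$ is an $M$-cardinal does not by itself rule out its being a successor cardinal in $M$. The paper handles this by contradiction at the least bad $n$: if $j(\kappa^{+n}) \geq \kappa^{+\omega}$ with $n$ minimal, write $\kappa^{+n} = \nu^+$ (here the successor hypothesis on $\kappa$ covers $n=0$), so $j(\nu) < \kappa^{+\omega}$; then no $M$-cardinals, hence no $V[G]$-cardinals, lie in $(j(\nu),\kappa^{+\omega})$, so every ordinal below $\kappa^{+\omega}$ has $V[G]$-cardinality $\leq j(\nu)$, whence $|\kappa^{+\omega}|^{V[G]} \leq j(\nu)\cdot\omega < \kappa^{+\omega}$, contradicting the chain condition. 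This is the missing line that turns your plan into a proof, and it is exactly what you anticipated when you flagged this step as the main obstacle.
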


Before giving the proof, let us compare this claim with Foreman's result about $\mathcal C_\kappa$.  Since being a cardinal is a $\Pi_1$ property that is $\Gamma$-robust above $\kappa^{+\omega}$ for any $\Gamma$ contained in the class of $\kappa^{+\omega}$-c.c.\ forcings, the above proposition implies that $\mathcal C_\kappa \subseteq \mathrm{fix}(\kappa,\Gamma)$ for successor $\kappa$.  The class $\mathcal C_\kappa$ can be formed as an increasing $\omega$-sequence in the following way.  Let $\mathcal C_\kappa^0 = \kappa$, and given $\mathcal C^n_\kappa$, let 
$$\mathcal C_\kappa^{n+1} = \{ \beta^{+\alpha} :  \alpha,\beta \in \mathcal C_\kappa^n \wedge \beta^{+\alpha} \notin [\kappa,\kappa^{+\omega}) \}.$$ 
Let $\mathcal C_\kappa = \bigcup_{n} \mathcal C_\kappa^n$.  If $\kappa$ is a successor cardinal, then $\sup(\mathcal C_\kappa^1) = \kappa^{+\kappa} = \aleph_\kappa > \kappa$.
Assume $n\geq 2$ and for $1\leq i<n$, $\delta_i = \sup(\mathcal C_\kappa^i)$ is a limit cardinal, and $\delta_i = \aleph_{\delta_{i-1}} > \delta_{i-1}$.  
Then 
$$\delta_n := \sup \mathcal C_\kappa^n = \sup \{ \beta^{+\alpha} : \alpha,\beta \in \mathcal C_\kappa^{n-1}\} = \aleph_{\delta_{n-1}} > \aleph_{\delta_{n-2}} = \delta_{n-1}$$
Thus if $\delta = \sup_n \delta_n$, then $\sup \mathcal C_\kappa = \delta = \aleph_{\delta}$, and this is the first fixed point of the $\aleph$-function above $\kappa$.  However, note that being a fixed point of the $\aleph$-function is a $\Pi_1$ property that is $\Gamma$-robust above $\kappa^{+\omega}$ when $\Gamma$ is a class of $\kappa^{+\omega}$-c.c.\ forcings.  Also, $\delta$ is a limit of $\mathrm{fix}(\kappa,\Gamma)$ of countable cofinality.  Thus $\mathrm{fix}(\kappa,\Gamma)$ is strictly larger than $\mathcal C_\kappa$ for such $\Gamma$.  Other examples of $\Pi_1$ properties that are $\Gamma$-robust for suitable $\Gamma$ above sufficiently large ordinals include being regular, being a limit cardinal, and some degrees of (weak) Mahloness.

\begin{proof}[Proof of Proposition~\ref{forbidden}]
First note that $\kappa \subseteq \mathrm{fix}(\kappa,\Gamma)$ by definition.
Next note that if $\kappa$ is a successor cardinal and $j : V \to M \subseteq V[G]$ is a generic embedding with critical point $\kappa$, where $V[G]$ is 
a $\kappa^{+\omega}$-c.c.\ forcing extension and $M$ is transitive, then $\kappa^{+\omega}$ is closed under $j$. This holds because otherwise, if we take the least $n$ such that $j(\kappa^{+n}) > \kappa^{+\omega}$, then we would have a tail of cardinals below $\kappa^{+\omega}$ that are forced to have the same cardinality as $j(\mu)$, where $\kappa^{+n} = \mu^+$.  It follows that $\kappa^{+\omega} \in \mathrm{fix}(\kappa,\Gamma)$.

Suppose $\alpha,\beta \in \mathrm{fix}(\kappa,\Gamma)$, $\beta\geq\kappa^{+\omega}$, and $\delta$ is the $\alpha^{th}$ ordinal above $\beta$ at which $\varphi(x)$ holds.  Suppose $\bbP \in \Gamma$, $G \subseteq \bbP$ is generic over $V$, and $j : V \to M \subseteq V[G]$ is an elementary embedding with $M$ transitive and $\crit(j) = \kappa$.
Then $M \models$ ``$j(\delta)$ is the $j(\alpha)^{th}$ ordinal $\gamma \geq j(\beta) \geq j(\kappa^{+\omega})$ such that $\varphi(\gamma)$,'' and note that $j$ is the identity on $\alpha,\beta,\kappa^{+\omega}$.  Since $\varphi(x)$ is $\Gamma$-robust above $\kappa^{+\omega}$ and a $\Pi_1$ property, 
$$\{ \gamma \geq \kappa^{+\omega} : V \models \varphi(\gamma) \} = \{ \gamma \geq \kappa^{+\omega} : V[G] \models \varphi(\gamma) \}.$$
Again because $\varphi(x)$ is $\Pi_1$,
$$\{ \gamma \geq \kappa^{+\omega} : M \models \varphi(\gamma) \} \supseteq \{ \gamma \geq \kappa^{+\omega} : V[G] \models \varphi(\gamma) \}.$$
Thus the $\alpha^{th}$ ordinal $\gamma\geq\beta$ such that $M\models\varphi(\gamma)$, is less than or equal to the $\alpha^{th}$ ordinal $\gamma\geq\beta$ such that $V[G] \models\varphi(\gamma)$.  Thus $j(\delta) \leq \delta$, so $j(\delta) = \delta$.

For the final claim, suppose $\alpha$ is a limit point of $\mathrm{fix}(\kappa,\Gamma)$ and $\cf(\alpha)\in \mathrm{fix}(\kappa,\Gamma)$.  Let $\la \alpha_i : i < \cf(\alpha) \ra$ be an increasing sequence contained in $\mathrm{fix}(\kappa,\Gamma)$ and cofinal in $\alpha$. 
Suppose $\bbP \in \Gamma$, $G \subseteq \bbP$ is generic over $V$, and $j : V \to M \subseteq V[G]$ is an elementary embedding with $M$ transitive and $\crit(j) = \kappa$.
Then 
$$j(\la \alpha_i : i < \cf(\alpha)\ra)  = \la \beta_i : i < j(\cf(\alpha)) \ra = \la \beta_i : i < \cf(\alpha) \ra $$ 
is cofinal in $j(\alpha)$, and 
$$\la j(\alpha_i) : i < \cf(\alpha) \ra = \la \alpha_i : i <\cf(\alpha) \ra$$
is cofinal in $\la \beta_i : i <\cf(\alpha)\ra$.  Thus $j(\alpha) = \sup_i \beta_i = \sup_i \alpha_i = \alpha$.
\end{proof}

The restriction to $\Pi_1$ properties is important, as can be seen by the $\Sigma_2$ property of being a measurable cardinal.  If $\kappa$ is a successor cardinal, there is a $\kappa^+$-saturated normal ideal on $\kappa$, and $\lambda>\kappa$ is measurable, then by Proposition~\ref{uf_trick}, there is a $\kappa$-complete uniform ideal $J$ on $\lambda$ such that $\p(\lambda)/J \cong \p(\kappa)/I$.  Thus $\lambda \in \mathsf{FK}(\kappa,\kappa^+\text{-c.c.})$.  By the same token, if $\lambda > \kappa$ is strongly compact, then every regular $\delta \geq \lambda$ is in $\mathsf{FK}(\kappa,\kappa^+\text{-c.c.})$.



It is natural to ask exactly how large $\mathsf{FK}(\kappa,\Gamma)$ can be.  
Foreman~\cite{forbidden} showed that consistently, $\mathsf{FK}(\kappa,\kappa\text{-centered})$ contains $\{ \lambda^+ : \lambda = |\lambda| >\cf(\lambda) = \kappa \}$ simultaneously for all successor cardinals $\kappa$.

\subsection{Graph colorings}\label{subsec: graph-colorings}

A \emph{graph} is a pair $G = \la V_G,E_G \ra$, where $E_G \subseteq [V_G]^2$.  $V_G$ is the set of \emph{vertices}, and $E_G$ is the set of \emph{edges}.  For graphs $G,H$, we say that $H$ is a \emph{subgraph} of $G$ when $V_H \subseteq V_G$ and $E_H = E_G \cap [V_H]^2$.

A \emph{vertex coloring} of a graph $G$ is a function $c : V_G \to \kappa$ for some cardinal $\kappa$ such that $c(x) \not= c(y)$ whenever $\{ x,y \} \in E_G$.  The \emph{chromatic number} of a graph $G$, or $\chi(G)$, is the least cardinal $\kappa$ such that there exists a vertex coloring of $G$ with range $\kappa$.

\begin{theorem}[Erd\H{o}s-Hajnal \cite{EH}]
\label{EH}
For every infinite cardinal $\kappa$ and every $n<\omega$, there exists a graph $G$ with $\beth_n(\kappa)^+$-many vertices such that $\chi(G) > \kappa$, but $\chi(H) \leq \kappa$ for every subgraph $H$ of $G$ such that $|V_H| \leq \beth_n(\kappa)$.
\end{theorem}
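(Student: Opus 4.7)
The plan is to realize $G$ as the canonical Erd\H{o}s--Hajnal shift graph on $\lambda := \beth_n(\kappa)^+$. Set $V_G := [\lambda]^{n+1}$ and declare $\{\sigma,\tau\}$ to be an edge precisely when there exist $a_0 < a_1 < \cdots < a_{n+1}$ in $\lambda$ with $\sigma = \{a_0,\ldots,a_n\}$ and $\tau = \{a_1,\ldots,a_{n+1}\}$. Since $\lambda$ is infinite, $|V_G| = \lambda = \beth_n(\kappa)^+$, matching the desired vertex count. Two independent arguments then deliver the chromatic properties: a partition-calculus computation for the global lower bound, and an induction on $n$ for the local upper bound.

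For $\chi(G) > \kappa$, a proper $\kappa$-coloring of $G$ is exactly a map $c \colon [\lambda]^{n+1} \to \kappa$ with $c(\{a_0,\ldots,a_n\}) \neq c(\{a_1,\ldots,a_{n+1}\})$ whenever $a_0<\cdots<a_{n+1}$ in $\lambda$. The Erd\H{o}s--Rado theorem $\beth_n(\kappa)^+ \to (\kappa^+)^{n+1}_\kappa$ produces, from any such $c$, a $c$-homogeneous set $H \in [\lambda]^{\kappa^+}$, and any $(n{+}2)$ elements of $H$ immediately violate the shift constraint.

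For the subgraph bound, every subgraph $H \subseteq G$ with $|V_H| \leq \beth_n(\kappa)$ is contained in the induced shift graph $\mathrm{Sh}_{n+1}(S) := G \cap [S]^{n+1}$ on some $S \subseteq \lambda$ of size at most $\beth_n(\kappa)$, so it suffices to prove by induction on $n$ the key lemma that $\chi(\mathrm{Sh}_{n+1}(S)) \leq \kappa$ whenever $|S| \leq \beth_n(\kappa)$. The base $n=0$ is immediate because $\mathrm{Sh}_1(S)$ is the complete graph on at most $\kappa$ vertices. For the step from $n$ to $n+1$, I would fix an injection $f \colon S \to {}^{\beth_n(\kappa)}2$ (available since $|S| \leq 2^{\beth_n(\kappa)}$) and, for $\alpha < \beta$ in $S$, define the discrepancy invariants $\delta(\alpha,\beta) := \min\{\xi : f(\alpha)(\xi) \neq f(\beta)(\xi)\}$ and $\varepsilon(\alpha,\beta) := f(\alpha)(\delta(\alpha,\beta))$. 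These satisfy an ultrametric identity: for $\alpha<\beta<\gamma$, $\delta(\alpha,\gamma) \leq \max\{\delta(\alpha,\beta),\delta(\beta,\gamma)\}$, with strict inequality exactly when $\delta(\alpha,\beta) = \delta(\beta,\gamma)$. The sought coloring of $\mathrm{Sh}_{n+2}(S)$ into $\kappa$ colors combines the leading pair-level datum $(\delta(\alpha_0,\alpha_1),\varepsilon(\alpha_0,\alpha_1)) \in \beth_n(\kappa) \times 2$ of a chain $\alpha_0 < \cdots < \alpha_{n+1}$ with an inductively supplied coloring $\bar c \colon [\beth_n(\kappa)]^{n+1} \to \kappa$, applied to a suitable $(n{+}1)$-subset of $\beth_n(\kappa)$ extracted from the discrepancy pattern of the chain.

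The main obstacle is that the consecutive-discrepancy sequence $(\delta(\alpha_i,\alpha_{i+1}))_{i \leq n}$ along the chain need not be injective, so feeding it directly into $\bar c$ is ill-defined. The standard remedy is to stratify $[S]^{n+2}$ by the order pattern of this sequence and, within each stratum, either extract (via the ultrametric identity) an honest shift-adjacent pair in $[\beth_n(\kappa)]^{n+1}$ on which $\bar c$ distinguishes, or else read the color off the leading pair-level data $(\delta,\varepsilon)$. Organizing this case analysis is the only genuine combinatorial calculation in the proof; everything else is a direct application of Erd\H{o}s--Rado.
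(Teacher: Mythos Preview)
The paper does not supply a proof of this theorem; it is quoted from \cite{EH} as background, and the shift graphs $G_n(A)$ that you employ are exactly the construction the authors later attribute to Erd\H{o}s and Hajnal when they discuss sharpness of the graph-reflection results. Your proposal is the standard argument and is correct: the lower bound via the Erd\H{o}s--Rado relation $\beth_n(\kappa)^+\to(\kappa^+)^{n+1}_\kappa$ is immediate, and the inductive upper bound via discrepancy coding against an injection into ${}^{\beth_n(\kappa)}2$ is the classical route, with the case analysis on the order type of the discrepancy sequence being, as you note, the only point requiring real care.
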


Here, we use the standard notation that $\beth_0(\kappa) = \kappa$, and $\beth_{n+1}(\kappa) = 2^{\beth_n(\kappa)}$.

In \cite{EH}, Erd\H{o}s and Hajnal also introduced their universal graph $\mathsf{EH}(\kappa,\lambda)$, defined by taking the vertex set as the set of all functions $f: \kappa \to \lambda$, and connecting $f,g$ with an edge when $| \{ \alpha: f(\alpha) = g(\alpha) \} | < \kappa$.  (So two functions are connected when they \emph{dis}agree almost everywhere.)  They showed that for every graph $G$ with vertex set of size $\kappa$, if every subgraph $H$ with $|V_H| < |V_G|$ has $\chi(H) \leq \lambda$, then there is an edge-preserving homomorphism $f : G \to \mathsf{EH}(\kappa,\lambda)$.  This implies that $\chi(G) \leq \chi(\mathsf{EH}(\kappa,\lambda))$.  
By Theorem~\ref{EH}, we have that for every infinite $\kappa$ and $n<\omega$, 
$\chi(\mathsf{EH}(\beth_n(\kappa)^+,\kappa)) > \kappa$.

Because of their usefulness in computing the chromatic numbers of infinite graphs in general, several authors, such as Todor\v{c}evi\'c \cite{todorcevic}, have urged the computation of the chromatic numbers of these graphs.  We note that \cite{komjath} shows that the value of $\chi(\mathsf{EH}(\omega_2,\omega))$ is independent of $\ZFC+\GCH$.

In our model, the value of $\chi(\mathsf{EH}(\omega_m,\omega_n))$ equals the lower bound given by Erd\H{o}s and Hajnal for all $n \leq m < \omega$.

\begin{theorem}
    Suppose for all $n < \omega$, there is a normal ideal $I$ on $\omega_{n+1}$ such that $\p(\omega_{n+1})/I \cong \mathcal B(\col(\omega_{n},\omega_{n+1}))$.
    Then for all $n \leq m < \omega$, $\chi(\mathsf{EH}(\omega_m,\omega_n)) = \omega_{n+1}$.
\end{theorem}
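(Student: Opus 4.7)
The plan is to establish the two inequalities separately. The upper bound $\chi(\mathsf{EH}(\omega_m,\omega_n)) \leq \omega_{n+1}$ is where the hypothesis does the work, via the irregular ultrafilters provided by Theorem~\ref{up_sizes}. The lower bound $\chi(\mathsf{EH}(\omega_m,\omega_n)) \geq \omega_{n+1}$ will follow from a direct clique construction under $\GCH$ (which holds in our setting by Proposition~\ref{gch}).

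For the upper bound, fix $n\leq m<\omega$. By Theorem~\ref{up_sizes} applied with $a=n$, $b=n+1$, $c=m$ (so $a<b\leq c+1$ since $n<n+1\leq m+1$), there is a uniform ultrafilter $\mathcal U$ on $\omega_m$ such that $|\omega_n^{\omega_m}/\mathcal U|=\omega_{n+1}$. Fix a bijection $\Phi\colon\omega_n^{\omega_m}/\mathcal U\to\omega_{n+1}$ and define a vertex coloring $c\colon\omega_n^{\omega_m}\to\omega_{n+1}$ by $c(f)=\Phi([f]_\mathcal U)$. Suppose $\{f,g\}$ is an edge of $\mathsf{EH}(\omega_m,\omega_n)$; then by definition $A:=\{\alpha<\omega_m:f(\alpha)=g(\alpha)\}$ has cardinality ${<}\omega_m$, and since $\mathcal U$ is uniform on $\omega_m$ we have $A\notin\mathcal U$, hence $[f]_\mathcal U\neq[g]_\mathcal U$ and $c(f)\neq c(g)$. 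Thus $c$ is a proper coloring, giving $\chi(\mathsf{EH}(\omega_m,\omega_n))\leq\omega_{n+1}$.

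For the lower bound we produce a clique of size $\omega_{n+1}$ in $\mathsf{EH}(\omega_m,\omega_n)$. Under $\GCH$ we have $|\omega_n^{<\omega_m}|=\omega_m^{<\omega_m}\leq\omega_{m+1}$ and, more usefully, $|\omega_n^{<\omega_n}|=\omega_n$ for the diagonal case. The cleanest route is the standard tree argument: fix a bijection $\theta\colon\omega_n^{<\omega_m}\to\omega_n$ (it suffices that the left-hand side have cardinality $\leq\omega_n$; when $m=n$ this is $\omega_n^{<\omega_n}=\omega_n$ under $\GCH$, and when $m>n$ one replaces the tree by an almost-disjoint family of functions $\omega_m\to\omega_n$, which can be built of size $2^{\omega_n}=\omega_{n+1}$ by a standard diagonal-coding argument). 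For each $h\in\omega_n^{\omega_m}$ (when $m=n$, for each $h\in\omega_n^{\omega_n}$, so we index by a set of size $2^{\omega_n}=\omega_{n+1}$), let $F_h(\alpha)=\theta(h\restriction\alpha)$. Then if $h\neq h'$ first differ at some $\beta<\omega_m$, the agreement set of $F_h$ and $F_{h'}$ is $\{\alpha:\alpha\leq\beta\}$, which has cardinality $<\omega_m$. Choosing $\omega_{n+1}$ many $h$'s yields a clique of that size, forcing $\chi(\mathsf{EH}(\omega_m,\omega_n))\geq\omega_{n+1}$. Alternatively, for $m>n$ one may invoke Theorem~\ref{EH} (Erd\H{o}s--Hajnal) with $\kappa=\omega_n$ and $k=m-n-1$, noting that under $\GCH$, $\beth_k(\omega_n)^+=\omega_m$; the resulting graph embeds into $\mathsf{EH}(\omega_m,\omega_n)$ by the universal property, so $\chi(\mathsf{EH}(\omega_m,\omega_n))>\omega_n$.

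Combining the two bounds yields the equality. The only mildly delicate point is choosing the clique construction uniformly for both cases $m=n$ and $m>n$; all the real content is packaged in the application of Theorem~\ref{up_sizes}, so once that theorem is in hand the rest of the argument is a short verification using only the uniformity of $\mathcal U$ and arithmetic of cardinal exponentials under $\GCH$.
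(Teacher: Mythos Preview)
Your argument is correct and follows the same route as the paper: the upper bound via the ultrafilter from Theorem~\ref{up_sizes} is exactly the paper's argument, and for the lower bound the paper likewise appeals to $\GCH$ together with the Erd\H{o}s--Hajnal result recorded just before the theorem. Your treatment of the lower bound is slightly more explicit, splitting into the clique construction for $m=n$ and the appeal to Theorem~\ref{EH} for $m>n$; both are fine. One caution: your parenthetical remark about building an almost-disjoint family of functions $\omega_m\to\omega_n$ of size $\omega_{n+1}$ ``by a standard diagonal-coding argument'' is not obviously correct for $m>n+1$ (the tree coding breaks because $|\omega_n^{<\omega_m}|>\omega_n$, and it is not clear a clique of that size exists in general), so it is best to drop that aside and rely on your alternative via Theorem~\ref{EH}, which is what the paper does as well.
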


\begin{proof}
    By Theorem~\ref{up_sizes}, there exists a uniform ultrafilter $\mathcal U$ on $\omega_m$ such that $|\omega_n^{\omega_m} / \mathcal U| = \omega_{n+1}$.  We can color the vertices of $\mathsf{EH}(\omega_m,\omega_n)$ by their equivalence classes mod $\mathcal U$.  If $f$ and $g$ are connected with an edge, then they disagree on a cobounded set, so $[f]_{\mathcal U} \not= [g]_{\mathcal U}$.  Since the hypothesis implies $\GCH$ below $\aleph_\omega$, $\chi(\mathsf{EH}(\omega_m,\omega_n)) \geq \omega_{n+1}$, so this is an optimal vertex coloring.
\end{proof}

In our model, we obtain many instances of the Foreman-Laver graph reflection property of \cite{Foreman86,Foreman-Laver}.  Recall that $[\kappa,\lambda] \twoheadrightarrow [\kappa',\lambda']$ means that every graph with $\kappa$ vertices and chromatic number $\lambda$ has a subgraph with $\kappa'$ vertices and chromatic number $\lambda'$.  We write $[\kappa,{\geq}\lambda] \twoheadrightarrow [\kappa',{\geq}\lambda']$ to mean that the chromatic numbers are not assumed to be exactly $\lambda,\lambda'$ respectively, but bounded below by these cardinals.

\begin{corollary}
    Suppose for all $n < \omega$, there is a normal ideal $I$ on $\omega_{n+1}$ such that $\p(\omega_{n+1})/I \cong \mathcal B(\col(\omega_{n},\omega_{n+1}))$.
    Then for all natural numbers $k,m,n$ with $n\leq m$,
    $[\omega_{m+k},\omega_{m}] \twoheadrightarrow [\omega_{n+k},\omega_{n}].$   
\end{corollary}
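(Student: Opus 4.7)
The plan is to combine the computation $\chi(\mathsf{EH}(\omega_m,\omega_n))=\omega_{n+1}$ from the preceding theorem with the transfer machinery of Corollary~\ref{lowerend} and the generic embeddings available from the dense ideals on successor cardinals.

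First, I would reduce to the single-step case $[\omega_{n+k+1},\omega_{n+1}] \twoheadrightarrow [\omega_{n+k},\omega_n]$ by induction on $m-n$, since chaining single-step reflections yields the full statement (and the base case $m=n$ is trivial, taking $G$ itself as the subgraph). So assume $m=n+1$ and let $G$ be a graph on $\omega_{n+k+1}$ vertices with $\chi(G)=\omega_{n+1}$.

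By iterating Corollary~\ref{lowerend} from $\omega_{n+k}$ up through $\omega_{n+k+1}$, I obtain a projection $\pi\colon\omega_{n+k+1}\to\omega_{n+k}$ and compatible $\omega_n$-complete uniform ideals $J$ on $\omega_{n+k+1}$ and $J'$ on $\omega_{n+k}$ such that $[A]_{J'}\mapsto[\pi^{-1}[A]]_{J}$ is an isomorphism of quotient Boolean algebras and $[f]_{J'}\mapsto[f\circ\pi]_{J}$ is a bijection between the $\omega_{n-1}$-valued reduced products. I would then choose a section $S\subseteq\omega_{n+k+1}$ of $\pi$ (so $|S|=\omega_{n+k}$) and set $H=G\restriction S$. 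The upper bound $\chi(H)\le\omega_n$ comes from the Erd\H{o}s--Hajnal universal property recalled before the preceding theorem: a straightforward induction on cardinality shows that every proper subgraph of $H$ of smaller cardinality has chromatic number at most $\omega_{n-1}$, so $H$ admits a graph homomorphism into $\mathsf{EH}(\omega_{n+k},\omega_{n-1})$, which has chromatic number $\omega_n$ by the preceding theorem. The lower bound $\chi(H)\ge\omega_n$ is by contradiction: a hypothetical coloring $c\colon S\to\omega_{n-1}$ pulls back along $\pi$ to a function $c\circ\pi\colon\omega_{n+k+1}\to\omega_{n-1}$ which, using the generic embedding $j\colon V\to M$ coming from the dense ideal on $\omega_{n+1}$ and the $\omega_n$-completeness of $J$, is shown to lift to an actual proper coloring of $G$ with fewer than $\omega_{n+1}$ colors, contradicting $\chi(G)=\omega_{n+1}$.

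The hardest part will be making the lower-bound argument go through, since the naive pullback $c\circ\pi$ need not properly color $G$-edges whose endpoints lie in the same fiber $\pi^{-1}(v)$. The plan to overcome this is twofold: refine $\pi$ by iterating Corollary~\ref{lowerend} \emph{inside} each fiber (recursively exploiting the hypothesis at lower cardinals) so that the induced subgraphs on fibers are themselves $\omega_{n-1}$-colorable in $V$, and use the $\omega_n$-completeness together with the normality inherited from the $\aleph_1$-density given by Corollary~\ref{cor:aleph_1-dense-ideals} to patch the local per-fiber colorings with the global pullback $c\circ\pi$ into a single proper $\omega_{n-1}$-coloring of $G$. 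This fiber-wise Chang-style reflection is what makes essential use of the simultaneous existence of dense ideals at every $\omega_{j+1}$, and not just at $\omega_{n+1}$.
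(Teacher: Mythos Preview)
Your argument has a genuine gap in both directions, and the overall strategy diverges from the paper's in a way that loses the key leverage.

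For the upper bound $\chi(H)\leq\omega_n$, you assert that ``a straightforward induction on cardinality shows that every proper subgraph of $H$ of smaller cardinality has chromatic number at most $\omega_{n-1}$.'' But nothing in the hypotheses gives this. All you know about $G$ is that $\chi(G)=\omega_{n+1}$, so any subgraph of $H$ is at best $\omega_{n+1}$-colorable; there is no mechanism forcing subgraphs of $H$ of size $<\omega_{n+k}$ to be $\omega_{n-1}$-colorable. The Erd\H{o}s--Hajnal universality therefore does not apply to $H$ in the way you want. For the lower bound, you correctly identify the obstruction (edges inside a fiber of $\pi$) but the proposed fix is not an argument: even if each fiber were $\omega_{n-1}$-colorable, there is no reason a per-fiber coloring and the pullback $c\circ\pi$ can be merged into a single proper coloring of $G$, and the invocation of a generic embedding with critical point $\omega_{n+1}$ does nothing to repair this, since the coloring problem lives at cardinality $\omega_{n+k+1}$.

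The paper's approach avoids both issues by never trying to pin down a specific subgraph. It first proves the weaker relation $[\omega_{m+k},{\geq}\omega_m]\twoheadrightarrow[\omega_{n+k},{\geq}\omega_n]$ by contradiction: if \emph{every} subgraph of $G$ of size $\leq\omega_{m+k-1}$ has chromatic number $\leq\omega_{n-1}$, then Erd\H{o}s--Hajnal universality applies to $G$ itself, giving a homomorphism $G\to\mathsf{EH}(\omega_{m+k},\omega_{n-1})$, hence $\chi(G)\leq\omega_n<\omega_m$, contradiction. The point is that the universality is applied to the big graph $G$, not to the sought-for $H$. The exact-chromatic-number version is then obtained by a separate soft argument: find any subgraph with $\chi\geq\omega_n$, shrink it further if its chromatic number overshoots (using induction on $k$), and then pad its vertex set back up to $\omega_{n+k}$ by adjoining vertices from a large independent set of $G$.
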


\begin{proof}
    We first establish $[\omega_{m+k},\geq\!\omega_{m}] \twoheadrightarrow [\omega_{n+k},\geq\!\omega_{n}]$ for all $k,m,n$ with $n\leq m$.  It suffices to assume that $m = n + 1$, since then the other instances follow from the transitivity of the transfer property.

    First, let us also assume that $n \geq 1$.  Suppose $G$ is a graph such that $|V_G| = \omega_{m+k}$ and $\chi(G) \geq \omega_{m}$.  If the transfer property fails, then for every subgraph $H$ such that $|V_H| \leq \omega_{n+k} = \omega_{m+k-1}$, $\chi(H) \leq \omega_{n-1}$.  But then there is an edge-preserving homomorphism $f  :  G \to \mathsf{EH}(\omega_{m+k},\omega_{n-1})$.  Since $\chi(\mathsf{EH}(\omega_{m+k},\omega_{n-1})) = \omega_{n} = \omega_{m-1}$, $\chi(G) < \omega_{m}$, a contradiction.

    If $n = 0$, then let $G$ be a graph with $V_G = \omega_{m+k}$ and $\chi(G)\geq \omega_{m}$, and let $M$ be an elementary substructure of $\la H_{\omega_{m+k+1}},\in,G \ra$ of size $\omega_{k}$.  Let $H$ be the induced subgraph with $V_H = V_G \cap M$.  By the De Bruijn–Erd\H{o}s theorem, having infinite chromatic number is a first-order property, so indeed $\chi(H) \geq \omega$.

    For the stronger property $[\omega_{m+k},\omega_{m}] \twoheadrightarrow [\omega_{n+k},\omega_{n}]$, we argue by induction on $k$.  First note that the stronger relation is same as the weaker one when $k = 0$.
    Suppose that $k>0$ and $[\omega_{m+k'},\omega_{m}] \twoheadrightarrow [\omega_{n+k'},\omega_{n}]$ holds for all $n \leq m < \omega$ and $k' < k$.  Let $G$ be any graph on $\omega_{m+k}$ with $\chi(G) = \omega_m$.  Then there is an independent set of vertices, $X$, of size $\omega_{m+k}$.  Let $H_0$ be a subgraph witnessing $[\omega_{m+k},\geq\!\omega_{m}] \twoheadrightarrow [\omega_{n+k},\geq\!\omega_{n}]$.  
    If $\chi(H_0) = \omega_n$, we are done.  If $\chi(H_0) = \omega_{n+r}$ where $0<r\leq k$, then by induction we have $[\omega_{n+k},\omega_{n+r}]\twoheadrightarrow[\omega_{n+k-r},\omega_n]$.
    Let $H_1$ be a subgraph of $H_0$ witnessing this relation.
    Then let $X' \subseteq X$ have size $\omega_{n+k}$, and let $H$ be the subgraph of $G$ with $V_H = V_{H_1} \cup X'$.  Since $X'$ is independent, any vertex coloring of $H_1$ can be extended to one of $H$ by using only one more color for the members of $X' \setminus H_1$.  Thus $|V_H| = \omega_{n+k}$, and $\chi(H) = \chi(H_1) = \omega_n$.
    \end{proof}

    We note that the conclusion above is optimal in a sense.  One the one hand, if $k>0$ and $[\omega_{m+k},\omega_m] \twoheadrightarrow [\omega_{n+k},\omega_n]$ holds, then also $[\omega_{m+k},\omega_m] \twoheadrightarrow [\omega_{r+k},\omega_n]$ holds for $n \leq r \leq m$, which can be seen adjoining a large enough set of independent vertices to a witness of the first relation.  In other words, a nonzero gap between cardinality and chromatic number can be increased in a subgraph.  However, it cannot generally be decreased.\footnote{This contrasts to the cosmetically similar Chang's Conjecture $(\kappa_1,\kappa_0)\twoheadrightarrow(\mu_1,\mu_0)$ about elementary submodels, wherein the gap can be readily decreased but not increased.}  This is shown by the construction of Erd\H{o}s and Hajnal \cite{EH}.  For a set of ordinals $A$ and $n\geq 2$, they define a graph $G_n(A)$ on $[A]^n$ by connecting two nodes $\{ \alpha_1,\dots,\alpha_n \},\{ \beta_1,\dots,\beta_n \}$ (written in increasing order) when $\beta_i = \alpha_{i+1}$ for $0<i<n$.  They show that, assuming $\GCH$ holds below $\aleph_\omega$, for all $n,k<\omega$ such that $k \geq 1$, if $|A| = \omega_{n+k}$, then $\chi(G_{k+1}(A)) = \omega_n$. 
    Therefore, under $\GCH$, if $n \leq m$, $k \geq 1$, and $H$ is a subgraph of $G_{k+1}(\omega_{m+k})$ of chromatic number $\omega_n$, then $|V_H| \geq \omega_{n+k}$.  Thus for $n \leq m$ and $k_0 < k_1$, $[\omega_{m+k_1},\omega_m] \twoheadrightarrow [\omega_{n+k_0},\omega_n]$ is inconsistent with $\GCH$.
    
    In contrast to the results above, Shelah \cite{shelahgraph} showed that in $L$, for every successor cardinal $\kappa$, there is a graph $G$ with $|V_G| = \kappa$ and $\chi(G) = \kappa$ such that for every subgraph $H$ with $|V_H| < \kappa$, $\chi(H) \leq \omega$. A similar result was obtained by Shelah (using a non-reflecting stationary set and instances of $\GCH$, with larger graphs) in \cite{Shelah-graphs-2013} and by Lambie-Hanson and Rinot \cite{LHR-chromatic}, using $\square(\kappa)$ and $\GCH$, obtaining various gaps. 
  
\subsection{Partition hypotheses}\label{subsec: partition-hypothesis}

In recent work \cite{BBMT}, Bannister, Bergfalk, Moore, and Todor\v{c}evi\'c isolated a combinatorial principle $\mathrm{PH}_n(\kappa)$.  
To state this principle, let us first define a few notions.  For finite sequences $s,t$, we put $s \lhd t$ when $s$ is a strict subsequence of $t$.
For $1 \leq n < \omega$, a function $F  :  \kappa^{\leq n} \to \kappa$ is called \emph{$n$-cofinal} when 
\begin{itemize}
    \item for all $\alpha<\kappa$, $F(\la\alpha\ra) \geq \alpha$, and 
    \item whenever $s,t \in \kappa^{\leq n}$ and $s \lhd t$, $F(s) \leq F(t)$.
\end{itemize}

$\mathrm{PH}_n(\kappa)$ states that for all colorings $c  :  \kappa^{n+1} \to \omega$, there is $m<\omega$ and an $(n+1)$-cofinal $F  :  \kappa^{\leq n+1} \to \kappa$ such that for all sequences $s_1 \lhd s_2 \lhd \dots\lhd s_{n+1}$ such that $s_i \in \kappa^i$,
$$c\left(F(s_1),F(s_2),\dots,F(s_{n+1})\right) = m.$$
The combinatorial power of $\mathrm{PH}_n(\kappa)$ stems in part from the fact that for any injective sequence $\la\alpha_1,\alpha_2,\dots,\alpha_{n+1}\ra \subseteq \kappa$, there are $(n+1)!$ ways to arrange subsequences $s_1 \lhd s_2 \lhd \dots \lhd s_{n+1} = \la\alpha_1,\alpha_2,\dots,\alpha_{n+1}\ra$ with $\len(s_1) = 1$, and $c\left(F(s_1),F(s_2),\dots,F(s_{n+1})\right)$ will be the same for any such arrangement.

They showed the following results:
\begin{enumerate}
    \item For all $n$, $\mathrm{PH}_n(\omega_n)$ is false.
    \item If $\kappa$ is regular and $n \geq 1$, then $\mathrm{PH}_n(\kappa)$ implies $\neg\square(\kappa)$.
    \item If $\kappa$ is weakly compact, then $\mathrm{PH}_n(\kappa)$ holds for all $n$.
    \item If $1 \leq n < m$ and for $1 \leq k \leq n$, there is a uniform, $\omega_k$-complete, $\omega_k$-dense ideal on $\omega_m$, then $\mathrm{PH}_n(\omega_m)$ holds.
\end{enumerate}
Our model thus answers their question of whether $\mathrm{PH}_n(\omega_m)$ is consistent for $2 \leq n < m$.

In more recent work in preparation \cite{BLZ}, Berkfalk, Lambie-Hanson, and Zhang introduce a similar principle, $\mathrm{PH}^{\mathrm{bd}}_n(\kappa, \mathrm{reg})$.  This states that for all $c  :  \kappa^{n+1} \to \kappa$ such that $c(s) < \min(s)$ for all $s \in \kappa^{n+1}$, there is an $(n+1)$-cofinal $F  :  \kappa^{\leq n+1} \to \kappa$ and an ordinal $\xi < \kappa$ such that for all sequences $s_1 \lhd s_2 \lhd \dots\lhd s_{n+1}$ such that $s_i \in \kappa^i$,
$c\left(F(s_1),F(s_2),\dots,F(s_{n+1})\right) < \xi.$

They apply this principle in the study of homological algebra.  There are some rather concretely defined groups of families of functions on ordinals into a fixed abelian group such that the question of whether certain quotient groups are trivial, namely the subgroup of ``$n$-coherent'' families modulo the subgroup of ``$n$-trivial'' families, is independent of $\ZFC$.  For a given ordinal $\delta$ and fixed abelian group $\mathcal 
 A$, this quotient group is denoted by $\check{\mathrm{H}}^n(\delta,\mathcal A)$.  They show that for $m<n$, $\check{\mathrm{H}}^n(\omega_m,\mathcal A)$ is always trivial, $\check{\mathrm{H}}^n(\omega_n,\mathcal A)$ is nontrivial for some choice of $\mathcal A$, and the status of $\check{\mathrm{H}}^n(\omega_m,\mathcal A)$ for $n<m$ is independent.  They show that the axiom $V=L$ implies that $\check{\mathrm{H}}^n(\omega_m,\mathcal A) \neq 0$ whenever $n<m$, that $\mathrm{PFA}$ implies that $\check{\mathrm{H}}^1(\omega_m,\mathcal A) = 0$ when $m \geq 2$, and that the existence of the kind of uniform ideals on $\omega_m$ in our model implies that $\check{\mathrm{H}}^n(\omega_m,\mathcal A) = 0$ whenever $n<m$.  This argument goes through the principle $\mathrm{PH}^{\mathrm{bd}}_n(\kappa, \mathrm{reg})$ and is currently the only known way to show the consistency of $\check{\mathrm{H}}^n(\omega_m,\mathcal A) = 0$ for $2 \leq n < m$.  For more details, see \cite{BLZ}.

In summary, we have:
\begin{theorem}
    Suppose $\ZFC$ is consistent with a huge cardinal.  Then there is a model of $\ZFC$ in which:
    \begin{enumerate}
        \item $\mathrm{PH}_n(\omega_m)$ holds for all $n<m<\omega$.
        \item For $n,m \in \omega$, $n = m$ if and only if there exists an abelian group $\mathcal A$ such that the cohomology group $\check{\mathrm{H}}^n(\omega_m,\mathcal A)$ is nontrivial.
    \end{enumerate}
\end{theorem}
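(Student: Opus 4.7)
The plan is to assemble this final theorem as a direct corollary of the Main Theorem together with the external results of \cite{BBMT} and \cite{BLZ}, so the bulk of the work is verifying that the combinatorial hypotheses used by those papers are available in our model. First, starting from a huge cardinal, apply Theorem~\ref{thm:dense-ideals-everywhere} to obtain a model in which every successor of a regular cardinal carries a normal ideal $I$ with $\p(\mu^+)/I \cong \mathcal{B}(\Col(\mu,\mu^+))$. In particular, the hypothesis of Corollary~\ref{cor:aleph_1-dense-ideals} is met, so in this model, for all $1\le n \le m < \omega$ there is a uniform, $\omega_n$-complete, $\omega_n$-dense ideal on $\omega_m$.

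For part (1), I would simply quote the consequence established in \cite{BBMT}: whenever $1\le n < m$ and for every $1\le k \le n$ there exists a uniform, $\omega_k$-complete, $\omega_k$-dense ideal on $\omega_m$, the principle $\mathrm{PH}_n(\omega_m)$ holds. Since our model supplies such ideals for the entire range $1\le k \le n$ (in fact for all $k \le m$), this yields $\mathrm{PH}_n(\omega_m)$ for every pair with $n < m$, which is the desired conclusion. No additional set-theoretic input is needed beyond checking that the parameters line up.

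For part (2), the three cases in the biconditional are handled separately, two of them outright in $\ZFC$ and one using our model. If $n > m$, then $\check{\mathrm H}^n(\omega_m,\mathcal A)$ is always trivial in $\ZFC$ by the results of \cite{BLZ}, and if $n = m$, those same results give an abelian $\mathcal A$ for which $\check{\mathrm H}^n(\omega_m,\mathcal A)\neq 0$ in $\ZFC$, so in either case the ``only if'' and the $n=m$ clause of ``if and only if'' hold unconditionally in our model. The remaining case $n < m$ is exactly what the results in preparation \cite{BLZ} address: they show that the existence of ideals of the type we have on $\omega_m$ implies the bounded partition hypothesis $\mathrm{PH}^{\mathrm{bd}}_n(\omega_m,\mathrm{reg})$, and that this in turn forces $\check{\mathrm H}^n(\omega_m,\mathcal A) = 0$ for every abelian group $\mathcal A$. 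Applying this to our model finishes the ``if'' direction and completes the characterization.

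The main obstacle is not inside this final theorem at all; it is in Theorem~\ref{thm:dense-ideals-everywhere}, on which everything here rests. Once that is in hand, the argument is essentially bookkeeping: match our ideals to the hypotheses of the external theorems, and invoke the trivial $\ZFC$ clauses to close the biconditional in the cohomology statement. The only mild subtlety is that both \cite{BBMT} and \cite{BLZ} require the \emph{simultaneous} presence of suitably complete and suitably dense uniform ideals on $\omega_m$ across a range of completeness levels, which is precisely what the Boolean transfer principle of Subsection~\ref{subsec: boolean-transfer} delivers from the very rigid form $\p(\mu^+)/I \cong \mathcal{B}(\Col(\mu,\mu^+))$ produced by our main construction.
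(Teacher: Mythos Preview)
Your proposal is correct and takes essentially the same approach as the paper: the theorem is stated there as a summary of the preceding discussion, assembled from Theorem~\ref{thm:dense-ideals-everywhere} and Corollary~\ref{cor:aleph_1-dense-ideals} together with the quoted external results of \cite{BBMT} (for part (1)) and \cite{BLZ} (for the three cases of part (2)), exactly as you outline.
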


\section{Open Questions}\label{sec: questions}
We conclude this paper with a list of open questions. 

In \cite{foremanhandbook}, Foreman asks whether the existence of strongly layered ideal on each of the $\aleph_n$ implies that $\aleph_{\omega}$ is J\'onsson. If true, it would imply that $\aleph_\omega$ is J\'onsson in the model obtained in this paper and in particular that many instances of Chang's Conjecture hold in this model.    
\begin{question}\label{question:ideals-implies-cc}
    Assume that for all $n<\omega$ there is a normal $I$ ideal on $\omega_{n+1}$ such that $\p(\omega_{n+1})/I \cong \mathcal B(\col(\omega_n,\omega_{n+1}))$. 

    Are there $0 < k < n < m$ such that $(\omega_m, \omega_n) \twoheadrightarrow (\omega_n, \omega_k)$?
\end{question}
\begin{question}
    Assume that for all $n<\omega$, there is a normal $I$ ideal on $\omega_{n+1}$ such that $\p(\omega_{n+1})/I \cong \mathcal B(\col(\omega_n,\omega_{n+1}))$. 
    \begin{enumerate}
        \item Does $\SCH$ hold at $\aleph_\omega$?
        \item Can there be an $\aleph_{\omega+2}$-saturated ideal on $\aleph_{\omega+1}$?
        \item Can there be an $\omega_1$-indecomposible ultrafilter on $\aleph_{\omega+1}$?
    \end{enumerate}
\end{question}

The following question uses the notations of Subsection \ref{subsec: FK-spectrum}.    
\begin{question}
Is it consistent that for every regular cardinal $\kappa$ below the first $\aleph$-fixed point that is not in $\mathrm{fix}(\omega_1, \omega_2\text{-c.c.})$, there is a uniform, $\aleph_1$-dense, countably complete ideal on $\kappa$? 
\end{question}

The forcing $\Sh(\kappa,\lambda)$ and the anonymous collapse $\mathbb{A}(\kappa,\lambda)$ share many interesting properties. In particular, by the minimality of the anonymous collapse, there is a projection from $\Sh(\kappa,\lambda)$ to $\mathbb{A}(\kappa,\lambda)$.
\begin{question}
    Let $\kappa$ be regular uncountable cardinal, $\lambda > \kappa$ inaccessible. Is $\mathcal B(\Sh(\kappa,\lambda))\cong\mathcal B( \mathbb{A}(\kappa,\lambda))$?
\end{question}
\providecommand{\bysame}{\leavevmode\hbox to3em{\hrulefill}\thinspace}
\providecommand{\MR}{\relax\ifhmode\unskip\space\fi MR }
\providecommand{\MRhref}[2]{%
  \href{http://www.ams.org/mathscinet-getitem?mr=#1}{#2}
}
\providecommand{\href}[2]{#2}

\end{document}